\theoremstyle{plain}
\newtheorem{theorem}{Theorem}
\newtheorem{lemma}{Lemma}
\newtheorem{corollary}[theorem]{Corollary}
\newtheorem{proposition}{Proposition}
\theoremstyle{remark}
\newtheorem{definition}[theorem]{Definition}
\newtheorem{assumption}{Assumption}
\newcommand{\bM}{\bs{M}}
\newcommand{\N}{\mathbb{N}}
\newcommand{\cN}{\mathcal{N}}
\newcommand{\cD}{\mathcal{D}}
\newcommand{\R}{\mathbb{R}}
\newcommand{\E}{\mathbb{E}}
\newcommand{\bE}{\mathbb{E}}
\newcommand{\argmin}{{\arg\hspace*{-0.5mm}\min}}
\newcommand{\matspan}{\mathrm{span}}
\newcommand{\one}{\mathbbm{1}}
\def\op{\mathrm{op}}
\def\bs{\boldsymbol}
\def\st{\mathrm{s.t.}}
\def\pr{\mathbb{P}}
\def\cN{\mathcal{N}}
\def\argmin{{\arg\hspace*{-0.5mm}\min}}
\def\prob{\mathbb{P}}
\def\pval{\phi}
\def\bv{\bs{\varepsilon}}
\def\bw{\bs{w}}
\def\bP{\bs{P}}
\def\bV{\bs{V}}
\def\bZ{\bs{Z}}
\def\bY{\bs{Y}}
\def\bX{\bs{X}}
\def\be{\bs{e}}
\def\bU{\bs{U}}
\def\cP{\mathcal{P}}
\def\proj{\mathrm{Proj}}
\def\tr{\mathrm{tr}}
\def\anova{\mathrm{anova}}
\def\naive{\mathrm{naive}}
\def\rev{\color{black}}
\def\revone{\color{black}}
\newcommand*\samethanks[1][\value{footnote}]{\footnotemark[#1]}
\newcommand\independent{\protect\mathpalette{\protect\independenT}{\perp}}
\def\independenT#1#2{\mathrel{\rlap{$#1#2$}\mkern2mu{#1#2}}}
\newcommand{\vertiii}[1]{{\left\vert\kern-0.25ex\left\vert\kern-0.25ex\left\vert #1 
		\right\vert\kern-0.25ex\right\vert\kern-0.25ex\right\vert}}
\begin{document}
	
	\begin{frontmatter}
		%%%%%%%%%%%%%%%%%%%%%%%%%%%%%%%%%%%%%%%%%%%%%%
		%%                                          %%
		%% Enter the title of your article here     %%
		%%                                          %%
		%%%%%%%%%%%%%%%%%%%%%%%%%%%%%%%%%%%%%%%%%%%%%%
		\title{Residual permutation test for regression coefficient testing}
		%\title{A sample article title with some additional note\thanksref{T1}}
		\runtitle{Residual permutation test}
		%\thankstext{T1}{A sample of additional note to the title.}
		
		\begin{aug}
			%%%%%%%%%%%%%%%%%%%%%%%%%%%%%%%%%%%%%%%%%%%%%%%
			%% Only one address is permitted per author. %%
			%% Only division, organization and e-mail is %%
			%% included in the address.                  %%
			%% Additional information can be included in %%
			%% the Acknowledgments section if necessary. %%
			%% ORCID can be inserted by command:         %%
			%% \orcid{0000-0000-0000-0000}               %%
			%%%%%%%%%%%%%%%%%%%%%%%%%%%%%%%%%%%%%%%%%%%%%%%
			\author[A]{\fnms{Kaiyue}~\snm{Wen}\ead[label=e1]{kaiyuew@stanford.edu}}\thanks{\ Equal contribution}\thanks{\ Part of this work was done while KW was at the Institute for Interdisciplinary Information Sciences, Tsinghua University.},
			\author[B]{\fnms{Tengyao}~\snm{Wang}\ead[label=e2]{t.wang59@lse.ac.uk}}\samethanks[1]
			\and
			\author[C]{\fnms{Yuhao}~\snm{Wang}\ead[label=e3]{yuhaow@tsinghua.edu.cn}}\thanks{To whom correspondence should be addressed}
			%%%%%%%%%%%%%%%%%%%%%%%%%%%%%%%%%%%%%%%%%%%%%%
			%% Addresses                                %%
			%%%%%%%%%%%%%%%%%%%%%%%%%%%%%%%%%%%%%%%%%%%%%%
			\address[A]{Department of Computer Science, Stanford University\printead[presep={,\ }]{e1}}
			
			\address[B]{Department of Statistics, London School of Economics and Political Science\printead[presep={,\ }]{e2}}
			
			\address[C]{Institute for Interdisciplinary Information Sciences, Tsinghua University\printead[presep={,\ }]{e3}}
		\end{aug}
		
		\begin{abstract}
		We consider the problem of testing whether a single coefficient is equal to zero in linear models when the dimension of covariates $p$ can be up to a constant fraction of sample size $n$. 
			In this regime, an important topic is to propose tests with \emph{finite-sample valid} size control without requiring the noise to follow strong distributional assumptions. 
			In this paper, we propose a new method, called \emph{residual permutation test} (RPT), which is constructed by projecting the regression residuals onto the space orthogonal to the union of the column spaces of the original and permuted design matrices. 
			RPT can be proved to achieve finite-sample size validity under fixed design with just exchangeable noises, whenever $p < n / 2$. Moreover, RPT is shown to be asymptotically powerful for heavy tailed noises with bounded $(1+t)$-th order moment when the true coefficient is at least of order $n^{-t/(1+t)}$ for $t \in [0,1]$. We further proved that this signal size requirement is essentially {\rev rate-optimal} in the minimax sense. %Our test is permutation based and relies on a more delicate analysis of the regression residuals from classical ordinary least squares fit after projecting the residuals onto various subspaces. We also propose other finite-population valid single coefficient tests allowing higher dimensions. 
			Numerical studies confirm that RPT performs well in a wide range of simulation settings with normal and heavy-tailed noise distributions.
		\end{abstract}
		
		\begin{keyword}[class=MSC]
			\kwd[Primary ]{62J99}
			%\kwd{}
			\kwd[; secondary ]{62F03}
		\end{keyword}
		
		\begin{keyword}
%			\kwd{distribution-free test}
			\kwd{permutation test}
			\kwd{finite-sample validity}
			\kwd{heavy tail distribution}
			\kwd{high-dimensional data}
		\end{keyword}
		
	\end{frontmatter}

\section{Introduction}\label{sec:intro}

%\textcolor{red}{Introduction section to revise}

%\textcolor{red}{I am not sure if the introduction is now too redundant in your eyes. Some places is of course a bit colloquial, which you can comment.}

%\textcolor{red}{Change the first sentence??} 
Testing and inference of linear regression coefficients is a fundamental problem in statistics research and has inspired methodological innovations in many other research directions in the statistics community \citep[e.g.][]{arias2011global, ZZ14, barber2015controlling, CCD18, BCNZ19}. In this paper, we consider the setting where we have observations $(\bX, \bZ, \bY) \in \R^{n \times p} \times \R^n \times \R^n$ generated according to the following model:
\begin{equation}
	\label{Eq:Model}
	\bs{Y} = \bs{X} \beta + b \bs{Z} + \bs{\varepsilon},
\end{equation}
where $\bs{\varepsilon} := (\varepsilon_1, \ldots, \varepsilon_n)^\top \in \R^n$ is an $n$-dimensional noise vector with $n \ge 2$, and our goal is to test the null hypothesis $H_0: b = 0$ against the alternative $H_1: b \neq 0$. 

%\textcolor{blue}{
%{\revone Depending on the types of validity, existing approaches can be mainly splitted into two categories, the first is hypothesis tests with asymptotic validity, and the second is with finite-population validity.}
Here, we are primarily interested in designing a new coefficient test with finite-sample size validity. In other words, we require our test to have valid size control with arbitrary magnitude of $n$,  instead of requiring some asymptotic regime assumption that may be unrealistic in practice. %} %\todo{I think we need to say something to emphasize that: finite-population validity is important, we need to study it!}
{\revone Classically, one of the most well-known tests for $b=0$ in model~\eqref{Eq:Model} is the ANOVA test \citep{fisher1935design}. It is known to be asymptotically valid when $p$ is fixed and $n\to\infty$, and has finite-sample valid size control when the noise possesses spherical symmetry. However,}
as we will see in Section~\ref{sec:size}, the {\revone finite-sample} size of ANOVA test can be far from the nominal level in the presence of heavy-tailed noises. 
{\revone  This motivates us to propose a new test that has finite-sample valid size control under weaker noise assumptions, especially with relatively large $p$, which is also a topic that has received increasing attention in recent years (see e.g. Section~\ref{sec:literature}). }
%without such restrictive distributional assumptions. 
In particular, %instead of the independent Gaussian distribution assumption above, 
{\revone we are interested in developing a test with finite-sample valid size control under a fixed design of $\bX$ and $\bZ$ by assuming that the noise $\boldsymbol{\varepsilon} = (\varepsilon_1,\ldots,\varepsilon_n)^\top$ has \emph{exchangeable components}:}

\begin{assumption}[Exchangeable noise]\label{as:exchangeable}
	For any permutation $\sigma$ of indices $1, \ldots, n$,
	\[
	(\varepsilon_1, \ldots, \varepsilon_n) \overset{\mathrm{d}}{=} (\varepsilon_{\sigma(1)}, \ldots, \varepsilon_{\sigma(n)}).
	\]
\end{assumption}

%instead of focusing on the asymptotic validity of the test, we are primarily interested in the exact validity of the hypothesis tests. Under the setting where $p < n$, by assuming that $\bs{\varepsilon}$ is a Gaussian random vector with independent indices, ANOVA test~\citep{} has been proven to provide valid exact Type-I error control without any assumptions on $\bs{X}, \bs{Z}$ and regression coefficient $\beta \in \R^p$. However, as will demostrate in Section~\ref{}, when $\bs{\varepsilon}$ does not follow a Gaussian distribution or other types of restrictive distribution shape constraint, ANOVA may not be necessarily valid. 

{\revone We remark that Assumption~\ref{as:exchangeable}  differs from the common assumption that $\E[\bv] = \bs{0}$ for fixed design of $\bX, \bZ$, or that $\E[\bv \mid \bX, \bZ] = 0$ (or even the more relaxed assumption $\E[\bv^\top (\bX, \bZ)] = 0$) when $\bX, \bZ$ follow a random design, which allows for heteroskedastic noise and typically appears in the regression coefficient tests with asymptotically valid size control.}
A common approach to handle exchangeable noise is through the idea of permutation tests~\citep{Pitman37a,Pitman37b,Pitman38}. Recently, \citet{LB21} implemented this idea to the problem of regression coefficient testing.  In their seminal work, the authors proposed a \emph{cyclic permutation test} that achieved finite-sample size validity under Assumption~\ref{as:exchangeable} by exploiting the exchangeability of the noise terms. However, to achieve a size $\alpha$ control, their cyclic permutation test requires that $n / p \ge 1 / \alpha - 1$. For instance, for a sample size of $n = 300$ and a targeting Type-I error rate is $\alpha = 0.01$, at most $p = 2$ covariates are allowed in $\boldsymbol{X}$. This limits the applicability of their test in large dimensions. In this paper, we consider the more challenging question of finite-sample Type-I error control in setting where $p$ is allowed to be of the same order of magnitude as $n$. We propose a \emph{residual permutation test} (RPT), a permutation-based approach that performs hypothesis tests by manipulating the empirical residuals after regression adjustment. The proposed test is guaranteed to have the correct Type-I error control whenever $p < n / 2$. Moreover, our result is fixed design and does not require any regularity conditions on the design matrix $\bX$.

%We also prove that this new test is finite-population valid just under the exchangeable noise assumption in Assumption~\ref{as:exchangeable}.

In addition to proving its finite-sample {\revone size} validity,
% in high dimensions, 
%in this paper, 
we further analyze the statistical power of the proposed test in {\revone the regime where $p$ can be up to a constant fraction of $n$, which we will refer to as the proportional regime in this work}, 
%controlling the finite population Type-I error rate, another criteria for a statistical test is the statistical power, or in other words the Type-II error control. In this paper, we provide power analysis of regression permutation test, 
especially when the $\varepsilon_i$'s follow a heavy-tailed distribution.
{\revone It should be noted that just with Assumption~\ref{as:exchangeable} we can only guarantee our test to have correct size control, the resulting test may not necessarily have power. Indeed, just with Assumption~\ref{as:exchangeable}, $b$ may not even be identifiable, so that there is no test that is uniformly valid under the null while still maintainng power against some alternative. For example, in the extreme case where $\bZ = \bX \beta^Z$ for some $\beta^Z \in \mathbb{R}^p$, i.e., that $\bZ$ can be expressed as a linear representation of the column vectors of $\bX$, then $b$ is not an identifiable parameter anymore, i.e., no test can have any power. As we can also see in~\eqref{eq:pvalest}, under Assumption~\ref{as:exchangeable} and with such choice of $\bZ$, we can always have $\pr(\pval \le \alpha) \le \alpha$ for all $\alpha \in [0, 1]$, no matter how large $b$ is. This means that in order for our test to have power, additional assumption on $\bZ$ must be imposed.} 
%{\revone It should be noted that just with Assumption~\ref{as:exchangeable} we can only guarantee our test to have correct size control, the resulting test may not necessarily have power. For example, in the extreme case where $\bZ = \bX \beta^Z$ for some $\beta^Z \in \R^p$, i.e., that $\bZ$ can be expressed as a linear representation of the column vectors of $\bX$, then $b$ is not an identifiable parameter anymore, i.e., no test can have any power. This means that in order for our test to have power, additional assumption on $\bZ, \bv$ must be imposed.} 
As we will discuss further in Section~\ref{sec:heavy-tail}, statistical methods with robustness to heavy-tailed data have significant demands in practice~\citep{Eklund16,WPL15,Cont01}, and has been actively studied in both modern statistics and theoretical computer science communities.
Despite its importance, there is a lack of available tools that can handle regression coefficient testing under this {\revone proportional} regime with heavy-tailed noise. In this paper, we fill this gap by showing that {\revone under a suitable modelling assumption of $\bZ$,} when the $\varepsilon_i$'s are i.i.d.\ and have a finite $(1 + t)$-th order moment for any $t \in [0,1]$, and that $n / p \ge 3 + m$ for some $m > 0$, our proposed test is asymptotically powerful whenever the coefficient $b$ is of order at least $n^{-t/(1+t)}$.  {\rev In proving this result, a crucial step is to establish a concentration bound for projected length of a random vector with independent heavy-tailed components. This concentration bound may be of independent interest for future research on statistical procedures with heavy-tailed noise, and is stated in Corollary~\ref{cor:convergence}.} We also studied the minimax {\rev rate} optimality of regression coefficient testing with heavy-tailed noises; and proved that in the presence of heavy-tailed noise with only a finite $(1+t)$-th moment, the $n^{-t/(1+t)}$ order requirement for $b$ is essentially {\rev rate-optimal}. 

%\textcolor{red}{Say something about the naive RPT.}

Since ANOVA has been used extensively in practical applications, as an independent contribution, we provide a more comprehensive analysis of the ANOVA test. 
{\rev Specifically, while ANOVA can be shown to have finite-sample size validity with \emph{spherically symmetric noise}, our simulations show that the it can substantially violate the nominal size control under more general noise distributions.}
%pecifically, we show that ANOVA is finite population valid when either the design or the noise follows a \emph{spherically symmetric distribution}, a condition that is slightly weaker than the Gaussianity assumption.\todo{to change here.} On the other hand, our simulation analysis show that ANOVA is indeed not validity when such spherical distributional assumptions are violated. 
At the same time, we propose another permutation-based test: naive residual permutation test (naive RPT), which, like ANOVA, also has valid size control under spherically symmetric noise distribution whenever $p < n$. While naive RPT is still not valid for non-spherically symmetric noises, it does appear to have smaller Type I error violations compared to ANOVA.
%From these, we further proposed the naive residual permutation test (naive RPT) and proved that the naive RPT is also valid under the \emph{spherically symmetric distribution}. We also performed numerical analysis to show that naive RPT is in general more robust to the violation of Gaussianity assumption than ANOVA.

In summary, we make the following contributions in this work:
\begin{itemize}
	\item We propose a new test that has finite-sample size validity with fixed-design linear models and exchangeable noises whenever $p < n / 2$.
	\item We prove that when the noise variables are heavy-tailed with bounded $(1 + t)$-th order moment for $t \in [0, 1]$ {\revone and under suitable assumptions of $\bZ$}, our test is asymptotically powerful when $b$ is at least of order $n^{-t/(1+t)}$.
	%We also analyzed its statistical power with heavy-tailed noises.
	\item  We perform numerical analysis to show that ANOVA is indeed invalid in general distributions, especially with heavy-tailed data. We also studied other theoretical properties of ANOVA.
	\item  We discuss the {\rev minimax rate optimality} of regression coefficient test with heavy-tailed distributions, and show that our test is essentially optimal in the minimax sense.
\end{itemize}

The rest of this paper is organized as follows. In Section~\ref{sec:literature}, we review existing results in 
% {\rev\sout{high-dimensional coefficient testing, conditional independence testing}} 
{\rev regression coefficient testing, permutation- and randomization-based tests} and heavy-tailed data. In Section~\ref{sec:size}, we provide more studies on the finite-sample properties of ANOVA test with non-Gaussian noises, and propose a new test that is easier to implement and more robust to non-Gaussianity. As ANOVA test has been heavily used in practical applications, we believe this is of independent interest. In Section~\ref{sec:null}, we present our method, and prove its finite-sample size validity. In Sections~\ref{sec:alternative} and~\ref{sec:minimax}, we provide power analysis of RPT and study {\rev its minimax rate optimality} 
% {\rev\sout{of high-dimensional coefficient testing}} 
under some heavy-tailed assumptions. Finally, in Section~\ref{sec:numerical} we provide numerical analysis. In Section~\ref{sec:discussion}, we end the manuscript with a discussion.

\subsection*{Notation}
We conclude this section by introducing some notation used throughout the paper. For any $n \times p$ dimensional matrix $\bs{A}$, we denote by $\matspan(\bs{A})$ the subspace spanned by the $p$ column vectors of $\bs{A}$; and we write $\matspan(\bs{A})^\perp$ as the space that is orthogonal to $\matspan(\bs{A})$. Given an $n$-dimensional vector $\bs{a}$, we denote by $\proj_{\bs{A}}(\bs{a})$ the projection of $\bs{a}$ onto the subspace $\matspan(\bs{A})$, and denote by $\|\bs{a}\|_2$ as the $\ell_2$-norm of the vector $\bs{a}$. Given two $n \times q_1$ and $n \times q_2$ dimensional matrices $\bs{A}$, $\bs{B}$, we denote by $(\bs{A}, \bs{B})$ as the $n \times (q_1 + q_2)$ matrix via column concatenation of matrices $\bs{A}$ and $\bs{B}$. We write $\cN(0, 1)$ as standard normal distribution. For two sequences $(a_n)_{n\in\mathbb{N}}$ and $(b_n)_{n\in\mathbb{N}}$, we write $a_n = O(b_n)$, or equivalently $b_n = \Omega(a_n)$, if there exists a universal constant $C>0$ such that $|a_n| \leq C|b_n|$ for all $n$; we write $a_n = o(b_n)$, or equivalently $b_n = \omega(a_n)$, if $|a_n| / |b_n| \to 0$.

\section{Literature review}\label{sec:literature}

Our work spans a wide range of research directions, including {\rev hypothesis testing of regression coefficients}, permutation- {\rev and randomization-}based hypothesis tests and {\rev heavy-tailed data analysis}. In this section, we compare our research to works within each direction.

\subsection{{\rev Hypothesis} testing of regression coefficients}\label{sec:coeff}

The most classical approach for testing the null hypothesis $b=0$ is through the analysis of variance (ANOVA) test~\citep{fisher1935design}. ANOVA test was originally proposed by Sir Ronald Fisher in the 1920s, and has been widely used in economics~\citep{Doane16}, finance~\citep{Paolella18} and biology~\citep{Lazic08} etc. In the context of single coefficient testing, when $n >p+1$ and $\bv$ follows a spherically symmetric distribution, if $\tilde\beta := \argmin_{\beta} \|\bs{Y} - \bs{X}\beta\|_2^2$ and $(\hat\beta,\hat b) := \argmin_{(\beta,b)} \|\bs{Y} - \bs{X}\beta - 
b \bs{Z}\|_2^2$, then under $H_0$, the test statistic
\begin{equation}
	\label{Eq:Fstat}
	\pval_{\anova}:= \frac{\|\bs{Y} - \bs{X}\tilde\beta\|_2^2 - \|\bs{Y} - \bs{X}\hat\beta - \hat b \bs{Z}\|_2^2}{\|\bs{Y} - \bs{X}\hat\beta - \hat b \bs{Z}\|_2^2/(n-p-1)} \sim F_{1,n-p-1}
\end{equation}
can be used to construct a test where $H_0$ is rejected when $\phi_{\mathrm{anova}}$ exceeds the $1-\alpha$ quantile of the $F_{1,n-p-1}$ distribution.  As the above distributional result is nonasymptotic and holds whenever $n > p + 1$, the associated test is valid even {\rev when $p$ diverges as a constant fraction of $n$}. However, as we will discuss in Section~\ref{sec:size}, {\revone for a general noise distribution of} $\bv$, ANOVA test is usually \emph{not} guaranteed to have a valid Type-I error control. This encourages us to construct hypothesis tests with valid Type-I error control allowing a broader class of noise distributions. 

As emphasized by \citet{LB21}, this is a challenging problem, with a ``century long effort'' in the statistical community to alleviate the strong assumption of ANOVA. {\revone In the context of finite-sample size validity,} some representative works include~\citet{Hart70,Mein15}. However, the two methods mentioned above still require the noise to follow certain geometric constraint, which is either symmetric about 0 or rotationally invariant.
%still assume independent noise which is either symmetric about 0 or has sub-Gaussian tails \textcolor{red}{Change to rotationally invariant}. 
\citet{LB21} represented, to the best of our knowledge, the first work that established finite-sample size control with only exchangeable noise.
However, as mentioned in the introduction, the cyclic permutation test proposed in \citet{LB21} requires the dimension of $p$ to be much smaller than $n$ for valid size control, and no corresponding statistical power analysis was provided. Alternative test with less restrictive assumptions on dimension $p$ was proposed in~\citet{d2022robust}, which relaxes the dimensionality assumption by requiring the rows of $\bX$ to follow a discrete random distribution with a relatively small number of unique values.
%where the authors proposed a ``stratified randomization test''assuming that rows of $\bX$ must follow a discrete random distribution with a relatively small number of unique values.
%In this paper, we develop a new method that is uniformly valid allowing higher dimensions $p < n / 2$ with just exchangeable noises. We also provide rigorous power analysis of the proposed tests, even with heavy tailed noises that even do not necessarily have a finite second order moment.

Besides finite-sample size validity, a less demanding criteria for coefficient test is the \emph{asymptotic size validity}. {\rev The idea of permutation or randomization have been heavily used to propose asymptotically valid test; see Section~\ref{sec:preperm} for more details.} In the high-dimensional regime where $p$ is proportional or even much larger than $n$, debiased / desparsified Lasso was proposed to construct confidence intervals and perform coefficient tests \citep{ZZ14,vandegeer14,JM14}. By invoking 1) certain sparsity conditions on the regression coefficients; 2) some regularity conditions on the design matrix $\bX$ %(such as the restricted eigenvalue conditions~\citep{Wain19}), 
and 3) sharp tail bounds on the noise variables,
debiased / desparsified Lasso is guaranteed to establish asymptotically valid p-value and confidence intervals for regression coefficients. 
{\rev We remark that the additional sparsity assumption on the regression coefficients allow for the dimension $p$ to diverge at a much faster rate than $n$ compared to asymptotic regime studied in the current paper.}
Other follow up studies include~\citet{ZB18,BCNZ19,SB19}, to name a few. 
% \todo{I am not sure if this is a correct treatment, as from R1's Q4 the author want to de-emphasize ``high-dim'' by deleting, while we are adding more explanations. Also, I think we need to mention DiCicco's work somewhere in Section 2.1.} 

More broadly speaking, regression coefficient test can be viewed as a subdomain of the more general conditional independence testing, i.e., testing the null hypothesis $Y \independent Z \mid X$, treating $\bX, \bY, \bZ$ as i.i.d. realizations from some hypothesized superpopulation. Unfortunately, when one has no assumption on the joint distribution of the random variables $X, Y$ and $Z$,~\citet{SP20} proved that it is a ``statistically hard problem'', in the sense that a valid test for the null does not have power against \emph{any} alternative. This means that some restrictions must be added to the class of null distributions to have some power. Following this insight, an important research question then, is to propose tests with valid size control under weak distributional assumptions. In this paper, we show that a linear functional relationship between $\bY$ and $\bX$ is sufficient to have finite-sample size validity with non-trivial power.

% In this paper, we prove that by assuming that the functional relationship between $\bY$ and $\bX$ is a linear model, one does not need more distributional assumptions to get finite-population validity with some power.% In other words, the linearity assumption is a minimal assumptions that one need to assume for conditional independence testing.

%\subsection{Permutation test and its applications in conditional independence testing}

\subsection{Permutation- {\rev and randomization-}based hypothesis tests}\label{sec:preperm}

As also mentioned in the introduction section, our new method is based on permutation test~\citep{Pitman37a,Pitman37b,Pitman38}. {\rev Application of permutation and related randomization techniques for statistical inference has a long history in statistics and econometrics \citep{fisher1935design, rubin1980comment, rosenbaum1984conditional, romano1990behavior, kennedy1995randomization, rosenbaum2002covariance,canay2017randomization,young2019channeling}.}  Permutation test was originally developed for independence testing. Specifically, using the exchangeability properties of the sampled data, permutation test is guaranteed to have finite-sample size validity, without any geometric or moment constraints on the underlying distributions.

{\rev For the task of regression coefficient testing, \citet{freedman1983nonstochastic} proposed tests based on permuted regression residuals, and analyzed its asymptotic size control in a fixed dimension. \citet{diciccio2017robust} considered a permutation test using the studentized partial correlation of $\boldsymbol{Y}$ and $\boldsymbol{Z}$ given $\boldsymbol{X}$ and derived asymptotic size and power of the test in a fixed dimension setting. \citet{toulis2019invariant} studied a test based on permuting the residuals of $\boldsymbol{Y}$ regression against $(\boldsymbol{Z},\boldsymbol{X})$. More recently,  \citet{LB21,d2022robust} used permutation test and its extensions to obtain exact size control for testing a single component or a subvector of regression coefficients. 
	{\revone      We note that although all the works mentioned in this paragraph use permutation tests as the basic building block of their tests, their validity guarantees are based on different assumptions.
		Among them, \citet{toulis2019invariant, LB21, d2022robust} considered the size validity of their tests under a similar noise invariance assumption to our Assumption~\ref{as:exchangeable}.
		% , and~\citet{LB21, d2022robust} provided finite-population valid size control.
	}
	
	Other related applications of permutation tests include} sharp null hypothesis tests \citep{CDM17,CDLM21}, {\rev instrumental variable tests \citep{imbens2005robust},} and conditional independence tests \citep{BWBS20, KNBW21}.

{\revone
	In addition to the above permutation-based testing, the knockoff-based procedure \citep{candes2018panning} can also be used to perform asymptotically valid coefficient testing in the regression setting. Another related line of works exploit bootstrap or jackknife techniques to provide tests with asymptotic size validity
	% consider resampling-based approaches for regression coefficient testing
	~\citep[e.g.][]{miller1974unbalanced,freedman1981bootstrapping,mammen1993bootstrap,chatterjee1999generalised,bickel2008choice}. See the discussions in \citet[Section~A in Supplementary Materials]{LB21} for a comprehensive overview of this area.
	% \todo{In Section~2.1, we said that ``The idea of permutation or randomization have been heavily used to propose asymptotically valid
	% test; see Section 2.2 for more details'' I am not sure if we should emphasize which is asymptotic valid and which is finite-population valid here, if we do not mention this, then we should change the statement in Section~2.1 accordingly.}
}

\subsection{Heavy-tailed data} \label{sec:heavy-tail}

To understand the efficiency of the proposed method in heavy tailed data, in this paper, we further provide power analysis when the noise terms follow a heavy-tailed distribution. In classical high-dimensional literature, due to the simplicity of theoretical analysis, existing methods usually focus on data with sharp tail bounds, such as sub-Gaussian or sub-exponential tail bounds~\citep[see, e.g.][]{Wain19}. However, as also discussed by \citet{SZF20}, such strong tail condition may not be reasonable in real world applications, such as neuroimaging~\citep{Eklund16}, gene expression analysis~\citep{WPL15}, and finance~\citep{Cont01}. 

Since the pioneering work by \citet{Catoni12}, the problem of extracting useful information from heavy-tailed data (or the related adversarially contaminated data) has been an active area of research in mathematical statistics and theoretical computer science literature in the past ten years~\citep{BCL13,LTMP18,LM19,SZF20,FWZ21}. When we allow the dimension $p$ to grow with $n$, heavy-tailed data has been actively studied in mean estimation ~\citep{LM19,LM21}, regression coefficient estimation~\citep{Wang13,FLW17,SZF20,PJL20} and covariance matrix analysis~\citep{LT18,FWZ21}. The definition of ``heavy-tail'' may vary across different articles. Among all literature working with heavy-tailed noise, our assumptions are most similar to those in \citet{SZF20,BCL13}, which assume that the noise variables has at most a finite $(1 + t)$-th order moments for some $t \in (0,1]$ without any geometric or shape constraints. To our knowledge this is also the weakest heavy tail assumption studied in the literature.

In the context of coefficient testing, few methods have been proposed that can work with heavy-tailed data. We fill this gap by providing statistical power guarantees of our constructed test in the presence of heavy-tail noises. Our power analysis stems from our new theoretical insight on the asymptotic convergence of heavy-tailed random variables after subspace projections. It would be of interest if these results could be extended to understand the power of permutation-testing based hypothesis tests in other heavy-tailed scenarios.

% can also be helpful to study of permutation test in other heavy-tailed problems.

%\textcolor{red}{To remove:}
%In this paper, we considered the power analysis under alternative models with heavy-tailed noises. In classical statistical literature, people usually consider data with at least bounded second order moment. When the second order moment does not exist, a vast number of research focuses on estimation and inference of the mean of random variables with heavy-tailed distribution (see e.g.~\citep{LM19} and the references therein). Under the context of high-dimensional regressions, our assumptions on the noise variables is very similar to the one considered by \citet{SZF20}, which also assumes the noise variables follow a heavy tail distribution with bounded $(1 + t)$-th order moments for some $t \in (0,1]$. Specifically, they developed an adaptive Huber regression estimators to estimate the regression coefficients and derived the convergence rate of coefficient estimates for a given $t \in (0,1]$. They also derived lower bound for coefficient estimates with arbitrary sparsity constraints. Another example is the LAD based method~\citep{Wang13}, which proposes to estimate the coefficients by minimizing the regularized least absolute deviation function. For more literature review, we refer the readers to \citep{SZF20}. Despite the vast research of heavy-tailed data in mean estimation and high-dimensional regression, in high-dimensional coefficient testing, existing research seems still limited. Our research provides useful alternative analysis to fill this gap.

\section{Finite-sample size validity of ANOVA beyond Gaussianity}\label{sec:size}

As ANOVA has been frequently used in empirical analysis, it would be of interest to provide a more comprehensive analysis on the sensitivity of ANOVA test with respect to the Gaussianity assumption, both empirically and theoretically. {\rev In fact, although not explicitly stated in \citet{fisher1935design}, Fisher recognized that ANOVA's size validity only requires the noise to be spherically symmetric instead of Gaussian \citep[pp.~163--164]{stigler2016seven}. 
	%he wrote that: ``Fisher recognized that his tests— the various F-tests— required only spherical symmetry under the null hypothesis to work. Normality and independence imply spherical symmetry, but those are not necessary conditions'' \textcolor{red}{Not sure if we should keep this quotation or we can just mention the page number (page 163)}. 
	We provide a slight generalization of this result in Lemma~\ref{lem:anova}, which shows that ANOVA has valid size when \emph{either} the design \emph{or} the noise is spherically symmetric, in the sense defined below. %In order to state Lemma~\ref{lem:anova}, it will be convenient to first introduce the definition of ``spherical symmetry''. 
	% which shows that ANOVA is valid when \emph{either} the noise $\bs\varepsilon$ or the design matrix $(\bs{X}, \bs{Z})$ is spherically symemtric.
} 

\begin{definition}
	We say that a random matrix $\bs{A} \in \R^{n \times q}$ follows a spherically symmetric distribution if for any $\bs{Q}\in\mathbb{O}^{n\times n}$, $\bs{A} \stackrel{\mathrm{d}}{=} \bs{Q} \bs{A}$, where $\mathbb{O}^{n\times n}$ is the set of $n \times n$ orthonormal matrices.
\end{definition}

\begin{lemma}
	\label{lem:anova}
	Suppose $\bs{Y}$ is generated under~\eqref{Eq:Model} with $\beta\in\mathbb{R}^p, b = 0$. Suppose also that $\bv$ is a random vector that is almost surely not a zero vector, $(\bX, \bZ)$ is either deterministic or independent from $\bv$. If either $\bs{\varepsilon}$ or $(\bX, \bZ)$ follows a spherically symmetric distribution, then the test statistic $\pval_{\anova}$ defined in~\eqref{Eq:Fstat} satisfies $\pval_{\anova} \sim F_{1,n-p-1}$. 
\end{lemma}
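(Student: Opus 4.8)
The plan is to reduce $\pval_{\anova}$ to a scale-free ratio of squared projections of the noise vector $\bv$ alone, and then to recognise that ratio as an $F_{1,n-p-1}$ statistic separately in the two cases. Write $P_1:=\proj_{\bX}$ and $P_2:=\proj_{(\bX,\bZ)}$ for the orthogonal projections onto $\matspan(\bX)$ and $\matspan(\bX,\bZ)$, and work on the event that $\bX$ has rank $p$ and $(\bX,\bZ)$ has rank $p+1$ (with $n>p+1$ these are the non-degeneracy conditions already implicit in \eqref{Eq:Fstat}). Since $\matspan(\bX)\subseteq\matspan(\bX,\bZ)$ we have $P_2P_1=P_1$, so $(I-P_2)(P_2-P_1)=0$ and $I-P_1=(I-P_2)+(P_2-P_1)$ is an orthogonal sum of projections of ranks $n-p-1$ and $1$. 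As $\bY-\bX\tilde\beta=(I-P_1)\bY$ and $\bY-\bX\hat\beta-\hat b\bZ=(I-P_2)\bY$, Pythagoras gives $\|\bY-\bX\tilde\beta\|_2^2-\|\bY-\bX\hat\beta-\hat b\bZ\|_2^2=\|(P_2-P_1)\bY\|_2^2$. Under $H_0$, $\bY=\bX\beta+\bv$ with $\bX\beta\in\matspan(\bX)$ annihilated by both $P_2-P_1$ and $I-P_2$, hence
\[
\pval_{\anova}=(n-p-1)\,\frac{\|(P_2-P_1)\bv\|_2^2}{\|(I-P_2)\bv\|_2^2}.
\]
Regarded as a function $\pval_{\anova}=\pval_{\anova}(\bv,P_1,P_2)$, this is deterministic and satisfies $\pval_{\anova}(\bv,P_1,P_2)=\pval_{\anova}(\bs{Q}\bv,\bs{Q}P_1\bs{Q}^\top,\bs{Q}P_2\bs{Q}^\top)$ for every $\bs{Q}\in\mathbb{O}^{n\times n}$, since $\bs{Q}$ preserves Euclidean norm; this identity will drive the design-symmetric case.

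If $\bv$ is spherically symmetric, condition on $(\bX,\bZ)$ (deterministic, or independent of $\bv$, so $\bv$ stays spherically symmetric). Write $\bv\overset{\mathrm{d}}{=}R\,U$ with $U$ uniform on $S^{n-1}$ and $R=\|\bv\|_2\ge 0$ independent of $U$; the ratio is scale-free, so $R$ cancels. Choosing an orthonormal basis of $\R^n$ adapted to the orthogonal decomposition into the images of $P_1$, $P_2-P_1$ and $I-P_2$, and writing $U=G/\|G\|_2$ with $G\sim\cN(0,I_n)$, the numerator and denominator become, after the common factor $\|G\|_2^{-2}$ cancels, independent $\chi^2_1$ and $\chi^2_{n-p-1}$ variables, so $\pval_{\anova}\sim F_{1,n-p-1}$. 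The conclusion does not depend on the value of $(\bX,\bZ)$, so it holds unconditionally.

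If instead $(\bX,\bZ)$ is spherically symmetric, then by hypothesis it is independent of $\bv$; condition on $\bv$ and argue over the randomness of $(P_1,P_2)$. Spherical symmetry of $(\bX,\bZ)$ gives $(\bs{Q}P_1\bs{Q}^\top,\bs{Q}P_2\bs{Q}^\top)\overset{\mathrm{d}}{=}(P_1,P_2)$ for every fixed $\bs{Q}$, hence also when $\bs{Q}$ is Haar-random on $\mathbb{O}^{n\times n}$ and independent of $(P_1,P_2)$. Combining this with the rotation identity above (used with $\bs{Q}^\top$),
\[
\pval_{\anova}(\bv,P_1,P_2)\ \overset{\mathrm{d}}{=}\ \pval_{\anova}(\bv,\bs{Q}P_1\bs{Q}^\top,\bs{Q}P_2\bs{Q}^\top)\ =\ \pval_{\anova}(\bs{Q}^\top\bv,P_1,P_2).
\]
Now condition on $(P_1,P_2)$: the vector $\bs{Q}^\top\bv$ is uniform on the sphere of radius $\|\bv\|_2$, so the computation of the previous paragraph applies with $\bs{Q}^\top\bv$ in place of the noise and shows the right-hand side is $F_{1,n-p-1}$-distributed whatever the value of $(P_1,P_2)$; un-conditioning yields the claim.

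I expect the main obstacle to be the design-symmetric case: finite-population exactness there rests on the rotation-averaging step that trades randomness in the design for randomness in the noise, and one must be careful about the order of operations so that the two occurrences of $\bs{Q}$ in ``$\pval_{\anova}(\bs{Q}^\top\bv,P_1,P_2)$'' are handled consistently (first pass to the Haar-random $\bs{Q}$, only then condition on $(P_1,P_2)$). A secondary technical point, which I will dispatch under each hypothesis, is the almost-sure non-degeneracy invoked at the outset — that $\bX$ and $(\bX,\bZ)$ have ranks $p$ and $p+1$ and that $\bv\notin\matspan(\bX,\bZ)$, so the denominator $\|(I-P_2)\bv\|_2^2$ is almost surely positive — which under the spherical-symmetry hypotheses is inherited from the non-degeneracy already implicit in the setup of \eqref{Eq:Fstat}.
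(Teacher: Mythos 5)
Your proof is correct and follows essentially the same route as the paper's: reduce $\pval_{\anova}$ to a scale-free ratio of squared projections of $\bv$, handle the spherically symmetric noise case by passing to the uniform-on-the-sphere representation and a Gaussian/Cochran computation, and handle the spherically symmetric design case by introducing a Haar-random $\bs{Q}$ and using the rotation identity $\pval_{\anova}(\bv,\bs{Q}P_1\bs{Q}^\top,\bs{Q}P_2\bs{Q}^\top)=\pval_{\anova}(\bs{Q}^\top\bv,P_1,P_2)$ to transfer the symmetry from the design to the noise and then invoke the first case. The only cosmetic difference is that the paper phrases the noise-symmetric step as ``write $\bv=\rho\bs\xi$, observe the statistic is free of $\rho$, and appeal to the known Gaussian result via Cochran's theorem,'' whereas you make the $\chi^2_1/\chi^2_{n-p-1}$ decomposition explicit by writing $U=G/\|G\|_2$; these are the same computation.
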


{\rev For the sake of completeness we provide a proof of Lemma~\ref{lem:anova} in the Supplementary Material.}
The spherical symmetry in the noise or the design is slightly weaker than the usual Gaussianity constraint, however, it is still too strong for many real data applications. For instance, if we assume that observations $(X_i, Z_i, Y_i)$ are independent, then this assumption amounts to either i.i.d.\ normal noise or an i.i.d.\ multivariate normal design.

We now perform a numerical experiment to analyze the size validity of ANOVA test under general distributional classes of $\bv$. We generate data $(\boldsymbol{X}, \boldsymbol{Z}, \boldsymbol{Y})$ according to the model specified in~\eqref{Eq:Model} and that
\begin{equation}\label{Eq:Model2}
	\bZ = \bX \beta^Z + \be.
\end{equation}
In the simulation, we set $b=0$; since the result of ANOVA is invariant to $\beta, \beta^Z$, we simply set them to be zero vectors. We also set $\bX$ as $n \times p$ matrices with i.i.d.\ entries following either $\cN(0,1)$ or $t_1$ distribution, with $(n, p) = (300, 100), (600, 100)$ or $(600, 200)$; and $\be$ and $\bv$ have i.i.d.\ components from one of $\cN(0,1)$, $t_2$ or $t_1$ distributions. 

\begin{table}[t!]
	\centering
	%\begin{tabular}{ccccc}
	%\hline\hline
	%$n$ & $p$ & noise type & size at 0.01 & size at 0.005\\
	%\hline
	%$300$ & $100$ & $\mathcal{G}$ & $0.0094_{(0.0002)}$ & $0.0045_{(0.0002)}$\\
	%$600$ & $200$ & $\mathcal{G}$ & $0.0099_{(0.0003)}$ & $0.0049_{(0.0002)}$\\
	%$300$ & $100$ & $t_1$ & $0.0187_{(0.0003)}$ & $0.0160_{(0.0003)}$\\
	%$600$ & $200$ & $t_1$ & $0.0147_{(0.0003)}$ & $0.0128_{(0.0003)}$\\
	%$300$ & $100$ & $t_2$ & $0.0152_{(0.0003)}$ & $0.0106_{(0.0003)}$\\
	%$600$ & $200$ & $t_2$ & $0.0154_{(0.0003)}$ & $0.0107_{(0.0003)}$\\
	%\hline\hline
	%\end{tabular}
	\begin{tabular}{cccc|cc|cc}
		\hline\hline
		& & & & \multicolumn{2}{c}{ANOVA} & \multicolumn{2}{c}{Naive}\\
		\hline
		$n$ & $p$ & X type & noise type & 1\% & 5\% & 1\% & 5\%\\
		\hline
		$300$ & $100$ & Gaussian & Gaussian & $1.01$ & $4.99$ & $1.00$ & $4.96$\\
		$300$ & $100$ & Gaussian & $t_1$ & $1.81$ & $3.1$ & $1.58$ & $3.38$\\
		$300$ & $100$ & Gaussian & $t_2$ & $1.53$ & $4.83$ & $1.39$ & $4.83$\\
		$300$ & $100$ & $t_1$ & Gaussian & $1.01$ & $4.99$ & $1.03$ & $5.03$\\
		$300$ & $100$ & $t_1$ & $t_1$ & $2.43$ & $3.96$ & $1.58$ & $4.25$\\
		$300$ & $100$ & $t_1$ & $t_2$ & $1.80$ & $5.03$ & $1.41$ & $5$\\
		$600$ & $100$ & Gaussian & Gaussian & $0.95$ & $4.9$ & $0.96$ & $4.88$\\
		$600$ & $100$ & Gaussian & $t_1$ & $1.63$ & $2.45$ & $1.28$ & $3.36$\\
		$600$ & $100$ & Gaussian & $t_2$ & $1.69$ & $4.61$ & $1.28$ & $4.79$\\
		$600$ & $100$ & $t_1$ & Gaussian & $1.05$ & $4.86$ & $1.02$ & $4.87$\\
		$600$ & $100$ & $t_1$ & $t_1$ & $1.88$ & $2.84$ & $1.06$ & $3.84$\\
		$600$ & $100$ & $t_1$ & $t_2$ & $1.74$ & $4.79$ & $1.14$ & $5.01$\\
		$600$ & $200$ & Gaussian & Gaussian & $1.01$ & $4.96$ & $1.03$ & $4.93$\\
		$600$ & $200$ & Gaussian & $t_1$ & $1.41$ & $2.48$ & $1.24$ & $2.82$\\
		$600$ & $200$ & Gaussian & $t_2$ & $1.50$ & $4.67$ & $1.36$ & $4.72$\\
		$600$ & $200$ & $t_1$ & Gaussian & $1.01$ & $5.11$ & $0.98$ & $5.09$\\
		$600$ & $200$ & $t_1$ & $t_1$ & $2.02$ & $3.26$ & $1.33$ & $3.74$\\
		$600$ & $200$ & $t_1$ & $t_2$ & $1.70$ & $4.64$ & $1.34$ & $4.66$\\
		\hline\hline
	\end{tabular}
	\caption{\label{Tab:ANOVA} Percentage of rejection of the ANOVA test and naive residual permutation test, estimated over 100000 Monte Carlo repetitions, for various noise distributions at nominal levels of $\alpha=1\%$ and {\revone $\alpha=5\%$}. Data are generated by models~\eqref{Eq:Model} and~\eqref{Eq:Model2}, with $\bX$, $\bs{\varepsilon}$ and $\bs{e}$ having independent components distributed according to the various $X$ types and noise types described in the table. Standard errors for all entries are in the range of $0.02\%$ to $0.05\%$.}
\end{table}

Table~\ref{Tab:ANOVA} summarizes the performance of ANOVA test from $100000$ Monte Carlo simulations. We consider the sizes of the ANOVA test at nominal levels {\revone $\alpha = 0.01, 0.05$}. According to the simulation results, when the noises of $\be$ and $\bv$ follows a standard normal distribution, the ANOVA test has the correct size control, which is consistent with Lemma~\ref{lem:anova}. However, when normality is violated, the ANOVA test will be overly optimistic,  with an empirical size more than twice as large as the nominal level in some {\revone $1\%$-level tests (this issue is more pronounced if we consider a $0.5\%$ test level; see Table~\ref{Tab:Size5pc} in the Supplementary Material)}. In particular, the performance of noise type $t_1$ is in general worse than that of $t_2$, this means that ANOVA test is more vulnerable to heavy-tailed noises. Moreover, the performance of ANOVA is worse with a heavy-tailed design matrix $\bX$.

To better understand the empirical distribution of the simulated p-values, we plot their histogram in Figure~\ref{fig:histgram}(a)-(c).  Apparently, all the histograms are far from uniform on $[0,1]$ under the null hypothesis, with a large spike near zero. In addition, the magnitude of the spike increases as $n$ becomes smaller or that $\bv$ or $\bX$ becomes more heavy-tailed. %Moreover, the magnitude of the spike gets higher with a smaller $n$.  
Another interesting property is that the histograms are usually ``U-shaped'', where the peaks appear at regions near either $1$ or $0$.
In sum, when data are generated from non-Gaussian and in particular heavy-tailed distributions, the ANOVA tests are usually far from the correct level. 
% The aim of the current paper is to propose a new test that 1) is finite-population valid {\rev \sout{in high dimension} when $p$ is in the same order of magnitude as $n$} just with exchangeable noises and 2) has power even in heavy-tailed distribution. \todo{This is also the motivation of our paper.}

\begin{figure}[t!]
	\centering
	\subfigure[]{\includegraphics[width=0.32\textwidth]{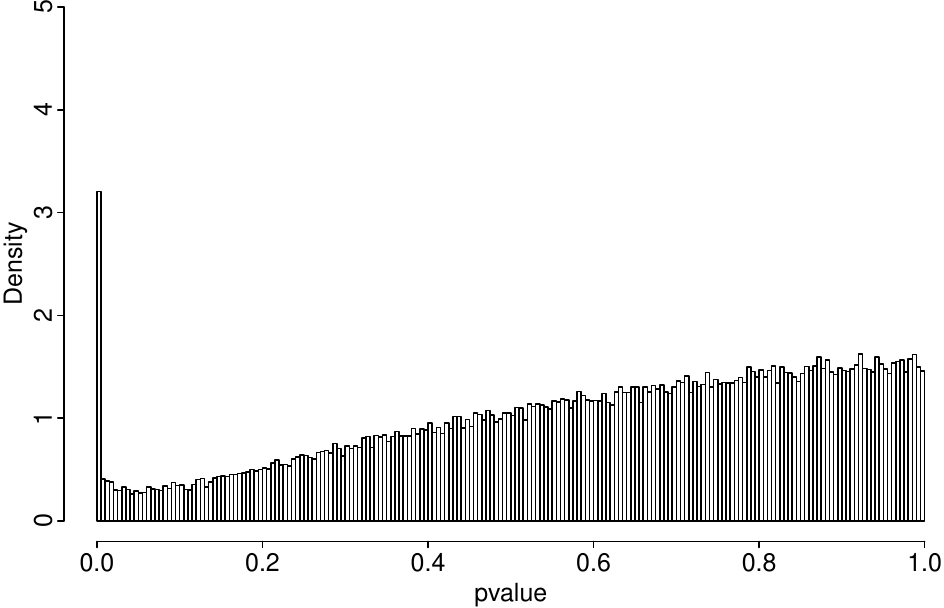}} 
	\subfigure[]{\includegraphics[width=0.32\textwidth]{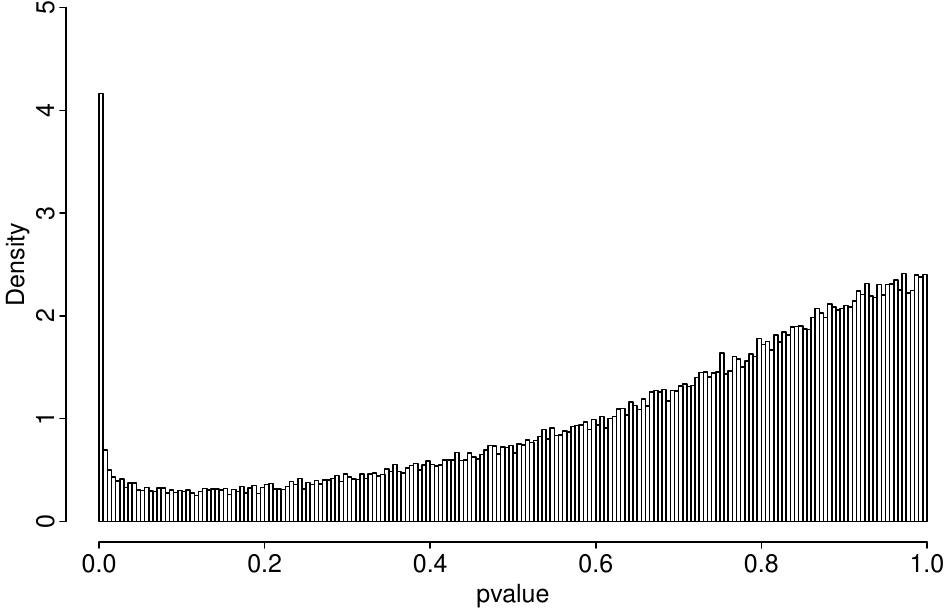}}
	\subfigure[]{\includegraphics[width=0.32\textwidth]{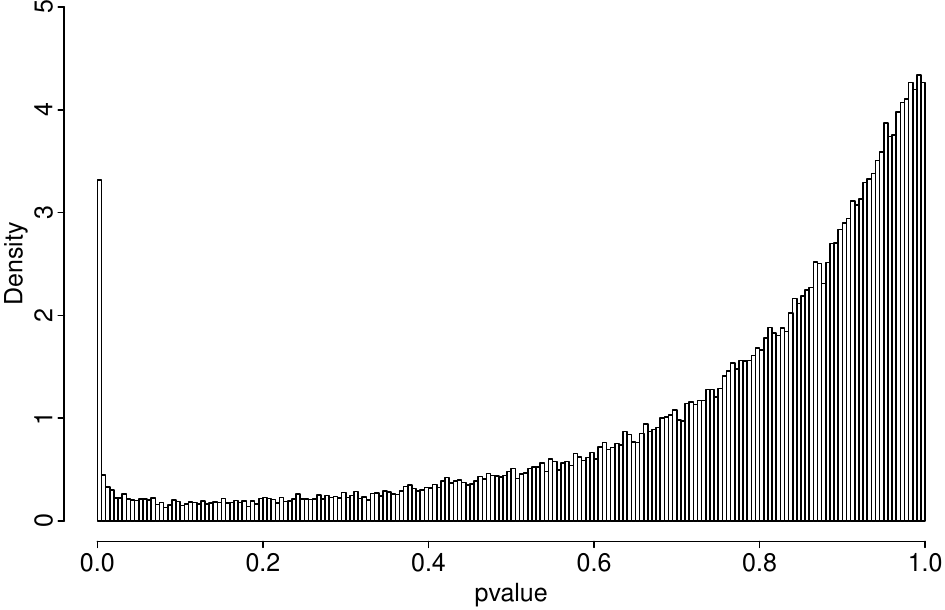}} \\
	\subfigure[]{\includegraphics[width=0.32\textwidth]{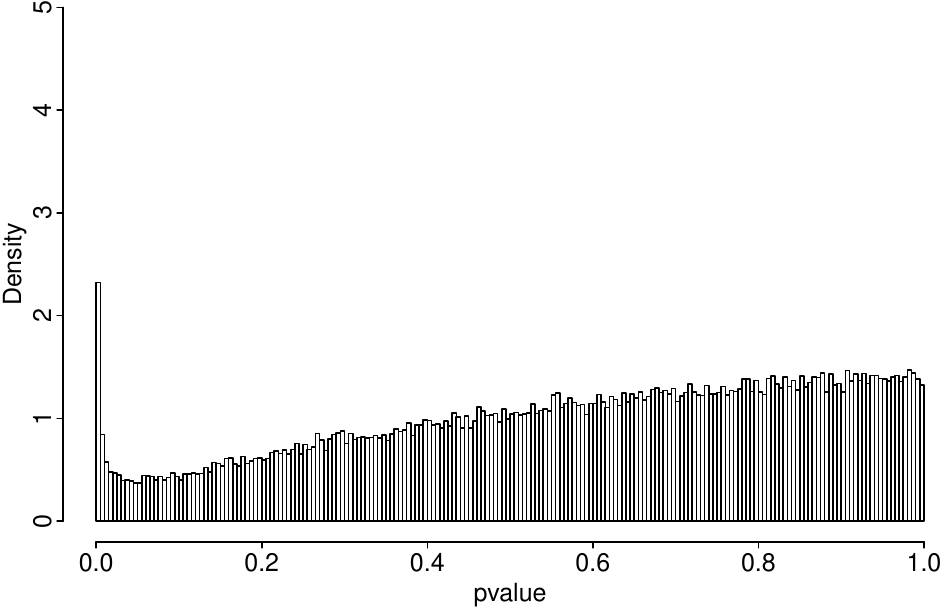}} 
	\subfigure[]{\includegraphics[width=0.32\textwidth]{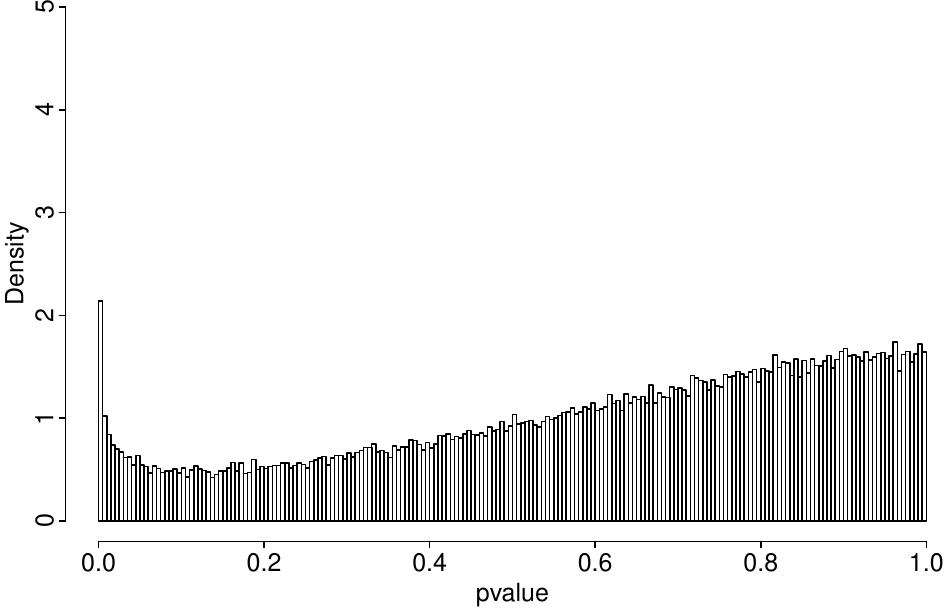}}
	\subfigure[]{\includegraphics[width=0.32\textwidth]{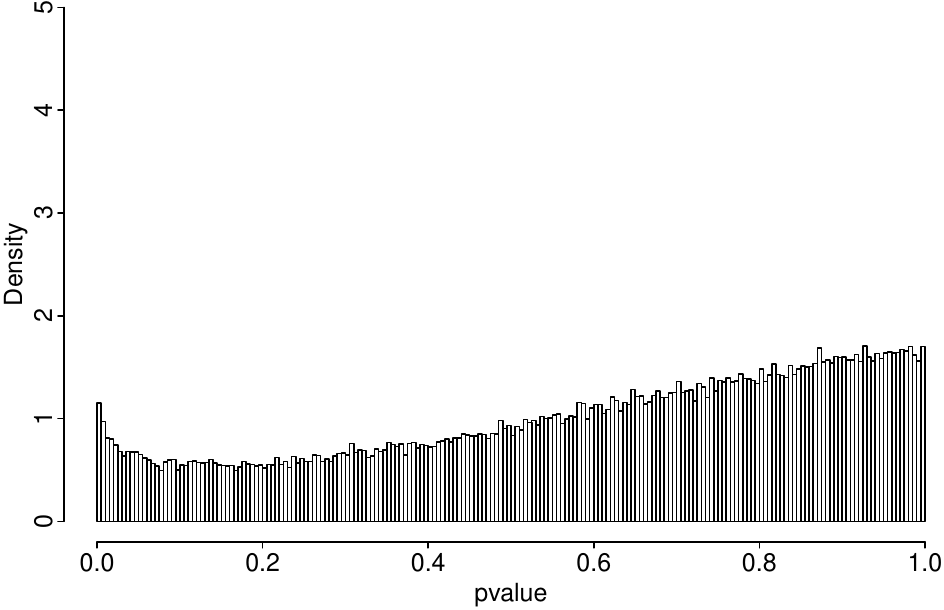}}
	\caption{Histogram of p-values under the null for ANOVA test and naive residual permutation test from $100000$ Monte-Carol replicates. The first line are the histograms of the ANOVA test under different specifications. Specifically, (a) is the result with Gaussian design, $n = 300, p = 100$ and $\bv$ has independent $t_1$ components; (b) is the histogram with the same setting as in (a) except that we switch from Gaussian design to $t_1$ design; (c) is the histogram with Gaussian design, $n = 600, p = 100$ and $\bv$ has independent $t_1$ components. The second line are the histogram for naive test. (e)-(f) use the same simulation settings as (a)-(c). }
	\label{fig:histgram}
\end{figure}

It is worth noting that when $\beta=0$ in~\eqref{Eq:Model}, we can easily construct a permutation test with valid size control by comparing the correlation of $Y$ to $Z$ and to its permutations. From this intuition, a straightforward approach is to first regress both $\bY$ and $\bZ$ onto $\bX$ to eliminate the influence of $\bX$, and then to use regression residuals for permutation test construction.  Specifically, let $\hat{\bs{R}}_\varepsilon := (\bs{I} - \bX (\bX^\top \bX)^{-1} \bX^\top) \bY$ and $\hat{\bs{R}}_e := (\bs{I} - \bX (\bX^\top \bX)^{-1} \bX^\top) \bZ$ be the regression residuals after projecting $\bY$ and $\bZ$ onto $\bX$ respectively. Let $\bs{V}_0 \in \R^{n \times (n - p)}$ be a matrix with orthonormal columns spanning an $(n-p)$-dimensional subspace of $\matspan(\bs{X})^\perp$, then $\bs{I} - \bX (\bX^\top \bX)^{-1} \bX^\top = \bs{V}_0\bs{V}_0^\top$. Hence under $H_0: b =0$,  the regression residuals $\hat{\bs{R}}_\varepsilon$ satisfy $\hat{\bs{R}}_\varepsilon = \bV_0 \bV_0^\top \bY = \bV_0 \bV_0^\top \bv$. From above, we construct a test, which we call as \emph{naive residual permutation test}, based on the \emph{projected residuals} $\hat{\bv} := \bV_0^\top \hat{\bs{R}}_\varepsilon = \bV_0^\top \bY$ and $\hat{\bs{e}} := \bV_0^\top \hat{\bs{R}}_e =  \bV_0^\top \bZ$ as
\begin{equation}\label{eq:naive}
	\pval_{\naive} = \frac{1}{K + 1} \left(1 + \sum_{k = 1}^{K} \one(|\hat{\be}^\top \hat{\bv}| \le |\hat{\be}^\top \bP_k \hat{\bv}|)\right),
\end{equation}
where the $\bP_k \in \R^{(n - p) \times (n - p)}$'s are random permutation matrices that are sampled uniformly at random from the set of all permutation matrices. 
%{\rev We remark that a similar permutation test based on regression residuals have been proposed in \citet{freedman1983nonstochastic}, though using F-statistics computed from permutations of non-projected residuals instead of the correlation statistics computed from projected residuals as in~\eqref{eq:naive}.} 
Lemma~\ref{lem:naive} shows that under a slightly weaker condition than Lemma~\ref{lem:anova}, $\pval_{\naive}$ has valid Type-I error control.

\begin{lemma}
	\label{lem:naive}
	Suppose $\bs{Y}$ is generated under~\eqref{Eq:Model} with $\beta\in\mathbb{R}^p$, $b=0$. If either 
	\begin{enumerate}[(a)]
		\item $\bs{\varepsilon}$ or $(\bX, \bZ)$ follows a spherically symmetric distribution;
		\item $\bs{Z}$ is generated under~\eqref{Eq:Model2} and either $\be$ or $(\bX, \bY)$ follows a spherically symmetric distribution,
	\end{enumerate}
	$\pval_{\naive}$ is a valid p-value, i.e., for all $\alpha \in (0,1)$, $\prob(\pval_{\naive} \le \alpha) \le \alpha$.
\end{lemma}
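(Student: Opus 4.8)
The plan is to recognize $\pval_{\naive}$ as a standard randomization $p$-value and reduce its validity to a distributional symmetry of the projected residuals $(\hat{\be},\hat{\bv})$. Under $H_0$ we have $\bY=\bX\beta+\bv$, and since $\bV_0^\top\bX=0$ the residuals simplify to $\hat{\bv}=\bV_0^\top\bY=\bV_0^\top\bv$ and $\hat{\be}=\bV_0^\top\bZ$; in case (b) additionally $\hat{\be}=\bV_0^\top\bZ=\bV_0^\top\be$. With statistic $T(\hat{\be},\hat{\bv})=|\hat{\be}^\top\hat{\bv}|$ and the symmetric group $\mathbb{S}_{n-p}$ acting on $(\hat{\be},\hat{\bv})$ by permuting the coordinates of $\hat{\bv}$, the test $\pval_{\naive}$ is exactly $\frac{1}{K+1}\bigl(1+\sum_{k}\one\{T(\bs{W})\le T(g_k\cdot\bs{W})\}\bigr)$ for $\bs{W}=(\hat{\be},\hat{\bv})$ and $g_1,\dots,g_K$ i.i.d.\ uniform on $\mathbb{S}_{n-p}$, independent of the data; and, using $|\hat{\be}^\top\bP_k\hat{\bv}|=|(\bP_k^\top\hat{\be})^\top\hat{\bv}|$ and $\bP_k^\top\overset{\mathrm{d}}{=}\bP_k$, it is equally of this form with the action permuting the coordinates of $\hat{\be}$ instead. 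By the classical permutation-test lemma (whose proof composes the $g_k$ with an auxiliary independent uniform $g_0$ to exhibit exchangeability of $\bigl(T(\bs{W}),T(g_1\bs{W}),\dots,T(g_K\bs{W})\bigr)$), it therefore suffices to show, for every fixed permutation matrix $\bP$, that $(\hat{\be},\hat{\bv})\overset{\mathrm{d}}{=}(\hat{\be},\bP\hat{\bv})$ when $\bv$ (case a) or $(\bX,\bY)$ (case b) is spherically symmetric, and that $(\hat{\be},\hat{\bv})\overset{\mathrm{d}}{=}(\bP\hat{\be},\hat{\bv})$ when $(\bX,\bZ)$ (case a) or $\be$ (case b) is spherically symmetric.

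The elementary ingredient is that if $\bs{\xi}$ is spherically symmetric in $\R^n$ and $\bs{V}\in\R^{n\times(n-p)}$ has orthonormal columns, then $\bs{V}^\top\bs{\xi}$ is spherically symmetric in $\R^{n-p}$: for $\bs{Q}'\in\mathbb{O}^{(n-p)\times(n-p)}$, the matrix $\bs{Q}:=\bs{V}\bs{Q}'\bs{V}^\top+(\bs{I}-\bs{V}\bs{V}^\top)$ is orthogonal and $\bs{V}^\top\bs{Q}=\bs{Q}'\bs{V}^\top$, so $\bs{V}^\top\bs{\xi}\overset{\mathrm{d}}{=}\bs{V}^\top\bs{Q}\bs{\xi}=\bs{Q}'\bs{V}^\top\bs{\xi}$ (and the same holds for an isometry between two $(n-p)$-dimensional inner product spaces). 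This disposes of the two ``easy'' cases at once. If $\bv$ is spherically symmetric and independent of $(\bX,\bZ)$, then conditionally on $(\bX,\bZ)$ it is still spherically symmetric, so $\hat{\bv}=\bV_0^\top\bv$ is spherically symmetric in $\R^{n-p}$ while $\hat{\be}$ is a constant, giving $(\hat{\be},\hat{\bv})\overset{\mathrm{d}}{=}(\hat{\be},\bP\hat{\bv})$ conditionally, hence unconditionally; the case where $\be$ is spherically symmetric and independent of $(\bX,\bY)$ is identical after exchanging the roles of $\be$ and $\bv$, and yields $(\hat{\be},\hat{\bv})\overset{\mathrm{d}}{=}(\bP\hat{\be},\hat{\bv})$.

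The remaining two cases reduce to a geometric claim, which I expect to be the crux of the proof and which also underlies Lemma~\ref{lem:anova}: \emph{if $(\bX,\bZ)\in\R^{n\times(p+1)}$ is spherically symmetric, then conditionally on $\sigma\bigl(\matspan(\bX),\proj_{\bX}(\bZ)\bigr)$ the residual $\bZ-\proj_{\bX}(\bZ)$ is spherically symmetric within the subspace $\matspan(\bX)^\perp$.} Granting it, take the subcase of (a) with $(\bX,\bZ)$ spherically symmetric and independent of $\bv$: the conditional spherical symmetry of $\bZ-\proj_{\bX}(\bZ)$ persists given $\mathcal{G}:=\sigma\bigl(\matspan(\bX),\proj_{\bX}(\bZ),\bv\bigr)$, so applying the isometry $\bV_0^\top$ of $\matspan(\bX)^\perp$ onto $\R^{n-p}$ shows $\hat{\be}=\bV_0^\top\bZ=\bV_0^\top\bigl(\bZ-\proj_{\bX}(\bZ)\bigr)$ is spherically symmetric in $\R^{n-p}$ given $\mathcal{G}$, while $\hat{\bv}=\bV_0^\top\bv$ is $\mathcal{G}$-measurable; hence $(\hat{\be},\hat{\bv})\overset{\mathrm{d}}{=}(\bP\hat{\be},\hat{\bv})$ given $\mathcal{G}$, and therefore unconditionally. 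The subcase of (b) with $(\bX,\bY)$ spherically symmetric and $\be$ independent of it is symmetric, applying the claim to $(\bX,\bY)$ and using $\hat{\bv}=\bV_0^\top\bY=\bV_0^\top\bigl(\bY-\proj_{\bX}(\bY)\bigr)$ and $\hat{\be}=\bV_0^\top\be$. Together with the permutation-test lemma this completes the proof modulo the claim.

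To prove the claim I would use the representation of a spherically symmetric random matrix $\bs{M}$ as $\bs{M}=\bs{U}\bs{A}$ with $\bs{U}$ Haar on $\mathbb{O}^{n\times n}$ independent of $\bs{A}$, where $\bs{A}$ is a measurable canonical form of $\bs{M}$ (e.g.\ a Gram--Schmidt representative). Writing $(\bX,\bZ)=\bs{U}(\bX_0,\bZ_0)$ we get $\matspan(\bX)=\bs{U}\matspan(\bX_0)$, $\proj_{\bX}(\bZ)=\bs{U}\proj_{\bX_0}(\bZ_0)$ and $\bZ-\proj_{\bX}(\bZ)=\bs{U}\bigl(\bZ_0-\proj_{\bX_0}(\bZ_0)\bigr)$, so the claim amounts to: conditionally on $\bs{A}$ and on $\bigl(\bs{U}\matspan(\bX_0),\,\bs{U}\proj_{\bX_0}(\bZ_0)\bigr)$, the vector $\bs{U}\bigl(\bZ_0-\proj_{\bX_0}(\bZ_0)\bigr)$ is uniform on the sphere of radius $\|\bZ_0-\proj_{\bX_0}(\bZ_0)\|$ inside $\bs{U}\matspan(\bX_0)^\perp$. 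This follows by disintegrating the Haar measure: the conditioning constrains $\bs{U}$ only through its action on $\matspan(\bX_0)$, so conditionally $\bs{U}$ still acts as an independent Haar-random orthogonal map on $\matspan(\bX_0)^\perp$, and a Haar-random rotation sends the fixed vector $\bZ_0-\proj_{\bX_0}(\bZ_0)$ to the claimed uniform law. The delicate points I anticipate are making this disintegration argument rigorous and handling the measurable selections of $\bs{A}$ and of the frame $\bV_0(\cdot)$ (so that, e.g., $\hat{\bv}=\bV_0^\top\bv$ is genuinely $\mathcal{G}$-measurable), together with the standard reduction to the full-rank case $\mathrm{rank}(\bX)=p$ in which $\bV_0$ is defined as in the paper; everything else is routine.
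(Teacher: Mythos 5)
Your proposal is correct, and for the two subcases where the \emph{noise} ($\bs\varepsilon$ or $\be$) is spherically symmetric it is essentially the paper's argument reorganized: the paper writes $\bs\varepsilon=\rho\bs\xi$ and reduces to the Gaussian case, while you note directly that $\bV_0^\top\bs\varepsilon$ is spherically symmetric and hence exchangeable; these are the same observation. Where you genuinely diverge is in the two subcases where the \emph{design} ($(\bX,\bZ)$ or $(\bX,\bY)$) is spherically symmetric. The paper's trick there is to introduce an auxiliary Haar-distributed $\bs{Q}\in\mathbb{O}^{n\times n}$ and observe that $\pval_{\naive}$ computed from $(\bs{Q}\bX,\bs{Q}\bZ,\bs\varepsilon)$ collapses algebraically (via $\bs{Q}^\top\bs{Q}=\bs{I}$) to $\pval_{\naive}$ computed from $(\bX,\bZ,\bs{Q}^\top\bs\varepsilon)$, so the design-symmetric case reduces at once to the noise-symmetric case without ever computing a conditional distribution. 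You instead work directly with the randomization-test exchangeability criterion $(\hat\be,\hat\bv)\overset{\mathrm{d}}{=}(\bP\hat\be,\hat\bv)$ and prove a conditional geometric claim: given $\matspan(\bX)$ and $\proj_{\bX}(\bZ)$, the residual $\bZ-\proj_{\bX}(\bZ)$ is spherically symmetric inside $\matspan(\bX)^\perp$. This claim is correct and can be established by the Haar disintegration you sketch, but it is substantially heavier machinery than the paper needs: the paper's rotation-swap sidesteps the conditional law of the residual entirely, and the measurable-selection headaches you flag (choosing a $\mathcal{G}$-measurable $\bV_0$) are much milder in the paper's version because the same fixed $\bV_0$ appears on both sides of its distributional equality. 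Both routes are valid; yours gives a self-contained geometric lemma that may be reusable (and arguably makes the exchangeability condition needed for the permutation-test lemma more transparent), while the paper's is considerably shorter and avoids the disintegration argument.

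Two small points worth noting. First, your observation that $|\hat\be^\top\bP_k\hat\bv|=|(\bP_k^\top\hat\be)^\top\hat\bv|$ together with $\bP_k^\top\overset{\mathrm{d}}{=}\bP_k$ is exactly the right way to unify the four subcases under one exchangeability criterion. Second, your phrasing of the randomization-test lemma is the correct one for i.i.d.\ sampled permutations (rather than a full group), and the $g_0$-composition argument you allude to is indeed what makes $(T(\bs{W}),T(g_1\bs{W}),\dots,T(g_K\bs{W}))$ exchangeable; this matches the spirit of the paper's Lemma~\ref{lem:exchangeability}, though that lemma is stated for a fixed group satisfying Assumption~\ref{as:permutation} rather than i.i.d.\ draws, so you are implicitly invoking a different (also standard) variant.
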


While Lemma~\ref{lem:naive} is slightly less stringent than  Lemma~\ref{lem:anova}, it still requires the spherical symmetry in distributions. To better understand their empirical performances, we also show the performance of $\pval_{\naive}$ with non-Gaussian noises or non-Gaussian designs in Table~\ref{Tab:ANOVA} and Figures~\ref{fig:histgram}(d)-(f) . Without the strong Gaussianity or spherically symmetry assumption, $\pval_{\anova}$ is also not guaranteed to have finite-sample size validity. Nevertheless, when both tests are invalid, the size of naive permutation test is closer to the correct level than its competitor. This indicates that naive test is more robust to non-Gaussian distributions. Moreover, the naive test is an intuitive method and is easy to implement. Thus, the naive test could be used as a preferrable alternative to ANOVA in real data analysis when $n / 2 \le p < n$. 

\section{Residual permutation test: methodology and size validity}\label{sec:null}

In Section~\ref{sec:size}, we have shown from simulation experiments that a naive permutation test on the residuals, although more robust than ANOVA, is still not guaranteed to have finite-sample size validity with just exchangeable noise.  In this section we describe a more refined test using the projected residuals $\hat{\bs{\varepsilon}}$ and $\hat{\bs{e}}$, which we call the \emph{residual permutation test} (RPT),  and present its finite-sample size validity guarantee in Theorem~\ref{thm:null}. For intuition behind such construction, we refer the readers to Section~\ref{sec:intuition}. {\revone We will assume throughout this section that the design matrix $(\bX, \bZ)$ is deterministic.}

To describe RPT, we write $\mathcal{P}$ for the set of all permutation matrices in $\mathbb{R}^{n\times n}$ and we denote by $\bs{P}_0 = \bs{I} \in \mathcal{P}$ the identity matrix. To successfully perform the regression permutation test, we first need to randomly generate of a sequence of $K$ permutation matrices $\{\bs{P}_1, \ldots, \bs{P}_K\} \subseteq \mathcal{P} \setminus \{\bs{P}_0\}$, such that together with $\bs{P}_0$ they form a group:
\begin{assumption}\label{as:permutation}
	The set of permutation matrices $\mathcal{P}_K := \{\bs{P}_0, \bs{P}_1, \ldots, \bs{P}_K\}$ satisfies that for any $\bs{P}_i, \bs{P}_j$, there exists a $k \in \{0, \ldots, K\}$ such that $\bs{P}_k = \bs{P}_i \bs{P}_j$.
\end{assumption}
We write $\bs{V}_0 \in \R^{n \times (n - p)}$ as a matrix with orthonormal columns spanning an $(n-p)$-dimensional subspace of $\matspan(\bs{X})^\perp$ and $\bs{V}_k := \bs{P}_k \bs{V}_0$.\footnote{If $\bs{X}$ is full column rank, then $\bV_0 \bV_0^\top = \bs{I} - \bX (\bX^\top \bX)^{-1} \bX^\top$ and $\matspan(\bs{V}_0)$ and $\matspan(\bs{X})^\perp$ are the same space. Otherwise, $\matspan(\bs{V}_0)$ is a subspace of $\matspan(\bs{X})^\perp$.} In addition, we denote by $\tilde{\bs{V}}_k \in \R^{n \times (n - 2p)}$ a matrix with orthonormal columns spanning a subspace of $\matspan(\bs{V}_0) \cap \matspan(\bs{V}_k)$. Recall that $\hat{\bs{e}} := \bV_0^\top \bZ$ and $\hat{\bv} := \bV_0^\top \bY$. Given a fixed $T:\mathbb{R}^{n-2p}\times \mathbb{R}^{n-2p}\to \mathbb{R}$, we can calculate the p-value of our coefficient test via:
\begin{equation}\label{eq:pvalestori}
	\pval := \frac{1}{K+1} \left(1 + \sum_{k=1}^{K} \one\left\{ \min_{\tilde{\bs{V}} \in \{\tilde{\bs{V}}_1, \ldots, \tilde{\bs{V}}_K\}} T\left(\tilde{\bs{V}}^\top \bs{V}_0 \hat{\bs{e}}, \tilde{\bs{V}}^\top \bs{V}_0 \hat{\bs{\varepsilon}}\right) \leq T\left(\tilde{\bs{V}}_k^\top \bs{V}_0 \hat{\bs{e}}, \tilde{\bs{V}}_k^\top \bs{V}_k \hat{\bs{\varepsilon}}\right)\right\}\right),
\end{equation}
where $T$ can be any bivariate function. For example, one can choose $T(x,y) = |\langle x, y\rangle|$. As demonstrated in the Supplementary Material, the above definition of $\pval$ can be simplified as the following equivalent form 

\begin{equation}\label{eq:pvalest}
	\pval := \frac{1}{K+1} \left(1 + \sum_{k=1}^{K} \one\left\{ \min_{\tilde{\bs{V}} \in \{\tilde{\bs{V}}_1, \ldots, \tilde{\bs{V}}_K\}} T\left(\tilde{\bs{V}}^\top \bZ, \tilde{\bs{V}}^\top \bY\right) \leq T\left(\tilde{\bs{V}}_k^\top \bZ, \tilde{\bs{V}}_k^\top \bP_k \bY\right)\right\}\right).
\end{equation}
% \[
% \pval = \frac{1}{K+1} \left(1 + \sum_{k=1}^{K} \one\left\{ \min_{\tilde{\bs{V}} \in \{\tilde{\bs{V}}_1, \ldots, \tilde{\bs{V}}_K\}} T\left(\tilde{\bs{V}}^\top \bs{V}_0 \hat{\bs{e}}, \tilde{\bs{V}}^\top \bs{V}_0 \hat{\bs{\varepsilon}}\right) \leq T\left(\tilde{\bs{V}}_k^\top \bs{V}_0 \hat{\bs{e}}, \tilde{\bs{V}}_k^\top \bs{V}_k \hat{\bs{\varepsilon}}\right)\right\}\right).
% \]
The following theorem shows that the proposed p-value is uniformly valid under the null:

\begin{theorem}\label{thm:null}
	Suppose that $(\bs{X},\bs{Z},\bs{Y})$ is generated under model~\eqref{Eq:Model} with $p < n/2$ and that the noise $\bs{\varepsilon}$ satisfies Assumption~\ref{as:exchangeable}. Suppose $\{\bs{P}_k: k = 0,\ldots,K\}$ satisfies Assumption~\ref{as:permutation}. Under $H_0: b = 0$, $\phi$ defined in~\eqref{eq:pvalest} is a valid p-value, i.e.\ $\pr\left(\pval \leq \alpha\right) \leq \alpha$ for all $\alpha\in[0,1]$.
\end{theorem}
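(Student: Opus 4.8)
The plan is to run a permutation-test argument driven by the group $\mathcal{P}_K$, together with one extra reduction to cope with the fact that the quantity on the left of the indicator in~\eqref{eq:pvalest} is a minimum rather than a single un-permuted statistic. Throughout I treat $\bX$, $\bZ$, the permutation matrices $\bP_0,\dots,\bP_K$ and the derived matrices $\bV_0,\bV_k,\tilde{\bs{V}}_k$ as fixed, so that $\bv$ is the only randomness; the hypothesis $p<n/2$ is exactly what makes $\dim\bigl(\matspan(\bV_0)\cap\matspan(\bV_k)\bigr)\ge n-2p\ge1$, so the $\tilde{\bs{V}}_k$ are well defined. First I would record two orthogonality facts: the columns of $\tilde{\bs{V}}_k$ lie in $\matspan(\bV_0)\subseteq\matspan(\bX)^\perp$, so $\tilde{\bs{V}}_k^\top\bX=0$; and, since those columns also lie in $\matspan(\bV_k)=\matspan(\bP_k\bV_0)$, the columns of $\bP_k^\top\tilde{\bs{V}}_k$ lie in $\matspan(\bV_0)\subseteq\matspan(\bX)^\perp$, so $\tilde{\bs{V}}_k^\top\bP_k\bX=0$ too. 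Plugging $\bY=\bX\beta+\bv$ (valid under $H_0$) into~\eqref{eq:pvalest} therefore annihilates every $\bX\beta$ term, and I may rewrite
\[
\pval=\frac{1}{K+1}\Bigl(1+\sum_{k=1}^{K}\one\{A_0\le S_k\}\Bigr),
\]
where $S_k:=T(\tilde{\bs{V}}_k^\top\bZ,\ \tilde{\bs{V}}_k^\top\bP_k\bv)$ and, for $\ell\in\{0,\dots,K\}$, $A_\ell:=\min_{1\le j\le K}T(\tilde{\bs{V}}_j^\top\bZ,\ \tilde{\bs{V}}_j^\top\bP_\ell\bv)$; here $A_0$ is precisely the left-hand minimum of~\eqref{eq:pvalest} because $\bP_0=\bs{I}$.

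The next step is to dominate $\pval$ by an exchangeable rank statistic built from the $A_\ell$'s. Taking $j=k$ in the minimum defining $A_k$ gives $A_k\le S_k$, hence $\one\{A_0\le S_k\}\ge\one\{A_0\le A_k\}$, so $\pval\ge\bar\phi:=\frac{1}{K+1}\bigl(1+\sum_{k=1}^K\one\{A_0\le A_k\}\bigr)$. I would then show that $(A_0,\dots,A_K)$ has a distribution invariant under a transitive group of coordinate permutations. The point is that because the minimum in $A_\ell$ ranges over \emph{all} of $\tilde{\bs{V}}_1,\dots,\tilde{\bs{V}}_K$ it enjoys a clean equivariance: for each $m$, Assumption~\ref{as:permutation} gives $\bP_\ell\bP_m=\bP_{\rho_m(\ell)}$ for a permutation $\rho_m$ of $\{0,\dots,K\}$, and $A_\ell(\bP_m\bv)=\min_j T(\tilde{\bs{V}}_j^\top\bZ,\ \tilde{\bs{V}}_j^\top\bP_{\rho_m(\ell)}\bv)=A_{\rho_m(\ell)}(\bv)$. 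The maps $\{\rho_m\}$ form a group acting transitively on $\{0,\dots,K\}$ (given $i,j$, take $\bP_m=\bP_i^{-1}\bP_j\in\mathcal{P}_K$), and Assumption~\ref{as:exchangeable} gives $\bP_m\bv\overset{\mathrm{d}}{=}\bv$; hence $(A_{\rho_m(0)},\dots,A_{\rho_m(K)})\overset{\mathrm{d}}{=}(A_0,\dots,A_K)$ for every $m$.

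Finally I would close with the standard fact that a rank p-value computed from a coordinatewise, transitively-invariant vector is super-uniform. Writing $\phi_i(a):=\frac{1}{K+1}\bigl(1+|\{j\ne i:a_j\ge a_i\}|\bigr)$, one checks $\{\phi_i(a)\le\alpha\}=\{|\{j:a_j\ge a_i\}|\le\alpha(K+1)\}$ and, by examining the qualifying index with the smallest $a_i$, that $\sum_{i=0}^K\one\{\phi_i(a)\le\alpha\}\le\lfloor\alpha(K+1)\rfloor$ for every $a\in\R^{K+1}$. Since $\bar\phi=\phi_0(A)$ and transitivity forces $\pr(\phi_i(A)\le\alpha)$ to be the same for all $i$, taking expectations gives $(K+1)\pr(\bar\phi\le\alpha)=\E\sum_{i=0}^K\one\{\phi_i(A)\le\alpha\}\le\alpha(K+1)$, whence $\pr(\pval\le\alpha)\le\pr(\bar\phi\le\alpha)\le\alpha$.

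I expect the second step to be the real obstacle. The raw statistics $S_1,\dots,S_K$ do \emph{not} permute among themselves under $\bv\mapsto\bP_m\bv$, because each subspace $\tilde{\bs{V}}_k$ is tied to the particular permutation $\bP_k$, so no exchangeability is directly available; it is precisely the minimum over all $\tilde{\bs{V}}_j$ in~\eqref{eq:pvalest}, combined with the domination $A_k\le S_k$, that restores the group equivariance and lets the usual permutation-test reasoning go through — identifying this as the correct move and verifying the resulting invariance is the crux. A lesser, routine point is that $\bX$ may fail to be of full column rank, which is the reason $\bV_0$ is taken as an orthonormal basis of an $(n-p)$-dimensional subspace of $\matspan(\bX)^\perp$ rather than of the whole orthogonal complement.
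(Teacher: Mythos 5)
Your proof is correct and follows essentially the same route as the paper: the orthogonality identities $\tilde{\bs{V}}_k^\top\bX=\tilde{\bs{V}}_k^\top\bP_k\bX=0$ under $H_0$, the domination of $\pval$ via $A_k\le S_k$, and the group-equivariance of $(A_0,\ldots,A_K)$ under $\bv\mapsto\bP_m\bv$ are precisely the paper's reduction to Lemma~\ref{lem:exchangeability}. The only cosmetic difference is in establishing the final rank bound: the paper's Lemma~\ref{lem:exchangeability} introduces auxiliary Gaussians for random tie-breaking, whereas you prove the same $\lfloor\alpha(K+1)\rfloor/(K+1)$ bound deterministically by examining the qualifying index with the smallest $a_i$ — a marginally more elementary argument that otherwise changes nothing.
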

We remark that as shown in Theorem~\ref{thm:null}, an important advantage of RPT is that the result is finite sample in the sense that it holds for arbitrary size of $n$. Moreover, our result assumes a fixed-design matrix and does not require any assumption on $\bX$ for finite-sample size validity. For example, the rank of $\bX$ even does not necessarily need to be $p$. Also, Theorem~\ref{thm:null} shows that RPT has valid size for any choice of function $T(\cdot, \cdot)$ and number of permutations $K$. However, in practice, to have good power under the alternative, we typically set $T(x,y) = |\langle x, y\rangle|$ and choose a moderate size of $K = O(1/\alpha)$.

%\begin{algorithm}[!t]
%\DontPrintSemicolon
%	\caption{\label{alg:test}Permutation set construction}
%	\KwIn{design matrix $\bs{X}\in\mathbb{R}^{n\times p}$, additional covariate of of interest $\bs{Z}\in\mathbb{R}^n$, response vector $\bs{Y}\in\mathbb{R}^n$, number of permutations $K\in\mathbb{N}$.}
%	
%	Apply Algorithm~\ref{alg:perm} with inputs $K$ and $\bs{X}$ to generate $K$ permutation matrices $\{\bs{P}_1,\ldots,\bs{P}_K\}$.\;
%	
%	Find an orthonormal matrix $\bs{V}_0\in\mathbb{R}^{n\times (n-p)}$ such that  $\bs{V}_0^\top \bs{X} = 0$.
%	
%	\For{$k = 1,\ldots, K$}{
%	    Set $\bs{V}_k := \bs{P}_k \bs{V}_0$.\;
%	    Find an orthonormal matrix $\tilde{\bs{V}}_k \in\mathbb{R}^{n\times (n-2p)}$ such that $\tilde{\bs{V}}_k^\top (\bs{X}, \bs{P}_k\bs{X}) = 0$.\;
%	    Compute 
%	    \begin{align*}
%	    a_k&:=\langle \tilde{\bs{V}}_k^\top \bs{V}_0\bs{V}_0^\top Z, \tilde{\bs{V}}_k^\top \bs{V}_0\bs{V}_0^\top Y\rangle\\
%	    b_k&:=\langle \tilde{\bs{V}}_k^\top \bs{V}_0\bs{V}_0^\top Z, \tilde{\bs{V}}_k^\top \bs{V}_k\bs{V}_0^\top Y\rangle
%	    \end{align*}
%	}
%	\KwOut{p-value $p:= \frac{1}{K+1}(1 + \sum_{k=1}^K \mathbbm{1}_{\{\min_{1\leq j\leq K} a_j \leq b_k\}}) $\;}
%\end{algorithm}

%\newpage

\subsection{Some intuition of RPT}\label{sec:intuition}

In this section, we discuss the intuition behind~\eqref{eq:pvalestori}. As demonstrated in Section~\ref{sec:size}, a naive permutation test on the residuals is in general not valid in the finite-sample setting with just exchangeable noises. This is because under the null, $\phi_\naive$ performs permutations on the vector $\hat{\bv} = \bV_0^\top \bv$ instead of $\bv$ itself. Even if $\bv$ is an exchangeable random vector, $\bV_0^\top \bv$ may no longer be so, which renders the naive test invalid. 

To overcome this challenge, we may want to construct a new test that, under $H_0$, is equivalent to permuting the noise vector $\bv$ directly, instead of the transformed noise $\bV_0^\top\bv$. %which is also the main challenge of this test. 
Interestingly, this goal can be achieved based on a further transformation of the vector $\bV_0^\top \bv$. Specifically, given a permutation matrix $\bP_k$, recall that $\bV_k = \bP_k \bV_0$, we may use the transformation that under $H_0$, 
\begin{equation}\label{eq:nullintuition1}
	\hat{\bv} = \bV_0^\top \bv = \bV_0^\top \bP_k^\top \bP_k \bv = \bV_k^\top \bP_k \bv.
\end{equation}
%where in the second equality we use that fact that $\bP_k^\top \bP_k = \bs{I}$ since it is a permutation matrix and in the third equality recall that $\bV_k := \bP_k \bV_0$.

In light of this transformation, we have that under $H_0$, $\bV_k \hat{\bv} = \bV_k \bV_k^\top \bP_k \bv =  \proj_{\bV_k}(\bP_k\bv)$, i.e., a projection of the noise vector $\bP_k \bv$ onto the space $\matspan(\bV_k)$, and equivalently, $\bV_0 \hat{\bv} = \proj_{\bV_0}(\bv)$. However, this is still not enough, as $\proj_{\bV_0}(\bv)$ and $\proj_{\bV_k}(\bP_k\bv)$ corresponds to the projections of the vectors $\bv$ and $\bP_k \bv$ onto different subspaces, which are not directly comparable. This means that we need to further propose a more refined strategy to project $\bv$ and $\bP_k \bv$ onto some \emph{same space} for a fair comparison.
%, so that we can construct a rank based test by comparing their projections on this new space. 

Now recall that we already have $\proj_{\bV_0}(\bv)$ and $\proj_{\bV_k}(\bP_k\bv)$, an ideal choice of such space would then be $\matspan(\tilde{\bV}_k)$, i.e., the intersection of $\matspan(\bV_0)$ and $\matspan(\bV_k)$. %Specifically, using that $\tilde{\bV}_k$ represents a subspace of $\matspan(\bV_0)$ and $\matspan(\bV_k)$, we easily have $\tilde{\bV}_k^\top = \tilde{\bV}_k^\top \bV_0 \bV_0^\top = \tilde{\bV}_k^\top \bV_k \bV_k^\top$. This allows us to project the two vectors onto the space spanned by $\tilde{\bV}_k$ via that $\tilde{\bV}_k^\top \bV_k \hat{\bv} = \tilde{\bV}_k^\top \bV_k \bV_k \bP_k \bv = \tilde{\bV}_k^\top \bP_k \bv$ and equivalently, $\tilde{\bV}_k^\top \bV_0 \hat{\bv} = \tilde{\bV}_k^\top \bv$. 
Specifically, using that $\tilde{\bV}_k$ spans a subspace of $\matspan(\bV_k)$, it is straightforward that $\tilde{\bV}_k^\top = \tilde{\bV}_k^\top \bV_k \bV_k^\top$. 
From this and~\eqref{eq:nullintuition1}, we have that under $H_0$,
\[
\tilde{\bV}_k^\top \bV_k \hat{\bv} = \tilde{\bV}_k^\top \bV_k \bV_k^\top \bP_k \bv = \tilde{\bV}_k^\top \bP_k \bv
\]
and equivalently $\tilde{\bV}_k^\top \bV_0 \hat{\bv} = \tilde{\bV}_k^\top \bv$ since $\tilde{\bV}_k$ spans a subspace of $\matspan(\bV_0)$ as well.

%To successfully project $\bv$ and $\bP_k \bv$ onto $\matspan(\tilde{\bV}_k)$, we apply a ``double projection''. Specifically, we project the vectors $\bV_0 \hat{\bv} \equiv \proj_{\bV_0} (\bv)$ and $\bV_k \hat{\bv} \equiv \proj_{\bV_k} (\bP_k\bv)$ onto the space spanned by $\tilde{\bV}_k$, using $\tilde{\bV}_k^\top \bV_0 \hat{\bv}$ and $\tilde{\bV}_k^\top \bV_k \hat{\bv}$ respectively. Since $\tilde{\bV}_k$ is at the intersection of the two spaces, it is equivalent to projecting the vectors $\bv$, $\bP_k \bv$ onto the space of $\tilde{\bV}_k$ directly. 
%

% In sum, we have that under $H_0$ and $K = 1$, $\tilde{\bs{V}}_1^\top \bs{V}_0 \hat{\bs{\varepsilon}} \equiv \tilde{\bs{V}}_1^\top \bv$ and $\tilde{\bs{V}}_1^\top \bs{\bV}_1 \hat{\bs{\varepsilon}} \equiv  \tilde{\bs{V}}_1^\top \bP_1 \bv$.
% %(See Section~\ref{sec:rptvalidity} for details). 
% %
% Then using that for any fixed $\bP_1$, $\bv \overset{d}{=} \bP_1 \bv$ since $\bv$ is an exchangeable vector, we have that 
% \[
% T\left(\tilde{\bs{V}}_1^\top \bs{V}_0 \hat{\bs{e}}, \tilde{\bs{V}}_1^\top \bs{V}_0 \hat{\bs{\varepsilon}}\right) \overset{d}{=} T\left(\tilde{\bs{V}}_1^\top \bs{V}_0 \hat{\bs{e}}, \tilde{\bs{V}}_1^\top \bs{\bV}_1 \hat{\bs{\varepsilon}}\right),
% \]
% which gives us that with $K = 1$, $\prob(\pval \le 1 / 2) \le 1 /2$. This allows us to construct a test with Type I error bounded by $0.5$. 

{\revone 
	In light of the above analysis, we further have that under $H_0$,
	\begin{align*}
		a_k &:= T\left(\tilde{\bs{V}}_k^\top \bs{V}_0 \hat{\bs{e}}, \tilde{\bs{V}}_k^\top \bs{V}_0 \hat{\bs{\varepsilon}}\right) = T\left(\tilde{\bs{V}}_k^\top \bs{V}_0 \hat{\bs{e}}, \tilde{\bs{V}}_k^\top \bs{\varepsilon}\right) \\
		b_k &:= T\left(\tilde{\bs{V}}_k^\top \bs{V}_0 \hat{\bs{e}}, \tilde{\bs{V}}_k^\top \bs{V}_k \hat{\bs{\varepsilon}}\right) = T\left(\tilde{\bs{V}}_k^\top \bs{V}_0 \hat{\bs{e}}, \tilde{\bs{V}}_k^\top \bP_k \bs{\varepsilon}\right).
	\end{align*}
	%
	% The above analysis gives us enough ingredients to understand the validity of $\pval$ when $K = 1$. Specifically, recall that under $H_0$, $\tilde{\bs{V}}_1^\top \bs{V}_0 \hat{\bs{\varepsilon}} = \tilde{\bs{V}}_1^\top \bv$ and $\tilde{\bs{V}}_1^\top \bs{\bV}_1 \hat{\bs{\varepsilon}} =  \tilde{\bs{V}}_1^\top \bP_1 \bv$.
	% Then using that for any fixed $\bP_1$, $\bv \overset{d}{=} \bP_1 \bv$ since $\bv$ is an exchangeable vector, we have that 
	% \[
	% T\left(\tilde{\bs{V}}_1^\top \bs{V}_0 \hat{\bs{e}}, \tilde{\bs{V}}_1^\top \bs{V}_0 \hat{\bs{\varepsilon}}\right) \overset{d}{=} T\left(\tilde{\bs{V}}_1^\top \bs{V}_0 \hat{\bs{e}}, \tilde{\bs{V}}_1^\top \bs{\bV}_1 \hat{\bs{\varepsilon}}\right),
	% \]
	% which gives us that with $K = 1$, $\prob(\pval \le 1 / 2) \le 1 /2$ and furthermore, $\prob(\pval \le \alpha) \le \alpha$ for all $\alpha \in [0 ,1]$. %This allows us to construct a test with Type I error bounded by $0.5$. 
	%
	% %To further construct a test allowing smaller Type-I error, 
	% Nevertheless, such test may become powerless, as $\pval$ can only be equal to two points $0.5$ or $1$. To boost its statistical power, we need to increase the size of $K$. Using the same analysis as above, we have that under the null,
	Writing further that 
	\[
	a^*:=\min_{\ell\in\{1,\ldots,K\}}a_\ell\quad \text{and}\quad b_k^*:= \min_{ \ell \in \{1, \ldots, K\} } T\left(\tilde{\bs{V}}_\ell^\top \bs{V}_0 \hat{\bs{e}}, \tilde{\bs{V}}_\ell^\top \bP_k \bs{\varepsilon}\right),
	\]
	we can control $\pval$ as
	\begin{equation}
		\pval = \frac{1}{K+1} \left(1 + \sum_{k=1}^{K} \one\left\{ a^* \leq  b_k\right\}\right) \geq \frac{1}{K+1} \left(1 + \sum_{k=1}^{K} \one\left\{ a^* \leq b_k^* \right\}\right),\label{Eq:phi}
	\end{equation}
	where for the last inequality we use the fact that $b_k^* \le b_k$. Observe that we may also write $a^*= g(\bv)$ and $b^*_k=g(\bP_k\bv)$ for $g(\boldsymbol{u}):=\min_{\tilde\bV\in\{\tilde \bV_1,\ldots,\tilde \bV_K\}}T(\tilde\bV^\top \bV_0 \hat{\be}, \tilde\bV^\top \boldsymbol{u})$, which is a function that depends only on $(\bX, \bZ, \mathcal{P}_K)$. This allows us to rewrite the above inequality as
	\[
	\pval \ge \frac{1}{K + 1} \left(1+\sum_{k=1}^K \mathbbm{1}\{g(\bs{\varepsilon}) \leq g(\bs{\bs{P}_k\varepsilon})\}\right).
	\]}
Now our only remaining job is to prove that the p-value displayed at the end of the above inequality
is valid. The following lemma, which is a key ingredient in the proof of Theorem~\ref{thm:null}, shows that once we
construct $\cP_K$ such that Assumption~\ref{as:exchangeable} holds, $\pval$ is a valid p-value:

\begin{lemma}\label{lem:exchangeability}
	Suppose $\bs{\varepsilon}$ satisfies Assumption~\ref{as:exchangeable} and let $\{\bs{P}_0 = \bs{I}, \bs{P}_1, \ldots, \bs{P}_K\}$ be a fixed set of permutation matrices satisfying Assumption~\ref{as:permutation}. Then for any function $g : \R^n \to \R$, we have that 
	\[
	\mathbb{P}\biggl\{\frac{1}{K+1}\biggl(1+\sum_{k=1}^K \mathbbm{1}\{g(\bs{\varepsilon}) \leq g(\bs{\bs{P}_k\varepsilon})\}\biggr) \leq \alpha\biggr\} \leq \frac{\lfloor \alpha (K+1)\rfloor}{K+1} \leq \alpha.
	\]
\end{lemma}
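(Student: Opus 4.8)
The plan is to prove this via a group-invariance argument, which is the standard template for permutation test validity but requires some care here because the indicator compares $g(\bs\varepsilon)$ against $g(\bs{P}_k\bs\varepsilon)$ rather than against order statistics of a fully exchangeable collection. First I would observe that since $\{\bs{P}_0 = \bs{I}, \bs{P}_1,\ldots,\bs{P}_K\}$ forms a group under matrix multiplication (by Assumption~\ref{as:permutation}, closure plus finiteness gives a group), the map $\bs{P}_i \mapsto \bs{P}_k\bs{P}_i$ is a bijection of the group for each fixed $k$. Combined with exchangeability of $\bs\varepsilon$ (Assumption~\ref{as:exchangeable}), which gives $\bs{P}_k\bs\varepsilon \overset{\mathrm{d}}{=} \bs\varepsilon$ for every $k$, and more importantly the joint statement that the random vector $(\bs{P}_0\bs\varepsilon,\ldots,\bs{P}_K\bs\varepsilon)$ has a distribution invariant under the simultaneous left-action $\bs{P}_i\bs\varepsilon \mapsto \bs{P}_k\bs{P}_i\bs\varepsilon$, i.e.\ under reindexing the tuple by the group element $k$.

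Next I would define, for the realized tuple, the quantity $N(j) := \sum_{k=0}^K \mathbbm{1}\{g(\bs{P}_j\bs\varepsilon) \le g(\bs{P}_k\bs\varepsilon)\}$, the rank (counting from the top, with ties counted generously) of $g(\bs{P}_j\bs\varepsilon)$ within the multiset $\{g(\bs{P}_0\bs\varepsilon),\ldots,g(\bs{P}_K\bs\varepsilon)\}$. The test's numerator is exactly $N(0)$ (note the $k=0$ term contributes $\mathbbm{1}\{g(\bs\varepsilon)\le g(\bs\varepsilon)\}=1$, matching the ``$1+\sum_{k=1}^K$''). The key combinatorial fact is that for any fixed real tuple, $\sum_{j=0}^K \mathbbm{1}\{N(j) \le m\} \le m$ for every integer $m \ge 0$: at most $m$ of the entries can have top-rank at most $m$, because an entry with $N(j)\le m$ must be among the $m$ largest (counting ties appropriately — here one should be slightly careful, but generous tie-counting in $N$ makes $\{N(j)\le m\}$ an event that at most $m$ indices satisfy). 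Hence $\frac{1}{K+1}\sum_{j=0}^K \mathbbm{1}\{N(j)\le m\} \le \frac{m}{K+1}$ deterministically.

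Then the group-invariance step: because the tuple $(\bs{P}_0\bs\varepsilon,\ldots,\bs{P}_K\bs\varepsilon)$ is, as a whole, distributionally invariant under permuting its coordinates by the group (left-multiplication by $\bs{P}_k$ permutes the index set $\{0,\ldots,K\}$), the random variables $N(0), N(1),\ldots,N(K)$ are exchangeable; in particular $N(0) \overset{\mathrm{d}}{=} N(j)$ for each $j$. Taking expectations in the deterministic bound and applying this,
\[
(K+1)\,\PP\{N(0) \le m\} = \sum_{j=0}^K \PP\{N(j)\le m\} = \E\Bigl[\sum_{j=0}^K \mathbbm{1}\{N(j)\le m\}\Bigr] \le m.
\]
Setting $m = \lfloor \alpha(K+1)\rfloor$ and noting that $\{\pval \le \alpha\} = \{N(0) \le \alpha(K+1)\} = \{N(0) \le \lfloor\alpha(K+1)\rfloor\}$ since $N(0)$ is an integer, we get $\PP\{\pval\le\alpha\} \le \frac{\lfloor\alpha(K+1)\rfloor}{K+1} \le \alpha$.

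The main obstacle, and the place deserving the most care, is the tie-handling in defining $N$ and verifying the deterministic counting inequality $\sum_j \mathbbm{1}\{N(j)\le m\}\le m$: with the ``$\le$'' (rather than ``$<$'') inside the indicator, $N(j)$ overcounts ties, and one must confirm this overcounting goes in the safe direction so that the event $\{N(j)\le m\}$ still holds for at most $m$ indices (intuitively, if more than $m$ values tied at the maximum, none of them would have $N(j)\le m$). A clean way to handle this is to note $N(j) \ge 1 + \#\{k : g(\bs{P}_k\bs\varepsilon) > g(\bs{P}_j\bs\varepsilon)\}$, so $N(j)\le m$ forces at most $m-1$ strictly larger values, and a standard argument bounds the number of indices with at most $m-1$ strictly larger values by $m$. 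The rest is routine once the exchangeability of $(N(0),\ldots,N(K))$ is in place, which itself follows directly from the group structure and Assumption~\ref{as:exchangeable}.
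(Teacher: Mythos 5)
Your proof is correct and follows the same overall strategy as the paper's: use the group structure from Assumption~\ref{as:permutation} to show that replacing $\bs\varepsilon$ by $\bs{P}_k\bs\varepsilon$ merely reindexes the multiset $\{g(\bs{P}_0\bs\varepsilon),\ldots,g(\bs{P}_K\bs\varepsilon)\}$, hence $N(0)\stackrel{\mathrm{d}}{=}N(j)$ for every $j$, and then count ranks. The one place you genuinely diverge is tie-handling. The paper introduces auxiliary i.i.d.\ Gaussians $\xi_0,\ldots,\xi_K$ to build strictly-ordered tie-broken ranks $\tilde R_k$, which are almost surely a permutation of $\{1,\ldots,K+1\}$, together with $R_k\ge\tilde R_k$, and then uses $\sum_{k'}\PP(\tilde R_{k'}\le x) = x$. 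You instead prove the deterministic counting inequality $\sum_{j=0}^K\mathbbm{1}\{N(j)\le m\}\le m$ directly (cleanest: among any $r$ indices with $N(j)\le m$, the one achieving the minimal value of $g(\bs{P}_j\bs\varepsilon)$ satisfies $N(j)\ge r$, so $r\le m$), avoiding auxiliary randomness entirely. Both routes yield the same intermediate bound $(K+1)\PP(N(0)\le m)\le m$, and your deterministic version is arguably more elementary and self-contained.

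One small overstatement to flag: you assert that $(N(0),\ldots,N(K))$ is exchangeable. In general it is not — the group invariance only gives invariance of the joint law under the left-regular action of $\mathcal{P}_K$ on the index set $\{0,\ldots,K\}$, which is a transitive but (for $K\ge 2$) proper subgroup of the full symmetric group $S_{K+1}$. Transitivity is exactly what delivers the marginal identity $N(0)\stackrel{\mathrm{d}}{=}N(j)$, which is the only thing your subsequent display uses, so the proof is unaffected; but the word ``exchangeable'' should be replaced by ``identically distributed'' (or by ``invariant in law under a transitive group of permutations of the index set'').
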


{\revone 
	We remark that from the above discussion, a more ideal approach would be to compute $a^*$ and $b^*_k$ for $k=1,\ldots,K$ and construct the test directly using the right-hand side of~\eqref{Eq:phi}. This test, while almost exact, is unfortunately not directly computable from the data. As a compromise, we replace $b_k^*$ with an upper bound $b_k$ to obtain a feasible test $\pval$. As we will see from the numerical simulations, this has resulted in a relatively conservative test.}

\section{Analysis of statistical power}\label{sec:alternative}

This section provides power analysis of RPT under mild moment assumptions of noises $\varepsilon_i$ and $e_i$'s when the second order moments are not necessarily finite. For simplicity of exposition, throughout this section we assume without loss of generality that $n$ is a multiple of $|\mathcal{P}_K| = K+1$, where $K$ is a fixed constant that is chosen such that $K \ge 1 / \alpha$ for the prespecified Type-I error $\alpha$. The scenario where $n$ is not divisible by $K + 1$ can be handled by randomly discarding a subset of data of size at most $K$ to make $n$ divisible. We will focus on the version of RPT defined in~\eqref{eq:pvalest} with $T(x,y) = |\langle x, y\rangle|$. {\revone While we continue to assume that the design matrix $\bX$ is deterministic, $\bZ$ is assumed to have a random design following specific models in this and the next two sections.} Moreover, we are primarily interested in the dependence of the power of RPT on the tail heaviness of the noise distributions. To this end, we make the following assumption on the model:
\begin{assumption}\label{as:iid}
	$\varepsilon_i$'s are i.i.d.\ from some distribution $\prob_\varepsilon$ with mean 0, $\bZ$ follows the model in~\eqref{Eq:Model2} with $e_i$'s i.i.d. from some distribution $\prob_e$ with mean 0. $\bv$ is independent from $\be$.
\end{assumption}
{\revone
	As mentioned in the introduction, some assumption on $\bZ$ is needed for the regression coefficient $b$ to even be identifiable. The structural assumption on $\bZ$ in~\eqref{Eq:Model2} is stronger than typically assumed in the regression coefficient testing literature. This is partly due to the fact that previous power results mostly assume a fixed $p$ regime \citep[e.g.][]{freedman1983nonstochastic, diciccio2017robust}, or asymptotic regimes where $p = O(n^\gamma)$ for some constant $\gamma < 1$~\citep[e.g.][]{mammen1993bootstrap}, see also the references in the Supplementary Material of~\citet{LB21}. On the other hand, when $p\asymp n$, it is not uncommon to see additional structural assumptions on the design matrix. For instance, debiased lasso \citep{ZZ14} assumes a nodewise linear regression structure similar to~\eqref{Eq:Model2} and \citet{lopes2014residual} imposed an eigenvalue decay condition on the sample covariance matrix. In addition, the exact form of model~\eqref{Eq:Model2} is not essential, and is assumed here to simplify our exposition. As we will see later in Section~\ref{sec:alternativehete}, RPT will be asymptotically powerful as long as the quantity defined in~\eqref{eq:detz} is bounded away from zero (Corollary~\ref{cor:zdet}). While the modelling assumption in~\eqref{Eq:Model2} is a sufficient condition for this to hold with asymptotic probability 1, we may relax it to accommodate nonlinear dependence of $\bZ$ on $\bX$ and heteroskedastic noise (Theorems~\ref{thm:althete} and~\ref{thm:altnonlinear}). Even if all these models do not work and $\bZ$ is completely deterministic, Corollary~\ref{cor:zdet} shows that our test is still powerful, provided~\eqref{eq:detz} is large enough, which is an assumption verifiable by practitioners. It would be of interest to propose new tests that have non-trivial power under a nonlinearity assumption weaker than Theorem~\ref{thm:altnonlinear}, which we leave for future work.
}

We also make the following assumption on the permutation matrices $\bP_1, \ldots, \bP_K$.

%We denote by $\cD_t$ the class of distributions with $t$-th order moment bounded between $[1, 2]$, i.e., for some $t > 0$ and some random variable $\xi$, 
%\[
%\prob_\xi \in \cD_t \quad \textrm{iff} \quad \E[\xi] = 0 \;\&\; 1 \le \E[|\xi|^t] \le 2. 
%\]

\begin{assumption}\label{as:trace} For any $k = 1, \ldots, K$, $|\mathrm{tr}[\bV_0 \bV_0^\top \bs{P}_k]| < \sqrt{2 p} K$ and $\tr[\bP_k] = 0$.
\end{assumption}
%  \begin{assumption}[Heavy tailed noises]\label{as:tail} $\bs{\varepsilon} = (\varepsilon_1,\ldots,\varepsilon_n)^\top$ and $\bs{e} = (e_1,\ldots,e_n)^\top$ has i.i.d. components with
%  \begin{enumerate}
%      \item[(i)] $\E[e_1] = 0$ and $\E[e_i^2] = \sigma_e^2 < \infty$;
%      \item[(ii)] $\E[\varepsilon_1] = 0$ and for some constant $t \in (0,1]$, $\E |\varepsilon_1|^{1+t} = c_e < \infty$.
%  \end{enumerate}
%  \end{assumption}
Notice that when the covariate matrix $\bX$ is of full column rank $p$, Assumption~\ref{as:trace} is equivalent to that $|\mathrm{tr}[\bX (\bX \bX)^{-1} \bX^\top \bs{P}_k]| < \sqrt{2 p} K$. 

In Theorem~\ref{thm:alternative1}, we showcase the pointwise signal detection rate of $\pval$ given any fixed $\prob_\varepsilon$ and $\prob_e$. Moreover, we just require $\prob_\varepsilon$ to have bounded $(1+t)$-th order moment. 

\begin{theorem}\label{thm:alternative1}
	Fix $K \in \mathbb{N}$. Suppose that $(\bs{X},\bs{Z},\bs{Y})$ is generated under model~\eqref{Eq:Model} where $\bs{\varepsilon}$ and $\bZ$ satisfy Assumption~\ref{as:iid} and
	\[
	0 < \E[|e_1|^2] < \infty \quad \text{and} \quad 0 < \E[|\varepsilon_1|^{1 + t}] < \infty
	\]
	for some constant $t \in [0, 1]$. Assume $\mathcal{P}_K$ satisfies Assumption~\ref{as:trace}. In the asymptotic regime where $b$ and $p$ vary with $n$ in a way such that $n > (3+m)p$ for some constant $m > 0$ and
	\begin{align}\label{eq:bn}
		|b| = \Omega (n^{-\frac{t}{1+t}}) \;\textrm{if}\;\; t < 1 \quad\textrm{or}\quad |b| = \omega(n^{-\frac{1}{2}}) \;\textrm{if}\;\; t = 1,
	\end{align}
	we have $\lim_{n \to \infty} \prob\left(\pval > \frac{1}{K + 1}\right) = 0$.
	%\begin{equation}\label{eq:upp_asymp_high}
	%\forall \prob_e \in \cD_2, \prob_\varepsilon \in \cD_{1 + t}, \quad 
	%\lim_{n \to \infty} \prob\left(\pval > \frac{1}{K + 1}\right) \to 0
	%\end{equation}
\end{theorem}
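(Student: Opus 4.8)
The plan is to show that, with probability tending to one, $\pval$ attains its minimal possible value $1/(K+1)$. With $T(x,y)=|\langle x,y\rangle|$ we have $\pval>1/(K+1)$ if and only if $\min_{1\le j\le K}\bigl|\langle\tilde{\bs{V}}_j^\top\bZ,\tilde{\bs{V}}_j^\top\bY\rangle\bigr|\le\bigl|\langle\tilde{\bs{V}}_k^\top\bZ,\tilde{\bs{V}}_k^\top\bs{P}_k\bY\rangle\bigr|$ for some $k$, so it suffices to prove $\prob\bigl(\min_j|S_j|\le\max_k|S_k'|\bigr)\to 0$, where $S_j$ and $S_k'$ denote these two statistics. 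Writing $\bs{M}_k:=\tilde{\bs{V}}_k\tilde{\bs{V}}_k^\top$ and using $\matspan(\tilde{\bs{V}}_k)\subseteq\matspan(\bs{V}_0)\subseteq\matspan(\bX)^\perp$ together with $\tilde{\bs{V}}_k^\top\bs{P}_k=\tilde{\bs{V}}_k^\top\bs{P}_k\bs{V}_0\bs{V}_0^\top$ (hence $\tilde{\bs{V}}_k^\top\bs{P}_k\bX=\bs{0}$), and plugging in $\bZ=\bX\beta^Z+\be$ and $\bY=\bX(\beta+b\beta^Z)+b\be+\bv$, I would first reduce the event of interest to
\[
\min_{1\le j\le K}\bigl|b\,\be^\top\bs{M}_j\be+\langle\bv,\bs{M}_j\be\rangle\bigr|\;>\;\max_{1\le k\le K}\bigl|b\,\be^\top\bs{M}_k\bs{P}_k\be+\langle\bv,\bs{P}_k^\top\bs{M}_k\be\rangle\bigr|,
\]
so the task becomes controlling the four kinds of terms appearing here under the moment hypotheses $\E|e_1|^2<\infty$ and $\E|\varepsilon_1|^{1+t}<\infty$.

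For the ``main'' quadratic terms I would proceed by truncation, since only second moments of $\be$ are available. Because $\|\bs{M}_j\|_F^2=\tr(\bs{M}_j)=n-2p$ and the surviving fourth-order cross-moments of $\be$ in an off-diagonal quadratic form reduce to products of second moments, $\be^\top\bs{M}_j\be=\sum_i(\bs{M}_j)_{ii}e_i^2+o_p(n)$; truncating $e_i^2$ at a large level $L$, applying Chebyshev to the truncated sum (whose per-coordinate variance is at most $L\sigma_e^2$), and noting the untruncated tail is nonnegative with mean at most $(n-2p)\,\E[e_1^2\one\{e_1^2>L\}]$, I obtain $\be^\top\bs{M}_j\be\ge(1-\epsilon)(n-2p)\sigma_e^2$ with probability tending to one, for every $\epsilon>0$. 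Running the same argument (after symmetrizing $\bs{M}_k\bs{P}_k$) gives $|\be^\top\bs{M}_k\bs{P}_k\be|\le\sigma_e^2|\tr(\bs{M}_k\bs{P}_k)|+o_p(n)$; here I invoke the structural fact that $\matspan(\bs{M}_k)$ is an $(n-2p)$-dimensional subspace of the $(n-p)$-dimensional $\matspan(\bs{V}_0)$, so $\bs{M}_k=\bs{V}_0\bs{V}_0^\top-\bs{Q}_k$ for an orthogonal projection $\bs{Q}_k$ of rank $p$, whence $|\tr(\bs{M}_k\bs{P}_k)|\le|\tr(\bs{V}_0\bs{V}_0^\top\bs{P}_k)|+p<\sqrt{2p}\,K+p$ by Assumption~\ref{as:trace}.

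The hard part will be the cross terms $\langle\bv,\bs{M}_j\be\rangle$ and $\langle\bv,\bs{P}_k^\top\bs{M}_k\be\rangle$, where $\bv$ has only a finite $(1+t)$-th moment and, when $t<1$, possibly infinite variance. I would condition on $\be$, under which each such term is $\langle\bv,\bs{u}\rangle$ for a fixed vector $\bs{u}$ with $\|\bs{u}\|_2\le\|\be\|_2$, and $\|\be\|_2^2=(1+o_p(1))n\sigma_e^2$ by the weak law. For a fixed $\eta>0$, truncate each $v_i$ at level $\eta n^{1/(1+t)}$: the probability that some $|v_i|$ exceeds this level is at most $\eta^{-(1+t)}\E\bigl[|v_1|^{1+t}\one\{|v_1|>\eta n^{1/(1+t)}\}\bigr]\to 0$; the induced bias is $\|\bs{u}\|_1\cdot o(n^{-t/(1+t)})=o(n^{1/(1+t)})$ since $\|\bs{u}\|_1\le\sqrt n\,\|\bs{u}\|_2=O(n)$; and the truncated per-coordinate variance is at most $\eta^{1-t}n^{(1-t)/(1+t)}\E|v_1|^{1+t}$, so Chebyshev gives, conditionally on $\be$, $\prob\bigl(|\langle\bv,\bs{u}\rangle|>\delta n^{1/(1+t)}\bigr)\lesssim\eta^{1-t}\|\bs{u}\|_2^2/(\delta^2 n)+o(1)$. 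Taking expectations over $\be$ turns $\|\bs{u}\|_2^2$ into $\sigma_e^2\tr(\bs{M}_j)\le n\sigma_e^2$; sending first $n\to\infty$ and then $\eta\to 0$ shows $\langle\bv,\bs{M}_j\be\rangle=o_p(n^{1/(1+t)})$, and likewise $\langle\bv,\bs{P}_k^\top\bs{M}_k\be\rangle=o_p(n^{1/(1+t)})$ because permutations preserve Euclidean norm. This delocalization-after-projection estimate for heavy-tailed vectors is the crux of the argument and, I expect, the main obstacle.

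Combining the three steps via the reverse triangle inequality and a union bound over the fixed number $K$ of indices, for each fixed $\epsilon>0$ and with probability tending to one we have $\min_j|S_j|\ge|b|(1-\epsilon)(n-2p)\sigma_e^2-o_p(n^{1/(1+t)})$ and $\max_k|S_k'|\le|b|\sigma_e^2\bigl(p+\sqrt{2p}\,K\bigr)+|b|\epsilon n+o_p(n^{1/(1+t)})$. Since $n>(3+m)p$ forces $n-2p\ge(1+m)p$ and $p\le n/(3+m)$, choosing $\epsilon$ small enough (depending only on $m$ and $\sigma_e^2$) makes $(1-\epsilon)(n-2p)\sigma_e^2-\sigma_e^2(p+\sqrt{2p}\,K)-\epsilon n\ge c_0 n$ for some constant $c_0=c_0(m,\sigma_e^2)>0$ and all large $n$, so $\min_j|S_j|-\max_k|S_k'|\ge|b|\,c_0 n-o_p(n^{1/(1+t)})$. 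Finally $|b|n=\Omega(n^{1/(1+t)})$ when $t<1$ and $|b|n=\omega(n^{1/2})=\omega(n^{1/(1+t)})$ when $t=1$, so the right-hand side is positive with probability tending to one, yielding $\prob(\pval>1/(K+1))\to 0$.
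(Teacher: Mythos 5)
Your proposal is correct in its essentials and follows the paper's high-level skeleton (reduce to pairwise comparisons over a fixed $K$, show cross terms $\langle\bv,\bs{M}_j\be\rangle$ are negligible relative to $b\cdot\be^\top\bs{M}_j\be$, lower-bound the latter via diagonal/off-diagonal splitting plus Assumption~\ref{as:trace}), but the machinery you deploy for the two key estimates is genuinely different from the paper's, and arguably cleaner. For the cross term, the paper introduces an auxiliary \emph{uniformly random permutation} $\bs{P}$ applied to $\bv$ (valid by exchangeability) in order to ``smooth out'' the matrix $\tilde{\bs{V}}_j\tilde{\bs{V}}_j^\top$, uses an $i$-dependent truncation level $B\,i^{1/(1+t)}$, and handles the beyond-truncation contribution by a Borel--Cantelli argument (Lemma~A7). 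You instead condition on $\be$ directly, truncate all $v_i$ at the \emph{uniform} level $\eta n^{1/(1+t)}$, kill the beyond-truncation event by a simple union bound $n\,\prob(|v_1|>\eta n^{1/(1+t)})\le\eta^{-(1+t)}\E[|v_1|^{1+t}\one\{|v_1|>\eta n^{1/(1+t)}\}]\to 0$, and apply Chebyshev; the uniform truncation removes the $i$-dependence that forced the paper to smooth. For the diagonal part of the quadratic form, the paper invokes a weak-law-of-large-numbers result (Lemma~A5, citing van de Geer), while you reprove what is needed by truncating $e_i^2$ — also fine, since only second moments of $e_i$ are assumed. One small wrinkle: your ``send $\eta\to 0$'' step for the cross term uses that $\eta^{1-t}\to 0$, which degenerates at $t=1$. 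There, however, $\varepsilon_i$ has finite variance and a direct (untruncated) Chebyshev gives $\langle\bv,\bs{M}_j\be\rangle=O_p(n^{1/2})$, which combined with $|b|n=\omega(n^{1/2})$ still closes the argument; you should state this case separately rather than claim $o_p(n^{1/(1+t)})$, which is false at $t=1$. Also note your bias bound $\|\bs{u}\|_1\cdot o(n^{-t/(1+t)})$ requires restricting to the high-probability event $\|\be\|_2^2\le 2\sigma_e^2 n$ before ``taking expectations over $\be$'', since $\E[\|\bs{u}\|_1]$ is not directly controlled by $\E[\|\bs{u}\|_2^2]$. Neither of these affects the soundness of the overall approach.
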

% \todo{Double check the first sentence of this statement.}

Notice that here we need to assume without loss of generality that $\E[e_i^2] > 0$ and $\E[|\varepsilon_1|^{1 + t}] > 0$ to ensure that both two random variables are \emph{not} almost surely equal to zero. Otherwise, $\pval$ is almost surely equal to $1$, and cannot have any statistical power with any size of $b$. Theorem~\ref{thm:alternative1} shows that under certain assumptions on the $\cP_K$, RPT has power to reject the alternative classes even with heavy-tailed noises. Moreover, our analysis {\revone works in a proportional regime where the number of covariates can be} as large as $n / 3$. Remarkably, the statistical power guarantee in Theorem~\ref{thm:alternative1} does not require the $\varepsilon_i$'s to have a bounded second order moment. This distinguishes us from the class of empirical correlation based approaches, such as debiased / desparsified Lasso or OLS fit based tests, which requires $\varepsilon_i$'s to have at least a bounded second order moment or even stronger conditions such as sub-Gaussianity to have statistical power.

As we will see in Section~\ref{sec:algorithm}, Assumption~\ref{as:trace} is a mild condition that can be checked in practice. However, an inspection of the proof of Theorem~\ref{thm:alternative1} reveals that, even if Assumption~\ref{as:trace} does not hold for $\mathcal{P}_K$, RPT is still asymptotically powerful under the same signal strength condition~\eqref{eq:bn} and a slightly stronger requirement on the number of covariates.
%However, an inspection of the proof of Theorem~\ref{thm:alternative1} reveals that, even if Assumption~\ref{as:trace} does not hold for $\mathcal{P}_K$, RPT is still guaranteed to achieve the signal detection rate as in~\eqref{eq:bn} under a slightly stronger requirement on the number of covariates.
Specifically, we require that $n > (4 + m) p$ for some constant $m > 0$ that does not depend on $n$. {\rev In Theorem~\ref{thm:alternative1}, for simplicity we assume that $K$ is a fixed constant. In the Supplementary Material we further provide an extension of Theorem~\ref{thm:alternative1} where we allow $K$ to diverge with $n$. In particular, we show that for $t < 1$, RPT is still guaranteed to have non-trivial power whenever $K = O(n^{\frac{2t}{1 + t}})$.}

In the following theorem, we show that when $p / n \to 0$, we can further relax $e_i$'s finite second order moment condition to a finite first order moment condition.

\begin{theorem}\label{thm:alternative2}
	Fix $K \in \mathbb{N}$. Suppose that $(\bs{X},\bs{Z},\bs{Y})$ is generated under model~\eqref{Eq:Model} where $\bs{\varepsilon}$ and $\bZ$ satisfy Assumption~\ref{as:iid} and
	\[
	0 < \E[|e_1|] < \infty \quad \text{and} \quad 0 < \E[|\varepsilon_1|^{1 + t}] < \infty
	\]
	for some constant $t \in [0, 1]$. In the asymptotic regime where $b$ and $p$ vary with $n$ in a way such that $p / n \to 0$ and $b$ satisfies~\eqref{eq:bn}, 
	$\lim_{n \to \infty} \prob\left(\pval > \frac{1}{K + 1}\right) = 0$.
	%\begin{equation}\label{eq:upp_asymp_high}
	%\forall \prob_e \in \cD_2, \prob_\varepsilon \in \cD_{1 + t}, \quad 
	%\lim_{n \to \infty} \prob\left(\pval > \frac{1}{K + 1}\right) \to 0
	%\end{equation}
\end{theorem}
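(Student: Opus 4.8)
The plan is to prove $\pr(\pval>1/(K+1))\to0$ by showing that, with probability $1-o(1)$, the p-value in~\eqref{eq:pvalest} attains its minimal value $1/(K+1)$, i.e.\ that for every $k\in\{1,\dots,K\}$ the observed statistic strictly dominates the $k$-th permuted one; throughout I take $T(x,y)=|\langle x,y\rangle|$. Write $Q_j:=\tilde{\bs V}_j\tilde{\bs V}_j^\top$, the rank-$(n-2p)$ orthogonal projection onto $\matspan(\tilde{\bs V}_j)$. Since $\matspan(\tilde{\bs V}_j)\subseteq\matspan(\bs V_0)\subseteq\matspan(\bs X)^\perp$ we have $\tilde{\bs V}_j^\top\bs X=\bs 0$, and since $\bs P_k^\top\matspan(\tilde{\bs V}_k)\subseteq\matspan(\bs V_0)\subseteq\matspan(\bs X)^\perp$ we have $\tilde{\bs V}_k^\top\bs P_k\bs X=\bs 0$; substituting $\bs Y=\bs X\beta+b\bs Z+\bs\varepsilon$ and $\bs Z=\bs X\beta^Z+\bs e$ then gives
\[
\langle\tilde{\bs V}_j^\top\bs Z,\tilde{\bs V}_j^\top\bs Y\rangle=b\,\bs e^\top Q_j\bs e+\langle Q_j\bs e,\bs\varepsilon\rangle,\qquad \langle\tilde{\bs V}_k^\top\bs Z,\tilde{\bs V}_k^\top\bs P_k\bs Y\rangle=b\,\bs e^\top Q_k\bs P_k\bs e+\langle\bs P_k^\top Q_k\bs e,\bs\varepsilon\rangle .
\]
Hence it suffices to establish, with probability $1-o(1)$ and uniformly over the finite set $j,k\in\{1,\dots,K\}$: (i) a signal lower bound $\bs e^\top Q_j\bs e\ge c\|\bs e\|_2^2\ge c'n$ for constants $c,c'>0$; (ii) a signal-leakage bound $|\bs e^\top Q_k\bs P_k\bs e|=o(|b|\,\|\bs e\|_2^2)$; and (iii) a heavy-tailed noise bound $\max_j|\langle Q_j\bs e,\bs\varepsilon\rangle|\vee\max_k|\langle\bs P_k^\top Q_k\bs e,\bs\varepsilon\rangle|=o(|b|\,\|\bs e\|_2^2)$. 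Given (i)--(iii) and~\eqref{eq:bn}, the strict inequality $|b\,\bs e^\top Q_j\bs e+\langle Q_j\bs e,\bs\varepsilon\rangle|>|b\,\bs e^\top Q_k\bs P_k\bs e+\langle\bs P_k^\top Q_k\bs e,\bs\varepsilon\rangle|$ follows exactly as in the proof of Theorem~\ref{thm:alternative1}.

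The new ingredient relative to Theorem~\ref{thm:alternative1} is (i) under only a first moment on $\bs e$, which I would obtain from exchangeability of $\bs e$ together with $p/n\to0$. Because $\bs I-Q_j$ is a fixed rank-$2p$ projection, for a uniformly random permutation matrix $\bs P_\pi$ independent of $\bs e$ we have $\bigl(\|(\bs I-Q_j)\bs e\|_2^2,\|\bs e\|_2^2\bigr)\overset{\mathrm d}{=}\bigl(\|(\bs I-Q_j)\bs P_\pi\bs e\|_2^2,\|\bs e\|_2^2\bigr)$, and averaging over $\pi$, using $\tr(\bs I-Q_j)=2p$ and $\bs 1^\top(\bs I-Q_j)\bs 1\le n$, gives
\[
\E_\pi\bigl[\|(\bs I-Q_j)\bs P_\pi\bs e\|_2^2\,\big|\,\bs e\bigr]\le\frac{2p}{n}\|\bs e\|_2^2+\frac{1}{n-1}\Bigl(\bigl(\textstyle\sum_{i=1}^n e_i\bigr)^2+\|\bs e\|_2^2\Bigr).
\]
By the weak law of large numbers---which needs only $\E|e_1|<\infty$---we have $n^{-1}\sum_i e_i\to0$ in probability and $n^{-1}\|\bs e\|_2^2\ge\tfrac12\E[\min(e_1^2,M)]=:\tfrac12 c_0>0$ on an event of probability $1-o(1)$ (with $M$ a large fixed constant, $c_0>0$ since $\bs e$ is not a.s.\ zero); on that event, Markov's inequality gives $\pr_\pi\bigl(\|(\bs I-Q_j)\bs P_\pi\bs e\|_2^2>\tfrac12\|\bs e\|_2^2\mid\bs e\bigr)\le 4p/n+o(1)\to0$, and transporting this back through the distributional identity yields $\bs e^\top Q_j\bs e=\|\bs e\|_2^2-\|(\bs I-Q_j)\bs e\|_2^2\ge\tfrac12\|\bs e\|_2^2\ge\tfrac14 c_0 n$ with probability $1-o(1)$, uniformly over $j$.

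For (ii), the same randomization applied to the symmetric part of $Q_k\bs P_k$, together with $p/n\to0$ (which lets one replace $Q_k$ by the identity up to an $o(\|\bs e\|_2)$ error in norm) and $\tr(\bs P_k)=0$---which holds for the canonical cyclic construction of $\mathcal P_K$, and ensures that $\bs e^\top\bs P_k\bs e=\sum_i e_i e_{\sigma_k(i)}$ carries no diagonal $\sum_i e_i^2$ term---reduces matters to bounding $|\bs e^\top\bs P_k\bs e|$; decomposing $\bs P_k$ into bounded-length cycles (for that construction, into $\Theta(n)$ independent blocks) and applying the von Bahr--Esseen inequality gives $|\bs e^\top Q_k\bs P_k\bs e|=o(|b|\,\|\bs e\|_2^2)$ under~\eqref{eq:bn}. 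For (iii), I would condition on $\bs e$ and view $\langle Q_j\bs e,\bs\varepsilon\rangle=\sum_i(Q_j\bs e)_i\varepsilon_i$ as a weighted sum of i.i.d.\ mean-zero variables; von Bahr--Esseen gives $\E\bigl[|\langle Q_j\bs e,\bs\varepsilon\rangle|^{1+t}\mid\bs e\bigr]\le2\,\E|\varepsilon_1|^{1+t}\,\|Q_j\bs e\|_{1+t}^{1+t}$, and since $\|Q_j\bs e\|_{1+t}\le n^{(1-t)/(2(1+t))}\|Q_j\bs e\|_2$ with $\|Q_j\bs e\|_2\ge(\tfrac14 c_0 n)^{1/2}$ with high probability by (i), a union bound over the finite index set gives $\max_j|\langle Q_j\bs e,\bs\varepsilon\rangle|=O(n^{1/(1+t)})$ in probability, and likewise for the permuted noise terms since $\|\bs P_k^\top Q_k\bs e\|_r=\|Q_k\bs e\|_r$ for every $r$. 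Matching $O(n^{1/(1+t)})$ against the signal $|b|\,\bs e^\top Q_j\bs e\gtrsim|b|n$ and using~\eqref{eq:bn}, i.e.\ $|b|n\gtrsim n^{1/(1+t)}$ (strictly when $t=1$), completes the comparison.

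The step I expect to be the main obstacle is (ii): when $\bs e$ is heavy-tailed the quadratic form $\bs e^\top Q_k\bs P_k\bs e$ is dominated by a handful of extreme coordinates of $\bs e$, so the crude bound $|\bs e^\top Q_k\bs P_k\bs e|\le\|Q_k\bs e\|_2\|\bs e\|_2$ is, up to constants, of the same order as the signal $\bs e^\top Q_j\bs e$ itself; extracting the additional decay must use both $\tr(\bs P_k)=0$ and the fact that $\matspan(\bs I-Q_k)$ is only $2p=o(n)$-dimensional, so that the extreme coordinates of $\bs e$---which by exchangeability occupy uniformly random positions---are with high probability neither aligned with that subspace nor matched between $\bs e$ and $\bs P_k\bs e$. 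The remaining arguments parallel the proof of Theorem~\ref{thm:alternative1}, with its second-moment estimates for $\bs e$ replaced by the truncation arguments above.
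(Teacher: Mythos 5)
There is a genuine gap, and it is not where you expect it. You flag step (ii) as the main obstacle, but (ii) can be handled essentially as you sketch: writing $\be^\top Q_k\bP_k\be = \be^\top\bP_k\be - \be^\top(\bI-Q_k)\bP_k\be$, Cauchy--Schwarz with your bound (i) controls the second term by $o_{\pr}(\|\be\|_2^2)$, and the first term is $o_{\pr}(n) = o_{\pr}(\|\be\|_2^2)$ by partitioning the sum $\sum_i e_ie_{\sigma_k(i)}$ into groups of i.i.d.\ summands and applying Feller's weak law of large numbers (this is the paper's Lemma~\ref{lem:partition} plus Lemma~\ref{lem:partconv}; it works for \emph{any} fixed-point-free permutation, so the ``bounded-length cycles'' restriction and the invocation of von Bahr--Esseen are both unnecessary here---you only have a first moment on $\be$, so von Bahr--Esseen is not even applicable, whereas the Feller WLLN is). Also note the target in (ii) should be $o_{\pr}(\|\be\|_2^2)$, not $o_{\pr}(|b|\|\be\|_2^2)$; the factor $b$ multiplies \emph{both} $\be^\top Q_j\be$ and $\be^\top Q_k\bP_k\be$ in the test statistic, so it cancels in the comparison.

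The real gap is in (iii). Conditioning on $\be$, von Bahr--Esseen gives $\pr\bigl(|\langle Q_j\be,\bv\rangle| > \delta b\|\be\|_2^2 \mid \be\bigr) \lesssim \dfrac{n^{(1-t)/2}\|\be\|_2^{1+t}}{(\delta b)^{1+t}\|\be\|_2^{2(1+t)}} = \dfrac{n^{(1-t)/2}}{(\delta b)^{1+t}\|\be\|_2^{1+t}}$. On the high-probability event $\|\be\|_2^2 \ge c_0 n/2$ this is $\asymp \dfrac{1}{\delta^{1+t}\,b^{1+t}n^{t}}$. When $t<1$ and $|b|=\Omega(n^{-t/(1+t)})$---the regime claimed by \eqref{eq:bn}---we have $b^{1+t}n^t = \Omega(1)$, so the bound is $O(1)$, not $o(1)$. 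Your argument works only for $t=1$ (where \eqref{eq:bn} demands the strict $\omega(n^{-1/2})$) or if one upgrades to $b = \omega(n^{-t/(1+t)})$ for $t<1$, which is a strictly weaker conclusion than the theorem. The paper's Lemma~\ref{lem:t10} (behind Lemma~\ref{lem:tcomb}) closes exactly this gap by an index-dependent truncation $f_i := \varepsilon_i\one(|\varepsilon_i| \le B\,i^{1/(1+t)})$, a mean-correction $f_i\mapsto f_i'$, and a split of $\sum_i\E[(f_i')^2]$ at a slowly diverging $a_n$, where dominated convergence applied to $\E[|\varepsilon_1|^{1+t}\one(|\varepsilon_1| \ge Ba_n^{1/(1+t)})]\to 0$ yields the crucial little-$o$, namely $\E\|\bs f'\|_2^2/n = o(n^{(1-t)/(1+t)})$, which is what makes $\frac{o(n^{(1-t)/(1+t)})}{b^2 n} = o(1)$ at the boundary rate. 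There is no way around this refinement with a bare moment bound of the von Bahr--Esseen type.

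That said, your step (i) is a genuinely different and arguably simpler route than the paper's. The paper handles $\be^\top Q_j\be \gtrsim \|\be\|_2^2$ via a three-stage machinery: introduce an independent copy $\be'$, symmetrize to $\be-\be'$, apply the sign-flip/permutation randomization Lemma~\ref{lem:symbound} (which requires symmetry), and finally decouple back using the identity $\pr(\mathrm{I}<0) = \sqrt{\pr(\mathrm{I}<0,\mathrm{I}'<0)} \le \sqrt{\pr(\mathrm{I}+\mathrm{I}'<0)}$. You instead use the joint distributional identity $(\|(\bI-Q_j)\be\|_2^2,\|\be\|_2^2) \stackrel{\mathrm d}{=} (\|(\bI-Q_j)\bP_\pi\be\|_2^2,\|\be\|_2^2)$, condition on $\be$, compute $\E_\pi[\be^\top\bP_\pi^\top(\bI-Q_j)\bP_\pi\be\mid\be] \le \frac{2p}{n}\|\be\|_2^2 + \frac{1}{n-1}\bigl((\bs1^\top\be)^2 + \|\be\|_2^2\bigr)$, and apply Markov after controlling $\bs1^\top\be$ by the WLLN. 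This avoids symmetrization entirely and works for any exchangeable $\be$; it buys a cleaner proof of \eqref{eq:upp_e_2}, and would be a worthwhile simplification of the paper's argument if combined with the paper's Lemma~\ref{lem:tcomb} for the noise step.
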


The statistical power guarantee in Theorem~\ref{thm:alternative1} requires the set of permutations to follow Assumption~\ref{as:trace}, whilst the finite-sample size validity requires instead Assumption~\ref{as:permutation}. Then an important question is, how to effectively construct a $\cP_K$ that satisfies both assumptions. In Section~\ref{sec:algorithm}, we provide an algorithm to answer this question. In order to prove Theorems~\ref{thm:alternative1} and~\ref{thm:alternative2}, we are faced with two questions, the first is that we do not have any assumption on $\bX$, so that $\tilde{\bV}_j$ can follow arbitrary pattern; the second is the heavy tails of $e_i$'s and $\varepsilon_i$'s. We defer the proof of the two theorems to the Supplementary Material. 
% To help the readers understand the intuitions of the proof, we provide a proof sketch of the two theorems in Sections~\ref{sec:sketch1} and~\ref{sec:sketch2}.
To help the readers understand the intuitions of the proof, we provide a proof sketch of the main Theorem~\ref{thm:alternative1} in Section~\ref{sec:sketch1}.

%Notice that in Theorem~\ref{thm:alternative}, we require $1 \le \E[|\xi_i|^{1 + t}] \le 2$ and $1 \le \E[|e_i|^{1 + t}] \le 2$ to make it more convenient for readers to associate, and compare, Theorem~\ref{thm:alternative} with the minimax bounds in Theorems~\ref{thm:lower} and~\ref{thm:supupp}, where the minimax rate is taken over all the $\prob_\xi$'s in $\cD_{1 + t}$. In fact,~\eqref{thm:alternative} and~\eqref{eq:upp_asymp_low} are asymptotically consistent for any distribution $\prob_\xi$ with $\E[|\xi|^{1 + t}]$ being equal to some non-zero fixed constant.

%In sum, we require that the $e_i$'s to have second order moment when $n$ and $p$ are in the same order of magnitude. When $p$ is much smaller than $n$, $e_i$ just need to have first order moment. 

\subsection{An algorithm for construction of permutation set}\label{sec:algorithm}

\begin{algorithm}[!t]
	\DontPrintSemicolon
	\caption{\label{alg:perm}Permutation set construction}
	\KwIn{The number of permutation matrices $K$,
		the orthonormal matrix $\bV_0 \in \R^{n \times (n - 2p)}$ such that $\bV_0^\top \bX = 0$, the maximum number of loops $T$}
	
	% Find an orthonormal matrix $\bs{V}_0\in\mathbb{R}^{n\times (n-p)}$ such that  $\bs{V}_0^\top \bs{X} = 0$.
	
	\Repeat{(i) $|\tr[\bV_0\bV_0^\top \bP_k]| \le \sqrt{2} Kp^{1/2}$ for all $k=1,\ldots,K$ or (ii) the number of iterations has reached its limit $T$}{
		Generate an independent random permutation $\pi$ of indices $\{1,\ldots,n\}$\;
		\For{$k=1,\ldots,K$}{
			% Construct a permutation matrix $\bs{P}_k\in\mathcal{P}$ by setting its $(u,v)$-th entries as $1$ if and only if
			% \[
			% \biggl\lceil \frac{\pi (u)}{K+1} \biggr\rceil = \biggl\lceil \frac{\pi (v)}{K+1} \biggr\rceil \quad\text{and}\quad  \pi(u) - \pi(v) \in \{k, k - (K + 1)\}.
			% \]
			{\rev Construct a permutation function $\sigma_k := \pi^{-1} \circ \tilde{\sigma}_k \circ \pi$, where $\circ$ denotes a composition of two functions and $\tilde{\sigma}_k$ is a permutation function such that
				\begin{equation}\label{eq:cycperm}
					\tilde{\sigma}_k(i) := \left\{\begin{aligned}
						& i + k & & \quad\textrm{if}\; i \bmod (K + 1) \le K + 1 - k  \\
						& i - (K + 1 - k) & & \quad \textrm{otherwise},
					\end{aligned}\right.
				\end{equation}
				and set $\bP_k$ as the permutation matrix corresponding to $\sigma_k$.}
		}
	}
	\KwOut{Set of permutation matrices $\mathcal{P}_K := \{\bP_0 := \bs{I}, \bs{P}_1,\ldots,\bs{P}_K\}$ satisfying the criteria (i). When none of the $\cP_K$'s comply, report the $\cP_K$ with the smallest $\sum_{k=1}^K |\tr[\bV_0 \bV_0^\top \bP_k]|$.}
\end{algorithm}

%\begin{algorithm}[!t]
%\caption{Construction of permutation group}\label{alg:perm}
%\begin{algorithmic}
%\Require $K$ as the number of permutation matrices, $\bX \in \R^{n\times p}$ as the covariance matrix, $T$ as the number of iterations
%\State  Compute $\bs{A}_0 \in \R^{n \times p}$ as a matrix with orthonormal columns spanning a $p$-dimensional subspace of $\matspan(\bs{X})$
%\State  Define $m := \lfloor \frac{n}{K+1} \rfloor$
%\For{iteration $k = 1,\dots,T$}
%\State Uniformly sample a random permutation $\pi_k : [(K+1)m] \to [(K+1)m]$. We additionally define $\pi_k(u) = u , n \ge u > (K+1)m$.
%\State Define the permutation matrices $\bs{P}^{(k)}_i \in \mathcal{P} , 1 \le i \le K$ as 
%\State $(\bs{P}^{(k)}_i)_{u,v} = \one {\left( \lceil \frac{\pi_k (u)}{K+1} \rceil = \lceil \frac{\pi_k(v)}{K+1} \rceil \ \&\ \pi_k(u) - \pi_k(v) \equiv i \mod K+1 \right)} + \one {\left( \lceil \frac{\pi_k (u)}{K+1} \rceil > m \ \&\ u = v \right)} $
%\State $\mathcal{P}_K = \{\bs{P}^{(k)}_i , 1 \le i \le K\} \cup \{\bs{I}\}$
%\For{index $i = 1,\dots,K$}
%\If{$|\mathrm{tr}(\bs{A}_0\bs{A}_0^T \bs{P}^{(k)}_i)| > K^3p^{1/2}$}
%\State Continue
%\EndIf
%\EndFor
%\State Return $\mathcal{P}_K$
%\EndFor
%\end{algorithmic}
%\end{algorithm}

As demonstrated in Theorems~\ref{thm:null} and~\ref{thm:alternative1}, to successfully perform a test that is valid under the null and has sufficient statistical power to get the rate in~\eqref{eq:bn} {\rev when $n / p > 3 + m$ for some constant $m > 0$}, one needs a set of permutations satisfying both Assumptions~\ref{as:permutation} and~\ref{as:trace}. As demonstrated in Proposition~\ref{prop:algorithm} below, such permutation set always exist, so that we can at least apply a brute-forth algorithm to find a desired set. To improve computational efficiency, we further develop a randomized algorithm that can discover the desired permutation set with high probability (Algorithm~\ref{alg:perm}). {\rev To understand this algorithm, notice that if we are just interested in Assumption~\ref{as:permutation}, one simple way is to divide the $n$ indices into $m := n / (K + 1)$ ordered list of indices and perform cyclic permutation on each sub-list. Specifically, we first denote $S_1, \ldots, S_m$ as $m$ ordered list of indices such that
	\[
	(1, \ldots, n) := (\underset{S_1}{\underbrace{1, \ldots, K + 1}}, \; \underset{S_2}{\underbrace{K + 2, \ldots, 2(K + 1)}}, \; \underset{S_m}{\underbrace{(m - 1)(K + 1) + 1, \ldots, m (K + 1)}}),
	\]
	Then we define the $\tilde{\bP}_k$ for $k \ge 1$ (or equivalently its permutation function $\tilde{\sigma}_k$) as that
	\[
	(\tilde{\sigma}_k(1), \ldots, \tilde{\sigma}_k(n)) := (S_1^k, \ldots, S_m^k),
	\]
	where each $S_i^k$ is created via shifting all the elements in $S_i$ by $k$ places. Taking $S_1^k$ for example, it means $S_1^k := (K + 2 - k, \ldots, K + 1, 1, 2, \ldots, K + 1 - k)$. One can easily verify that the resulting set of permutation matrices $\tilde\cP_K := \{\bs{I}, \tilde\bP_1, \ldots, \tilde\bP_K\}$ satisfies Assumption~\ref{as:permutation} since it is constructed by cyclic permutations\footnote{Notice that the ``$\tilde{\sigma}_k$'' described here is exactly the same as the ``$\tilde{\sigma}_k$'' in~\eqref{eq:cycperm}}. However, since $\tilde\cP_K$ is blind of $\bX$, Assumption~\ref{as:trace} may not hold. To overcome this challenge, in Algorithm~\ref{alg:perm} we apply an iterative algorithm where in each round, we set $\sigma_k := \pi^{-1} \circ \tilde{\sigma}_k \circ \pi$ for some random permutation $\pi$ and loop until it reaches the number of rounds limit or the resulting $\cP_K$ satisfies Assumption~\ref{as:trace} (Step~6). This allows Algorithm~\ref{alg:perm} to still preserve Assumption~\ref{as:permutation}, while being more adaptive to $\bX$. 
}
% In fact, finding a permutation set that just satisfies Assumption~\ref{as:permutation} is trivial, as we can easily divide the indices $\{1, \ldots, n\}$ into $K+1$ subsets and change the order of subsets for permutation set construction. However, such construct cannot create sufficient randomness to satisfy Assumption~\ref{as:trace} for an arbitrary $\bX$. To generate extra randomness, we need to first shuffle the permutation of all indices and then construct $\cP_K$ on the randomized data. 
In Proposition~\ref{prop:algorithm}, we show that after doing $T$-th round of such iterations, Algorithm~\ref{alg:perm} is able to deliver a $\cP_K$ satisfying the desired properties with probability at least $1 - \frac{1}{K^T}$.

\begin{proposition}\label{prop:algorithm}
	Given $K, T$, we have that there exists a $\mathcal{P}_K$ satisfying Assumptions~\ref{as:permutation} and~\ref{as:trace}. Moreover, {\revone Algorithm~\ref{alg:perm} always returns a $\mathcal{P}_K$ that satisfies Assumption~\ref{as:permutation}; and with probability at least $1 - \frac{1}{K^T}$, the returned $\mathcal{P}_K$ also satisfies Assumption~\ref{as:trace}.}
\end{proposition}

%\textcolor{red}{To edit:}
Notice that throughout this article, we assume that the alternative class is in the form $\bY = \bX \beta + b \bZ + \bv$ for some $b \neq 0$, whence we invoke Assumption~\ref{as:trace} to increase its statistical power. When the alternative class follows other forms, such as $\bY = \bX \beta +  f(\bZ) + \bv$ with some nonlinear function $f : \R^n \mapsto \R^n$, one may not necessarily need Assumption~\ref{as:trace} anymore. 
Instead, one may need other assumptions on $\cP_K$ to adapt to the nonlinear function $f(\cdot)$. In light of Algorithm~\ref{alg:perm} and our theoretical statements, we summarize an implementation of RPT in Algorithm~\ref{alg:test}. {\rev The maximum time complexities of Algorithms~\ref{alg:perm} and~\ref{alg:test} are {\revone $O(T K np)$ and $O(T Knp + K np^2)$} respectively, where $T$ is the maximum number of iterations. The expected time complexities of the two algorithms are instead {\revone $O(K np)$ and $O(K np^2)$}, respectively.} We also remark that due to the construction of the permutation set in Algorithm~\ref{alg:perm}, RPT is inherently a randomized procedure, and unlike permutation tests or bootstrapping, this variability due to randomness cannot be reduced by increasing computational time. It would be of interest to propose a new test that eradicates such reproducibility issue while maintaining all the beneficial features of Algorithm~\ref{alg:perm}, which we leave for future work.

\begin{algorithm}[!t]
	\DontPrintSemicolon
	\caption{\label{alg:test}Residual Permutation Test (RPT)}
	\KwIn{design matrix $\bs{X}\in\mathbb{R}^{n\times p}$, additional covariate of of interest $\bs{Z}\in\mathbb{R}^n$, response vector $\bs{Y}\in\mathbb{R}^n$, number of permutations $K\in\mathbb{N}$, maximal number of iterations $T \in \N$.}
	
	Find an orthonormal matrix $\bs{V}_0\in\mathbb{R}^{n\times (n-p)}$ such that  $\bs{V}_0^\top \bs{X} = 0$.\;
	
	Apply Algorithm~\ref{alg:perm} with inputs $K, T$ and $\bV_0$ to generate $K$ permutation matrices $\{\bs{P}_1,\ldots,\bs{P}_K\}$.
	
	\For{$k = 1,\ldots, K$}{
		Set $\bs{V}_k := \bs{P}_k \bs{V}_0$.\;
		Find an orthonormal matrix $\tilde{\bs{V}}_k \in\mathbb{R}^{n\times (n-2p)}$ such that $\matspan(\tilde{\bs{V}}_k) \subseteq \matspan(\bV_0) \cap \matspan(\bV_k)$.
		% such that $\tilde{\bs{V}}_k^\top (\bs{X}, \bs{P}_k\bs{X}) = 0$.
		\;
		Compute 
		\[
		a_k := \left|\langle \tilde{\bs{V}}_k^\top \bZ, \tilde{\bs{V}}_k^\top \bY\rangle\right| \quad \textrm{and} \quad b_k :=\left|\langle \tilde{\bs{V}}_k^\top \bZ, \tilde{\bs{V}}_k^\top \bP_k \bY\rangle\right|,
		\]
		where $<\cdot, \cdot>$ denotes the inner product.
	}
	\KwOut{p-value $\pval:= \frac{1}{K+1}(1 + \sum_{k=1}^K \mathbbm{1}\{\min_{1\leq j\leq K} a_j \leq b_k\}) $\;}
\end{algorithm}

% \subsection{Proof sketch of Theorems~\ref{thm:alternative1} and~\ref{thm:alternative2}}\label{sec:sketch}

% In this section, we provide a proof sketch of Theorems~\ref{thm:alternative1} and~~\ref{thm:alternative2}. The complete proof is deferred to Section~\ref{sec:pfalternative}.

\subsection{Proof sketch of Theorem~\ref{thm:alternative1}}\label{sec:sketch1}

As $K$ is finite, we mainly need to prove that for any fixed $\bs{P}_j, \bs{P}_k \in \mathcal{P}_K$, with probability converging to $1$, $|\hat{\bs{e}}^\top \bs{V}_0^\top \tilde{\bs{V}}_j \tilde{\bs{V}}_j^\top \bs{V}_0 \hat{\bs{\varepsilon}}| > |\hat{\bs{e}}^\top \bs{V}_0^\top \tilde{\bs{V}}_k \tilde{\bs{V}}_k^\top \bs{V}_k \hat{\bs{\varepsilon}}|$.
%\[
%T\left(\tilde{\bs{V}}_j^\top \bs{V}_0 \hat{\bs{e}}, \tilde{\bs{V}}_j^\top \bv\right) > T\left(\tilde{\bs{V}}_k^\top \bs{V}_0 \hat{\bs{e}}, \tilde{\bs{V}}_k^\top \bP_k \bv \right).
%\]
%Using the decomposition $\bY = \bX \beta + \bZ b + \bv$, one can derive from basic algebra (see proof of ... for details) 
To achieve this goal, we need to prove that
\begin{equation}\label{eq:noisecor}
	\frac{|\be^\top \tilde{\bs{V}}_j \tilde{\bs{V}}_j^\top \bv|}{b n} = o_\pr(1)
\end{equation}
(i.e., that the empirical correlation between the projection of $\be$ and $\bv$ onto the space spanned by $\tilde{\bs{V}}_j$ is negligible with high probability) and that with high probability,
\begin{equation}\label{eq:signal}
	\frac{\be^\top  \tilde {\bs{V}}_j \tilde {\bs{V}}_j^\top  \be - \be^\top  \tilde {\bs{V}}_k \tilde {\bs{V}}_k^\top  \bP_k \be}{n} \gtrsim 1 \quad \text{and} \quad \frac{\be^\top  \tilde {\bs{V}}_j \tilde {\bs{V}}_j^\top  \be + \be^\top  \tilde {\bs{V}}_k \tilde {\bs{V}}_k^\top  \bP_k \be}{n} \gtrsim 1.
\end{equation}

To prove \eqref{eq:noisecor}, when $t = 1$, the result is straightforward from Chebyshev's inequality; hence the main challenge is to prove the case $t \in [0, 1)$. {\rev In Corollary~\ref{cor:convergence}, we establish a more general result, which characterizes the stochastic convergence of $|\bw^\top \bv|$ where $\bw$ is an arbitrary deterministic vector and $\bv$ can be heteroscedastic. We refer the readers to Section~\ref{sec:alternativehete} for its statement as well as the intuitions for its proof.}

% For the first term, we prove that for any fixed $B$, 
% \[
% \E[|\bw^\top (\bs{f} - \E[\bs{f}])|] = o(n^{\frac{1 - t}{1 + t}} \|\bw\|_2^2)
% \]

% There are two main challenges to establish \eqref{eq:noisecor}, namely the lack of structural assumptions of arbitrary fixed design matrix $\tilde{\bV}_j \tilde{\bV}_j^\top$  and the heavy tailed noise $\bv$. To address the first challenge, we introduce a random permutation matrix $\bP\sim\mathrm{Unif}(\mathcal{P})$ independent from $\be$ and $\bv$. From the exchangeability of $\bv$, we have $\be^\top \tilde{\bs{V}}_j \tilde{\bs{V}}_j^\top \bv \overset{d}{=} \be^\top \tilde{\bs{V}}_j \tilde{\bs{V}}_j^\top \bP \bv$, so that we can take expectation over $\bP$ to ``smooth out'' the matrix $\tilde{\bs{V}}_j \tilde{\bs{V}}_j^\top$ for a tighter control of~\eqref{eq:noisecor}. As will be demonstrated in the proof of Theorem~\ref{thm:alternative1}, armed with this new ``smoothing'' technique, we only need to show $\|\bv\|_2^2 = o_\pr(n^{2 / (1 + t)})$ and $\sum_{i=1}^n \varepsilon_i = o_\pr(n^{1 / (1 + t)})$, which can be solved by applying the Marcinkiewicz–Zygmund strong law of large numbers for heavy-tailed summands~\citep{gut2005probability}.

%To tackle the second challenge, we apply a truncation to the noise variables $\varepsilon_i$. 
%For the second challenge, due to such heavy-tailed property, we can not directly use e.g. Chebyshev's inequality to bound~\eqref{eq:noisecor}, as it requires $\varepsilon_i$'s to have at least a bounded second order moment. 

Thanks to the bounded second order moment of $e_i$'s, the analysis of \eqref{eq:signal} is simpler. Specially, by using a variant of weak law of large number we develop in this paper to control the weighted sum of $e_i^2$'s and a Chebyshev's inequality to control the sum of cross terms $e_i e_j$'s, we can have that with probability converging to $1$,
\[
\frac{\be^\top  \tilde {\bs{V}}_j \tilde {\bs{V}}_j^\top  \be - \be^\top  \tilde {\bs{V}}_k \tilde {\bs{V}}_k^\top  \bP_k \be}{n} \gtrsim \frac{n - 3p - \mathrm{tr}[\bX (\bX^\top \bX)^{-1} \bX^\top \bs{P}_k]}{n}.
\]
Using that $\bP_k$ satisfies Assumption~\ref{as:trace}, we easily obtain the desired result.

{\rev 
	\section{Statistical power under broader classes of alternatives}\label{sec:alternativehete}
	
	In Theorems~\ref{thm:alternative1} and~\ref{thm:alternative2}, for simplicity of illustration, we consider the class of alternative hypotheses where $\bZ$ is a linear model and all the noises are i.i.d. In this section, we consider two relaxations of these assumptions. First, we assume that $\bZ$ follows a linear model with respect to the covariates and all noises are heteroscedastic; second, we allow $\bZ$ to have some nonlinearity, at the cost of slightly more restrictions on the degree of heteroscedasticity of $\varepsilon_i$'s. 
	
	\begin{assumption}\label{as:hete}
		% Same as Assumption~\ref{as:iid}, except that the $\varepsilon_i$'s are $e_i$'s are not identically distributed. Suppose also that for for some constant $t \in [0,1]$, $\E(|\varepsilon_i|^{1 + t}) \le 1$ and $1 \le \E[e_i^2] \le 2$ for any $1 \le i \le n$. Moreover, we assume $\bv$ and $\be$ satisfy that for any constant $B > 0$,
		%     \[
		%     \sup_{n \ge 1} \sum_{i=1}^n \pr\left(|\varepsilon_i|^{1 + t} \ge B i\right) < \infty \quad\textrm{and}\quad \lim_{n \to \infty} \sum_{i=1}^n \pr(e_i^2 \ge B n) = 0;
		%     \]
		% and when $t < 1$, there exists a sequence $a_n$ 
		% \[
		% \sum_{i=1}^n \E[|\varepsilon_i|^{1 + t} \one(|\varepsilon_i| \ge \delta_n)] = o(n).
		% \]
		% \kaiyue{
		$\bZ$ follows the model in~\eqref{Eq:Model2}; the random vectors $\bv$ and $\be$ are first $n$ components of two independent infinite sequences of independent zero-mean random variables $\varepsilon_1, \varepsilon_2, \ldots$ and $e_1, e_2, \ldots$, respectively. Suppose also that 
		\begin{itemize}
			\item for some universal constants $C_e, c_e > 0$, we have $\mathbb{E}[e_i^2] \leq C_e$ for all $1 \le i < \infty$, and 
			\begin{equation}
				\label{eq:ashete1}
				\lim_{a \to \infty}\sup_{n \ge 1}  \frac{1}{n}\sum_{i=1}^n \E[ e_i^2 \one(e_i^2 \ge a)] = 0 \quad \textrm{and}\quad \liminf_{n \to \infty} \frac{1}{n} \sum_{i=1}^n \E[e_i^2] > c_e.
			\end{equation}
			\item for some fixed $t\in[0,1]$ and some universal constant $C_\varepsilon > 0$, we have $\mathbb{E}[|\varepsilon_i|^{1+t}] \leq C_{\varepsilon}$ for all $i$ and given any fixed $B > 0$,
			\begin{equation}
				\label{eq:ashete2}
				\sum_{i=1}^\infty \pr\left(|\varepsilon_i|^{1 + t} \ge B i\right) < \infty.
			\end{equation}
			% \item for all $1 \le i < \infty$, $\E[e_i^2] \le C_e$ and $\E(|\varepsilon_i|^{1 + t}) \le C_\varepsilon$ for some constants $t \in [0,1]$ and $C_e, C_\varepsilon > 0$, moreover, there exists a constant $c_e > 0$ such that $\liminf_{n \to \infty} \frac{1}{n} \sum_{i=1}^n \E[e_i^2] > c_e$;
			% \item given any fixed constant $B > 0$,
			% \begin{equation}\label{eq:ashete}
			%     \sum_{i=1}^\infty \pr\left(|\varepsilon_i|^{1 + t} \ge B i\right) < \infty \quad\textrm{and}\quad \lim_{a \to \infty}\sup_{n \ge 1}  \frac{1}{n}\sum_{i=1}^n \E[ e_i^2 \one(e_i^2 \ge a)] = 0.
			% \end{equation}
		\end{itemize}
		% }
	\end{assumption}
	
	Informally speaking, instead of requiring all noises to be i.i.d., Assumption~\ref{as:hete} allows noises to be heteroscedastic, under certain restriction on the degree of heteroscedasticity of $\varepsilon_i$'s and $e_i$'s. To intuitively understand~\eqref{eq:ashete2} and the first equation in~\eqref{eq:ashete1}, taking \eqref{eq:ashete2} for example, a sufficient condition for it to hold is that there exists a zero-meaned random variable $\varepsilon_\infty$ satisfying that $\E[|\varepsilon_\infty|^{1 + t}] < \infty$ and that for any $1 \le i < \infty$, $|\varepsilon_i| \preceq_d |\varepsilon_\infty|$, i.e., that $|\varepsilon_i|$ is stochastically dominanted by $|\varepsilon_\infty|$ uniformly for all $\varepsilon_i$'s. When such $\varepsilon_\infty$ exists, for any $n \ge 1$, 
	\begin{align*}
		\sum_{i=1}^n \pr\left(|\varepsilon_i|^{1 + t} \ge B i\right) & \le \sum_{i=1}^n \pr\left(|\varepsilon_\infty|^{1 + t} \ge B i\right) \\
		& \le \int_0^\infty \pr\left(\frac{|\varepsilon_\infty|^{1 + t}}{B} \ge x\right) dx = \E\left[\frac{|\varepsilon_\infty|^{1 + t}}{B}\right] < \infty,
	\end{align*}
	which satisfies~\eqref{eq:ashete2}. Analogously, when there exists a zero-meaned random variable $e_\infty$ with $\E[|e_\infty|^2] < \infty$ and $|e_\infty|$ stochastically dominates all $|e_i|$'s, we can also have
	\[
	\sup_{n \ge 1} \frac{1}{n} \sum_{i = 1}^n \E[e_i^2 \one(e_i^2 \ge a)] \le \E[e_\infty^2 \one(e_\infty^2 \ge a)],
	\]
	which, from dominated convergence theorem, converges to zero as $a \to \infty$. Armed with Assumption~\ref{as:hete}, we have the following theorem on the power of RPT.
	
	\begin{theorem}\label{thm:althete}
		Fix $K\in\mathbb{N}$. Assume that $(\bs{X},\bs{Z},\bs{Y})$ is generated under model~\eqref{Eq:Model} with $\bv$ and $\bZ$ satisfying Assumption~\ref{as:hete}; $\mathcal{P}_K$ satisfies Assumption~\ref{as:trace}. In the asymptotic regime where $b$ and $p$ vary with $n$ in a way such that $n > (3 C_e/ c_e +m)p$ for some constant $m > 0$ and $b$ satisfies~\eqref{eq:bn},
		we have $\lim_{n \to \infty} \prob\left(\pval > \frac{1}{K + 1}\right) = 0$.
	\end{theorem}
	
	In the following, we show that when we are willing to impose slightly more restrictions on the degree of heterogeneity of $\varepsilon_i$'s, we can still maintain the $n^{-\frac{t}{1 + t}}$ rate even when the expectation of $\bZ$ cannot be viewed as a linear function of $\bX$.
	
	\begin{assumption}
		\label{as:nonlinear}
		$\bZ$ is generated according to $\bZ = \bX \beta^Z + \bs{h} + \be$,
		where $\bs{h}$ is an $n$-dimensional deterministic vector; $\bv$ and $\be$ follow the same assumptions as the $\bv$ and $\be$ in Assumption~\ref{as:hete}, with the addition that
		\[
		\lim_{a \to \infty}\sup_{i \ge 1} \E[|\varepsilon_i|^{1 + t} \one(|\varepsilon_i|^{1 + t} > a)] = 0.
		\]
	\end{assumption}
	In Assumption~\ref{as:nonlinear}, to alleviate the linearity requirement of $\bZ$, we introduce an additional
	uniform constraint concerning the tails of $\varepsilon_i$'s. It is worth noting that this new condition is satisfied when there exists a $\varepsilon_\infty$ with $\E[|\varepsilon_\infty|^{1 + t}] < \infty$ that stochastically dominates all $\varepsilon_i$'s. Specifically, when such $\varepsilon_\infty$ exists, then
	\[
	\sup_{i \ge 1} \E[|\varepsilon_i|^{1 + t} \one(|\varepsilon_i|^{1 + t} > a)] \le \E[|\varepsilon_\infty|^{1 + t} \one(|\varepsilon_\infty|^{1 + t} > a)],
	\]
	which, from dominated convergence theorem, converges to zero as $a \to \infty$.
	
	\begin{theorem}\label{thm:altnonlinear}
		Fix $K\in\mathbb{N}$. Assume that $(\bs{X},\bs{Z},\bs{Y})$ is generated under model~\eqref{Eq:Model} with $\bv$ and $\bZ$ satisfying Assumption~\ref{as:nonlinear}; $\mathcal{P}_K$ satisfies Assumption~\ref{as:trace}. In the asymptotic regime where $b, p$ and $\bs{h}$ vary with $n$ in a way such that for some constants $m, r$ with $m > 0, r < c_e$, $\limsup_{n \to \infty} \|\bs{h}\|_2^2 / n \le r$, $n > (3C_e / (c_e - r) +m)p$ and $b$ satisfies~\eqref{eq:bn}, 
		we have $\lim_{n\to\infty}\mathbb{P}\bigl(\phi > \frac{1}{K+1}\bigr) = 0$. 
	\end{theorem}
	
	{\revone When $\mathcal{P}_K$ does not satisfy Assumption~\ref{as:trace}, the same conclusion in Theorem~\ref{thm:altnonlinear} still holds with $n > (4C_e / (c_e - r) +m)p$; see also the analogous comment after Theorem~\ref{thm:alternative1}. Notice also that} when $\bZ$ and $\bP_1, \ldots, \bP_K$ are all deterministic and we keep the data generating model of $\bY$ as~\eqref{Eq:Model}, following an analysis analogous to the proof of Theorem~\ref{thm:altnonlinear}, we can prove that $\pval$ is still asymptotically powerful whenever
	\begin{equation}\label{eq:bnz}
		|b| = \Omega (z_n^{-1} n^{-\frac{t}{1+t}}) \;\textrm{if}\;\; t < 1 \quad\textrm{or}\quad |b| = \omega(z_n^{-1} n^{-\frac{1}{2}}) \;\textrm{if}\;\; t = 1,
	\end{equation}
	where 
	\begin{equation}\label{eq:detz}
		z_n := \left(\frac{\|\bV_0^\top \bZ\|_2}{\sqrt{n}} \right)^{-1} \cdot \min_{1 \le j, k \le K} \min_{z \in \{0, 1\}} \frac{\bZ^\top \tilde{\bV}_j \tilde{\bV}_j^\top \bZ + (-1)^z \bZ^\top \tilde{\bV}_k \tilde{\bV}_k^\top \bP_k \bZ}{n}.
	\end{equation}
	In other words, $\bZ$ does not necessarily need to be random for RPT to have power. 
	To formally describe the above intuition, we have the following corollary.
	\begin{corollary}\label{cor:zdet}
		Fix $K\in\mathbb{N}$. Assume that $(\bs{X},\bs{Z},\bs{Y})$ is generated under model~\eqref{Eq:Model} with $\bv$ as in Assumption~\ref{as:nonlinear} and $p < n / 2$. $\bZ, \cP_K$ are deterministic such that $\|\bV_0^\top \bZ\|_2 > 0$ and $z_n > 0$ uniformly for all $n \ge 3$. In the asymptotic regime where $b$ satisfies~\eqref{eq:bnz}, we have $\lim_{n\to\infty}\mathbb{P}\bigl(\phi > \frac{1}{K+1}\bigr) = 0$.
	\end{corollary}

	An inspection of the proof of Theorem~\ref{thm:altnonlinear} reveals that,  when $\bZ$ satisfies the random model as prescribed in Assumption~\ref{as:nonlinear} and 
	$(n, p)$ is as in Theorem~\ref{thm:altnonlinear}, with probability converging to $1$, $z_n \asymp 1$, and the scale delivered by~\eqref{eq:bnz} and~\eqref{eq:bn} coincide. In practice, one can choose $\bP_1, \ldots, \bP_K$ to maximize~\eqref{eq:detz}.
	% In practice, one can also generate the set of permutation matrices to satisfy~\eqref{eq:detz}. 
	% We nevertheless keep the model of $\bZ$ to follow certain randomized model as in~\eqref{Eq:NewDesign} for more intuitive explanation. 
	% In fact, one can prove that when $\bZ$ follows the randomized model in~\eqref{Eq:NewDesign}, then~\eqref{eq:detz} holds with high probability. 
	% Once we further relax the requirement of $\varepsilon_i$'s to the one in Assumption~\ref{as:hete},
	
	% \textcolor{red}{
	% \begin{itemize}
	%     \item Originally, our plan was to assume that $e_i$'s are not identically distributed;
	%     \item When the $\varepsilon_i$'s are not identical, we still need some uniform bound on the tail of $\varepsilon_i$'s to achieve the desired rate;
	%     \item When $\varepsilon_i = g(X_i) \epsilon_i$ for some identically distributed $\epsilon_i$'s, I think we still need some assumptions on the largest and smallest magnitude of $g(X_i)$ to have power, no? In any case, at least for the current version, I am not sure if the condition is correct;
	%     \item I think we may need to change the $\Omega(n^{-\frac{t}{1 + t}})$ to $\omega(n^{-\frac{t}{1 + t}})$.
	% \end{itemize}
	% }
	
	In order to prove Theorem~\ref{thm:altnonlinear}, one needs to understand the rate of convergence of the term $|\bs{h}^\top \tilde{\bV}_j \tilde{\bV}_j^\top \bv|$. Based on our analysis of this term in the proof of Theorem~\ref{thm:altnonlinear}, it is straightforward to get the following corollary, which characterizes the rate of convergence of $\bw^\top \bv$ for arbitrary deterministic $n$-dimensional vector $\bw$, which we believe is of independent interest:
	
	\begin{corollary}\label{cor:convergence}
		Consider the $\bv$ as in Assumption~\ref{as:nonlinear} with $t \in [0, 1)$. Then for any fixed constant $\delta > 0$,
		\[
		\lim_{n \to \infty} \sup_{\bw \in \mathcal{S}^{n - 1}} \pr\left( |\bw^\top \bv| > \delta n^{\frac{1 - t}{2(1 + t)}}\right) = 0,
		\]
		where $\mathcal{S}^{n - 1} := \{\bw \in \R^n: \|\bw\|_2 = 1\}$ is the $(n - 1)$-sphere in the $n$-dimensional Euclidean space.
	\end{corollary}
	Informally then, Corollary~\ref{cor:convergence} means that $|\bw^\top \bv| = o_\pr(\|\bw\|_2 n^{\frac{1 - t}{2(1 + t)}})$ for any choice of $n$-dimensional unit vector $\bw$. For example, one can even allow $\max_{1 \le i \le n} |w_i| / \|\bw\|_2 \asymp 1$. This enables us to prove the rate of convergence of $\bs{h} \tilde{\bV}_j \tilde{\bV}_j^\top \bv$ without any regularity condition on $\bX$ or $\bs{h}$. 
	
	To prove Corollary~\ref{cor:convergence} (or equivalently to find the rate of convergence of $\bs{h}^\top \tilde{\bV}_j \tilde{\bV}_j^\top \bv$),
	% To control $\bs{h}^\top \tilde{\bV}_j \tilde{\bV}_j^\top \bv$ (or equivalently proving ),
	% To get Corollary~\ref{cor:convergence}, 
	the main challenge is to deal with the heavy-tailedness of $\varepsilon_i$'s. We apply a truncation $f_i := \varepsilon_i \one(|\varepsilon_i| \ge B i)$ and seek to control $\bw^\top \bs{f}$ and $\bw^\top (\bv - \bs{f})$ separately, where for simplicity we write $\bs{f} := (f_1, \ldots, f_n)^\top$. We seek to control $\bw^\top \bs{f}$ via proving the following two convergence results (see the Supplementary Material for its proof): 
	\begin{itemize}
		\item For any fixed $B > 0$, $\sup_{\bw \in \mathcal{S}^{n - 1}}\E[|\bw^\top (\bs{f} - \E[\bs{f}])|^2] = o(n^{\frac{1 - t}{1 + t}})$;
		\item As $B \to \infty$, we have that $\sup_{n \ge 1} \|\E[\bs{f}]\|_2^2 / n^{\frac{1 - t}{1 + t}} \to 0$ (notice that here $\|\E[\bs{f}]\|_2^2$ is a function of $B$).
	\end{itemize}
	With the above results, it is straightforward that for any constant $\delta > 0$, by choosing the constant $B_\delta > 0$ sufficiently large, uniformly for all $n \ge 2$,
	\[
	\sup_{\bw \in \mathcal{S}^{n - 1}} |\bw^\top \E[\bs{f}_\delta]| \le \|\E[\bs{f}_\delta]\|_2 \le \frac{\delta}{2} \cdot n^{\frac{1 - t}{2(1 + t)}},
	\]
	where we rewrite $\bs{f}$ as $\bs{f}_\delta$ to emphasize its dependence on $B_\delta$. Moreover, by Chebyshev's inequality, we further have from the above convergence results that as $n \to \infty$,
	\begin{equation}\label{eq:a1sketch}
		\sup_{\bw \in \mathcal{S}^{n - 1}}\pr\left(|\bw^\top (\bs{f} - \E[\bs{f}])| > \frac{\delta}{2} \cdot n^{\frac{1 - t}{2(1 + t)}}\right) \to 0.
	\end{equation}
	Taking together, we control $\bw \bs{f}_\delta$; and our only remaining job is to control the convergence of $\bw^\top (\bv - \bs{f}_\delta)$, which we prove by an argument similar in spirit to Borel-Cantelli Lemma~\citep{Durrett19}.
}

% {\revone
% Finally, we would like to mention that in the low-dimensional regime  ($p = o(n)$),~\eqref{eq:detz} is, with high-probability, of order $O(1)$ even when $\bZ$ is arbitrary deterministic vector and the $\cP_K$'s are generated according to Algorithm~\ref{alg:perm} with $T = 1$. This further gives us the following corollary:
% \begin{corollary}
% \label{cor:p=o(n)}
%     Fix $K \in \N$. Assume that $(\bX, \bZ, \bY)$ is generated under model~\eqref{Eq:Model} with $(\bX, \bZ)$ deterministic, $\bv$ satisfies Assumption~\ref{as:nonlinear}, and $\cP_K$ is generated according to Algorithm~\ref{alg:perm} with $T = 1$, in the asymptotic regime where $b$ and $p$ vary with $n$ in a way such that $p / n \to 0$ and $b$ satisfies~\eqref{eq:bn}, 
% $\lim_{n \to \infty} \prob\left(\pval > \frac{1}{K + 1}\right) = 0$.
% \end{corollary}
% }

\section{Minimax {\rev rate} optimality of coefficient tests}\label{sec:minimax}

In this section, we investigate the minimax {\rev rate} optimality of RPT by deriving the statistical efficiency limit of coefficient tests with heavy-tailed noises. Without loss of generality, we denote $\cD_t$ as the class of distributions with $t$-th order moment bounded between $[1, 2]$, i.e., for some $t > 0$ and some random variable $\xi$ with distribution $\prob_{\xi}$, 
\[
\prob_\xi \in \cD_t \quad \textrm{iff} \quad \E[\xi] = 0 \;\text{and}\; 1 \le \E[|\xi|^t] \le 2. 
\]
Notice that in the above definition, the thresholds $1$ and $2$ are chosen for notational simplicity, in fact, the general conclusions in this section still hold for $\eta_1 \le \E[|\xi|^t] \le \eta_2$ with arbitrary $\eta_1, \eta_2 > 0$. We further let $\tilde{\cD}$ denote the class of distributions such that
\[
\prob_\xi \in \tilde{\cD} \quad \textrm{iff} \quad \prob\left(|\xi| > \frac{1}{2}\right) > \frac{1}{2}.
\]
%In this section, we will work with fixed design matrices $\bX$.
With a slight abuse of notation, given $b_0 \in \R$, we write $\prob_{b_0}$ as a distribution of $(\bY, \bZ)$ such that the $b$ in~\eqref{Eq:Model} is equal to $b_0$. Note that we have suppressed the dependence of $\mathbb{P}_{b_0}$ on  $\bX, \beta,\beta^Z,\mathbb{P}_\varepsilon$ and $\mathbb{P}_e$ for notational simplicity. In particular, $\prob_0$ corresponds to the null hypothesis.

From above, we define the minimax testing risk indexed by $t, \bX$ as
\begin{align*}
	& \mathcal{R}_{t, \bX}(\tau) := \\
	& \quad \inf_{\varphi \in \Phi} \left\{\sup_{\prob_\varepsilon \in \cD_t} \sup_{\prob_e \in \cD_1 \cap \tilde{\cD}} \sup_{\beta, \beta^Z \in \R^p} \prob_0(\varphi = 1) + \sup_{|b| \ge \tau}\sup_{\prob_\varepsilon \in \cD_t} \sup_{\prob_e \in \cD_1 \cap \tilde{\cD}} \sup_{\beta, \beta^Z \in \R^p} \prob_b(\varphi = 0)\right\}.
\end{align*}
Here $\Phi$ corresponds to the class of measurable functions of data $(\bX, \bZ, \bY)$ taking value in $\{0,1\}$. 
We first establish the following nonasymptotic minimax lower bound for testing $H_0:b=0$ against $H_1:b\neq 0$ in the presence of heavy-tailed noises. %Also, with a slight abuse of notation, $\prob_b$ corresponds to the distribution where the regression coefficient from $\bY$ to $\bZ$ is equal to $b$. Apparently, $\prob_0$ corresponds to the null hypothesis.

\begin{theorem}\label{thm:lower}
	Let $t \in [0,1]$ be given and assume that $\bs{\varepsilon}$ and $\bs{e}$ satisfy  Assumption~\ref{as:iid}. For any $\eta \in (0,1)$, there exists a constant $c_\eta > 0$ depending only on $\eta$ such that for any fixed design $\bs{X}$,
	\[
	\mathcal{R}_{1 + t, \bX}\left(c_\eta n^{-\frac{t}{1 + t}}\right) \ge 1 - \eta.
	\]
\end{theorem}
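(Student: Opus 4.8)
The plan is to use the standard two-point (Le Cam) reduction for minimax testing lower bounds, with the twist that the two hypotheses must be made indistinguishable by choosing the noise distributions adversarially. Fix the design $\bX$, and without loss of generality set $\beta = \beta^Z = 0$ (the $\sup$ over $\beta,\beta^Z$ only helps the adversary, and these nuisance parameters do not interact with the information-theoretic argument once we have fixed the noise law). The testing risk is bounded below by the risk of distinguishing two specific distributions of $(\bY,\bZ)$: under $H_0$ we take $b = 0$ with some noise law $\prob_\varepsilon^{(0)}$, and under $H_1$ we take $b = \tau := c_\eta n^{-t/(1+t)}$ with a possibly different noise law $\prob_\varepsilon^{(1)}$, keeping $\prob_e$ fixed throughout. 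By the classical inequality $\mathcal{R}_{t,\bX}(\tau) \ge 1 - \mathrm{TV}\bigl(\prob_0^{(0)}, \prob_\tau^{(1)}\bigr)$, it suffices to exhibit noise laws in the required moment classes for which this total variation distance is at most $\eta$.

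The heart of the construction is to absorb the shift $b\bZ = \tau \bZ$ into the noise. Concretely, I would choose $\prob_\varepsilon^{(1)}$ to be the law of $\varepsilon^{(0)} - \tau Z$ conditionally on a suitable event — or, more cleanly, work one coordinate at a time and design $\varepsilon_i$ to have a heavy tail of exactly the critical order. The canonical choice: let $\varepsilon_i^{(0)}$ have a density proportional to $|x|^{-(2+t)}$ for $|x|$ large (so that $\E|\varepsilon_i^{(0)}|^{1+t} < \infty$ but barely), truncated/smoothed near the origin to make it a bona fide mean-zero density in $\cD_{1+t}$. Under $H_1$ the observation in coordinate $i$ is $\tau Z_i + \varepsilon_i^{(1)}$; we pick $\varepsilon_i^{(1)}$ so that $\tau Z_i + \varepsilon_i^{(1)} \overset{d}{=} \varepsilon_i^{(0)}$ marginally, which is possible because a shift by $\tau Z_i$ of a heavy-tailed density of index $2+t$ changes the density pointwise by a relative amount of order $\tau|Z_i| \cdot |x|^{-1}$, and integrating the pointwise difference gives a per-coordinate contribution to the total variation of order $\tau^{1+t}\E|Z_i|^{1+t}$ up to constants (this is the standard fact that shifting a density with $(1+t)$ finite moments by $\delta$ moves it by $O(\delta^{1+t})$ in $L^1$ when the index is exactly $2+t$; more precisely one gets $O(\delta + \delta^{1+t}\log)$ and the clean $n^{-t/(1+t)}$ rate comes from optimizing). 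Summing over $n$ coordinates and using $\E|Z_i|^{1+t} = O(1)$ from $\prob_e \in \cD_1 \cap \tilde\cD$ (and that $Z_i = e_i$ after setting $\beta^Z = 0$), the total variation between the product laws is bounded, via Pinsker or a direct $L^1$ tensorization bound, by something like $C\sqrt{n\,\tau^{1+t}} = C\sqrt{n \cdot c_\eta^{1+t} n^{-t}} = C c_\eta^{(1+t)/2}$, which is $\le \eta$ once $c_\eta$ is chosen small enough depending only on $\eta$. The role of the auxiliary class $\tilde\cD$ (forcing $\prob(|e_i|>1/2)>1/2$) is to guarantee $\prob_e$ is genuinely spread out so that the shift really is detectable at this scale and no smaller — it ensures the lower bound is tight, but for the lower bound direction itself we mainly need the upper moment bound on $|Z_i|$.

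I expect the main obstacle to be the careful construction of the pair of noise densities and the quantitative control of the per-coordinate total variation. One must: (i) write down an explicit mean-zero density in $\cD_{1+t}$ with the exact tail index $2+t$ and verify the moment bounds lie in $[1,2]$ (a scaling argument handles this); (ii) show that for each realization $z$ of $Z_i$ there is a mean-zero density $\prob_\varepsilon^{(1)}$, also in $\cD_{1+t}$ with uniformly (in $z$, for $|z|$ in the typical range) bounded constants, such that $z\mapsto$ its $\tau z$-shift matches $\prob_\varepsilon^{(0)}$ — equivalently just take $\prob_\varepsilon^{(1)}$ to be the $(-\tau z)$-shift of $\prob_\varepsilon^{(0)}$ and check its mean and moments are still controlled, which costs only an $O(\tau|z|)$ correction; and (iii) bound $\int |\,p(x) - p(x - \tau z)\,|\,dx \le C\min(1, \tau|z|\cdot M_t)$ with $M_t$ related to $\int |x|^{-1}\wedge(\cdot)$, then tensorize. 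A subtlety is that $\varepsilon_i^{(1)}$ depends on $Z_i$, so the joint law under $H_1$ is $\int \prod_i \bigl[\prob_\varepsilon^{(1)}(\cdot\,;z_i)\bigr]\,d\prob_e(z)$; but since by design the inner product over $x$-coordinates is exactly $\prod_i \prob_\varepsilon^{(0)}$ for every $z$, the $\bZ$-marginal integrates out trivially and the two joint laws of $(\bY,\bZ)$ differ only through the correction terms, keeping the argument clean. Everything else — reduction to two points, $\sup$ over nuisance parameters, the $1 - \mathrm{TV}$ bound — is routine.
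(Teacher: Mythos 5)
Your high-level reduction is the same as the paper's: lower-bound the minimax risk by $1 - \mathrm{TV}$ for a well-chosen two-point pair, using that the suprema in $\mathcal{R}_{1+t,\bX}$ allow different noise laws under $H_0$ and $H_1$. Beyond that, the construction you propose runs into two obstacles that I do not see how to patch within your approach.

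First, the noise law cannot depend on $\bZ$. You choose $\varepsilon_i^{(1)}$ so that $\tau Z_i + \varepsilon_i^{(1)} \overset{\mathrm{d}}{=} \varepsilon_i^{(0)}$ coordinate-wise, i.e.\ take $\prob_\varepsilon^{(1)}$ to be the $(-\tau z)$-shift of $\prob_\varepsilon^{(0)}$ for each realised $z = Z_i$. Then $\varepsilon_i^{(1)}$ is a function of $Z_i$, which violates Assumption~\ref{as:iid} ($\bv$ independent of $\be$) and lies outside the class of laws the supremum ranges over. If instead you fix one $z$-independent $\prob_\varepsilon^{(1)}$, the conditional law of $Y_i'$ given $Z_i$ necessarily disagrees with $\prob_\varepsilon^{(0)}$ coordinate by coordinate, and quantifying that disagreement is exactly the part you cannot sidestep. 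Conversely, if $\prob_\varepsilon^{(1)}$ really tracked $z$, the two joint laws of $(\bY,\bZ)$ would coincide exactly and $\mathrm{TV}=0$, which is vacuous. The ``subtlety'' you flag at the end is therefore not a technicality but where the argument breaks.

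Second, the rate arithmetic does not close, and the gap is intrinsic. With $\tau = c_\eta n^{-t/(1+t)}$ one has $n\tau^{1+t} = c_\eta^{1+t}n^{1-t}$, so $C\sqrt{n\tau^{1+t}} = Cc_\eta^{(1+t)/2}n^{(1-t)/2}$, which diverges for every $t < 1$. To make a per-coordinate KL or TV tensorization give a constant total TV you would need $\tau \lesssim n^{-1/(1+t)}$, which for $t<1$ is a strictly smaller, hence strictly weaker, separation than the claimed $n^{-t/(1+t)}$. This loss is forced: any density in $\cD_{1+t}$ places constant mass in an $O(1)$ neighbourhood of the origin, so a shift by $\delta$ already incurs $\Omega(\delta)$ per-coordinate TV, regardless of the tail index; no choice of heavy tail makes the per-coordinate cost $O(\delta^{(1+t)/t})$, which is what you would need.

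The paper avoids both obstacles by putting the heavy-tail structure in the law of $Z$ and replacing tensorization with a coupling. It takes $\varepsilon_i$ Rademacher, $Z_i$ a two-point distribution with a rare spike of size $n/\gamma$ occurring with probability $\gamma/n$ where $\gamma := -\log(1-\eta)/2$, and $b = c_\eta n^{-t/(1+t)}$ calibrated so the null noise $b\tilde Z_i + R_i$ (with $\tilde Z_i$ an independent copy of $Z_i$) has $(1+t)$-th moment exactly $2$. The alternative is $Y_i' = bZ_i + R_i$ with $\bZ' = \bZ$. The two joint laws are coupled on $\{\bZ = \tilde\bZ\}$, giving
\[
\mathrm{TV}\bigl(\prob_{\bY,\bZ}, \prob_{\bY',\bZ'}\bigr) \le \prob\bigl(\bZ \neq \tilde\bZ\bigr) \le 1 - (1-\gamma/n)^{2n},
\]
which is controlled by $\eta$ through the choice of $\gamma$. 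The coupling pays only a probability of order $\gamma/n$ per coordinate — the chance that the spike appears in either $Z_i$ or $\tilde Z_i$ — so there is no tensorization loss. If you want to keep the Le Cam skeleton, the essential change is to replace ``absorb a small shift into a smooth heavy-tailed noise density'' with ``make $Z$ a rare large spike and hide an independent replica of it in the noise,'' then bound $\mathrm{TV}$ by the coupling-failure probability.
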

Theorem~\ref{thm:lower} shows that when entries of $\bs{\varepsilon}$ have finite $(1+t)$-th moment, the minimax separation rate in $b$ for testing $H_0$ against $H_1$ is at least of order $n^{-\frac{t}{1+ t}}$, which matches the upper bound in~Theorem~\ref{thm:alternative1}.
This indicates that the rate $n^{-\frac{t}{1+ t}}$ may be a tight lower bound, and that our constructed test may be an rate optimal test. 
Nevertheless, Theorems~\ref{thm:alternative1} and~\ref{thm:alternative2} are pointwise convergence results, where both $\prob_\xi$ and $\prob_e$ are considered as fixed and does not depend on $n, p$. 
%Thus the two rates are not directly comparable.
To match the lower bound in Theorem~\ref{thm:lower}, we further provide a power control of RPT uniformly over classes of noise distributions of $\prob_\varepsilon$ and $\prob_e$. Just as in Section~\ref{sec:alternative}, we assume without loss of generality that $n$ is divisible by $K + 1$.

\begin{theorem}\label{thm:supupp}
	Fix $K \in \N$. Assume that $(\bs{X},\bs{Z},\bs{Y})$ is generated under model~\eqref{Eq:Model} with $\bv$ and $\bZ$ satisfying Assumption~\ref{as:iid} and that $\mathcal{P}_K$ satisfies Assumption~\ref{as:trace}. In an asymptotic regime where $b$ and $p$ vary with $n$ in a way such that $n > (3 + m) p$ for some constant $m > 0$ and $|b| = \Omega(n^{-\frac{t}{1 + t} + \delta})$ for some constants $t \in (0, 1]$ and $\delta > 0$, we have for any constant $\nu > 0$ that,
	\begin{equation}\label{eq:upp_sup_high}
		\lim_{n \to \infty} \sup_{\prob_\varepsilon \in \cD_{1+t}} \sup_{\prob_e \in \cD_{2+\nu}} \prob\left(\pval > \frac{1}{K+1}\right) = 0.
	\end{equation}
	If we drop Assumption~\ref{as:trace} and instead assume $p / n \to 0$, %and Assumption~\ref{as:iid}, 
	then we have for any constant $\nu > 0$ that,
	\begin{align}\label{eq:upp_sup_low}
		\lim_{n \to \infty}  \sup_{\prob_\varepsilon \in \cD_{1+t}} \sup_{\prob_e \in \cD_{1+\nu} \cap \tilde{\cD}} \prob\left(\pval > \frac{1}{K + 1}\right) = 0. 
	\end{align}
\end{theorem}

%\textcolor{red}{To edit:}
In Theorem~\ref{thm:supupp}, the separation rate is slightly worse than~\eqref{eq:bn} by a factor of $n^\delta$, where $\delta$ can be any positive constant. Also, it is slightly worse than the lower bound in Theorem~\ref{thm:lower}. This shows that the separation rate $n^{-\frac{t}{1 + t}}$ is a nearly optimal rate of coefficient testing in the minimax sense. At the same time, it also shows that our residual permutation test is a nearly rate-optimal hypothesis test in the minimax sense.

\section{Numerical studies}\label{sec:numerical}

% \todo{To discuss next Monday.}

\subsection{Experimental setups}
\label{Sec:SimSetup}
In this section, we evaluate the performance of RPT, together with several competitors, in the following synthetic datasets. The observations $(\bs{X}, \bs{Y}, \bs{Z})\in\mathbb{R}^{n\times p}\times\mathbb{R}^n\times\mathbb{R}^n$ are generated according to the models~\eqref{Eq:Model} and~\eqref{Eq:Model2}
where 
\begin{itemize}
	\item $\bX$ is generated with i.i.d.\ entries from either $\cN(0,1)$ or $t_1$ distribution;
	\item $\beta$ and $\beta^Z$ are $p$-dimensional vectors with the first $5$ components sampled uniformly on the sphere $\mathcal{S}^{4}$ and the rest of the components equal to $0$;
	\item $\be$ and $\bv$ have independent and identically distributed components drawn from $\cN(0,1), t_1$ or $t_2$.
\end{itemize}
We vary $n\in\{300, 600\}$, $p\in \{100, 200\}$ and $b$ in different simulation experiments. 

In practice, we find that the p-value calculated by Algorithm~\ref{alg:test} is slightly on the conservative side. Hence, in addition to the test with p-value constructed by Algorithm~\ref{alg:test}, we also study a variant in our numerical experiments, where the p-value is computed as
$\frac{1}{K+1}(1+\sum_{k=1}^K\mathbbm{1}{\{a_k\leq b_k\}})$ instead (we call this variant as $\textrm{RPT}_{\textrm{EM}}$, where ``EM'' stands for empirical). To benchmark the performance of RPT and $\textrm{RPT}_{\textrm{EM}}$, we also look at the naive residual permutation test in~\eqref{eq:naive}. Other tests used for comparison include 
the ANOVA test described in the introduction, the robust permutation test by \citet{diciccio2017robust} (DR), the residual bootstrap method of \citet{freedman1981bootstrapping} (RB), the residual permutation approach of \citet{freedman1983nonstochastic} (FL), the conditional randomization test (CRT) of \citet{candes2018panning}, the residual randomization (RR) procedure of \citet{toulis2019invariant}, the desparsified Lasso  as implemented in the \texttt{hdi} R package (HDI) \citep{dezeure2015high} and the cyclic permutation test of \citet{LB21} (CPT).

%We further compare the performance of our test against a naive implementation of a permutation test (\textbf{naive}), where given $\hat{\bs{\epsilon}} := (I_n - \bs{X}(\bs{X}^\top \bs{X})^{-1}\bs{X}^\top)\bs{Y}$ and $\hat{\bs{e}} := (I_n - \bs{X}(\bs{X}^\top \bs{X})^{-1}\bs{X}^\top)\bs{Z}$ the p-value is calculated as $\frac{1}{K+1}(1 + \mathbbm{1}_{\{\langle \hat{\bs{\epsilon}}, \hat{\bs{e}}\rangle \leq \langle \hat{\bs{\epsilon}}, \bs{P}_k\hat{\bs{e}}\rangle\}})$. Finally, we also include 
%the ANOVA test described in the introduction and a debiased Lasso based test (\textbf{dbLasso}) using the implementation of \citet{javanmard2018debiasing} in our numerical studies.

We note that RPT relies on tuning parameters $K$ and $T$. For a test to have a size of $\alpha$, we need to have $K+1$ at least $1/\alpha$. We suggest using $K + 1= \lceil 1/\alpha\rceil$ in practice, though empirical simulation results suggest that our method is robust to the choice of $K$. We also set $T = 1$ to boost the computational efficiency of Algorithm~\ref{alg:perm}.

\subsection{Numeric analysis of validity under the null}\label{sec:simnull}
\begin{table}[t!]
	\centering
	\begin{tabular}{cccc|cc|cc|cc|cc|cc}
		\hline\hline
		& & & & \multicolumn{2}{c}{$\textrm{RPT}_{\textrm{EM}}$} & \multicolumn{2}{c}{RPT} & \multicolumn{2}{c}{DR} & \multicolumn{2}{c}{FL} & \multicolumn{2}{c}{CRT}\\
		\hline
		$n$ & $p$ & $\bX$ & noise & 1\% & 5\% & 1\% & 5\% & 1\% & 5\% & 1\% & 5\% & 1\% & 5\%\\
		\hline
		$300$ & $100$ & $\mathcal{G}$ & $\mathcal{G}$ & $0$ & $0.08$ & $0$ & $0$ & $0.98$ & $5.04$ & $0.99$ & $5.02$ & $0$ & $0.05$\\
		$300$ & $100$ & $\mathcal{G}$ & $t_1$ & $0.51$ & $1.12$ & $0.24$ & $0.5$ & $0.88$ & $4.66$ & $1.28$ & $3.8$ & $1.89$ & $3.03$\\
		$300$ & $100$ & $\mathcal{G}$ & $t_2$ & $0.14$ & $0.45$ & $0.04$ & $0.09$ & $0.67$ & $3.88$ & $1.23$ & $4.91$ & $0.53$ & $1.86$\\
		$300$ & $100$ & $t_1$ & $\mathcal{G}$ & $0$ & $0.09$ & $0$ & $0$ & $3.33$ & $9.02$ & $1.01$ & $4.99$ & $0$ & $0$\\
		$300$ & $100$ & $t_1$ & $t_1$ & $0.01$ & $0.23$ & $0$ & $0$ & $1.28$ & $5.67$ & $1.21$ & $4.35$ & $0.33$ & $0.59$\\
		$300$ & $100$ & $t_1$ & $t_2$ & $0$ & $0.09$ & $0$ & $0$ & $2.54$ & $8.45$ & $1.09$ & $5$ & $0$ & $0.01$\\
		$600$ & $100$ & $\mathcal{G}$ & $\mathcal{G}$ & $0.21$ & $1.77$ & $0.01$ & $0.06$ & $0.95$ & $4.91$ & $0.95$ & $4.91$ & $0$ & $0.04$\\
		$600$ & $100$ & $\mathcal{G}$ & $t_1$ & $0.73$ & $2.31$ & $0.48$ & $1.24$ & $0.92$ & $4.77$ & $1.09$ & $3.82$ & $1.68$ & $2.47$\\
		$600$ & $100$ & $\mathcal{G}$ & $t_2$ & $0.61$ & $2.29$ & $0.2$ & $0.55$ & $0.68$ & $4.04$ & $1.09$ & $4.87$ & $0.61$ & $1.94$\\
		$600$ & $100$ & $t_1$ & $\mathcal{G}$ & $0.23$ & $1.72$ & $0.01$ & $0.08$ & $3.95$ & $9.52$ & $0.93$ & $4.92$ & $0$ & $0$\\
		$600$ & $100$ & $t_1$ & $t_1$ & $0.13$ & $1.49$ & $0$ & $0.01$ & $1.37$ & $5.76$ & $1.04$ & $4.15$ & $0.25$ & $0.42$\\
		$600$ & $100$ & $t_1$ & $t_2$ & $0.1$ & $1.54$ & $0$ & $0.02$ & $3.33$ & $9.25$ & $1.05$ & $5.05$ & $0.01$ & $0.01$\\
		$600$ & $200$ & $\mathcal{G}$ & $\mathcal{G}$ & $0$ & $0.12$ & $0$ & $0$ & $1.04$ & $4.94$ & $1.02$ & $4.94$ & $0$ & $0.04$\\
		$600$ & $200$ & $\mathcal{G}$ & $t_1$ & $0.46$ & $1.02$ & $0.26$ & $0.51$ & $0.89$ & $4.77$ & $1.18$ & $3.41$ & $1.5$ & $2.37$\\
		$600$ & $200$ & $\mathcal{G}$ & $t_2$ & $0.12$ & $0.5$ & $0.04$ & $0.1$ & $0.68$ & $4.08$ & $1.2$ & $4.82$ & $0.49$ & $1.94$\\
		$600$ & $200$ & $t_1$ & $\mathcal{G}$ & $0$ & $0.12$ & $0$ & $0$ & $3.45$ & $9.07$ & $0.98$ & $5.11$ & $0$ & $0$\\
		$600$ & $200$ & $t_1$ & $t_1$ & $0.01$ & $0.26$ & $0$ & $0$ & $1.25$ & $5.61$ & $1.13$ & $4.12$ & $0.27$ & $0.49$\\
		$600$ & $200$ & $t_1$ & $t_2$ & $0$ & $0.1$ & $0$ & $0$ & $2.71$ & $8.74$ & $1.01$ & $4.75$ & $0$ & $0.01$\\
		\hline\hline
	\end{tabular}
	\caption{\label{Tab:SizeControl}Percentage of rejections of various tests under the null, estimated over 100000 Monte Carlo repetitions, for various noise distributions at nominal levels of $\alpha=1\%$ and {\revone $\alpha=5\%$}. Data are generated from the model in~\eqref{Eq:Model} and~\eqref{Eq:Model2} with $b=0$. $\bX$, $\bv$ and $\be$ are generated according to the various distribution types prescribed in the table. Here ``$\mathcal{G}$'' stands for standard normal distribution. Percentage signs are omitted.}
\end{table}
We start by analysing the validity of various tests under the null described in Section~\ref{Sec:SimSetup}. {\rev We estimated the size of RPT, $\textrm{RPT}_{\textrm{EM}}$, DR, FL, CRT, RB, RR and HDI at nominal levels of $1\%$ and {\revone $5\%$} for $(n, p) \in\{(300, 100), (600, 100), (600, 200)\}$ (see Table~\ref{Tab:Size5pc} in the Supplementary Material for estimated size at {\revone 0.5\%} nominal level). RB, RR and HDI displayed more serious violation of the empirical sizes in these simulation settings (see Table~\ref{Tab:AdditionalSizeSimulation} in the Supplementary Material).} 
The results for the remaining procedures are summarised in Table~\ref{Tab:SizeControl}. Notice that since the p-values of both ANOVA and the naive RPT are invariant with respect to the choices of $\beta, \beta^Z$ and $\Sigma$, the results in Table~\ref{Tab:ANOVA} are directly comparable to the ones in Table~\ref{Tab:SizeControl}. Therefore, we do not repeat the simulations of the two tests here.

{\rev From Table~\ref{Tab:SizeControl}, we see that DR has good size control when the design matrix $\boldsymbol{X}$ has Gaussian components and exceeds the nominal size levels when $\boldsymbol{X}$ is generated with $t_1$ components. FL performed the best when $n/p$ is relatively large, consistent with the asymptotic size validity of the test established in \citet{freedman1983nonstochastic}, though with low $n/p$ ratios and heavy-tailed noise, the empirical sizes can exceed the nominal level. CRT is conservative when components of $\boldsymbol{X}$ and the noise have the same distribution, but can violate the size control when the noise distributions have much heavier tails than that of components of $\boldsymbol{X}$.}
On the other hand, RPT exhibits valid, {\revone though sometimes conservative}, size controls in all settings, which is consistent with our theoretical findings. More interestingly, the size of $\textrm{RPT}_{\textrm{EM}}$ is also valid across all the simulation settings, even with heavy-tailed noises and heavy-tailed design. 
%, although $\textrm{RPT}_{\textrm{EM}}$ does not have any theoretical guarantee, it has valid size control across all the settings. 
In Section~\ref{sec:simpower}, we further study the empirical power of RPT and $\textrm{RPT}_{\textrm{EM}}$.
%This indicates $\textrm{RPT}_{\textrm{EM}}$ as a preferrable alternative approach in empirical analysis.

\begin{figure}[t!]
	\centering
	\subfigure[Gaussian design, Gaussian noise]{\includegraphics[width=0.45\textwidth]{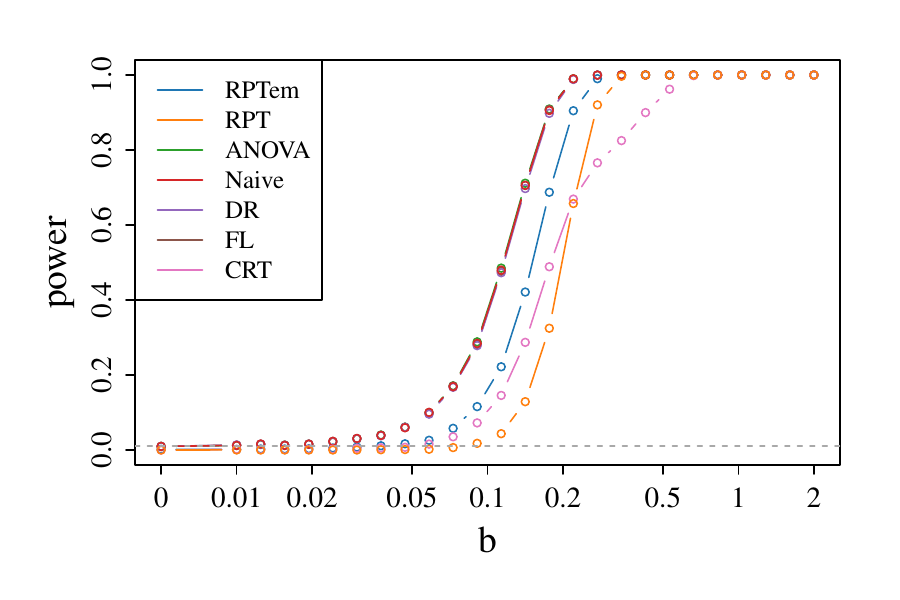}} 
	\subfigure[Gaussian design, $t_1$ noise]{\includegraphics[width=0.45\textwidth]{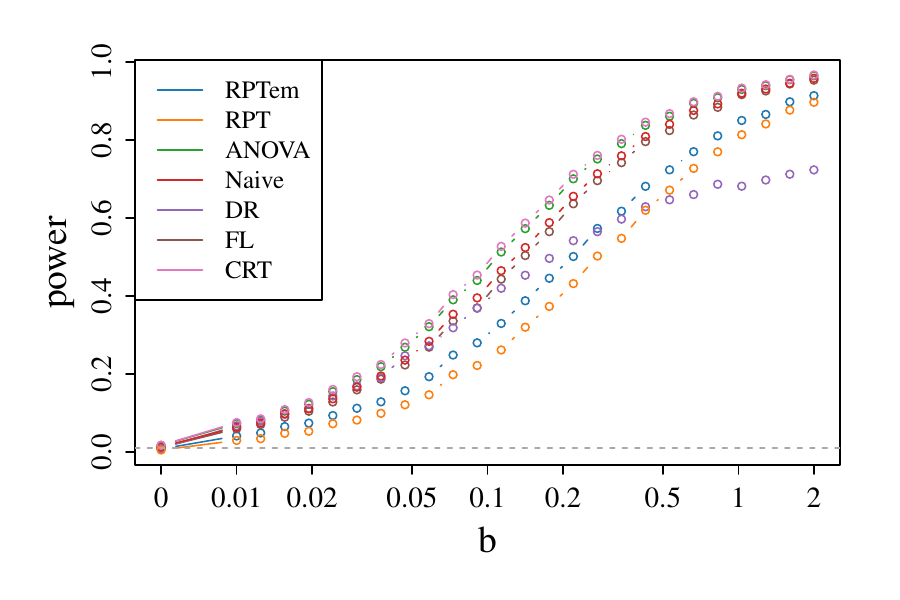}} 
	% \\ \vspace*{-0.2cm}
	\subfigure[Gaussian design, $t_2$ noise]{\includegraphics[width=0.45\textwidth]{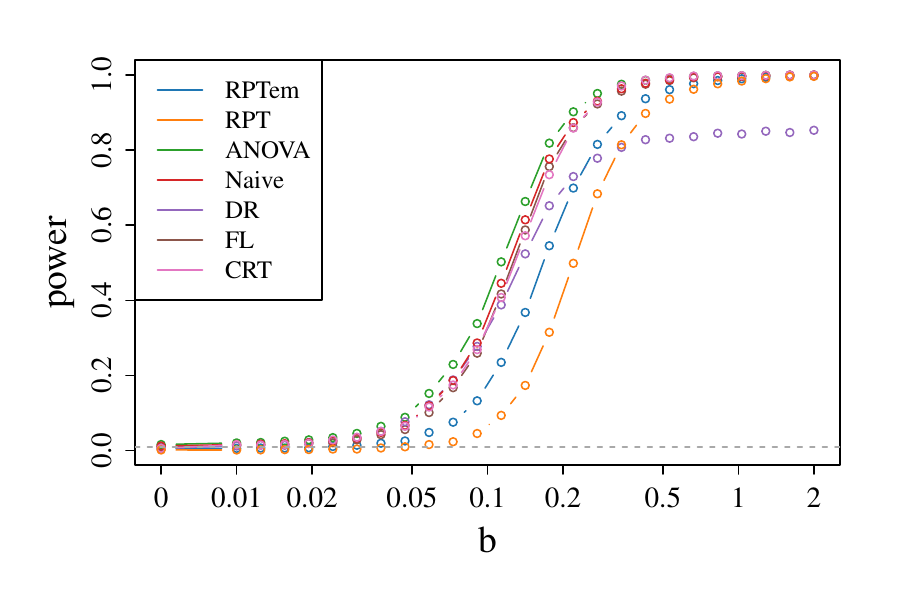}} 
	\subfigure[$t_1$ design, Gaussian noise]{\includegraphics[width=0.45\textwidth]{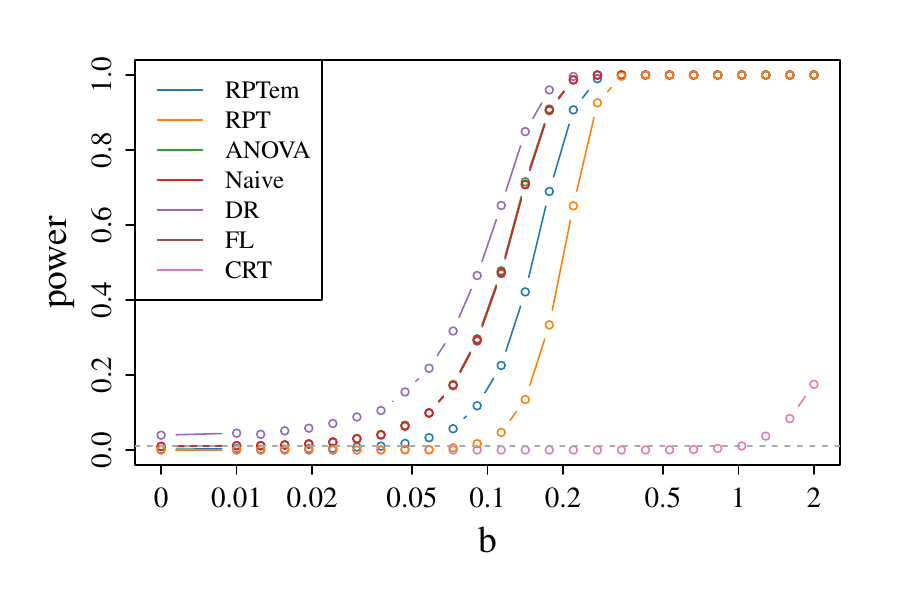}} 
	% \\ \vspace*{-0.2cm}
	\subfigure[$t_1$ design, $t_1$ noise]{\includegraphics[width=0.45\textwidth]{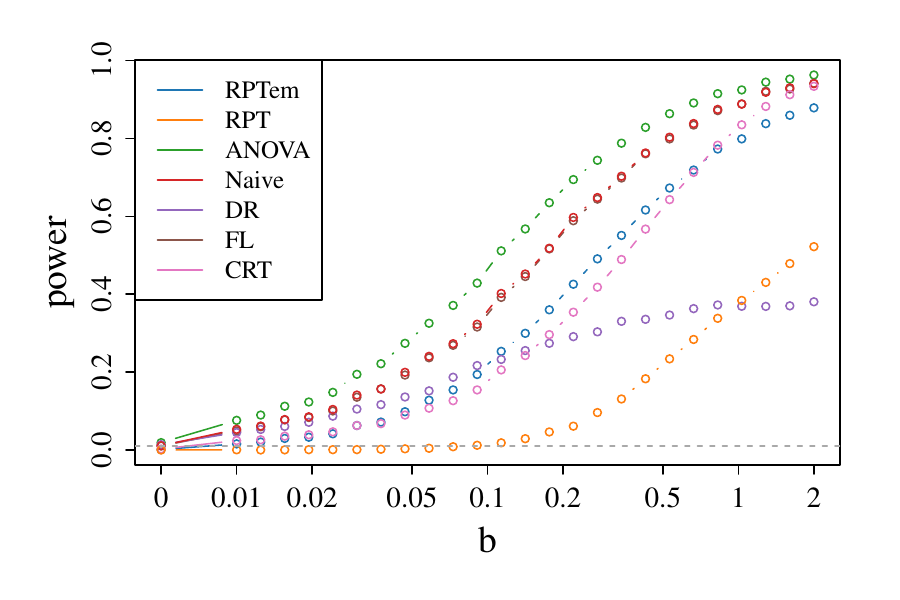}}
	\subfigure[$t_1$ design, $t_2$ noise]{\includegraphics[width=0.45\textwidth]{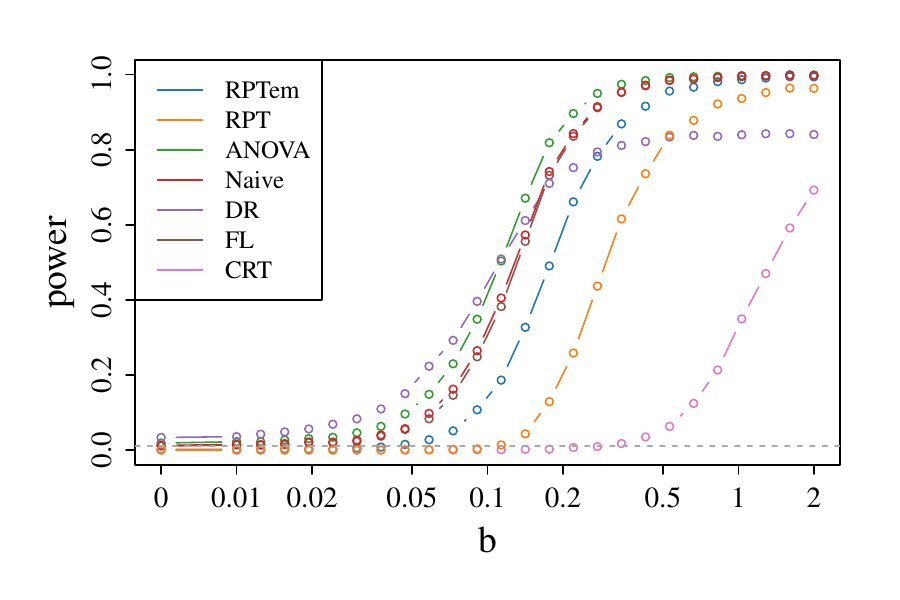}}
	\caption{Power (proportion of rejections) with nominal level $\alpha = 0.01$ (represented by the horizontal dashed line) over 10000 replicates for $b = 0$ or on a logarithmic grid between 0.01 and 2.  Here $\bX, \bv$ and $\be$ are generated according to various distribution types prescribed in the caption of each figure.} 
	\label{fig:power}
\end{figure}

\subsection{Numeric analysis of alternative power}\label{sec:simpower}

In Section~\ref{sec:alternative}, we established asymptotic power guarantees of RPT under fixed design and heavy tailed noises. 
In this section, we validate these theoretical insights via numerical analysis.
To benchmark the results, we investigate the power of all tests considered in Section~\ref{Sec:SimSetup}. We set $n=600$, $p=100$ and vary the $b$ in~\eqref{Eq:Model} for $b$ equals to $0$ or one of the 25 different values on an equally spaced logarithmic grid in the range of 0.01 to 2.
We analyze the power of all methods with design following Gaussian and $t_1$ distributions and noises following Gaussian, $t_1$, and $t_2$ distributions.
The estimated power curves for $\text{RPT}_{\text{EM}}$, RPT, ANOVA, naive RPT, DR, FL and CRT over 10000 repetitions are displayed in Figure~\ref{fig:power} (see also Figure~\ref{Fig:AdditionalPowerSimulation} in the Supplementary Material for power curves of RB, RR and HDI).

%\textcolor{red}{To edit:}

From Figures~\ref{fig:power}(a)-(c), (d) and (f), we can conclude that in most of the settings, the power of RPT is slightly worse than ANOVA, {\rev the naive RPT and FL}. The difference is more pronounced when both the design and the noise follow a heavy-tailed distribution (Figure~\ref{fig:power}(e)). However, bearing in mind the lack of valid size control of {\rev ANOVA, naive RPT, DR, FL and CRT, especially when design and noise are heavy-tailed,} we would argue that the gap in power between RPT and these competitors is the price to pay for finite-sample size validity with only exchangeable noise in {\revone the proportional regime}. Moreover, RPT is nevertheless still guaranteed to reject the alternative with high probability given {\rev a signal size $b$ not too much larger than the competitors. In addition, we observe that DR does not seem to have power converging to 1 as $b$ increases for heavy-tailed noise, while the power of CRT is substantially reduced for heavy-tailed design distributions.} 
% \todo{any explanation for this?}

Another interesting phenomenon is that the power of $\textrm{RPT}_{\textrm{EM}}$ is generally stronger than RPT, especially in the setting displayed in Figure~\ref{fig:power}(e), where both design and noise follow $t_1$ distribution. This, together with the size validity display in Section~\ref{sec:simnull}, suggests $\textrm{RPT}_{\textrm{EM}}$, although being lack of theoretical support, can serve as a viable alternative of RPT in empirical analysis. We leave the theoretical investigations of $\textrm{RPT}_{\textrm{EM}}$ as future work.

Finally, we compare RPT with the cyclic permutation test proposed in \citet{LB21}. As the cyclic permutation test is not well-defined for $n / p < 1/\alpha + 1$, we consider a relatively low dimensional setting where $n=1000$, $p=40$ and $\alpha=0.05$. {\revone We consider the test with both the default variable ordering (CPT) and a pre-ordering computed using a genetic algorithm (CPT-GA). Due to computational limitations, the genetic algorithm is computed with only $1000$ random initializations.} The data generation mechanism is the same as that described in Section~\ref{Sec:SimSetup}{\revone, except that to adapt the high computational cost of CPT-GA, for each specification we first generate $10$ repetitions of ($\bX, \bZ$), then for each ($\bX, \bZ$), we generate $1000$ repetitions of $\bv$, summing up to $10000$ repetitions}. Figure~\ref{Fig:PowerLB} shows the power curves of $\textrm{RPT}_{\textrm{EM}}$, RPT, {\revone CPT-GA and CPT} under various design matrix and noise distributions. {\revone We see that all four methods are well-calibrated at 5\% level when $b=0$, with RPT slightly more conservative than the other three approaches. For all the settings, the power of RPT and $\textrm{RPT}_{\textrm{EM}}$ converges to $1$ faster than CPT, though CPT has higher rejection rate than RPT as $b$ begins to diverge from zero. For CPT-GA, the performance of RPT and CPT-GA are comparable; and CPT-GA can significantly outperform RPT when both the design and noise are heavy-tailed. Moreover, we have found that genetic algorithm can significantly increase the power of CPT, which is consistent with the observation from~\citet{LB21}. From \citet{LB21}, it is expected that once we increase the number of random initializations for genetic algorithm from $1000$ to the recommended setting of $10000$, CPT-GA can become more powerful.}

\begin{figure}[t!]
	\centering
	\subfigure[Gaussian design, Gaussian noise]{\includegraphics[width=0.32\textwidth]{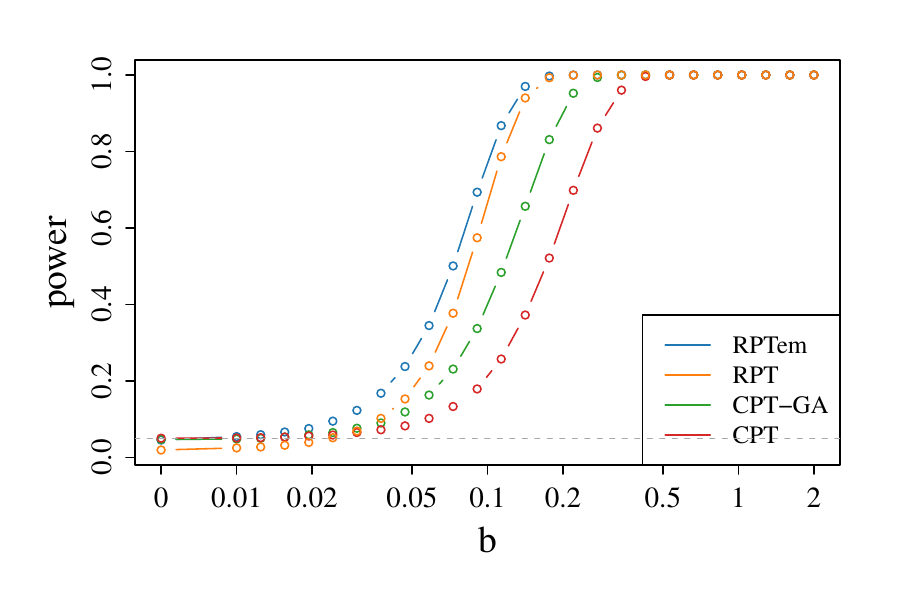}} 
	\subfigure[Gaussian design, $t_1$ noise]{\includegraphics[width=0.32\textwidth]{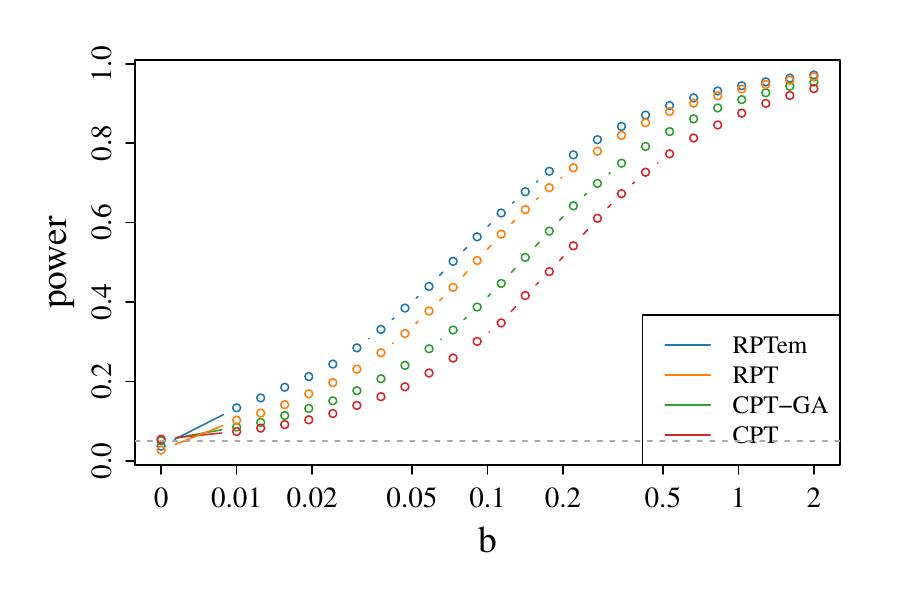}}
	\subfigure[Gaussian design, $t_2$ noise]{\includegraphics[width=0.32\textwidth]{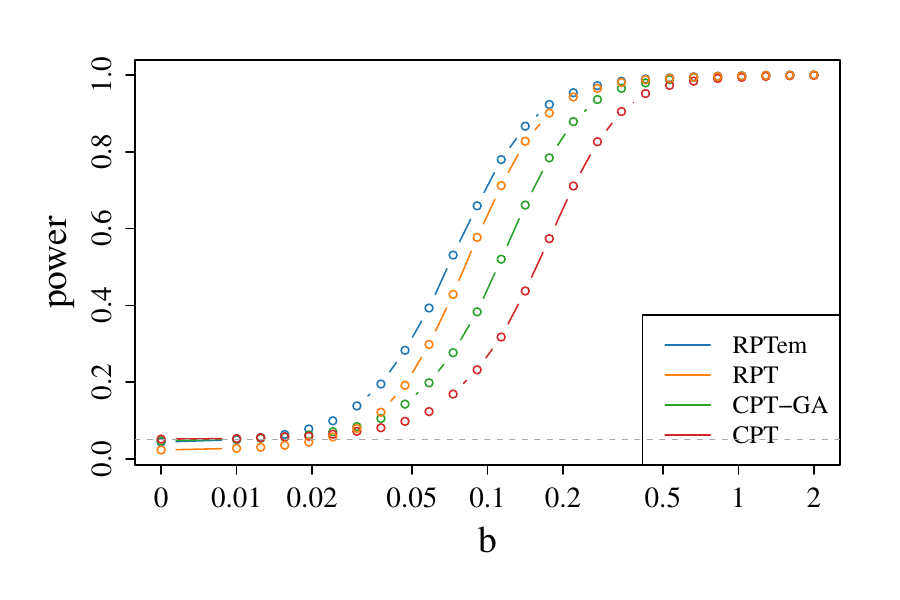}} \\
	\subfigure[$t_1$ design, Gaussian noise]{\includegraphics[width=0.32\textwidth]{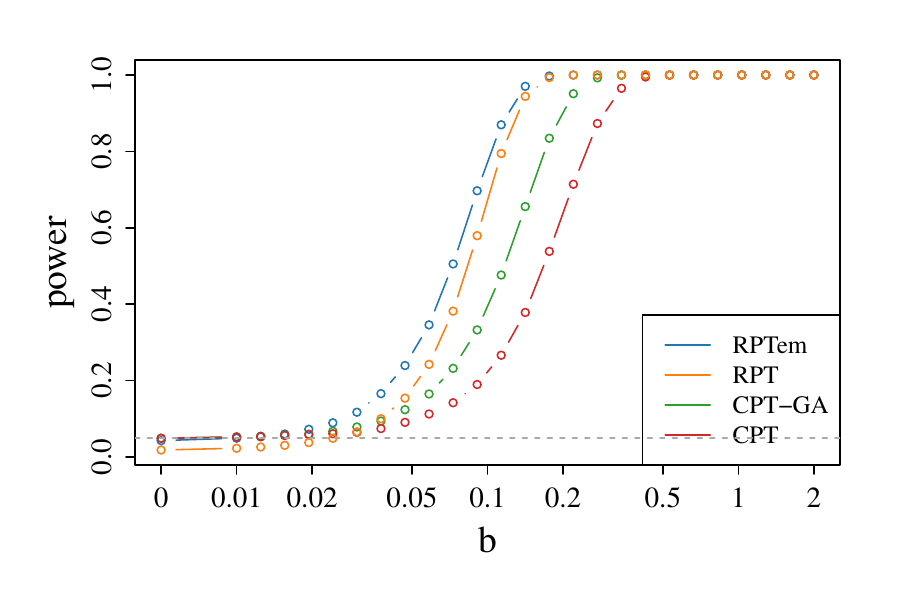}} 
	\subfigure[$t_1$ design, $t_1$ noise]{\includegraphics[width=0.32\textwidth]{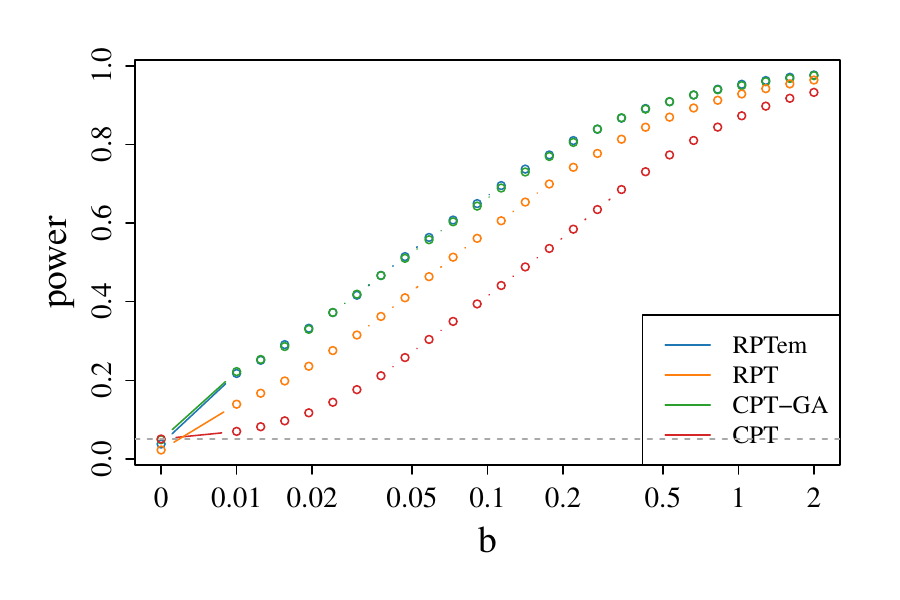}}
	\subfigure[$t_1$ design, $t_2$ noise]{\includegraphics[width=0.32\textwidth]{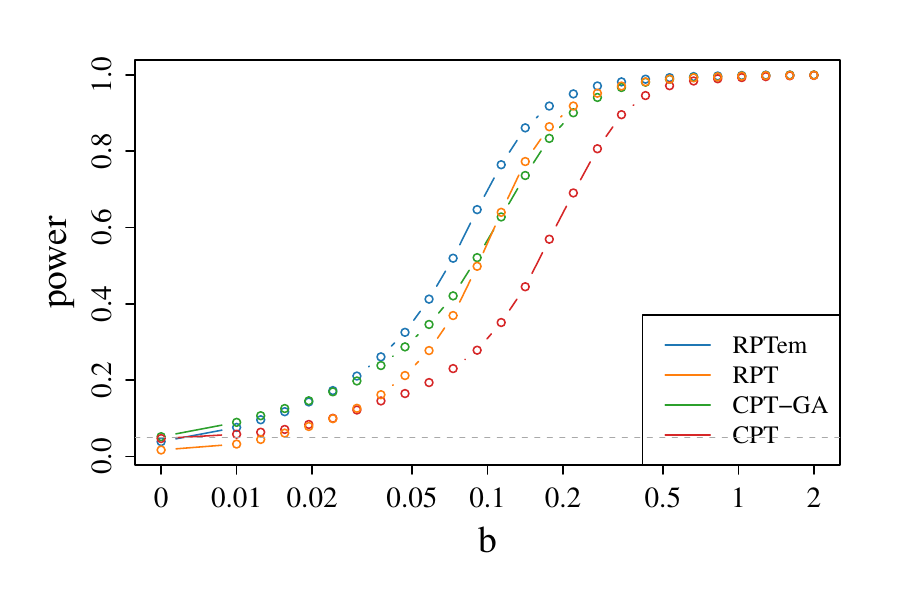}}
	\caption{\rev Power (proportion of rejections) with nominal level $\alpha = 0.05$ (represented by the horizontal dashed line) over 10000 replicates for $b = 0$ or on a logarithmic grid between 0.01 and 2. Here $\bX, \bv$ and $\be$ are generated according to various distribution types prescribed in the caption of each figure.}
	\label{Fig:PowerLB}
\end{figure}

%Finally, the power of naive RPT is nearly the same as ANOVA in most of the settings. Sometimes, the two power curves are even indistinguishable. Recall that in Section~\ref{sec:size}, we have shown that empirically, the size control of the naive RPT under the null is more robust to non-Gaussian noises than ANOVA. In practice, we recommend using the naive RPT for single coefficient tests when $n / 2 \le p < n$.
%Given that the naive RPT is a fast, intuitive approach and is very easy to implement, in practice, we recommend to use the naive RPT to analyze data with $p \ge n / 2$.

%\begin{figure}
%    \centering
%    \begin{tabular}{cc}
%    \includegraphics[width=0.47\textwidth]{fig/fig1power.pdf} & 
%    \includegraphics[width=0.47\textwidth]{fig/fig2power.pdf}\\
%    \includegraphics[width=0.47\textwidth]{fig/fig3power.pdf} & 
%    \includegraphics[width=0.47\textwidth]{fig/fig4power.pdf}
%    \end{tabular}
%    \caption{\label{Fig:PowerAnalysis}Power analysis...}
%\end{figure}

\section{Discussion}\label{sec:discussion}

%\textcolor{red}{To edit the entire section:}

In this paper, we propose a new method for fixed design regression coefficient test {\revone when the number of covariates $p$ can be as large as a fraction of the sample size $n$}. RPT is a permutation-based approach that exploits the exchangeability of the noise terms to achieve finite-sample size control. Our approach uses the fact that the empirical residuals of the classical OLS fit is equivalent to the projection of the noise vector onto an subspace orthogonal to the design to construct a test with valid size for $p < n / 2$ based on multiple subspace projection. At the same time, we provide power analysis of RPT, and derived the signal detection rate of the coefficient $b$ in the presence of heavy-tailed noise vector $\bs{\varepsilon}$. As a by product, we propose $\textrm{RPT}_{\textrm{EM}}$ and demonstrate its size validity and power via numerical experiments. It would be of interest to understand the theoretical properties of $\textrm{RPT}_{\textrm{EM}}$ in future study.

In the regime where $n / 2 \le p < n$, we propose the naive RPT, and prove its finite-sample size validity under spherically invariant distributions, and compare it with ANOVA as well as other competing approaches via numerical experiments. In the meanwhile, we provide a more profound analysis of ANOVA test, which is of independent interest for practitioners interested in ANOVA.

%Distribution-free dtest is an important topic in statistics research. 
In this paper, permutation test facilitates an important basis for construction of our test. This sheds light on extending permutation tests to solve other 
%distribution-free 
problems in modern statistics, which we leave as future work. In addition, permutation tests and its related the rank based tests have also been applied in model-free uncertainty quantification of machine learning predictions~\citep{LRW13,BHV14,RPC19}. It would be of interest if the power analysis techniques invented in this paper could be used to understand the efficiency of these approaches in modern machine learning applications.

%%%%%%%%%%%%%%%%%%%%%%%%%%%%%%%%%%%%%%%%%%%%%%
%% Single Appendix:                         %%
%%%%%%%%%%%%%%%%%%%%%%%%%%%%%%%%%%%%%%%%%%%%%%
%\begin{appendix}
%\section*{???}%% if no title is needed, leave empty \section*{}.
%\end{appendix}
%%%%%%%%%%%%%%%%%%%%%%%%%%%%%%%%%%%%%%%%%%%%%%
%% Multiple Appendixes:                     %%
%%%%%%%%%%%%%%%%%%%%%%%%%%%%%%%%%%%%%%%%%%%%%%
%\begin{appendix}
%\section{???}
%
%\section{???}
%
%\end{appendix}

%%%%%%%%%%%%%%%%%%%%%%%%%%%%%%%%%%%%%%%%%%%%%%
%% Support information, if any,             %%
%% should be provided in the                %%
%% Acknowledgements section.                %%
%%%%%%%%%%%%%%%%%%%%%%%%%%%%%%%%%%%%%%%%%%%%%%
\begin{acks}[Acknowledgments]
 The authors would like to thank the Editor, the Associate Editor and two anonymous referees for helpful comments that improved the paper. YW is also affiliated with Shanghai Artificial Intelligence Laboratory and Shanghai Qi Zhi Institute.
\end{acks}
%%%%%%%%%%%%%%%%%%%%%%%%%%%%%%%%%%%%%%%%%%%%%%
%% Funding information, if any,             %%
%% should be provided in the                %%
%% funding section.                         %%
%%%%%%%%%%%%%%%%%%%%%%%%%%%%%%%%%%%%%%%%%%%%%%
\begin{funding}
 The research of TW was supported by EPSRC grant EP/T02772X/2. The research of YW was supported by National Key R \& D Program (2022YFA1008100), and the grant of National Natural Science Foundation of China (NSFC) 12201341.
\end{funding}

%%%%%%%%%%%%%%%%%%%%%%%%%%%%%%%%%%%%%%%%%%%%%%
%% Supplementary Material, including data   %%
%% sets and code, should be provided in     %%
%% {supplement} environment with title      %%
%% and short description. It cannot be      %%
%% available exclusively as external link.  %%
%% All Supplementary Material must be       %%
%% available to the reader on Project       %%
%% Euclid with the published article.       %%
%%%%%%%%%%%%%%%%%%%%%%%%%%%%%%%%%%%%%%%%%%%%%%
\begin{supplement}
	\stitle{Supplementary Material to ``Residual permutation test for regression coefficient testing''}
	\sdescription{The supplementary material contains {\rev additional power analysis of RPT when $K$ diverges with $n$,} the proofs of all the theoretical statements appeared in the paper, {\rev and additional simualtion studies}.}
\end{supplement}

%%%%%%%%%%%%%%%%%%%%%%%%%%%%%%%%%%%%%%%%%%%%%%%%%%%%%%%%%%%%%
%%                  The Bibliography                       %%
%%                                                         %%
%%  imsart-???.bst  will be used to                        %%
%%  create a .BBL file for submission.                     %%
%%                                                         %%
%%  Note that the displayed Bibliography will not          %%
%%  necessarily be rendered by Latex exactly as specified  %%
%%  in the online Instructions for Authors.                %%
%%                                                         %%
%%  MR numbers will be added by VTeX.                      %%
%%                                                         %%
%%  Use \cite{...} to cite references in text.             %%
%%                                                         %%
%%%%%%%%%%%%%%%%%%%%%%%%%%%%%%%%%%%%%%%%%%%%%%%%%%%%%%%%%%%%%

%% if your bibliography is in bibtex format, uncomment commands:
\bibliographystyle{imsart-nameyear} % Style BST file (imsart-number.bst or imsart-nameyear.bst)
\bibliography{reference}       % Bibliography file (usually '*.bib')

%% or include bibliography directly:
% \begin{thebibliography}{}
% \bibitem{b1}
% \end{thebibliography}

\newpage

\renewcommand {\theproposition} {A\arabic{proposition}}
\renewcommand {\thefigure} {A\arabic{figure}}
\renewcommand {\thetable} {A\arabic{table}}
\renewcommand {\theequation} {A\arabic{section}.\arabic{equation}}
\renewcommand {\thelemma} {A\arabic{lemma}}
\renewcommand {\thesection} {A\arabic{section}}
\renewcommand {\thetheorem} {A\arabic{theorem}}
\renewcommand {\thecorollary} {A\arabic{corollary}}
\renewcommand {\thecondition} {A\arabic{condition}}

\renewcommand {\thepage} {A\arabic{page}}
\setcounter{page}{1}

\setcounter{equation}{0}
\setcounter{section}{0}
\setcounter{figure}{0}
\setcounter{proposition}{0}
\setcounter{theorem}{0}
\setcounter{lemma}{0}
\setcounter{table}{0}
\setcounter{condition}{0}

\begin{center}
	\bf \large
	\uppercase{Supplement to ``Residual permutation test for regression coefficient testing''}
\end{center}

Appendix~\ref{sec:divergingK} provides additional power analysis of RPT when $K$ diverges with $n$. 

Appendix~\ref{sec:pfvalidity} provides proof of all size validity statements of the ANOVA test, naive RPT and RPT. It includes the proof of all the theoretical statements in Sections~\ref{sec:size} and \ref{sec:null} and also the discussion of the equivalence between~\eqref{eq:pvalestori} and~\eqref{eq:pvalest}.

Appendix~\ref{sec:powerpre} provides a preliminary lemma for RPT's power analysis.

Appendix~\ref{sec:pfalternative} provides proof of the statistical power results of RPT when $K$ is fixed and noises are i.i.d. It includes the proof of the theoretical statements in Section~\ref{sec:alternative}.

Appendix~\ref{sec:pfaddpower} provides proof of the rest of the power results of RPT. In includes proof of the theoretical statements in Section~\ref{sec:alternativehete} and Appendix~\ref{sec:divergingK}.

Appendix~\ref{sec:pfopt} studies the minimax rate optimality of coefficient test with heavy-tailed noises. It includes proof of the theoretical statements in Section~\ref{sec:minimax}.

Appendix~\ref{sec:addnum} provides additional numerical analysis.

\noindent{\bf Notations} we define $\|\|_\op$ as operator norm, $\|\|_2$ as $\ell_2$-norm, $\|\|_F$ as Frobenius norm. We define $a_1 / a_2 = \infty$ if $a_1 = a_2 = 0$ or $a_2 = 0$. 
%We denote $\tilde{\cD}_{\geq, B}$ as the class of distributions that is lower bounded by a threshold $B$, and $\tilde{\cD}_{\leq, B}$ as the class of distributions upper bounded by some threshold $B$. 
Without loss of generality, we assume $b > 0$. Let ${\bf 1}$ be an $n$ dimensional vector with all entries equal to $1$. 

{\rev 
	\section{Statistical power of RPT with a diverging $K$}\label{sec:divergingK}
	
	% \subsection{Statistical power of RPT with a diverging $K$}
	
	In this section, we discuss the power of RPT when we allow $K$ to diverge with $n$. We have the following theorem:
	
	\begin{theorem}\label{thm:kalternative1}
		Suppose that $(\bs{X},\bs{Z},\bs{Y})$ is generated under model~\eqref{Eq:Model} where $\bs{\varepsilon}$ and $\bZ$ satisfy Assumption~\ref{as:iid} and
		\[
		0 < \E[|e_1|^{2 + \kappa}] < \infty \quad \text{and} \quad 0 < \E[|\varepsilon_1|^{1 + t}] < \infty
		\]
		for some constants $t \in [0, 1]$ and $\kappa \ge 0$. Assume also that $\mathcal{P}_K$ satisfies Assumption~\ref{as:trace}. In the asymptotic regime where $K$ varies with $n$ with rate 
		\begin{equation*}
			K = O(n^{\frac{2 \kappa}{2 + \kappa}}) \;\textrm{if}\;\; \kappa < 2 \quad\textrm{or}\quad K = o(n) \;\textrm{if}\;\; \kappa \ge 2,
		\end{equation*}
		and $p, b$ vary with $n$ and $K$ such that
		$n > (3+m)p + \min\{p, \sqrt{2 p} K\}$ for some constant $m > 0$ and
		\begin{align}\label{eq:bnk}
			|b| = \Omega (\sqrt{K} n^{-\frac{t}{1+t}}) \;\textrm{if}\;\; t < 1 \quad\textrm{or}\quad |b| = \omega(\sqrt{K} n^{-\frac{1}{2}}) \;\textrm{if}\;\; t = 1,
		\end{align}
		we have $\lim_{n \to \infty} \prob\left(\pval > \frac{1}{K + 1}\right) = 0$.
		%\begin{equation}\label{eq:upp_asymp_high}
		%\forall \prob_e \in \cD_2, \prob_\varepsilon \in \cD_{1 + t}, \quad 
		%\lim_{n \to \infty} \prob\left(\pval > \frac{1}{K + 1}\right) \to 0
		%\end{equation}
	\end{theorem}
	
	When $K = o(n / \sqrt{p})$, we require $n / p$ to be asymptotically larger than $3$ to get the desired power. As $K$ gets larger, the threshold becomes $4$ instead. When $K$ is a constant, then the power rate in~\eqref{eq:bnk} matches the main result~\eqref{eq:bn}. As the size of $K$ increases, we need more moments for $e_i$ to maintain the rate~\eqref{eq:bnk}. In particular, when the fourth order moment exists for $e_i$, $K$ can be as large as $o(n)$.
	
	In the rest of this section, we discuss the power of RPT when data generating mechanism satisfies Assumption~\ref{as:nonlinear} and $K$ is not a constant. 
	Extending the proofs of Theorems~\ref{thm:altnonlinear} and~\ref{thm:kalternative1}, we can conclude that under Assumption~\ref{as:nonlinear}, with additional constraints that
	\begin{itemize}
		\item uniformly for all $i$, $\E[|e_i|^{\kappa + 2}] \le C_e$ for some constant $C_e > 0$;
		\item for any fixed constant $B > 0$,
		\[
		\sum_{i = 1}^\infty \prob(|e_i^2 - \E[e_i^2]|^{1 + \kappa / 2} \ge B i) < \infty;
		\]
		\item $\sup_{i \ge 1} \E[|e_i^2 - \E[e_i^2]|^{1 + \kappa / 2} \one(|e_i^2 - \E[e_i^2]|^{1 + \kappa / 2} \ge a)] \to 0$ as $a \to \infty$,
	\end{itemize}
	and that $n > (3C_e / (c_e - r) +m)p + C_e / (c_e - r) \cdot \min\{p, \sqrt{2p} K\}$ for some fixed constant $m > 0$, RPT is still asymptotically powerful when $b$ and $K$ scales as in Theorem~\ref{thm:kalternative1}. In other words, even with heteroscedastic noise or nonlinear $\bZ$, RPT is still guaranteed to be asymptotically powerful with a diverging $K$.
}
% \todo{Remove to review?}

\section{Proof of finite-sample size validity statements}\label{sec:pfvalidity}
\subsection{ANOVA size validity}

\begin{proof}[Proof of Lemma~\ref{lem:anova}]
	Recall that 
	\[
	\proj_{\bs{X}}:=\bs{X}(\bs{X}^\top \bs{X})^{-1}\bs{X}^\top \quad\textrm{and}\quad \proj_{\bs{X},\bs{Z}}:=(\bs{X}, \bs{Z})\bigl\{(\bs{X}, \bs{Z})^\top (\bs{X}, \bs{Z})\bigr\}^{-1}(\bs{X}, \bs{Z})^\top.
	\]
	%	$\proj_{\bs{X}}:=\bs{X}(\bs{X}^\top \bs{X})^{-1}\bs{X}^\top$ and $\proj_{\bs{X},\bs{Z}}:=(\bs{X}, \bs{Z})\bigl\{(\bs{X}, \bs{Z})^\top (\bs{X}, \bs{Z})\bigr\}^{-1}(\bs{X}, \bs{Z})^\top$. %to be projection matrices onto the column spans of $\bs{X}$ and $(\bs{X}, \bs{Z})$ respectively. 
	First assume that $\bv$ is spherically symmetric. Since $\bs{\varepsilon}$ has a spherically symmetric distribution, we can write $\bs\varepsilon = \rho\bs\xi$, such that $\bs\xi \sim \mathrm{Unif}(\mathcal{S}^{n})$, i.e., a random vector that is sampled uniformly from the unit sphere with respect to the Haar measure; and that $\rho$ is some random variable taking value in $[0, \infty)$ and is independent from $\bs{\xi}$.  Then, we have almost surely,
	\begin{equation}\label{eq:anova}
		\pval_{\anova} = \frac{\|(\bs{I} - \mathrm{Proj}_{\bs{X}})(\bs{\varepsilon})\|_2^2 - \|(\bs{I} - \mathrm{Proj}_{\bs{X},\bs{Z}})(\bs{\varepsilon})\|_2^2}{\|(\bs{I} - \mathrm{Proj}_{\bs{X},\bs{Z}})(\bs{\varepsilon})\|_2^2/(n-p-1)} =\frac{\|(\mathrm{Proj}_{\bs{X},\bs{Z}}- \mathrm{Proj}_{\bs{X}})(\bs{\xi})\|_2^2}{\|(\bs{I} - \mathrm{Proj}_{\bs{X},\bs{Z}})(\bs{\xi})\|_2^2/(n-p-1)}.
	\end{equation}
	Hence, the distribution of $\pval_{\anova}$ does not depend on $\rho$. 
	
	By Cochran's theorem, we know that $\pval_{\anova} \sim F_{1,n-p-1}$ when $\bv \sim \cN(0, \bs{I})$, i.e., a multivariate standard normal distribution. Moreover, when $\bv \sim \cN(0, \bs{I})$, we have $\bv$ satisfies the above decomposition $\bv = \rho \bs{\xi}$ for some random variable $\rho$. Now recall that $\pval_{\anova}$ does not depend on $\rho$ (as shown in~\eqref{eq:anova}), we must have $\pval_{\anova} \sim F_{1,n-p-1}$ for all spherically symmetric $\bs\varepsilon$ as desired. 
	
	If instead $(\bX, \bZ)$ is spherically symmetric, let $\bs{Q}$ be an independent random matrix that is sampled uniformly from $\mathbb{O}^{n\times n}$ with respect to the Haar measure, then
	\[
	\pval_{\anova} \stackrel{\mathrm{d}}= \frac{\|(\mathrm{Proj}_{\bs{QX},\bs{QZ}} - \mathrm{Proj}_{\bs{QX}})(\bs{\varepsilon})\|_2^2 }{\|(I_n - \mathrm{Proj}_{\bs{QX},\bs{QZ}})(\bs{\varepsilon})\|_2^2/(n-p-1)}=\frac{\|(\mathrm{Proj}_{\bs{X},\bs{Z}} - \mathrm{Proj}_{\bs{X}})(\bs{Q}^{-1}\bs{\varepsilon})\|_2^2 }{\|(I_n - \mathrm{Proj}_{\bs{X},\bs{Z}})(\bs{Q}^{-1}\bs{\varepsilon})\|_2^2/(n-p-1)}.
	\]
	Since $\bs{Q}^{-1}\bs{\varepsilon}$ has a spherically symmetric distribution, the desired conclusion follows from the first case.
\end{proof}

\subsection{Validity of naive residual permutation test}

\begin{proof}[Proof of Lemma~\ref{lem:naive}]
	Without loss of generality we just prove the lemma with Condition (a). We first consider the case where $\bv$ follows a spherically symmetric distribution. Then using an analogous analysis as in Lemma~\ref{lem:anova}, we have
	\begin{align*}
		\pval_{\naive} & = \frac{1}{K + 1} \left(1 + \sum_{k = 1}^{K} \one(|\hat{\be}^\top \hat{\bv}| \le |\hat{\be}^\top \bP_k \hat{\bv}|)\right) \\
		& = \frac{1}{K + 1} \left(1 + \sum_{k = 1}^{K} \one(|\bZ^\top \bV_0 \bV_0^\top \bs{\xi}| \le |\bZ^\top \bV_0 \bP_k \bV_0^\top \bs{\xi}|)\right).
	\end{align*}
	This means that just like $\pval_{\anova}$, the distribution of $\pval_{\naive}$ does not depend on $\rho$. Moreover, when $\bv$ follows a multivariate standard normal distribution, $\bV_0 \bv$ is a $n - p$ dimensional multivariate standard normal random vector and thus $\pval_{\naive}$ is a valid p-value. Then using an analogous argument as in the proof of Lemma~\ref{lem:anova}, we have that $\pval_{\naive}$ is a valid p-value for all spherically symmetric noises.
	
	If instead $(\bX, \bZ)$ is spherically symmetric, again let $\bs{Q}$ be an independent matrix sampled uniformly from $\mathbb{O}^{n\times n}$, then
	\begin{align*}
		\pval_{\naive} & \stackrel{\mathrm{d}}= \frac{1}{K + 1} \left(1 + \sum_{k = 1}^{K} \one(|(\bs{Q}\bZ)^\top \bs{Q} \bV_0 \bV_0^\top \bs{Q}^\top \bv| \le |(\bs{Q} \bZ)^\top \bs{Q} \bV_0 \bP_k \bV_0^\top \bs{Q}^\top \bv|)\right) \\
		& = \frac{1}{K + 1} \left(1 + \sum_{k = 1}^{K} \one(|\bZ^\top \bV_0 \bV_0^\top \bs{Q}^\top \bv| \le |\bZ \bV_0 \bP_k \bV_0^\top \bs{Q}^\top \bv|)\right).
	\end{align*}
	Then using an analogous argument, we prove the size validity of $\pval_{\naive}$.
\end{proof}

\subsection{Validity of residual permutation test}\label{sec:rptvalidity}

We first show that the two definitions of RPT defined in~\eqref{eq:pvalestori} and \eqref{eq:pvalest} are equivalent. Since by definition, $\hat{\bv} = \bV_0^\top \bY$, we easily have $\tilde{\bV}_k^\top \bV_0 \hat{\bv} = \tilde{\bV}_k^\top \bV_0 \bV_0^\top \bY = \tilde{\bV}_k^\top \bY$, where for the last equality we apply Lemma~\ref{lem:tildev}. Using an analogous argument, we can prove that $\tilde{\bV}_k^\top \bV_0 \hat{\be} = \tilde{\bV}_k^\top \bZ$. Now for $\tilde{\bV}_k^\top \bV_k \hat{\bv}$, we apply that 
\[
\tilde{\bV}_k^\top \bV_k \hat{\bv} = \tilde{\bV}_k^\top \bV_k \bV_0^\top \bY = \tilde{\bV}_k^\top \bV_k \bV_0^\top \bP_k^\top \bP_k\bY = \tilde{\bV}_k^\top \bV_k \bV_k^\top \bP_k\bY = \tilde{\bV}_k^\top \bP_k\bY,
\]
where for the last equality we apply again Lemma~\ref{lem:tildev}. Putting together, we see that the two definitions of $\pval$ in~\eqref{eq:pvalestori} and \eqref{eq:pvalest} are numerically equivalent.

%Applying further Lemma~\ref{lem:hatep}, we have $\tilde{\bV}_k^\top \bV_k \hat{\bv} = \tilde{\bV}_k^\top \bV_k \bV_k \bP_k \bv = \tilde{\bV}_k^\top \bP_k \bv$. Moreover, $\tilde{\bV}_k^\top \bP_k\bY = \tilde{\bV}_k^\top \bP_k\bX \beta + \tilde{\bV}_k^\top \bP_k\bv$ In light of the above, we see that the two definitions are equivalent. 

In the rest of this section, our goal is to prove Theorem~\ref{thm:null}. We start with the following preliminary lemmas. Recall that for any matrix $\bU \in \R^{n \times q}$ with orthonormal columns and any vector $\bs{a} \in \R^n$, $\proj_{\bU} (\bs{a}) := \bU \bU^\top \bs{a}$. %Equipped with the preliminary lemmas, we are now ready to prove Theorem~\ref{thm:null} as follows.

\begin{lemma}\label{lem:tildev}
	Let $\bU \in \R^{n \times p_1}$ and $\bV \in \R^{n \times p_2}$ be two matrices with orthonormal columns spanning subspaces of $\R^n$. Let $\bs{W} \in  \R^{n \times q}$ be a matrix with orthonormal columns spanning a subspace of $\matspan(\bU) \cap \matspan(\bV)$. Then for any vector $\bs{a} \in \R^n$, $\bs{W}^\top \bs{a} = \bs{W}^\top \proj_{\bU}(\bs{a})  = \bs{W}^\top \proj_{\bV}(\bs{a})$.
\end{lemma}

\begin{proof}
	This is straightforward using that
	\[
	\bs{W}^\top = \bs{W}^\top \bU \bU^\top = \bs{W}^\top \bV \bV^\top
	\]
	since $\bV$ spans a subspace of $\matspan(\bU)$ and $\matspan(\bV)$.
\end{proof}

\begin{lemma}\label{lem:hatep}
	Under $H_0$, $\bV_0 \hat{\bv} = \proj_{\bV_0}(\bv)$. Moreover, for any permutation matrix $\bP_k$, we have that $\bV_k \hat{\bv} = \proj_{\bV_k}(\bP_k\bv)$.
\end{lemma}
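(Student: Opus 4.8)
The plan is to reduce both identities to the single algebraic fact that $\bV_0^\top \bX = 0$, together with the orthogonality of the permutation matrices. First I would unpack the null hypothesis: under $H_0$ we have $b = 0$ in~\eqref{Eq:Model}, so $\bY = \bX\beta + \bv$. Since $\matspan(\bV_0)$ is by construction a subspace of $\matspan(\bX)^\perp$, the columns of $\bV_0$ are orthogonal to every column of $\bX$, i.e.\ $\bV_0^\top \bX = 0$. Consequently
\[
\hat{\bv} = \bV_0^\top \bY = \bV_0^\top(\bX\beta + \bv) = \bV_0^\top \bv,
\]
and multiplying on the left by $\bV_0$ gives $\bV_0\hat{\bv} = \bV_0\bV_0^\top \bv = \proj_{\bV_0}(\bv)$, which is the first claim.

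For the second claim I would use $\bV_k = \bP_k\bV_0$ and the fact that $\bP_k$ is an orthogonal matrix, so that $\bP_k^\top \bP_k = \bs{I}$ and hence the columns of $\bV_k$ are still orthonormal (which makes $\proj_{\bV_k}(\cdot) = \bV_k\bV_k^\top(\cdot)$ legitimate). Then, using $\bV_0\hat{\bv} = \bV_0\bV_0^\top\bv$ from the first part,
\[
\bV_k \hat{\bv} = \bP_k \bV_0 \hat{\bv} = \bP_k \bV_0 \bV_0^\top \bv = \bP_k \bV_0 \bV_0^\top \bP_k^\top \bP_k \bv = \bV_k \bV_k^\top \bP_k \bv = \proj_{\bV_k}(\bP_k\bv),
\]
which completes the proof.

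There is essentially no substantive obstacle here: the statement is a bookkeeping consequence of the definitions $\hat{\bv} = \bV_0^\top\bY$, $\bV_k = \bP_k\bV_0$, the nullity $\bV_0^\top\bX = 0$, and the identity $\bP_k^\top\bP_k = \bs{I}$. The only point worth stating explicitly is why $\bV_k\bV_k^\top$ is the orthogonal projection onto $\matspan(\bV_k)$, namely that orthonormality of the columns of $\bV_0$ is preserved under the orthogonal transformation $\bP_k$; everything else is one line of matrix algebra.
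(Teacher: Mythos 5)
Your proof is correct and follows essentially the same route as the paper's: both reduce the first claim to $\bV_0^\top \bX = 0$ under $H_0$, and both derive the second claim by inserting $\bP_k^\top \bP_k = \bs{I}$ to turn $\bV_0^\top$ into $\bV_k^\top \bP_k$. The one small thing you add is the explicit remark that orthonormality of $\bV_k$'s columns is preserved under the orthogonal map $\bP_k$, which the paper leaves implicit but is worth stating.
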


\begin{proof}
	Since we are under the $H_0$, we have that
	\[
	\hat{\bv} = \bV_0^\top \bY = \bV_0^\top \left(\bX \beta + \bv\right).
	\]
	Then as a direct consequence of that $\matspan(\bV_0)$ is orthogonal to $\matspan(\bX)$, we have that $\bV_0^\top \bX = 0$ and thus $\hat{\bv} = \bV_0^\top \bv$. From above, we have
	\[
	\bs{V}_0 \hat{\bs{\varepsilon}} = \bs{V}_0 \bV_0^\top \bv = \proj_{\bV_0}(\bv)
	\]
	and that
	\[
	\bs{V}_k \hat{\bs{\varepsilon}} = \bs{V}_k \bV_0^\top \bv = \bs{V}_k \bV_0^\top \bP_k^\top \bP_k \bv = \bs{V}_k \bV_k^\top \bP_k \bv = \proj_{\bV_k}(\bP_k \bv).
	\]
\end{proof}

\begin{proof}[Proof of Theorem~\ref{thm:null}]
	Throughout the proof we work on a fixed $(\bX, \bZ)$ and a fixed set of permutation matrices $\{\bP_0, \ldots, \bP_K\}$ satisfying Assumption~\ref{as:permutation}. 
	
	From Lemmas~\ref{lem:tildev} and~\ref{lem:hatep}, we have that for any $\alpha \in [0,1]$,
	\begin{align*}
		\mathrm{I}_\alpha & := \pr\left(\frac{1}{K+1} \left(1 + \sum_{k=1}^{K} \one\left\{ \min_{\tilde{\bs{V}} \in \{\tilde{\bs{V}}_1, \ldots, \tilde{\bs{V}}_K\}} T\left(\tilde{\bs{V}}^\top \bs{V}_0 \hat{\bs{e}}, \tilde{\bs{V}}^\top \bs{V}_0 \hat{\bs{\varepsilon}}\right) \leq T\left(\tilde{\bs{V}}_k^\top \bs{V}_0 \hat{\bs{e}}, \tilde{\bs{V}}_k^\top \bs{V}_k \hat{\bs{\varepsilon}}\right) \right\}\right) \leq \alpha\right) \\
		& = \pr\left(\frac{1}{K+1} \left(1 + \sum_{k=1}^{K} \one\left\{ \min_{\tilde{\bs{V}} \in \{\tilde{\bs{V}}_1, \ldots, \tilde{\bs{V}}_K\}} T\left(\tilde{\bs{V}}^\top \bs{V}_0 \hat{\bs{e}}, \tilde{\bs{V}}^\top \bs{\varepsilon}\right)\leq T\left(\tilde{\bs{V}}_k^\top \bs{V}_0 \hat{\bs{e}}, \tilde{\bs{V}}_k^\top \bP_k \bs{\varepsilon}\right) \right\}\right) \leq \alpha\right).
	\end{align*}
	Then using that for any $k \in \{1, \ldots, K\}$, 
	\begin{align*}
		& \one\left\{ \min_{\tilde{\bs{V}} \in \{\tilde{\bs{V}}_1, \ldots, \tilde{\bs{V}}_K\}} T\left(\tilde{\bs{V}}^\top \bs{V}_0 \hat{\bs{e}}, \tilde{\bs{V}}^\top \bs{\varepsilon}\right)\leq T\left(\tilde{\bs{V}}_k^\top \bs{V}_0 \hat{\bs{e}}, \tilde{\bs{V}}_k^\top \bP_k \bs{\varepsilon}\right) \right\} \\
		& \ge 
		\one\left\{ \min_{\tilde{\bs{V}} \in \{\tilde{\bs{V}}_1, \ldots, \tilde{\bs{V}}_K\}} T\left(\tilde{\bs{V}}^\top \bs{V}_0 \hat{\bs{e}}, \tilde{\bs{V}}^\top \bs{\varepsilon}\right)\leq \min_{\tilde{\bs{V}} \in \{\tilde{\bs{V}}_1, \ldots, \tilde{\bs{V}}_K\}} T\left(\tilde{\bs{V}}^\top \bs{V}_0 \hat{\bs{e}}, \tilde{\bs{V}}^\top \bP_k \bs{\varepsilon}\right) \right\},
	\end{align*}
	we have
	\begin{align*}
		\mathrm{I}_\alpha & \leq \pr\left(\frac{1}{K+1} \left(1 + \sum_{k=1}^{K} \one\left\{ \min_{\tilde{\bs{V}} \in \{\tilde{\bs{V}}_1, \ldots, \tilde{\bs{V}}_K\}}  T\left(\tilde{\bs{V}}^\top \bs{V}_0 \hat{\bs{e}}, \tilde{\bs{V}}^\top \bs{\varepsilon}\right) \right.\right.\right.\\
		& \hspace*{5cm} \left.\left.\left. \leq \min_{\tilde{\bs{V}} \in \{\tilde{\bs{V}}_1, \ldots, \tilde{\bs{V}}_K\}}  T\left(\tilde{\bs{V}}^\top \bs{V}_0 \hat{\bs{e}}, \tilde{\bs{V}}^\top \bP_k \bs{\varepsilon}\right) \right\}\right) \leq \alpha\right).
	\end{align*}
	By defining $g : \R^{n} \mapsto \R$ as a fixed projection depending only on $(\bs{X}, \bs{Z})$ and $\cP_K$ such that for any $\bs{a} \in \R^n$,
	\[
	g(\bs{a}) = \min_{\tilde{\bs{V}} \in \{\tilde{\bs{V}}_1, \ldots, \tilde{\bs{V}}_K\}} T\left(\tilde{\bs{V}}^\top \bV_0 \hat{\bs{e}}, \tilde{\bs{V}}^\top \bs{a} \right),
	\]
	we can further rewrite the above inequality as
	\[
	\mathrm{I}_\alpha \le \pr\left(\frac{1}{K+1} \left(1 + \sum_{k=1}^{K} \one\left\{g(\bs{\varepsilon}) \leq g(\bs{P}_k\bs{\varepsilon})\right\}\right) \leq \alpha\right).
	\]
	
	Using Lemma~\ref{lem:exchangeability}, we can finally have that
	\[
	\mathrm{I}_\alpha \leq 
	\pr\left(\frac{1}{K+1} \left(1 + \sum_{k=1}^{K} \one\left\{g(\bs{\varepsilon}) \leq g(\bs{P}_k\bs{\varepsilon})\right\}\right) \leq \alpha\right) \le \alpha,
	\]
	which proves the desired results.
\end{proof}

\subsubsection{Proof of Lemma~\ref{lem:exchangeability}}
\begin{proof}
	Let $\xi_0,\ldots,\xi_K \stackrel{\mathrm{i.i.d.}}{\sim} \cN(0,1)$ and independent of all other randomness in the problem. Let 
	\[
	R_k := \sum_{k'=0}^K  \mathbbm{1}\{g(\bs{P}_k\bs{\varepsilon}) \leq  g(\bs{P}_{k'}\bs{\varepsilon})\},
	\]
	and 
	\[
	\tilde R_k := \sum_{k'=0}^K\Bigl(\mathbbm{1}\{g(\bs{P}_k\bs{\varepsilon}) <  g(\bs{P}_{k'}\bs{\varepsilon})\ +  \mathbbm{1}\{g(\bs{P}_k\bs{\varepsilon}) = g(\bs{P}_{k'}\bs{\varepsilon}) \text{ and } \xi_k \leq \xi_{k'}\}\Bigr).
	\]
	In other words, $\tilde{R}_k$ is the rank of $g(\bs{P}_k\bs{\varepsilon})$ among $(g(\bs{P}_{k'}\bs{\varepsilon}):k'=0,\ldots,K)$ in a decreasing order, with random tie-breaking. Also, observe that $R_k\geq \tilde R_k$. By Assumptions~\ref{as:exchangeable} we have $
	\bs{\varepsilon} \stackrel{\mathrm{d}}{=} \bs{P}_k\bs{\varepsilon}$ for all $k$, hence 
	\begin{align*}
		R_0 &\stackrel{\mathrm{d}}{=} \sum_{k'=0}^K \mathbbm{1}\{g(\bs{P}_k\bs{\varepsilon}) \le g(\bs{P}_{k'}\bs{P}_{k}\bs{\varepsilon})\}=\sum_{k'=0}^K \mathbbm{1}\{g(\bs{P}_k\bs{\varepsilon}) \le g(\bs{P}_{k'}\bs{\varepsilon})\} = R_k,
	\end{align*}
	where we used Assumption~\ref{as:permutation} in the penultimate equality. Thus, for all $k\in\{0,\ldots,K\}$ and $x\in\{1,\ldots,K+1\}$, 
	\begin{equation}\label{eq:rkbnd}
		\mathbb{P}(R_k \leq x) = \frac{1}{K+1}\sum_{k'=0}^K \mathbb{P}(R_{k'} \leq x) \leq \frac{1}{K+1}\sum_{k'=0}^K \mathbb{P}(\tilde R_{k'} \leq x).
	\end{equation}
	
	On the other hand, almost surely $(\tilde R_0,\tilde R_1,\ldots,\tilde R_K)$ is a re-arrangement of $(1,\ldots,K+1)$. This means that for any fixed $j \in \{1, \ldots, K + 1\}$, almost surely there is a $k'$ such that $\tilde R_{k'} = j$. In other words, for $j \in \{1, \ldots, K + 1\}$,
	\[
	\sum_{k'=0}^K \mathbb{P}(\tilde R_{k'} = j) = 1.
	\]
	By taking this back to~\eqref{eq:rkbnd}, we may further bound~\eqref{eq:rkbnd} as
	\[
	\prob(R_k \leq x) \le \frac{x}{K + 1}.
	\]
	
	Then
	\[
	\mathbb{P}\biggl\{\frac{1}{K+1}\biggl(1+\sum_{k=1}^K \mathbbm{1}\{g(\bs{\varepsilon}) \leq g(\bs{P}_k\bs{\varepsilon})\}\biggr) \leq \alpha\biggr\} = \mathbb{P}\biggl(\frac{R_0}{K+1}\leq \alpha\biggr) \leq \frac{\lfloor \alpha (K+1)\rfloor}{K+1} \leq \alpha,
	\]
	as desired. %If $g(\boldsymbol{\varepsilon})$ is a continuous random variable, then the event $g(\bs{P}_k\bs{\varepsilon}) = g(\bs{P}_{k'}\bs{\varepsilon})$ has 0 probability, so $R_k = \tilde R_k$ for all $k$ and the first inequality in the above display becomes an equality.
\end{proof}

{\rev 
	\section{A preliminary lemma for power analysis}\label{sec:powerpre}
	
	In this section, our main goal is to prove Lemma~\ref{lem:tcomb}, which can be used to characterize the stochastic convergence of $\be \tilde{\bV}_k \tilde{\bV}_k^\top \bv$ or $\bs{h} \tilde{\bV}_k \tilde{\bV}_k^\top \bv$ in the proofs of Theorems~\ref{thm:alternative1},~\ref{thm:alternative2},~\ref{thm:altnonlinear} and~\ref{thm:kalternative1}.
	
	\begin{lemma}\label{lem:t1}
		Let $\bM_1, \ldots, \bM_K \in\R^{n\times n}$ be $K$ deterministic matrices that varies with $n$ and satisfies $\|\bM_k\|_\op \leq 1$ for all $k = 1, \ldots, K$; let $\bw := (w_1, \ldots, w_n)^\top$ be an $n$-dimensional deterministic vector that varies with $n$. Assume that $\bv$ satisfies Assumption~\ref{as:nonlinear} with $t = 1$. Then if $b = \omega \bigl(\sqrt{K} n^{-1/2}\bigr)$, we have that for any fixed $\delta, \gamma > 0$,
		\[
		\lim_{n \to \infty} \mathbb{P}\biggl(\exists 1 \le k \le K \;\st\; \frac{|\bw^\top \bM_k \bv|}{b\max\{\|\bw\|_2^2, \gamma n\}} > \delta\biggr) = 0.
		\]
	\end{lemma}
	
	\begin{proof}
		We have for any $1 \le k \le K$,
		\begin{align*}
			\E [|\bw^\top \bM_k\bv|^2] = \bE [\bw^\top \bM_k\bv\bv^\top \bM_k^\top \bw] \le C_\varepsilon \|\bw\|_2^2,
		\end{align*}
		and thus by Chebyshev's inequality and a union bound, for any $\delta > 0$,
		\begin{align*}
			& \mathbb{P}\biggl(\exists 1 \le k \le K \;\st\; \frac{|\bw^\top \bM_k \bv|}{b \max\{\|\bw\|_2^2, \gamma n\}} > \delta\biggr) \le \sum_{k = 1}^K\prob \left(\frac{|\bw^\top \bM_k\bv|}{b \max \{\|\bw\|_2^2, \gamma n\}} > \delta \right) \\
			& \le \sum_{k = 1}^K \frac{\E [|\bw^\top \bM_k \bv|^2]}{\delta^2 b^2 \max \{\|\bw\|_2^4, \gamma^2 n^2\}} \le \frac{C_\varepsilon K}{\delta^2 \gamma b^2 n}.
		\end{align*}
		From above, the desired result follows from $K / b^2 n= o(1)$.
	\end{proof}
	
	\begin{lemma}\label{lem:t10moment}
		Consider the $\bv$ in Assumption~\ref{as:nonlinear} with $t \in [0, 1)$; let $\bw \in \R^n$ be an $n$-dimensional vector that varies with $n$, we have that for any fixed $B > 0$, there exists a sequence of positive real values $c_n \to 0$ that does not depend on $\bw$ such that
		\[
		\sum_{i = 1}^n \E[w_i^2 \varepsilon_i^2 \one(|\varepsilon_i| \le B i^{\frac{1}{1 + t}})] \le c_n \cdot \|\bw\|_2^2 n^{\frac{1 - t}{1 + t}}.
		\]
		Moreover,
		\[
		\lim_{B \to \infty} \sup_{n \ge 1}\left\{\sum_{i=1}^n \left(\E[\varepsilon_i \one(|\varepsilon_i| \le B i^{\frac{1}{1 + t}})] \right)^2 / n^{\frac{1 - t}{1 + t}}\right\} = 0.
		\]
	\end{lemma}
	
	\begin{proof}
		Without loss of generality we assume throughout this proof that $C_\varepsilon = 1$. To control the first inequality, let $f_i := \varepsilon_i \one(|\varepsilon_i| \le B i^{\frac{1}{1 + t}})$, let $a_n$ be a sequence of integers such that as $n \to \infty$, $a_n \to \infty$ and $a_n / n \to 0$, then
		\begin{equation}\label{eq:fsqmean}
			\begin{aligned}
				\sum_{i = 1}^n w_i^2\bE [f_i^2]
				&=  \sum_{i = 1}^{a_n} w_i^2\bE [f_i^2] + \sum_{i = a_n + 1}^{n} w_i^2\bE [f_i^2] \\
				&\le  \sum_{i = 1}^{a_n} w_i^2\bE [\varepsilon_i^2 \one(|\varepsilon_i| \le Bi^{\frac{1}{1+t}})] + \sum_{i = a_n + 1}^{n} w_i^2\bE [\varepsilon_i^2 \one(|\varepsilon_i| \le B a_n^{\frac{1}{1+t}})] \\
				& \quad + \sum_{i = a_n + 1}^{n} w_i^2\bE [\varepsilon_i^2 \one( B a_n^{\frac{1}{1+t}} < |\varepsilon_i| \le Bi^{\frac{1}{1+t}})] \\
				&\le \sum_{i = 1}^{n} w_i^2\bE [\varepsilon_i^2 \one(|\varepsilon_i| \le B a_n^{\frac{1}{1+t}})] + \sum_{i = a_n + 1}^{n} w_i^2\bE [\varepsilon_i^2 \one( B a_n^{\frac{1}{1+t}} < |\varepsilon_i| \le Bi^{\frac{1}{1+t}})].
			\end{aligned}
		\end{equation}
		
		For the first term in the above inequality,
		\begin{align*}
			& \sum_{i = 1}^{n} w_i^2 \E [\varepsilon_i^2 \one(|\varepsilon_i| \le B a_n^{\frac{1}{1+t}})] = \sum_{i = 1}^{n} w_i^2 \E[|\varepsilon_i|^{1 + t} |\varepsilon_i|^{1 - t} \one(|\varepsilon_i| \le B a_n^{\frac{1}{1+t}})] \le \sum_{i = 1}^{n} w_i^2  \E[|\varepsilon_i|^{1 + t} B^{1 - t} a_n^{\frac{1 - t}{1+t}}] \\
			& \le B^{1 - t} \|\bw\|_2^2 \cdot a_n^{\frac{1 - t}{1+t}} = c_{1n}\|\bw\|_2^2 \cdot n^{\frac{1 - t}{1+t}},
		\end{align*}
		where $c_{1n} := B^{1 - t} \cdot a_n^{\frac{1 - t}{1+t}} / n^{\frac{1 - t}{1+t}} \to 0$ is a sequence of real values that does not depend on $\bw$.
		
		For the second term on the right hand side of~\eqref{eq:fsqmean}, we have
		\begin{align*}
			& \sum_{i = a_n + 1}^{n} w_i^2 \E [\varepsilon_i^2 \one( B a_n^{\frac{1}{1+t}} < |\varepsilon_i| \le Bi^{\frac{1}{1+t}})] = \sum_{i = a_n + 1}^{n} w_i^2 \E [|\varepsilon_i|^{1 + t} |\varepsilon_i|^{1 - t} \one( B a_n^{\frac{1}{1+t}} < |\varepsilon_i| \le Bi^{\frac{1}{1+t}})] \\
			& \le \sum_{i = a_n + 1}^{n} w_i^2 \E [|\varepsilon_i|^{1 + t} \one( B a_n^{\frac{1}{1+t}} < |\varepsilon_i| \le Bi^{\frac{1}{1+t}})] \cdot B^{1 - t} n^{\frac{1 - t}{1 + t}} \\
			& \le B^{1 - t} \|\bw\|_2^2 n^{\frac{1 - t}{1 + t}} \sup_{i \ge 1} \E[|\varepsilon_i|^{1 + t} \one(|\varepsilon_i| > Ba_n^{\frac{1}{1+t}})].
		\end{align*}
		Recall the restrictions we have about $\varepsilon_i$'s in Assumption~\ref{as:nonlinear}, we have as $n \to \infty$,
		\[
		c_{2n} := B^{1 - t} \cdot \sup_{i \ge 1} \E[|\varepsilon_i|^{1 + t} \one(|\varepsilon_i| > Ba_n^{\frac{1}{1+t}})] \to 0.
		\]
		Notice further that with the above definition, $c_{2n}$ also does not depend on $\bw$. In light of the above two results, we prove the first inequality where we select $c_n := c_{1n} + c_{2n}$.
		
		For the second inequality, we first prove it when $t \in (0, 1)$. Using that all $\varepsilon_i$'s are mean-zeroed, we have for any fixed $B > 0$,
		\begin{align}\label{eq:lemfmom}
			& \sum_{i = 1}^n (\E [f_i])^2 = \sum_{i = 1}^n (\E [\varepsilon_i \one(|\varepsilon_i| > Bi^{\frac{1}{1+t}})])^2 \overset{(i)}{\le} \sum_{i = 1}^n  (\E [|\varepsilon_i| ^{1+t}])^{\frac{2}{1+t}} \left(\E \left[\left\{\one(|\varepsilon_i| > Bi^{\frac{1}{1+t}})\right\}^{\frac{1 + t}{t}}\right]\right)^{\frac{2t}{1+t}}  \nonumber\\
			& = \sum_{i = 1}^n  (\E [|\varepsilon_i| ^{1+t}])^{\frac{2}{1+t}} (\E [\one(|\varepsilon_i| > Bi^{\frac{1}{1+t}})])^{\frac{2t}{1+t}} \le \sum_{i = 1}^n \prob(|\varepsilon_i|^{1+t} > B^{1+t}i)^{\frac{2t}{1+t}} \nonumber \\
			& \overset{(ii)}{\le} n \left(\frac{\sum_{i = 1}^n \prob(|\varepsilon_i|^{1+t} > B^{1+t}i)}{n}\right)^{\frac{2t}{1+t}} \le n^{\frac{1 - t}{1 + t}} \left(\sum_{i = 1}^\infty \prob(|\varepsilon_i|^{1+t} > B^{1+t}i)\right)^{\frac{2t}{1+t}}. 
		\end{align}
		where $(i)$ uses H\"{o}lder's inequality; $(ii)$ uses Jensen's inequality. Now given a fixed $\eta > 0$, from the conditions of $\varepsilon_i$'s we must have that there exists a $N_\eta$ such that $\sum_{i = N_\eta}^\infty \prob(|\varepsilon_i|^{1+t} > i) \le \eta / 2$.
		Moreover, form Markov's inequality we further have that there exists a $B_\eta$ such that for any $B \ge B_\eta$, $\sum_{i = 1}^{N_\eta} \prob(|\varepsilon_i|^{1+t} > B i) \le \eta / 2.$
		Putting together, we have that for any $B \ge \max\{B_\eta, 1\}$,
		\[
		\sum_{i = 1}^\infty \prob(|\varepsilon_i|^{1+t} > B^{1 + t} i) \le \eta.
		\]
		Since the above result holds for arbitrary $\eta > 0$, we have as $B \to \infty$, 
		\[\sum_{i = 1}^\infty \prob(|\varepsilon_i|^{1+t} > B^{1 + t} i) \to 0.
		\]
		In light of above and~\eqref{eq:lemfmom}, we prove that the second result in the lemma statement holds with $t \in (0 ,1)$. 
		
		We now turn to the case with $t = 0$. In this case, we have for any fixed $B > 0$,
		\[
		\sum_{i = 1}^n (\E [f_i])^2 \le n \cdot \left(\sup_{i \ge 1} \E[|\varepsilon_i| \one(|\varepsilon_i| > B i)]\right)^2 \le n \cdot \left(\sup_{i \ge 1} \E[|\varepsilon_i| \one(|\varepsilon_i| > B)]\right)^2.
		\]
		Using the requirement in Assumption~\ref{as:nonlinear} we have $\sup_{i \ge 1} \E[|\varepsilon_i| \one(|\varepsilon_i| > B)]$ converges to zero as $B \to \infty$, which proves the second result in the lemma statement in the $t = 0$ case.
		
		Taking together, we prove the desired result.
	\end{proof}
	
	\begin{lemma}\label{lem:tcomb}
		Consider the $\bM_1, \ldots, \bM_K$ and $\bw$ in Lemma \ref{lem:t1}; assume that $\bv$ satisfies Assumption~\ref{as:nonlinear} with $t \in [0, 1]$ and that $b$ satisfies~\eqref{eq:bnk}.
		Then for any fixed $\delta, \gamma > 0$,
		\[
		\lim_{n \to \infty} \mathbb{P}\biggl(\exists 1 \le k \le K \;\st\;  \frac{|\bw^\top \bM_k \bv|}{b\max\{\|\bw\|_2^2, \gamma n\}} > \delta\biggr) = 0.
		\]
	\end{lemma}
	
	\begin{proof}
		When $t = 1$ the result follows from Lemma~\ref{lem:t1}. In the rest of the proof we assume throughout $t \in [0, 1)$. From the scaling of $b$ we have that there exist $C_b, N_b > 0$ such that for all $n \ge N_b$, $b \ge C_b n^{-\frac{t}{1 + t}}$, which yields that for any $B > 0, n \ge N_b$,
		\[
		\frac{\|\bw\|_2 \sqrt{\sum_{i=1}^n \left(\E[\varepsilon_i \one(|\varepsilon_i| \le B i^{\frac{1}{1 + t}})] \right)^2}}{b\max\{\|\bw\|_2^2, \gamma n\}} \le \frac{\sqrt{\sum_{i=1}^n \left(\E[\varepsilon_i \one(|\varepsilon_i| \le B i^{\frac{1}{1 + t}})] \right)^2}}{\sqrt{\gamma} C_b n^{\frac{1 - t}{2 (1 + t)}}}.
		\]
		In light of the above and with Lemma~\ref{lem:t10moment}, we have that for any fixed $\delta, \gamma > 0$, there exists a constant $B_{\gamma, \delta} > 0$ depending on $(\gamma, \delta)$ such that for $n$ sufficiently large,
		\begin{equation}\label{eq:t10hete}
			\frac{\|\bw\|_2 \sqrt{\sum_{i=1}^n \left(\E[\varepsilon_i \one(|\varepsilon_i| \le B_{\gamma, \delta} i^{\frac{1}{1 + t}})] \right)^2}}{b\max\{\|\bw\|_2^2, \gamma n\}} \le \frac{\delta}{3}.
		\end{equation}
		
		Writing $f_i := \varepsilon_i \one(|\varepsilon_i| \le B_{\gamma, \delta} i^{\frac{1}{1 + t}})$ and $\bs{f} := (f_1, \ldots, f_n)^\top$, then we have that
		\begin{align*}
			|\bw^\top \bM_k \bv| & \le |\bw^\top \bM_k (\bs{f} - \E[\bs{f}])| + |\bw^\top \bM_k \E[\bs{f}]| + |\bw^\top \bM_k(\bv - \bs{f})| \\
			& \le |\bw^\top \bM_k (\bs{f} - \E[\bs{f}])| + \|\bw\|_2 \|\E[\bs{f}]\|_2 + \|\bw\|_2  \|\bv - \bs{f}\|_2.
		\end{align*}
		In light of this decomposition and also~\eqref{eq:t10hete}, we only need to prove that as $n \to \infty$,
		\begin{equation}\label{eq:t10hete1}
			\pr\left(\exists 1 \le k \le K \;\st\;  \frac{|\bw^\top \bM_k (\bs{f} - \E[\bs{f}])|}{b\max\{\|\bw\|_2^2, \gamma n\}} > \frac{\delta}{3}\right) \to 0
		\end{equation}
		and that
		\begin{equation}\label{eq:t10hete2}
			\pr\left(\frac{\|\bw\|_2 \|\bv - \bs{f}\|_2}{b\max\{\|\bw\|_2^2, \gamma n\}} > \frac{\delta}{3}\right) \le \pr\left(\frac{\|\bv - \bs{f}\|_2}{b\sqrt{\gamma n}} > \frac{\delta}{3}\right) \to 0.
		\end{equation}
		To prove~\eqref{eq:t10hete1}, applying Chebyshev's inequality and a union bound, we have
		\begin{align*}
			& \pr\left(\exists 1 \le k \le K \;\st\;  \frac{|\bw^\top \bM_k (\bs{f} - \E[\bs{f}])|}{b\max\{\|\bw\|_2^2, \gamma n\}} > \frac{\delta}{3}\right) \le \sum_{k = 1}^K \pr\left(\frac{|\bw^\top \bM_k (\bs{f} - \E[\bs{f}])|}{b\max\{\|\bw\|_2^2, \gamma n\}} > \frac{\delta}{3}\right) \\
			& \le \sum_{k = 1}^K \frac{9 \E[|\bw^\top \bM_k (\bs{f} - \E[\bs{f}])|^2]}{\delta^2 b^2\max\{\|\bw\|_2^4, \gamma^2 n^2\}}.
		\end{align*}
		Then by applying Lemma~\ref{lem:t10moment} where we select $\bw$ as $\bM_k^\top \bw$ and using that all the $\varepsilon_i$'s are independent and the basic inequality that $\E[(f_i - \E[f_i])^2] \le \E[f_i^2]$, we have
		\[
		\pr\left(\exists 1 \le k \le K \;\st\;  \frac{|\bw^\top \bM_k (\bs{f} - \E[\bs{f}])|}{b\max\{\|\bw\|_2^2, \gamma n\}} > \frac{\delta}{3}\right) = o\left(\frac{K \|\bw\|_2^2 n^{\frac{1 - t}{1 + t}}}{b^2 \max\{\|\bw\|_2^4, \gamma^2 n^2\}}\right) = o(1),
		\]
		which proves~\eqref{eq:t10hete1}.
		
		To prove~\eqref{eq:t10hete2}, apparently we already have
		\[
		\sum_{i = 1}^\infty \prob(f_i \neq \varepsilon_i) = \sum_{i = 1}^\infty \prob ( |\varepsilon_i| > B_{\gamma, \delta} i^{\frac{1}{1 + t}}) < \infty.
		\]
		Then given any constant $\eta > 0$, there exists a constant $N_\eta$ depending only on $\eta$ such that
		\[
		\prob(\exists i > N_\eta \;\st\; f_i \neq \varepsilon_i) \le \sum_{i = N_\eta + 1}^\infty \prob(f_i \neq \varepsilon_i) \leq \frac{\eta}{2}.
		\]
		Using that $N_\eta$ is finite, we further have there exists a constant $M_\eta$ such that
		\[
		\prob( \exists \ell \le N_{\eta} , \;\st\; |\varepsilon_\ell| > M_\eta) \le \frac{\eta}{2}.
		\]
		
		Writing the event $\mathcal{E}_\eta := \{ \forall i > N_{\eta}, f_i = \varepsilon_i \;\&\; \forall \ell \le N_{\eta} , |\varepsilon_i| \le M_{\eta}\}$ then we easily have that under this event, with $n$ sufficiently large, $\frac{\|\bv - \bs{f}\|_2}{b\sqrt{\gamma n}} \le \frac{\delta}{3}$. In other words, given any fixed $\eta > 0$, with $n$ sufficiently large,
		\[
		\pr\left(\frac{\|\bv - \bs{f}\|_2}{b\sqrt{\gamma n}} > \frac{\delta}{3}\right) \le \pr(\mathcal{E}_\eta^c) \le \eta,
		\]
		which proves~\eqref{eq:t10hete2}. In light of our control of~\eqref{eq:t10hete1} and~\eqref{eq:t10hete2}, we prove the desired result.
	\end{proof}
}

\section{Proof of basic power results}\label{sec:pfalternative}

In this section, we prove the power result of RPT in the basic model where $\bZ$ follows a linear function with respect to $\bX$ and all noises are i.i.d.

\subsection{Proof of Theorem~\ref{thm:alternative1}}
\label{sec:upp_asymp_high}

\begin{lemma}\label{lem:offdiagbnd}
	Let $\bM \in \R^{n \times n}$ be a matrix with all diagonal entries equal to zero. Then for any $\prob_e \in \cD_2$, we have for any fixed $\delta > 0$,
	\[
	\prob\left(\frac{\be^\top \bM \be}{n} > \delta\right) \leq \frac{4\|\bM\|_F^2}{n^2 \delta^2}.
	\]
\end{lemma}

\begin{proof}
	Observe that
	\begin{align*}
		\E \left[\left(\frac{\be^\top \bM \be}{n}\right)^2\right] &= \bE \left[\left(\sum_{i \neq j} \bM_{i,j}\frac{e_i e_j}{n}\right)^2 \right].
	\end{align*}
	Using that for any $i \neq j$, $e_i \independent e_j$, we have
	\[
	\E \left[\left(\frac{\be^\top \bM \be}{n}\right)^2\right] = \E \left[\sum_{i,j} \bM_{i,j}^2 \frac{e_i^2 e_j^2}{n^2}\right] \le \frac{4 \|\bM\|_F^2}{n^2}.
	\]
	Then by applying Chebyshev's inequality, we obtain the desired result.
\end{proof}

\begin{lemma}\label{lem:van06}
	For each $n$, let $V_{n,i} (1 \le i \le n)$ be independent random variables. Suppose that there exists a constant $C > 0$ such that for any $i, n$,  $\E[|V_{n,i}|] \le C$ and that 
	\[
	\lim_{a \to \infty} \sup_{n \ge 1} \frac{1}{n}\sum_{i=1}^n \E[ |V_{n,i}| \one(|V_{n, i}| > a)] = 0,
	\]
	then $\frac{1}{n} \sum_{i=1}^n (V_{n,i} - \E[V_{n,i}])$ converges in probability to zero.
\end{lemma}

\begin{proof}
	We just need to prove that for any fixed $\epsilon_1, \epsilon_2 > 0$, there exists a $N_{\epsilon_1, \epsilon_2}$ such that for any $n \ge N_{\epsilon_1, \epsilon_2}$,
	\begin{equation}\label{eq:van06}
		\pr\left(\left|\frac{1}{n} \sum_{i=1}^n (V_{n,i} - \E[V_{n,i}])\right| > \epsilon_1\right) \le \epsilon_2.
	\end{equation}
	Let $\epsilon = \epsilon_1 \epsilon_2 / 3$; then there exists a constant $a_\epsilon > 0$ such that for any $n \ge 1$, $ \frac{1}{n}\sum_{i=1}^n \E[ |V_{n,i}| \one(|V_{n, i}| > a_\epsilon)] \le \epsilon$.
	
	Write $\bar{V}_{n,i} =  V_{n,i} \one(|V_{n, i}| \le a_\epsilon)$ and $\tilde{V}_{n,i} = V_{n,i} \one(|V_{n, i}| > a_\epsilon)$. Then it holds that 
	\begin{align*}
		\E \left[ \left| \frac{1}{n} \sum_{i=1}^n (V_{n,i} - \E[V_{n,i}]) \right| \right] &= \E \left[ \left| \frac{1}{n} \sum_{i=1}^n (\bar{V}_{n,i} - \E[\bar{V}_{n,i}]) + \frac{1}{n} \sum_{i=1}^n (\tilde{V}_{n,i} - \E[\tilde{V}_{n,i}]) \right| \right]
		\\&\le \E \left[   \left| \frac{1}{n}  \sum_{i=1}^n (\bar{V}_{n,i} - \E[\bar{V}_{n,i}]) \right| \right] + 
		\E \left[  \frac{1}{n} \sum_{i=1}^n | \tilde{V}_{n,i} - \E[\tilde{V}_{n,i}]| \right] 
	\end{align*}
	For the second term on the right hand side of the above inequality, we have 
	\[
	\E \left[  \frac{1}{n} \sum_{i=1}^n | \tilde{V}_{n,i} - \E[\tilde{V}_{n,i}]| \right] \le  \frac{2}{n} \sum_{i=1}^n \E|\tilde{V}_{n,i}| \le 2 \epsilon. 
	\]
	For the first term, using the basic inequality that for any random variable $V$, $\E[|V|] \le \sqrt{\E[V^2]}$ and the independence of $\bar{V}_{n,i}$'s, we have
	\begin{align*}
		\E \left[  \left|  \frac{1}{n} \sum_{i=1}^n (\bar{V}_{n,i} - \E[\bar{V}_{n,i}]) \right| \right] &\le \left( \frac{1}{n^2} \E \left[ \left| \sum_{i=1}^n  (\bar{V}_{n,i} - \E[\bar{V}_{n,i}]) \right|^2 \right] \right)^{1/2} \\
		&\le \Big( \frac{1}{n^2}  \sum_{i=1}^n  \E \big| \bar{V}_{n,i} - \E[\bar{V}_{n,i}] \big|^2 \Big)^{1/2} \le \frac{a_\epsilon}{\sqrt{n}}.
	\end{align*}
	Hence for any $n \ge a_\epsilon^2 / \epsilon^2$,
	\[
	\E \left[ \left| \frac{1}{n} \sum_{i=1}^n (V_{n,i} - \E[V_{n,i}]) \right| \right] \le 3 \epsilon.
	\]
	In light of above, and by a Markov's inequality, we can have~\eqref{eq:van06} with $N_{\epsilon_1, \epsilon_2} := a_\epsilon^2 / \epsilon^2$, thereby proving the desired result.
\end{proof}

{\rev 
	\begin{lemma}\label{lem:suff}
		The assumption of $\bv$ in Theorem~\ref{thm:alternative1} is a sufficient condition of that of $\bv$ in Assumption~\ref{as:nonlinear}.
	\end{lemma}
	
	\begin{proof}
		We have
		\[
		\sum_{i = 1}^\infty \pr(|\varepsilon_i|^{1 + t} \ge B i) = \sum_{i = 1}^\infty \pr(|\varepsilon_1|^{1 + t} \ge B i) \le \int_0^\infty \pr\left(\frac{|\varepsilon_1|^{1 + t}}{B} \ge x\right) dx = \E\left[\frac{|\varepsilon_1|^{1 + t}}{B}\right] < \infty.
		\]
		Moreover, for any constant $a > 0$,
		\[
		\sup_{i \ge 1} \E[|\varepsilon_i|^{1 + t} \one(|\varepsilon_i|^{1 + t} > a)] = \E[|\varepsilon_1|^{1 + t} \one(|\varepsilon_1|^{1 + t} > a)].
		\]
		Since $\E[|\varepsilon_1|^{1 + t}]$ is bounded, by dominated convergence theorem, the above quantity converges to zero as $a \to \infty$.
	\end{proof}
}

\begin{proof}[Proof of Theorem~\ref{thm:alternative1}]
	Without loss of generality, we assume throughout this proof that $\prob_e \in \cD_2$ and $\prob_\varepsilon \in \cD_{1 + t}$. 
	%	Since $K$ is finite, we only need to prove that for any $j,k \in \{1, \ldots, K\}$, as $n \to \infty$,
	%	\[
	%	\prob \left (  |\be^\top \tilde{\bs{V}}_j \tilde{\bs{V}}_j^\top  (\bv + b \be)| \le |\be^\top  \tilde{\bs{V}}_k \tilde{\bs{V}}_k^\top  \bP_k (\bv + b \be)| \right) \to 0.
	%	\]
	
	To prove the desired result, it suffices to prove that for all $\delta > 0$,
	\begin{equation}\label{eq:upp_epsilon}
		\begin{aligned}
			\prob\left(\exists 1 \le k \le K \;\st\;  \frac{|\be^\top \tilde{\bs{V}}_k \tilde{\bs{V}}_k^\top \bv|}{b n} \ge \delta\right) & \to 0 ;\\
			\prob\left(\exists 1 \le k \le K \;\st\;  \frac{|\be^\top  \tilde {\bs{V}}_k \tilde {\bs{V}}_k^\top \bP_k \bv|}{b n} \ge \delta\right) & \to 0 ;
		\end{aligned}
	\end{equation}
	and that with probability converging to $1$, for all $1 \le j, k, \le K$,
	\begin{equation}\label{eq:upp_e}
		\begin{aligned}
			& \frac{\be^\top  \tilde {\bs{V}}_j \tilde {\bs{V}}_j^\top  \be - \be^\top  \tilde {\bs{V}}_k \tilde {\bs{V}}_k^\top  \bP_k \be}{n}  \ge \frac{m}{2 (4 + m)};\\
			& \frac{\be^\top  \tilde {\bs{V}}_j \tilde {\bs{V}}_j^\top  \be + \be^\top  \tilde {\bs{V}}_k \tilde {\bs{V}}_k^\top  \bP_k \be}{n}  \ge \frac{m}{2 (4 + m)}.
		\end{aligned}
	\end{equation}

	To prove the first claim of~\eqref{eq:upp_epsilon}, since $\prob_e \in \cD_2$, we have from the law of large number,
	\[
	\prob\left(\frac{1}{2} n\le \|\be\|_2^2 \le \frac{5}{2} n\right) \to 1.
	\]
	Let $\mathcal{E}$ denote the above event, applying basic inequalities of random events, we have
	\begin{align*}
		& \prob\left(\exists 1 \le k \le K \;\st\; \frac{|\be^\top \tilde{\bs{V}}_k \tilde{\bs{V}}_k^\top \bv|}{b n} \ge \delta\right) \le \prob\left(\exists 1 \le k \le K \;\st\; \frac{|\be^\top \tilde{\bs{V}}_k \tilde{\bs{V}}_k^\top \bv|}{b n} \ge \delta \mid \mathcal{E}\right) \prob(\mathcal{E}) + \prob(\mathcal{E}^c) \\
		& \overset{(i)}{=} \prob\left(\exists 1 \le k \le K \;\st\; \frac{|\be^\top \tilde{\bs{V}}_k \tilde{\bs{V}}_k^\top \bv|}{b \max\{\|\be\|_2^2, \frac{5}{2} n\}} \ge \frac{2}{5}\delta \mid \mathcal{E}\right) \prob(\mathcal{E}) + \prob(\mathcal{E}^c)
	\end{align*}
	where $(i)$ straightly follows from that we are under $\mathcal{E}$. Then as a direct consequence of Lemma~\ref{lem:tcomb} with $\be$ as $\bw$ and $\tilde{\bs{V}}_k \tilde{\bs{V}}_k^\top$ as $\bM_k$, and also Lemma~\ref{lem:suff} and that $K$ is a constant, we prove the first claim of~\eqref{eq:upp_epsilon}. For the second claim of~\eqref{eq:upp_epsilon}, by instead taking $\tilde{\bs{V}}_k \tilde{\bs{V}}_k^\top \bP_k$ as $\bM_k$, the result follows from the same argument as the first claim of~\eqref{eq:upp_epsilon}. 
	
	In the rest of the proof we focus on proving the first statement of~\eqref{eq:upp_e}, and the second statement can be proven via a similar argument. Since we assume $K$ as fixed, it boils down to proving that for any fixed $j, k$, with probability converging to $1$, the first statement of~\eqref{eq:upp_e} holds. To achieve this goal, we apply the decomposition
	\begin{align*}
		\frac{\be^\top  \tilde {\bs{V}}_j \tilde {\bs{V}}_j^\top  \be - \be^\top  \tilde {\bs{V}}_k \tilde {\bs{V}}_k^\top  \bP_k \be}{n} = & \frac{\be^\top  (\tilde {\bs{V}}_j \tilde {\bs{V}}_j^\top  - \mathrm{diag}(\tilde {\bs{V}}_j \tilde {\bs{V}}_j^\top ))\be - \be^\top  (\tilde {\bs{V}}_k \tilde {\bs{V}}_k^\top  \bP_k - \mathrm{diag}(\tilde {\bs{V}}_k \tilde {\bs{V}}_k^\top  \bP_k)) \be}{n}\\
		& + \frac{\be^\top  \mathrm{diag}(\tilde{\bs{V}}_j \tilde{\bs{V}}_j^\top)  \be - \be^\top  \mathrm{diag}(\tilde {\bs{V}}_k \tilde {\bs{V}}_k^\top  \bP_k) \be}{n} \\
		=: &\; \mathrm{I} + \mathrm{II},
	\end{align*}
	where for any matrix $\bs{A} \in \R^{n \times n}$, $\mathrm{diag}(\bs{A})$ corresponds to the diagonal matrix such that all the diagonal elements are equal to the diagonal elements of $\bs{A}$.
	
	For $\mathrm{I}$, observe that 
	\begin{align*}
		\|\tilde {\bs{V}}_j \tilde {\bs{V}}_j^\top  - \mathrm{diag}(\tilde {\bs{V}}_j \tilde {\bs{V}}_j^\top ) \|_F^2 & \le \|\tilde {\bs{V}}_j \tilde {\bs{V}}_j^\top  \|_F^2 = \tr[\tilde {\bs{V}}_j \tilde {\bs{V}}_j^\top  \tilde {\bs{V}}_j \tilde {\bs{V}}_j^\top] \\
		&= \tr[\tilde {\bs{V}}_j \tilde {\bs{V}}_j^\top] = n - 2p,
	\end{align*}
	and that
	\begin{align*}
		\|\tilde {\bs{V}}_k \tilde {\bs{V}}_k^\top  \bP_k - \mathrm{diag}(\tilde {\bs{V}}_k \tilde {\bs{V}}_k^\top  \bP_k) \|_F^2 &\le \|\tilde {\bs{V}}_k \tilde {\bs{V}}_k^\top  \bP_k \|_F^2 = \mathrm{tr}(\tilde {\bs{V}}_k \tilde {\bs{V}}_k^\top  \bP_k \bP_k^\top \tilde {\bs{V}}_k \tilde {\bs{V}}_k^\top  ) \\
		&= \mathrm{tr}(\tilde {\bs{V}}_k \tilde {\bs{V}}_k^\top ) = n - 2p,
	\end{align*}
	we can apply Lemma~\ref{lem:offdiagbnd} to show that for any constant $\delta > 0$,
	\begin{equation}\label{eq:upp_e_I}
		\lim_{n \to \infty} \prob(|\mathrm{I}| \le \delta) \to 1.
	\end{equation}
	
	For $\mathrm{II}$, given any fixed $\bP_j$ and $\bP_k$, define $a_{n,i}  := \frac{(\tilde {\bs{V}}_j \tilde {\bs{V}}_j^\top  - \tilde {\bs{V}}_k \tilde {\bs{V}}_k^\top \bP_k)_{i,i}}{n}$ and write $V_{n,i} := n a_{n,i} e_i^2$. Then we can rewrite $\mathrm{II}$ as $\mathrm{II} = \frac{1}{n} \sum_{i=1}^n V_{n,i}$. Notice that for each $n$, it holds that $\big | a_{n,i} | \le 2/n$. From this, we can have that $\E[|V_{n,i}|] \le 2$ uniformly for all $i, n$ and that for any $a > 0$,
	\begin{align*}
		\sup_{n \geq 1} \frac{1}{n}\sum_{i=1}^n  \E[|V_{n,i}| \one(|V_{n,i}| > a)] \le \E[2 e_1^2 \one( 2e_1^2 > a)].
	\end{align*}
	Using dominated convergence theorem and that $\E[e_1^2] < \infty$, we have that $\E[2 e_1^2 \one( 2e_1^2 > a)] \to 0$ as $a \to \infty$; this allow us to apply Lemma~\ref{lem:van06} to get that for any constant $\delta > 0$,
	\begin{equation}\label{eq:upp_e_II1}
		\prob\left(|\mathrm{II} - \E[\mathrm{II} \mid \bP_j, \bP_k]| > \delta \mid \bP_j, \bP_k\right) \to 0.
	\end{equation}

	% For $\mathrm{II}$, given any fixed $\bP_j$ and $\bP_k$, we write $V_i := e_i^2$ and
	% \[
	% a_{n,i} := \frac{(\tilde {\bs{V}}_j \tilde {\bs{V}}_j^\top  - \tilde {\bs{V}}_k \tilde {\bs{V}}_k^\top \bP_k)_{i,i}}{n},
	% \]
	% then we can rewrite $\mathrm{II}$ as $\mathrm{II} = \sum_{i=1}^n a_{n,i} V_i$. It is straightforward that for each $n$, the absolute value all the entires in $a_{n,i}$ are bounded by $\frac{2}{n}$. Thus given any fixed $\bP_j, \bP_k$,
	% \[
	% \sup_{n \geq 1} \sum_{i=1}^n |a_{n,i}| \E[|V_i| \one(|V_i| > a)] \le 2 \E[|V_i| \one(|V_i| > a)],
	% \]
	% which converges to $0$ as $a \to \infty$. Moreover, 
	% \[
	% \lim_{n \to \infty} \sum_{i=1}^n |a_{n,i}|^2 \le \lim_{n \to \infty} \frac{4}{n} = 0.
	% \]
	% This allows us to apply Lemma~\ref{lem:van06} to get that for any constant $\delta > 0$,
	% \begin{equation}\label{eq:upp_e_II1}
		% \prob\left(|\mathrm{II} - \E[\mathrm{II} \mid \bP_j, \bP_k]| > \delta \mid \bP_j, \bP_k\right) \to 0.
		% \end{equation}
	
	Thus, it remains to control $\E[\mathrm{II} \mid \bP_j, \bP_k] = \sum_{i=1}^n a_{n,i}$. 
	We write 
	\begin{equation}\label{eq:ak}
		\bs{A}_k \bs{A}_k^\top  = \bs{I} - \tilde{\bs{V}}_k \tilde{\bs{V}}_k^\top,
	\end{equation}
	where $\bs{A}_k$ is a $n \times (n - 2p)$ matrix with orthonormal columns. Since the column space of $\tilde {\bs{V}}_k$ is at the intersection of $\mathrm{span}(\bX)^\perp$ and $\mathrm{span}(\bP_k \bX)^\perp$, we have that $\mathrm{span}(\bX)$ must be a subspace of $\mathrm{span}(\bs{A}_k)$. Hence without loss of generality we can write $\bs{A}_k := [\bs{A}_0, \bs{B}_k]$, where $\bs{A}_0 \in \R^{n \times p}$ is a matrix with orthonormal columns spanning $\mathrm{span}(\bV_0)^\perp$. With the above notations, we calculate
	\begin{align*}
		\E[\mathrm{II} \mid \bP_j, \bP_k] &= \sum_{i=1}^n a_{n,i} = \frac{1}{n} \tr[\tilde {\bs{V}}_j \tilde {\bs{V}}_j^\top  - \tilde {\bs{V}}_k \tilde {\bs{V}}_k^\top \bP_k] 
		= \frac{1}{n} ((n-2p) - \tr[\bs{A}_k\bs{A}_k^\top \bP_k]) \\
		& = \frac{1}{n}((n-2p) - \tr[\bs{A}_0\bs{A}_0^\top \bP_k + \bs{B}_k \bs{B}_k^\top \bP_k]).
	\end{align*}
	From Assumption~\ref{as:trace}, we have $\tr[\bs{A}_0\bs{A}_0^\top \bP_k] \le \sqrt{2 p} K$, and using Lemma~\ref{lem:tr}, we have $\tr[\bs{B}_k \bs{B}_k^\top \bP_k] \le \tr[\bs{B}_k \bs{B}_k^\top] \le p$, putting together we further have
	\begin{equation}
		\E[\mathrm{II} \mid \bP_j, \bP_k] \ge \frac{1}{n}((n - 2p) - p - \sqrt{2 p} K) \ge \frac{m}{4 + m}, \label{eq:explower}
	\end{equation}
	where the last inequality holds for sufficiently large $n$. 
	From above and~\eqref{eq:upp_e_II1}, and also our control of the term $\mathrm{I}$ in~\eqref{eq:upp_e_I}, we have that the first statement of~\eqref{eq:upp_e} holds with probability converging to $1$. Using an analogous argument we prove the second statement of \eqref{eq:upp_e}. In light of this and our analysis of~\eqref{eq:upp_epsilon}, we obtain the desired result.
\end{proof}

% \textcolor{red}{Kaiyue, I remembered you said that something needs to be changed in this proof?}

\subsection{Proof of Theorem~\ref{thm:alternative2}}\label{sec:upp_asymp_low}

\begin{lemma}\label{lem:partconv}
	Consider a deterministic permutation matrix $\bP \in \R^{n \times n}$ that varies with $n$ and $\tr[\bP] = 0$. We have that for any fixed $\delta > 0$,
	\[
	\forall \prob_e \in \cD_1, \quad \lim_{n \to \infty} \prob(|\be^\top \bP \be| / n > \delta) = 0.
	\].
\end{lemma}
\begin{proof}
	Let $\sigma$ be the permutation corresponding to $\bP$. From Lemma~\ref{lem:partition}, we have there exists a partition $U_1, U_2, U_3$ with $|U_j \cap \sigma(U_j)| = 0$ and that $|U_j| \ge \frac{n}{4} - 1$ for $j = 1, 2, 3$ such that
	\[
	\frac{\be^\top \bP \be}{n} = \frac{1}{n}\sum_{j = 1}^3 \sum_{i \in U_j}e_i e_{\sigma(i)}.
	\]
	Then 
	\[
	\lim_{n \to \infty} \prob(|\be^\top \bP \be| / n > \delta) \le \sum_{j=1}^3 \prob\left(\frac{1}{|U_j|} \left|\sum_{i \in U_j}e_i e_{\sigma(i)}\right| > \frac{\delta}{3}\right).
	\]
	From above, it remains to prove that for any $j$ and any fixed $\delta > 0$,
	\[
	\prob\left(\frac{1}{|U_j|} \left|\sum_{i \in U_j}e_i e_{\sigma(i)}\right| > \delta\right) \to 0.
	\]
	
	Let $\tilde{e}_i$ be a sequence of i.i.d. random variables that is independent from $\bw$ and that $\tilde{e}_i \stackrel{\mathrm{d}}{=} e_1 e_2$. Then we easily have that $\tilde{e}_i$ are i.i.d. random variables with zero mean and bounded first order moment. Then using the weak law of large number, we have that with $a_n(\delta) := \sup_{m \ge n} \prob(|\sum_{i = 1}^m\tilde{e}_i / m| > \delta)$,
	\[
	\lim_{n \to \infty} a_n(\delta) = \limsup_{n \to \infty} \prob\left(\left|\sum_{i = 1}^n\tilde{e}_i / n\right| > \delta\right) = \lim_{n \to \infty} \prob\left(\left|\sum_{i = 1}^n\tilde{e}_i / n\right| > \delta\right) = 0.
	\]
	Using that the $U_j$ and $\sigma(U_j)$ has no overlap, we have
	\[
	\sum_{i = 1}^{|U_j|}\tilde{e}_i \stackrel{\mathrm{d}}{=} \sum_{i \in U_j} e_i e_{\sigma(i)}
	\]
	and thus
	\[
	\prob\left(\frac{1}{|U_j|} \left|\sum_{i \in U_j}e_i e_{\sigma(i)}\right| > \delta\right) \le a_{|U_j|}(\delta) \le a_{\lceil n / 4 - 1 \rceil}(\delta) \to 0,
	\]
	where for the last inequality we use that $a_n(\delta)$ is non-increasing and $|U_j| \ge n / 4 - 1$.
	% Using that the $U_j$ and $\sigma(U_j)$ have no overlap, we have that for each $j \in \{1,2,3\}$, the $e_i e_{\sigma(i)}$'s with $i \in U_j$ are i.i.d. zero-mean random variables with bounded first order moment. So using standard results of weak law of large number, we have in probability, 
	% \begin{align*}
		% 	\left|\frac{\sum_{i \in U_j} e_i e_{\sigma(i)}}{n} \right| \le \left|\frac{\sum_{i \in U_j} e_i e_{\sigma(i)}}{|U_j|} \right| \to 0,
		% \end{align*} 
	% which proves the desired result.
\end{proof}

\begin{lemma}
	\label{lem:symbound}
	Assume that $\prob_e$ follows a distribution that is symmetric around zero; and let $\bs{U} \in \R^{n \times n}$ be a positive semi-definite matrix. 
	Then we have that for any $\delta > 0$,
	\[
	\prob\left(\be^\top \bs{U} \be > \delta \|\be\|_2^2\right) \leq \frac{\tr[\bs{U}]}{\delta n}.
	\]
\end{lemma}

\begin{proof}
	Let  $\bs{J}$ be a random diagonal matrix where all diagonal entries $\bs{J}_{i,i}$ are i.i.d. binary random variables with $\prob(\bs{J}_{i,i} = 1) = \prob(\bs{J}_{i,i} = -1) = \frac{1}{2}$. We write $\bP$ for a uniformly random permutation matrix that is independent from $\bs{J}$. Since $\prob_e$ is symmetric and all the $e_i$'s are independent, we have that $\be \overset{d}{=} \bP \bs{J} \be$, i.e., they are equal in distribution. 
	
	This allows us to prove the statement by controlling $\prob (\be^\top \bs{J}^\top \bs{P}^\top \bs{U}\bP \bs{J}\be > \delta \|\be\|_2^2)$ due to that
	\begin{align}\label{eq:eqdbnd}
		\prob\left(\be^\top \bs{U} \be \geq \delta \|\be\|_2^2\right) & = \prob\left(\be^\top \bs{J}^\top \bs{P}^\top \bs{U}\bP \bs{J}\be > \delta \be^\top \bs{J}^\top \bP^\top \bP \bs{J} \be\right) \nonumber\\
		& = \prob\left(\be^\top \bs{J}^\top \bs{P}^\top \bs{U}\bP \bs{J}\be > \delta \|\be\|_2^2\right).
	\end{align}
	
	First, for any fixed $\be_0 \in \R^n$, we have
	\[
	\E [\be^\top \bs{J}^\top \bP^\top \bs{U} \bP \bs{J} \be | \be = \be_0] = \be_0^\top  \E [\bs{J}^\top \bP^\top \bs{U} \bP \bs{J}] \be_0.
	\]
	
	Second, for any fixed matrix $\bM \in \R^{n \times n}$, we have $\E [(\bs{J}^\top \bM \bs{J})_{i,j}] = \E [\bs{J}_{i,i}\bM_{i,j}\bs{J}_{j,j}] = 0$ whenever $i \neq j$ and $\E [(\bs{J}^\top \bM \bs{J})_{i,i}] = \bE [\bs{J}_{i,i}\bM_{i,i}\bs{J}_{i,i}] = \bM_{i,i}$. Putting together and applying Lemma~\ref{lem:permdiag}, we have
	\begin{align*}
		\E [\be^\top \bs{J}^\top \bP^\top \bs{U} \bP \bs{J} \be | \be = \be_0] = \be_0^\top  \E [\bs{J}^\top \bP^\top \bs{U} \bP \bs{J}] \be_0  = \frac{\tr(\bs{U})}{n} \|\be_0\|_2^2.
	\end{align*}
	
	From above and Markov's inequality, we have 
	\begin{align*}
		\prob (\be^\top \bs{J}^\top \bs{P}^\top \bs{U}\bP \bs{J}\be > \delta \|\be\|_2^2) & = \E \left[\prob \left(\be^\top \bs{J}^\top \bs{P}^\top \bs{U}\bP \bs{J}\be > \delta \|\be\|_2^2 \mid \be \right)\right]\\
		& \leq \E \left[\frac{\E[\be^\top \bs{J}^\top \bs{P}^\top \bs{U}\bP \bs{J}\be \mid \be]}{\delta \|\be\|_2^2}\right] \\
		& = \E \left[\frac{\tr[\bs{U}]}{ \delta n}\right] = \frac{\tr[\bs{U}]}{\delta n}.
	\end{align*}
	In light of the above equality and~\eqref{eq:eqdbnd}, we obtain the desired result.
\end{proof}

\begin{lemma}\label{lem:partition}
	Consider a permutation $\sigma$ of $\{1, \ldots, n\}$ such that for any $i \in \{1, \ldots, n\}$, $\sigma(i) \neq i$. Then there exists a partition $U_1, U_2, U_3$ of the set $\{1, \ldots, n\}$ such that 
	\[
	\forall j \in \{1,2,3\}, \quad |U_j| \in \left[\frac{n}{4} - 1, \frac{n}{2} + 1\right] \;\&\; |U_j \cap \sigma(U_j)| = 0.
	\]
\end{lemma}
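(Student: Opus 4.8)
The plan is to translate the statement into graph colouring. Consider the graph $G$ on vertex set $\{1,\dots,n\}$ whose edges are the pairs $\{i,\sigma(i)\}$. The hypothesis $\sigma(i)\neq i$ for all $i$ says that $G$ is a disjoint union of cycles, each of length at least $2$, and the requirement $|U_j\cap\sigma(U_j)|=0$ is exactly the statement that $(U_1,U_2,U_3)$ is a proper $3$-colouring of $G$. So it suffices to exhibit a proper $3$-colouring of a disjoint union of cycles whose three colour classes all have sizes in $[n/4-1,\,n/2+1]$.

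\textbf{A first colouring.} First I would split the cycles of $G$ into those of even and of odd length and let $b$ be the number of odd cycles; since each odd cycle has length at least $3$ we have $3b\le n$, and since $n$ is a sum of cycle lengths we have $n\equiv b\pmod 2$, so $n-b$ is even. Pick one vertex from each odd cycle and place these $b$ vertices (call them $S_0$) into $U_3$. Deleting $S_0$ turns every odd cycle into an even path and leaves the even cycles unchanged, so the remaining graph $G'$ on $n-b$ vertices is bipartite; two-colour it, assigning in each connected component one side to $U_1$ and the other to $U_2$. Each component of $G'$ has two equal-size sides, so at this stage $|U_1|=|U_2|=(n-b)/2$ and $|U_3|=b$, and the colouring of $G$ is proper because every vertex of $S_0$ has both of its neighbours in $U_1\cup U_2$.

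\textbf{Rebalancing.} The second step is to rebalance by moving a set $S_1$ of vertices from the $U_1$-side of $G'$ into $U_3$. In $G$ the $U_1$-side is an independent set, and each of its vertices has all of its neighbours in $U_2\cup S_0$; moreover at most $b$ of them have a neighbour in $S_0$ (the two $G$-neighbours of a deleted vertex are the two ends of its path and hence lie on opposite sides), so there are at least $(n-b)/2-b=(n-3b)/2$ admissible $U_1$-side vertices whose move to $U_3$ keeps the colouring proper. With $s'=|S_1|$ the final class sizes are $(n-b)/2-s'$, $(n-b)/2$ and $b+s'$; note $|U_2|=(n-b)/2\in[n/3,n/2]$ automatically. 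A short computation using only $b\ge 0$ and $3b\le n$ shows that the constraints $(n-b)/2-s'\ge n/4-1$, $b+s'\ge n/4-1$, $b+s'\le n/2+1$ and $0\le s'\le (n-3b)/2$ are simultaneously satisfiable by an integer $s'$: when $b\ge n/4-1$ one may take $s'=0$, and otherwise the feasible interval for $s'$ is nonempty and of length at least $1$. Choosing such an $s'$ finishes the construction.

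\textbf{Main obstacle.} I expect the only genuine work to be this last arithmetic check — verifying that the three linear inequalities in $s'$ together with integrality are feasible for every $b\in\{0,1,\dots,\lfloor n/3\rfloor\}$ — and this is exactly where the $\pm1$ slack in the stated bounds $[n/4-1,n/2+1]$ gets absorbed. A shorter but less self-contained alternative would be to invoke the Hajnal--Szemer\'edi equitable colouring theorem: $G$ has maximum degree $2$, hence admits an equitable proper $3$-colouring, whose classes have sizes in $\{\lfloor n/3\rfloor,\lceil n/3\rceil\}\subseteq[n/4-1,n/2+1]$.
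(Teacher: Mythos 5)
Your proof is correct but takes a genuinely different route from the paper's. You reformulate the lemma as an equitable proper $3$-colouring of the undirected graph $G$ on $\{1,\dots,n\}$ with edge set $\{\{i,\sigma(i)\}\}$ (a disjoint union of single edges and cycles), then build the colouring constructively: delete one vertex per odd cycle to make $G$ bipartite, two-colour the rest with equal class sizes $(n-b)/2$, and rebalance by moving an integer number $s'$ of safe $U_1$-vertices into the third class. The arithmetic you flag at the end does check out — the feasible interval for $s'$ has length strictly greater than $1$ for every $b\in\{0,\dots,\lfloor n/3\rfloor\}$, so an integer choice exists, and the bound of at most $b$ non-admissible $U_1$-vertices is correct because the two ends of each truncated odd path land on opposite sides of the bipartition. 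The paper instead works directly with the directed functional graph: it takes a \emph{maximum} set $U$ with $U\cap\sigma(U)=\emptyset$, uses an exchange argument to show that after removing the out-edges of $U$ the remaining components are only single arcs $a\to b$ or length-$2$ directed paths $a\to b\to c$, and then distributes these remnants (splitting the single-arc sources roughly in half) across the other two colour classes; the size bounds follow from $|U|\le n/2$ and maximality ($|U_2|,|U_3|\le|U_1|$). The paper's argument is a self-contained extremal/exchange proof; yours is a more recognizable parity-plus-rebalancing colouring argument that trades the extremal lemma for a small linear-programming feasibility check. Your Hajnal--Szemer\'edi remark is also valid (max degree $\le 2$ gives an equitable $3$-colouring with class sizes $\lfloor n/3\rfloor$ or $\lceil n/3\rceil$, which lie in the required range), though it imports a heavy theorem for a problem that admits an elementary solution.
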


\begin{proof}
	% [Proof of Lemma~\ref{lem:partition}]
	Let $G$ be a directed graph on vertices $\{1, \ldots, n\}$ where there exists a directed edge $i \to j$ in $G$ if and only if $j = \sigma(i)$.  Then the cycles in $G$ are of length at least $2$.
	
	Let $U$ denote a set with the maximum number of notes such that $|U \cap \sigma(U)| = 0$, then apparently $|U| < \frac{n}{2} + 1$. Let $G'$ denote the subgraph of $G$ removing all the edges of the type $(u, \sigma(u))$ for $u \in U$. Then we must have that a node is in $U^c$ if and only if the node has an out edge in $G'$. Moreover, we claim that (i) $G'$ does not contain a circle with length $2$; (ii) all the connected component of $G'$ has no more than $2$ edges. To prove claim (i), suppose in contradiction there exists a circle $a \to b \to a$ in $G'$, then we must have that $a,b \not\in U$. This means that the set $U' = U \cup \{b\}$ can still satisfy that $|U' \cap \sigma(U')| = 0$, which contradicts that $U$ is maximal. To prove claim (ii), suppose in contradiction there exists a connected component with at least $3$ edges, then in this component there must exists a path $a \to b \to c \to d$ or $a \to b \to c \to a$. Then we easily have that $b, c \not\in U$. This means that the set $U' = U \cup \{b\}$ can still satisfy that $|U' \cap \sigma(U')| = 0$, which contradicts that $U$ is maximal.
	
	From the two claims, we must have that all the connected components in $G'$ must be of the form $a \to b$ or $a \to b \to c$. We now introduce three sets of nodes $A, B, C$, where $A$ consists of all the nodes $a$ such that $a \to b$ formalizes a connected component in $G'$; $B$ consists of all the nodes $a$ such that $a \to b \to c$ is a connected component in $G'$; and $C$ consists of all the nodes $b$ such that $a \to b \to c$ is a connected component in $G'$. Now recall the claim that a node is in $U^c$ if and only if the node has an out edge in $G'$, we have that the four disjoint sets $A, B, C, U$ formalizes a partition of all the nodes; moreover, $\sigma(A) \subseteq U$, $\sigma(B) = C$, $\sigma(C) \subseteq U$, $\sigma(U) = A \cup B$.
	
	From above, we split $A$ into two sets $A_1, A_2$ with size $|A_1|$ and $|A_2|$ differ by at most $1$; and set $U_1 = U, U_2 = A_1 \cup B, U_3 = A_2 \cup C$. Then it is straightforward that for all $i = 1, 2, 3$, 
	\[
	\frac{n}{4} - 1 \le \frac{n - |U_1| - 1}{2} \le |U_i| \le |U_1| \le \frac{n}{2} + 1
	\]
	and that
	\[
	|U_i \cap \sigma(U_i)| = 0,
	\]
	which proves the desired result.
\end{proof}

\begin{proof}[Proof of Theorem~\ref{thm:alternative2}]
	Without loss of generality, we assume throughout that $\prob_e \in \cD_1$ and $\prob_\varepsilon \in \cD_{1 + t}$. 
	Following analogous argument as in the proof of Theorem~\ref{thm:alternative1}, we tackle this problem via proving that for any fixed $\delta > 0$,
	\begin{equation}\label{eq:upp_epsilon_2}
		\begin{aligned}
			\prob\left(\exists 1 \le k \le K \;\st\; \frac{|\be^\top \tilde{\bs{V}}_k \tilde{\bs{V}}_k^\top \bv|}{b \| \be \|^2_2} \ge \delta\right) & \to 0 ;\\
			\prob\left(\exists 1 \le k \le K \;\st\; \frac{|\be^\top  \tilde {\bs{V}}_k \tilde {\bs{V}}_k^\top \bP_k \bv|}{b \| \be \|^2_2} \ge \delta\right) & \to 0 ;
		\end{aligned}
	\end{equation}
	and that with probability converging to $1$, for all $j, k$,
	\begin{equation}\label{eq:upp_e_2}
		\begin{aligned}
			& \frac{\be^\top  \tilde {\bs{V}}_j \tilde {\bs{V}}_j^\top  \be - \be^\top  \tilde {\bs{V}}_k \tilde {\bs{V}}_k^\top  \bP_k \be}{\|\be \|^2}  \ge \frac{1}{5};\\
			& \frac{\be^\top  \tilde {\bs{V}}_j \tilde {\bs{V}}_j^\top  \be + \be^\top  \tilde {\bs{V}}_k \tilde {\bs{V}}_k^\top  \bP_k \be}{\|\be \|^2}  \ge \frac{1}{5}.
		\end{aligned}
	\end{equation}
	
	To prove \eqref{eq:upp_epsilon_2}, since $\prob_e \in \cD_1$, we have that there exists some threshold $\tau > 0$ such that $\frac{2}{3}\le \E[|e_i|^2 \one(|e_i| \le \tau)] < \infty$, from this and by standard results of weak law of large number, we have
	\[
	\lim_{n \to \infty} \pr\left(\frac{1}{n} \sum_{i = 1}^n |e_i|^2 \one(|e_i| \le \tau) \ge \frac{1}{2}\right) = 1,
	\]
	which also means that
	\begin{equation}\label{eq:e1mom}
		\lim_{n \to \infty} \pr\left(\frac{1}{n} \sum_{i = 1}^n e_i^2 \ge \frac{1}{2}\right) = 1.
	\end{equation}
	Denote the above event by $\mathcal{E}$; then we have
	\[
	\prob\left(\exists 1 \le k \le K \;\st\; \frac{|\be^\top \tilde{\bs{V}}_k \tilde{\bs{V}}_k^\top \bv|}{b \| \be \|^2_2} \ge \delta \mid \mathcal{E} \right) = 
	\prob\left(\exists 1 \le k \le K \;\st\; \frac{|\be^\top \tilde{\bs{V}}_k \tilde{\bs{V}}_k^\top \bv|}{b \max\{\| \be \|^2_2, \frac{1}{2} n\}} \ge \delta \mid \mathcal{E} \right).
	\]
	\eqref{eq:upp_epsilon_2} then follows from exactly the same proof as that of~\eqref{eq:upp_epsilon}.
	
	%in light of~\citet[Exercise~8 of Section~5.4]{Chung} (see also \citet[Exercise~8 of Chapter 7]{Res19}), we have that as $n \to \infty$, $\delta_n := \E[|\be^\top {\bf 1}| / n] \to 0$,
	%and therefore by Markov's inequality,
	%\[
	%\pr\left(\frac{|\be^\top {\bf 1}|}{n} \le \delta_n^{\frac{1}{2}}\right) \to 1.
	%\]
	%From Lemma~\ref{lem:sumsq}, we further have that $\pr(\be^\top \be \ge n / 2) \to 0$. Then following analogous argument as in the proof~\eqref{eq:upp_epsilon}, we prove \eqref{eq:upp_epsilon_2}.
	
	% The first claim of~\eqref{eq:upp_epsilon_2} directly follows Lemma~\ref{lem:tcomb}. The second claim of~\eqref{eq:upp_epsilon_2} uses Lemma~\ref{lem:tcomb} and that $\bP_k \bv \overset{d}{=} \bv$. In the rest of the proof we focus on proving the first statement of~\eqref{eq:upp_e_2}, and the second statement can be proven via a similar argument. 
	
	In the rest of the proof we assume throughout that both $\bP_j$ and $\bP_k$ are fixed permutation matrices or equivalently being conditioned on. Since we assume $K$ is fixed, we only need to prove that for any fixed $j, k$, with probability converging to $1$, the two inequalities in~\eqref{eq:upp_e_2} hold. In the rest of this proof we prove the first inequality of \eqref{eq:upp_e_2}, and the second inequality follows from an analogous argument. To prove this, let $\be'$ denote an independent replication of $\be$. Recall the definition of $\bs{A}_k$ in~\eqref{eq:ak}, we have
	\begin{equation}\label{eq:upp_e_21}
		\begin{aligned}
			& (\be-\be')^\top (\tilde {\bs{V}}_j \tilde {\bs{V}}_j^\top  - \tilde {\bs{V}}_k \tilde {\bs{V}}_k^\top \bP_k)(\be-\be') \\ 
			& = \|\be-\be'\|_2^2 - (\be - \be')^\top \bP_k(\be - \be') - (\be-\be')^\top (\bs{A_j}\bs{A_j}^\top  - \bs{A_k}\bs{A_k}^\top \bP_k)(\be - \be') \\
			& \ge \|\be-\be'\|_2^2 -  (\be - \be')^\top \bP_k(\be - \be') - (\be-\be')^\top \left(\bs{A_j}\bs{A_j}^\top  + \frac{\bs{A_k}\bs{A_k}^\top  + \bP_k^\top \bs{A_k}\bs{A_k}^\top \bP_k}{2}\right)(\be - \be'),
		\end{aligned}
	\end{equation}
	where for the last inequality we apply Cauchy-Schwartz inequality. As $e_i - e_i'$ is symmetric around zero, we have from Lemma~\ref{lem:symbound} that the following event $\mathcal{E}_1$ holds with probability $1 - \frac{10p}{n} \to 1$:
	\begin{align*}
		\mathcal{E}_1 := \left\{(\be-\be')^\top (\bs{A_j}\bs{A_j}^\top  + \frac{\bs{A_k}\bs{A_k}^\top  + \bP_k^\top \bs{A_k}\bs{A_k}^\top \bP_k}{2})(\be - \be')  < \frac{1}{5}(\be-\be')^\top (\be-\be')\right\}.
	\end{align*}
	
	In addition, as $\|\tilde {\bs{V}}_j \tilde {\bs{V}}_j^\top  - \tilde {\bs{V}}_k \tilde {\bs{V}}_k^\top \bP_k\|_{\mathrm{op}} \le 2$, we have from Lemma~\ref{lem:tcomb} that the following two events $\mathcal{E}_2$ and $\mathcal{E}_3$ hold with probability converging to $1$:
	\begin{align*}
		\mathcal{E}_2 &\; := \left\{ \left|\be'^\top \left(\tilde {\bs{V}}_j \tilde {\bs{V}}_j^\top  - \tilde {\bs{V}}_k \tilde {\bs{V}}_k^\top \bP_k \right)\be\right| < \frac{1}{5} \|\be\|_2^2 \right\};\\
		\mathcal{E}_3 &\; := \left\{\left|\be'^\top \left(\tilde {\bs{V}}_j \tilde {\bs{V}}_j^\top  - \tilde {\bs{V}}_k \tilde {\bs{V}}_k^\top \bP_k \right)\be\right| < \frac{1}{5} \|\be\|_2^2 \right\}.
	\end{align*} 
	
	Working on the intersection of the three events $\mathcal{E}_1 \cap \mathcal{E}_2 \cap \mathcal{E}_3$, and applying the decomposition
	\begin{align*}
		(\be-\be')^\top (\tilde {\bs{V}}_j \tilde {\bs{V}}_j^\top  - \tilde {\bs{V}}_k \tilde {\bs{V}}_k^\top \bP_k) (\be-\be') & = \be^\top (\tilde {\bs{V}}_j \tilde {\bs{V}}_j^\top  - \tilde {\bs{V}}_k \tilde {\bs{V}}_k^\top \bP_k) \be + \be'^\top (\tilde {\bs{V}}_j \tilde {\bs{V}}_j^\top  - \tilde {\bs{V}}_k \tilde {\bs{V}}_k^\top \bP_k) \be' \\
		& - \be^\top (\tilde {\bs{V}}_j \tilde {\bs{V}}_j^\top  - \tilde {\bs{V}}_k \tilde {\bs{V}}_k^\top \bP_k) \be' - \be'^\top (\tilde {\bs{V}}_j \tilde {\bs{V}}_j^\top  - \tilde {\bs{V}}_k \tilde {\bs{V}}_k^\top \bP_k) \be,
	\end{align*}
	we have from~\eqref{eq:upp_e_21} that
	\begin{equation}\label{eq:upp_e_22}
		\begin{aligned}
			&\be^\top (\tilde {\bs{V}}_j \tilde {\bs{V}}_j^\top  - \tilde {\bs{V}}_k \tilde {\bs{V}}_k^\top \bP_k )\be + \be'^\top (\tilde {\bs{V}}_j \tilde {\bs{V}}_j^\top  - \tilde {\bs{V}}_k \tilde {\bs{V}}_k^\top \bP_k)\be' \\ 
			& \ge \frac{4}{5} \|\be - \be'\|_2^2 -  (\be - \be')^\top \bP_k(\be - \be') - \frac{1}{5} (\|\be\|_2^2 + \|\be'\|_2^2) \\
			& = \frac{3}{5} (\|\be\|_2^2 + \|\be'\|_2^2) - (\be - \be')^\top \bP_k(\be - \be') - \frac{8}{5} \be^\top \be'.
		\end{aligned}
	\end{equation}
	
	Define random events
	\begin{align*}
		\mathcal{E}_4  := \left\{(\be - \be')^\top \bP_k(\be - \be') \le \frac{1}{5}\|\be\|_2^2\right\}, \quad
		\mathcal{E}_5  := \left\{\be^\top \be' \le \frac{1}{8} \|\be\|_2^2\right\}.
	\end{align*}
	
	For $\mathcal{E}_4$, we have
	\begin{align*}
		\prob\left(\mathcal{E}_4^c\right) & \le \prob\left(\mathcal{E}_4^c \;\&\; \|\be\|_2^2 \ge n / 2\right) + \prob\left(\|\be\|_2^2 < n / 2\right) \\
		&  = \prob\left(\left\{(\be - \be')^\top \bP_k(\be - \be') > \frac{1}{5}\|\be\|_2^2\right\} \;\&\; \|\be\|_2^2 \ge n / 2\right) + \prob\left(\|\be\|_2^2 < n / 2\right) \\
		& \le \prob\left(\left\{(\be - \be')^\top \bP_k(\be - \be') > \frac{n}{10}\right\} \;\&\; \|\be\|_2^2 \ge n / 2\right) + \prob\left(\|\be\|_2^2 < n / 2\right) \\
		& \le \prob\left(\left\{(\be - \be')^\top \bP_k(\be - \be') > \frac{n}{10}\right\}\right) + \prob\left(\|\be\|_2^2 < n / 2\right)
	\end{align*}
	Then using Lemma~\ref{lem:partconv} and~\eqref{eq:e1mom}, we have that the event $\mathcal{E}_4$ holds with probability converging to $1$.
	
	For $\mathcal{E}_5$, using that all the $e_i e_i'$'s are i.i.d. random variables with $\E[|e_i e_i'|] = \E[|e_i|] \E[|e_i'|] < \infty$, we have $\be^\top \be' / n \to 0$ in probability; thus using a similar argument as $\mathcal{E}_4$, we have $\mathcal{E}_5$ holds with probability converging to $1$. 
	
	Now working on the event $\mathcal{E}_1 \cap \cdots \cap \mathcal{E}_5$ (which, as shown above, occurs with probability converging to $1$), we have from~\eqref{eq:upp_e_22} and also the definitions of $\mathcal{E}_4$ and $\mathcal{E}_5$ that 
	\[
	\be^\top (\tilde {\bs{V}}_j \tilde {\bs{V}}_j^\top  - \tilde {\bs{V}}_k \tilde {\bs{V}}_k^\top \bP_k )\be + \be'^\top (\tilde {\bs{V}}_j \tilde {\bs{V}}_j^\top  - \tilde {\bs{V}}_k \tilde {\bs{V}}_k^\top \bP_k)\be' \ge \frac{1}{5}(\|\be\|_2^2 + \|\be'\|_2^2).
	\]
	In other words, with probability converging to zero,
	\[
	\underset{\mathrm{I}}{\underbrace{\be^\top \left(\tilde {\bs{V}}_j \tilde {\bs{V}}_j^\top  - \tilde {\bs{V}}_k \tilde {\bs{V}}_k^\top \bP_k - \frac{1}{5}\bs{I}\right)\be}} + \underset{\mathrm{I}'}{\underbrace{\be'^\top \left(\tilde {\bs{V}}_j \tilde {\bs{V}}_j^\top  - \tilde {\bs{V}}_k \tilde {\bs{V}}_k^\top \bP_k - \frac{1}{5}\bs{I}\right)\be'}} < 0.
	\] 
	Since $\mathrm{I}$ and $\mathrm{I}'$ are two i.i.d. random variables, we have using their independence and identically distributed property that
	\[
	\prob(\mathrm{I} < 0) = \sqrt{\prob(\mathrm{I} < 0) \prob(\mathrm{I}' < 0)} = \sqrt{\prob(\mathrm{I} < 0, \mathrm{I}' < 0)} \le \sqrt{\prob(\mathrm{I} + \mathrm{I}' < 0)} \to 0,
	\]
	which proves~\eqref{eq:upp_e_2}. In light of this and our control of~\eqref{eq:upp_epsilon_2}, we prove the desired result.
\end{proof}

\subsection{Auxiliary lemmas}

\begin{lemma}\label{lem:permdiag}
	Let $\bP$ be a uniformly random permutation matrix. Let $\bM \in \R^{n \times n}$ be a fixed $n\times n$ matrix. Then for any $i = 1, \ldots, n$, $\E[(\bP \bM \bP^\top)_{ii}] = \frac{1}{n} \sum_{j = 1}^n \bM_{jj}$.
\end{lemma}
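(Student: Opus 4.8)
The plan is to use the simple observation that conjugating a matrix by a permutation matrix merely relabels its rows and columns, so the $i$-th diagonal entry of $\bP\bM\bP^\top$ is always \emph{some} diagonal entry of $\bM$, and when $\bP$ is uniform that entry is uniformly distributed. First I would fix a permutation $\pi$ of $\{1,\ldots,n\}$ and write the associated permutation matrix entrywise, say $\bP_{ab}=\one\{a=\pi(b)\}$, so that $(\bP^\top)_{ab}=\one\{b=\pi(a)\}$. Expanding the matrix product gives
\[
(\bP\bM\bP^\top)_{ii}=\sum_{a=1}^n\sum_{b=1}^n \bP_{ia}\,\bM_{ab}\,(\bP^\top)_{bi}=\sum_{a,b}\one\{i=\pi(a)\}\,\one\{i=\pi(b)\}\,\bM_{ab}.
\]
Since $\pi$ is a bijection, the constraints $i=\pi(a)$ and $i=\pi(b)$ force $a=b=\pi^{-1}(i)$, so the whole double sum collapses to $(\bP\bM\bP^\top)_{ii}=\bM_{\pi^{-1}(i),\,\pi^{-1}(i)}$.

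Next I would take expectation over a uniformly random $\pi\in S_n$. For any fixed index $i$, the random index $\pi^{-1}(i)$ is uniformly distributed on $\{1,\ldots,n\}$, hence
\[
\E\bigl[(\bP\bM\bP^\top)_{ii}\bigr]=\E\bigl[\bM_{\pi^{-1}(i),\,\pi^{-1}(i)}\bigr]=\frac1n\sum_{j=1}^n \bM_{jj},
\]
which is exactly the claimed identity. A fully equivalent route, which I would use if a shorter write-up is preferred, is a pure symmetry argument: by invariance of the uniform law under left multiplication, $\E[\bP\bM\bP^\top]$ is unchanged under conjugation by any permutation matrix, so all of its diagonal entries coincide; they therefore each equal their common average, namely $\tfrac1n\tr\bigl(\E[\bP\bM\bP^\top]\bigr)=\tfrac1n\E\bigl[\tr(\bP\bM\bP^\top)\bigr]=\tfrac1n\tr[\bM]$, using cyclicity of the trace.

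There is essentially no real obstacle here; the only thing to be slightly careful about is the convention for how a permutation matrix acts (whether $\bP$ encodes $\pi$ or $\pi^{-1}$, and whether it acts on the left or the right), but since the right-hand side is $\tfrac1n\tr[\bM]$, which is invariant under conjugation, the conclusion is the same under any consistent convention. I would state the convention explicitly at the start of the proof to keep the bookkeeping unambiguous.
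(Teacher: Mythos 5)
Your proof is correct and takes essentially the same route as the paper: both reduce $(\bP\bM\bP^\top)_{ii}$ to a single diagonal entry $\bM_{jj}$ with $j$ uniformly distributed over $\{1,\ldots,n\}$, then average. The extra care you take with the $\pi$ versus $\pi^{-1}$ convention, and the alternative trace-symmetry argument, are both sound but not needed.
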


\begin{proof}
	Let $\sigma$ be the random permutation corresponding to $\bP$, we have
	\[
	\E [(\bP^\top \bM \bP)_{i,i}] = \bE [\bM_{\sigma{(i)}, \sigma{(i)}}] = \frac{1}{n} \sum_j \bM_{j,j},
	\]
	where the second inequality is due to that $\sigma(i)$ can be viewed as a random variable that samples uniformly at random from the set $\{1, \ldots, n\}$.
\end{proof}

\begin{lemma}\label{lem:tr}
	Consider a symmetric positive semi-definite matrix $\bM \in \R^{n \times n}$ and a permutation matrix $\bP \in \R^{n \times n}$, we have
	\[
	\tr[\bM \bP] \le \tr[\bM].
	\]
\end{lemma}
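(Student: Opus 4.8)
The plan is to reduce the trace to a sum over a permutation and then apply the elementary entrywise bound that follows from positive semi-definiteness. Writing $\sigma$ for the permutation associated with $\bP$, so that $\bP$ sends the $i$-th coordinate vector to the $\sigma(i)$-th, a direct computation gives $\tr[\bM\bP] = \sum_{i=1}^n \bM_{i,\sigma(i)}$ and $\tr[\bM] = \sum_{i=1}^n \bM_{i,i}$. Hence it suffices to show $\sum_i \bM_{i,\sigma(i)} \le \sum_i \bM_{i,i}$.

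The key step is that, since $\bM$ is symmetric positive semi-definite, every principal $2\times 2$ submatrix is also positive semi-definite, so $\bM_{i,i} \ge 0$ and $\bM_{i,j}^2 \le \bM_{i,i}\bM_{j,j}$ for all $i,j$; in particular $\bM_{i,\sigma(i)} \le |\bM_{i,\sigma(i)}| \le \sqrt{\bM_{i,i}\,\bM_{\sigma(i),\sigma(i)}}$. Applying the AM--GM inequality $\sqrt{ab}\le (a+b)/2$ with $a = \bM_{i,i}$ and $b = \bM_{\sigma(i),\sigma(i)}$ and summing over $i$ gives $\sum_i \bM_{i,\sigma(i)} \le \tfrac12\sum_i \bM_{i,i} + \tfrac12\sum_i \bM_{\sigma(i),\sigma(i)}$. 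Because $\sigma$ is a bijection, $\sum_i \bM_{\sigma(i),\sigma(i)} = \sum_j \bM_{j,j} = \tr[\bM]$, and the two terms combine to give exactly $\tr[\bM]$, completing the argument.

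There is no real obstacle here; the only point requiring a moment's care is the bookkeeping in identifying $\tr[\bM\bP]$ with $\sum_i \bM_{i,\sigma(i)}$ under whatever convention one adopts for associating a permutation to $\bP$, and making sure the direction of the inequality (we need $\le$, not equality) is respected. An equivalent route, should one prefer to avoid referring to $2\times 2$ minors, is to factor $\bM = \bs{L}\bs{L}^\top$, write $\tr[\bM\bP] = \tr[\bs{L}^\top\bP\bs{L}] = \sum_i \langle \bs{r}_i, \bs{r}_{\sigma(i)}\rangle$ where $\bs{r}_i$ denotes the $i$-th row of $\bs{L}$, and then apply Cauchy--Schwarz followed by AM--GM to each summand to obtain $\sum_i \langle \bs{r}_i,\bs{r}_{\sigma(i)}\rangle \le \sum_i \|\bs{r}_i\|_2^2 = \tr[\bM]$.
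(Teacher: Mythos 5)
Your proof is correct and takes essentially the same route as the paper: both reduce $\tr[\bM\bP]$ to $\sum_i \bM_{i,\sigma(i)}$, bound each entry by $\tfrac12(\bM_{i,i}+\bM_{\sigma(i),\sigma(i)})$, and conclude by re-indexing via the bijection $\sigma$. The only cosmetic difference is that you pass through the $2\times 2$-minor bound $|\bM_{i,j}|\le\sqrt{\bM_{i,i}\bM_{j,j}}$ before applying AM--GM, whereas the paper cites the slightly more direct consequence of PSD-ness, $\bM_{i,j}\le\tfrac12(\bM_{i,i}+\bM_{j,j})$ (obtained from $(e_i-e_j)^\top\bM(e_i-e_j)\ge 0$); both yield the same summand bound.
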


\begin{proof}
	Using the positive semi-definiteness and symmetry of $\bM$, we have for any $i,j$ ($i$ and $j$ can be equal or unequal), 
	\[
	\bM_{i,j} \le \frac{\bM_{i,i} + \bM_{j,j}}{2}.
	\]
	Let $\sigma$ be the permutation associated with $\bP$, we have
	\[
	\tr[\bM \bP] = \sum_{i = 1}^n \bM_{i, \sigma(i)} \le \sum_{i = 1}^n \frac{\bM_{i,i} + \bM_{\sigma(i),\sigma(i)}}{2} = \tr[\bM],
	\]
	which proves the desired result.
\end{proof}

%\begin{lemma}\label{lem:rescal}
%	For any random variable $f$ satisfying $\E[f] \le 0$, there exists a random variable $f'$ with $\E[f'] = 0$ and that
%	\[
%	\prob{\left(f' = f \vert f > 0 \right)} = 1 \quad \& \quad \prob{\left(f \le f' \le 0 \vert f \le 0\right)} = 1.
%	\]
%\end{lemma}
%
%\begin{proof}
%	
%	If $\E[f] = 0$, then $f' = f$ satisfies all the conditions. Thus we only need to consider the case with $\E[f] < 0$. define $f^+ = f\one({f > 0})$ and $f^- = -f\one({f \le 0})$. Then $\E[f] = \E[f^+] - \E[f^-]$. As $\E[f] < 0$, we easily have that $\E[f^-] > \E[f^+] \ge 0$.
%	
%	We now construct a Bernoulli random variable $B$ satisfying that $\prob(B = 1) = \frac{\E[f^+]}{\E[f^-]} \in [0, 1)$ and that $B \independent f$. Then apparently, $f' = f^+ - B f^-$ satisfies all the requirements, as
%	\begin{align*}
	%		\E[f'] & = \E[f^+] - B \E[f^-] = 0 \\
	%		\prob{\left(f' = f \vert f > 0 \right)} & = \prob{\left((1 - B)f^- = 0 \vert f^- = 0 \right)} = 1 \\
	%		\prob{\left(f \le f' \le 0 \vert f \le 0 \right)} & = \prob{\left((1 - B)f^- \ge 0, f^+ = 0  \vert f^- \ge 0, f^+ = 0 \right)} = 1.
	%	\end{align*}
%\end{proof}

\subsection{Theoretical analysis of the algorithms}

We will first show an lemma.

\begin{lemma}
	\label{lem:permtr}
	Consider a fixed matrix $\bM \in R^{n \times n}$ with $n \ge 2$ and a fixed permutation matrix $\bP_0 \in R^{n \times n}$ satisfying $\tr[\bP_0] = 0$. Let $\tilde{\bP} \in R^{n \times n}$ be a uniformly randomly sampled permutation matrix and define $\bP := \tilde{\bP}^{-1} \bP_0 \tilde{\bP}$. Then for any $\delta > 0$, we have that 
	\[
	\prob\left(|\tr[\bM \bs P]| \ge \frac{\sqrt{2 \tr[\bM \bM^\top]}}{\sqrt \delta}\right) \le \delta.
	\]
\end{lemma}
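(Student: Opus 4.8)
The plan is to control the second moment of $\tr[\bM\bP]$ and then apply Markov's inequality. First I would fix notation: let $\sigma_0$ and $\tau$ be the permutations of $\{1,\ldots,n\}$ associated with $\bP_0$ and $\tilde{\bP}$, so that $\bP=\tilde{\bP}^{-1}\bP_0\tilde{\bP}$ is the permutation matrix of the conjugate $\pi:=\tau^{-1}\sigma_0\tau$, and hence $\tr[\bM\bP]=\sum_{a=1}^n\bM_{a,\pi(a)}$. Since $\tr[\bP_0]=0$, $\sigma_0$ has no fixed point; and $\pi(a)=a$ would force $\tau(a)$ to be a fixed point of $\sigma_0$, so $\pi(a)\neq a$ for every $a$, almost surely. (We assume $\bM\neq\bs 0$, which is the only non-trivial case.)

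The key fact I would establish is that the marginal law of $\pi(a)$ is uniform on $\{1,\ldots,n\}\setminus\{a\}$. This follows from relabelling invariance of the law of $\pi$: for any fixed permutation $g$, $g\pi g^{-1}=(\tau g^{-1})^{-1}\sigma_0(\tau g^{-1})$ and $\tau g^{-1}$ is again uniformly distributed, so $g\pi g^{-1}\overset{d}{=}\pi$; together with $\pi(a)\neq a$ this yields $\prob(\pi(a)=j)=\frac{1}{n-1}\one(j\neq a)$.

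With this in hand the second-moment bound is a short calculation. By Cauchy--Schwarz, $(\tr[\bM\bP])^2=\bigl(\sum_{a}\bM_{a,\pi(a)}\bigr)^2\le n\sum_{a}\bM_{a,\pi(a)}^2$, and taking expectations gives $\E[(\tr[\bM\bP])^2]\le n\sum_{a}\frac{1}{n-1}\sum_{j\neq a}\bM_{a,j}^2\le\frac{n}{n-1}\tr[\bM\bM^\top]\le 2\tr[\bM\bM^\top]$, using $n\ge 2$ and $\tr[\bM\bM^\top]=\sum_{a,j}\bM_{a,j}^2$. Applying Markov's inequality to the nonnegative random variable $(\tr[\bM\bP])^2$ with threshold $2\tr[\bM\bM^\top]/\delta$ then gives $\prob\bigl(|\tr[\bM\bP]|\ge\sqrt{2\tr[\bM\bM^\top]}/\sqrt\delta\bigr)\le\frac{\delta\,\E[(\tr[\bM\bP])^2]}{2\tr[\bM\bM^\top]}\le\delta$, which is exactly the claim.

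I do not expect a genuine obstacle here. The only step needing a little care is the marginal-law argument: one must check that uniformity of $\tau$ passes through conjugation so that $\pi(a)$ is uniform over the $n-1$ values different from $a$ — and crucially, the Cauchy--Schwarz step means we never need the (much more involved) joint law of $(\pi(a),\pi(a'))$, so no casework on coincidence patterns is required. A minor bookkeeping point is the permutation-matrix convention: whether one writes $\tr[\bM\bP]=\sum_a\bM_{a,\pi(a)}$ or $\sum_a\bM_{\pi(a),a}$, the argument is unchanged since $\pi^{-1}$ is also a uniform conjugate of a fixed-point-free permutation and has the same marginal law.
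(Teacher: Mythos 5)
Your proof is correct and takes essentially the same approach as the paper: bound $\E[(\tr[\bM\bP])^2]$ by $\frac{n}{n-1}\tr[\bM\bM^\top]\le 2\tr[\bM\bM^\top]$ via Cauchy--Schwarz plus the fact that $\bP_{u,v}$ equals $1$ with probability $\frac{1}{n-1}$ (off-diagonal), then apply Markov/Chebyshev. The only cosmetic difference is that the paper derives the marginal law of an entry of $\bP$ by directly counting that $(\tilde\sigma(u),\tilde\sigma(v))$ is uniform over ordered pairs of distinct indices while $\bP_0$ has exactly $n$ off-diagonal ones, whereas you use conjugation-invariance of the law of $\pi$; both are fine.
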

\begin{proof}
	Let $\tilde{\sigma}$ be the random permutation corresponding to $\tilde{\bP}$. Then we have that for any $\bP_{u, v}$, $\bP_{u, v} = 1$ if and only if $(\bP_0)_{\tilde{\sigma}(u), \tilde{\sigma}(v)} = 1$. Now that since $\tilde{\sigma}$ is a uniformly random permutation, we have that $(\tilde{\sigma}(u), \tilde{\sigma}(v))$ is a pair that is uniformly at random drawn from the set $\{(i,j) \mid i \neq j \in \{1, \ldots, n\}\}$. From this, we have for any fixed $(u, v)$,
	\[
	\prob(\bP_{u, v} = 1) = \prob((\bP_0)_{\tilde{\sigma}(u), \tilde{\sigma}(v)} = 1) = \frac{n}{n^2 - n} = \frac{1}{n - 1},
	\]
	and equivalently, $\E[\bP_{u, v}^2] = \E[\bP_{u, v}] = \frac{1}{n - 1}$.
	
	Notice also that since $\bP$ is a random permutation matrix, we have that for any fixed $u$ and any fixed $v_1 \neq v_2$, almost surely $\bP_{u, v_1} \bP_{u, v_2} = 0$.
	
	Putting together, we have
	\begin{align*}
		\E [\tr[\bM \bP]^2] &= \bE \left[\left(\sum_u \sum_v \bM_{u,v} \bP_{u,v}\right)^2\right] \le n \sum_u  \E \left[\left( \sum_v \bM_{u,v} \bP_{u,v}\right)^2\right] \\
		& = n\sum_u \sum_v \E [\bM_{u,v}^2 \bP_{u,v}^2] = \frac{n}{n - 1}\sum_u \sum_v \bM_{u,v}^2 = \frac{n}{n - 1} \tr[\bM\bM^\top] \le 2 \tr[\bM\bM^\top].
	\end{align*}
	
	From above, the desired result follows from Chebyshev's inequality.
\end{proof}

\begin{proof}[Proof of Proposition~\ref{prop:algorithm}]
	Throughout the proof we only consider the case with number of iterations $T = 1$, and the case of $T \ge 2$ can be proven via analogous argument. Let $\bP_\pi$ be the random permutation matrix associated with permutation $\pi$, and let $\tilde{\bP}_k$ be the permutation matrix associated with $\tilde{\sigma}_k$, then we have that
	\[
	\bP_k = \bP_\pi^{-1} \tilde{\bP}_k \bP_\pi,
	\]
	so that for any $k_1, k_2 \in \{1, \ldots, K\}$, we have that by setting $k_3$ as the remainder after dividing $k_1 + k_2$ by $K + 1$, 
	\[
	\bP_{k_1} \bP_{k_2} = \bP_\pi^{-1} \tilde{\bP}_{k_1} \bP_\pi \bP_\pi^{-1}\tilde{\bP}_{k_2}\bP_\pi = \bP_\pi^{-1} \tilde{\bP}_{k_1} \tilde{\bP}_{k_2}\bP_\pi = \bP_\pi^{-1} \tilde{\bP}_{k_3} \bP_\pi = \bP_{k_3}.
	\]
	This proves that the returned $\mathcal{P}_K$ always satisfies Assumption~\ref{as:permutation}.
	
	In addition, we have from Lemma~\ref{lem:permtr} and $\tr[\bV_0 \bV_0^\top \bV_0 \bV_0^\top] = \tr[\bV_0 \bV_0^\top] = p$ that for any $k$,
	\[
	\prob\left(|\tr[\bV_0 \bV_0^\top \bP_k]| \ge \sqrt{2 p} K \right) \le \frac{1}{K^2}.
	\]
	The desired result then follows by applying a union bound for all $k$.
	
	Note that since Algorithm~\ref{alg:perm} returns with non-zero probability, there must exist a $\mathcal{P}_K$ that satisfies both assumptions.
\end{proof}

{\rev 
	\section{Proof of additional power results}\label{sec:pfaddpower}
	
	\subsection{Proof of Theorem~\ref{thm:althete}}
	
	The following is an extension of Marcinkiewicz-Zygmund strong law of large numbers to the sum of non-i.i.d. random variables.
	
	\begin{lemma}
		\label{lem:althete1}
		Consider the $\bv$ in Assumption~\ref{as:hete}. 
		If $t < 1$, for any constant $\delta > 0$, it holds that, 
		\begin{align*}
			\lim_{n \to \infty} \pr(\|\bv\|_2^2 \ge \delta \cdot n^{\frac{2}{1+t}}) = 0. 
		\end{align*}
	\end{lemma}
	
	\begin{proof}
		Without loss of generality, we assume throughout this proof that $C_\varepsilon = 1$. For any constant $\delta, \epsilon > 0$, set $B := \epsilon^{\frac{1}{1-t}} \cdot \delta^{\frac{1}{1 -t}} / 6^{\frac{1}{1 -t}}$. 
		Define $f_i = \varepsilon_i \one(|\varepsilon_i| \le B i^{\frac{1}{t + 1}})$, by Assumption~\ref{as:hete}, we have that $ \sum_{i = 1}^{\infty} \Pr(f_i \neq \varepsilon_i ) < \infty$. Thus given $\epsilon > 0$, there exists an integer $N_{\epsilon}$ such that 
		\begin{align*}
			\sum_{i = N_{\epsilon}}^\infty \pr( f_i \neq \varepsilon_i) < \frac{\epsilon}{3}.
		\end{align*}
		Moreover, given $N_{\epsilon}$ and $\epsilon$, we have from Markov's inequality that exists a constant $M_{\epsilon} > 0$ such that
		\begin{align*}
			\sum_{i = 1}^{N_{\epsilon}} \pr(|\varepsilon_i| \ge M_{\epsilon}) \le \frac{\epsilon}{3}.
		\end{align*}
		
		Define the two random events 
		\[
		\mathcal{E}_1 := \{ \forall N_{\epsilon} \le i \le n, f_i = \varepsilon_i\} \quad \& \quad \mathcal{E}_2 := \{ \forall i \le N_{\epsilon},  |\varepsilon_i| \le M_{\epsilon}\}.
		\]
		Now for any $n \ge N_{\epsilon}$, we then have 
		\begin{align*}
			\pr(\|\bv\|_2^2 \ge \delta \cdot n^{\frac{2}{1+t}}) &\le \pr(\|\bv\|_2^2 \ge \delta \cdot n^{\frac{2}{1+t}} \;\&\; \mathcal{E}_1 \;\&\; \mathcal{E}_2 )  + \pr(\mathcal{E}_1^c) + \pr(\mathcal{E}_2^c) \\
			&\le \pr(\|\bv\|_2^2 \ge \delta \cdot n^{\frac{2}{1+t}} \;\&\; \mathcal{E}_1 \;\&\; \mathcal{E}_2 ) + \frac{2\epsilon}{3}.
		\end{align*}
		
		Under $\mathcal{E}_1 \;\&\; \mathcal{E}_2$, it holds that $\sum_{i = 1}^n \varepsilon_i^2 \le \sum_{i = 1}^n f_i^2 + N_{\epsilon} M_{\epsilon}^2$, whence
		\begin{align*}
			\pr(\|\varepsilon\|_2^2 \ge \delta \cdot n^{\frac{2}{1+t}} \;\&\;  \mathcal{E}_1 \;\&\;  \mathcal{E}_2 ) &\le  \pr\left(\sum_{i = 1}^n f_i^2 + N_{\epsilon} M_{\epsilon}^2 \ge \delta \cdot n^{\frac{2}{1+t}} \;\&\; \mathcal{E}_1 \;\&\; \mathcal{E}_2 \right) \\
			&\le \pr\left(\sum_{i = 1}^n f_i^2 + N_{\epsilon} M_{\epsilon}^2 \ge \delta \cdot n^{\frac{2}{1+t}}\right).
		\end{align*}
		
		Now it remains to understand the concentration of $\sum_{i = 1}^n f_i^2$. We have 
		\begin{equation*}
			\begin{aligned}
				\sum_{i = 1}^n \bE [f_i^2]
				& \le  \sum_{i = 1}^{n} \bE [\varepsilon_i^2 \one(|\varepsilon_i| \le Bi^{\frac{1}{1+t}})] \le   \sum_{i = 1}^{n} \bE [\varepsilon_i^2 \one(|\varepsilon_i| \le B n^{\frac{1}{1+t}})] \\
				& = \sum_{i = 1}^{n} \bE [|\varepsilon_i|^{1 + t} |\varepsilon_i|^{1 - t} \one(|\varepsilon_i| \le B n^{\frac{1}{1+t}})] \le B^{1 - t} n^{\frac{1 - t}{1 +t}} \sum_{i = 1}^{n} \bE [|\varepsilon_i|^{1 + t} \one(|\varepsilon_i| \le B n^{\frac{1}{1+t}})] \\
				&\le  B^{1 - t} n^{\frac{2}{1 +t}} = \frac{\epsilon \delta}{6} n^{\frac{2}{1 +t}},
			\end{aligned}
		\end{equation*}
		where for the last inequality we apply that $\E[|\varepsilon_i|^{1 + t}] \le 1$ for all $i$. In light of the above and by Markov's inequality, we have for $n \ge \max\{N_{\epsilon}, ( \frac{6 N_{\epsilon} M_{\epsilon}^2}{\epsilon\delta})^{\frac{1+t}{2}}\}$,
		\begin{align*}
			\pr(\sum_{i = 1}^n f_i^2 + N_{\epsilon} M_{\epsilon}^2 \ge \delta \cdot n^{\frac{2}{1+t}}) \le& \frac{\sum_{i = 1}^n \E[f_i^2] +  N_{\epsilon} M_{\epsilon}^2
			}{\delta \cdot n^{\frac{2}{1+t}}} 
			\le  \frac{\epsilon}{3}. 
		\end{align*}
		
		Concluding, we have that given any fixed $\epsilon, \delta > 0$, for all $n \ge \max\{N_{\epsilon}, ( \frac{6 N_{\epsilon} M_{\epsilon}^2}{\epsilon\delta})^{\frac{1+t}{2}}\}$,
		\begin{align*}
			\pr(\|\bv\|_2^2 \ge \delta \cdot n^{\frac{2}{1+t}}) \le \epsilon.
		\end{align*}
		The proof is then complete.
	\end{proof}
	
	\begin{lemma}\label{lem:tcombhete}
		Consider the $\bM \in \R^{n \times n}$ with $\|\bM\|_\op \le 1$ and random vectors $\bv$ and $\be$ satisfying Assumption~\ref{as:hete}. Let $t \in [0, 1]$ be given and assume that $b$ satisfies~\eqref{eq:bn}. Then for any fixed $\delta > 0$, 
		\[
		\lim_{n \to \infty} \pr\left(\frac{|\be^\top \bM \bv|}{b n} > \delta\right) = 0.
		\]
	\end{lemma}
	
	\begin{proof}
		When $t = 1$, the result is a direct consequence from Chebyshev's inequality since both $e_i$'s and $\varepsilon_i$'s are independent random variables with uniformly bounded second order moments. We now focus on the $t  < 1$ case. In this case, we have for any fixed $\delta, \delta' > 0$, by Chebyshev's inequality and union bound, 
		\begin{align*}
			\pr\left(\frac{|\be^\top \bM \bv|}{b n} > \delta \right) & \le \pr \left(\frac{|\be^\top \bM \bv|}{b n} > \delta \;\&\; \|\bv\|_2^2 \le \delta' n^{\frac{2}{1 + t}} \right) + \pr\left(\|\bv\|_2^2 > \delta' n^{\frac{2}{1 + t}} \right) \\
			& \le \frac{\E[|\be^\top \bM \bv|^2 \one\{\|\bv\|_2^2 \le \delta' n^{\frac{2}{1 + t}}\}]}{\delta^2 b^2 n^2}  + \pr\left(\|\bv\|_2^2 > \delta' n^{\frac{2}{1 + t}} \right) \\
			& \le \frac{C_e \delta' \cdot n^{\frac{2}{1 + t}}}{\delta^2 b^2 n^2}  + \pr\left(\|\bv\|_2^2 > \delta' n^{\frac{2}{1 + t}} \right),
		\end{align*}
		where $\delta' > 0$ can be an arbitrary constant that does not depend on $n$. From above, and that 
		\[
		\liminf_{n \to \infty} b / n^{-\frac{t}{1 + t}} > 0,
		\]
		we have that there exists a constant $C$ such that for any fixed $\delta, \delta' > 0$, with $n$ sufficiently large,
		\begin{equation}\label{eq:t10'}
			\pr\left(\frac{|\be^\top \bM \bv|}{b n} > \delta \right) \le C \delta' / \delta^2 + \pr(\|\bv\|_2^2 > \delta' n^{\frac{2}{1 + t}}).
		\end{equation}
		In light of the above, we have that given any $\eta > 0$, by choosing $\delta'$ as $ \delta_\eta := \delta^2 \eta / (2 C)$,
		\begin{equation*}
			\pr\left(\frac{|\be^\top \bM \bv|}{b n} > \delta \right) \le \frac{\eta}{2} + \pr(\|\bv\|_2^2 > \delta_\epsilon n^{\frac{2}{1 + t}}).
		\end{equation*}
		From above, and by applying Lemma~\ref{lem:althete1} we prove that by choosing $N_\eta > 0$ large enough, for any $n \ge N_\eta$,
		\begin{equation*}
			\pr\left(\frac{|\be^\top \bM \bv|}{b n} > \delta \right) \le \eta,
		\end{equation*}
		which proves the desired result.
		% we have that given any fixed $\epsilon > 0$, there exists constants $\delta_\epsilon, N_\epsilon$ such that by setting $\delta'$ in the last line of above inequality as $\delta_\epsilon$, for any $n \ge N_\epsilon$, the first term in the last line of the above inequality is bounded below by $\epsilon / 2$. Then using Lemma~\ref{lem:althete1} we further have that there exists a $N_\epsilon'$ such that for any $n \ge N_\epsilon'$, $\pr(\|\bv\|_2^2 > \delta_\epsilon n^{\frac{2}{1 + t}}) \le \epsilon / 2$. Putting together we have that for any $n \ge \max\{N_\epsilon, N_\epsilon'\}$, 
		% \[
		% \pr\left(1 \le k \le K, \frac{|\be^\top \bM_k \bv|}{b n} > \delta \right) \le \epsilon.
		% \]
		% Since the above inequality holds for arbitrary $\delta, \epsilon > 0$, we obtain the desired result.
	\end{proof}
	
	% Equipped with the above lemmas, we are now ready to give a proof for Theorem~\ref{thm:althete}:
	
	\begin{proof}[Proof of Theorem~\ref{thm:althete}]
		Applying Lemma~\ref{lem:tcombhete} and that $K$ is fixed, it follows from the proof of Theorem~\ref{thm:alternative1} that the only remaining task is to prove that for any fixed $\bs{P}_j, \bs{P}_k$, with probability converging to $1$,
		\begin{equation*}
			\begin{aligned}
				& \frac{\be^\top  \tilde {\bs{V}}_j \tilde {\bs{V}}_j^\top  \be - \be^\top  \tilde {\bs{V}}_k \tilde {\bs{V}}_k^\top  \bP_k \be}{n}  \ge \frac{c_e m}{2 (4 C_e / c_e + m)};\\
				& \frac{\be^\top  \tilde {\bs{V}}_j \tilde {\bs{V}}_j^\top  \be + \be^\top  \tilde {\bs{V}}_k \tilde {\bs{V}}_k^\top  \bP_k \be}{n}  \ge \frac{c_e m}{2 (4 C_e / c_e + m)}.
			\end{aligned}
		\end{equation*}
		
		Without loss of generality we just prove the first inequality. We can write
		\begin{align*}
			\frac{\be^\top  \tilde {\bs{V}}_j \tilde {\bs{V}}_j^\top  \be - \be^\top  \tilde {\bs{V}}_k \tilde {\bs{V}}_k^\top  \bP_k \be}{n} = & \frac{\be^\top  (\tilde {\bs{V}}_j \tilde {\bs{V}}_j^\top  - \mathrm{diag}(\tilde {\bs{V}}_j \tilde {\bs{V}}_j^\top ))\be - \be^\top  (\tilde {\bs{V}}_k \tilde {\bs{V}}_k^\top  \bP_k - \mathrm{diag}(\tilde {\bs{V}}_k \tilde {\bs{V}}_k^\top  \bP_k)) \be}{n}\\
			& + \frac{\be^\top  \mathrm{diag}(\tilde{\bs{V}}_j \tilde{\bs{V}}_j^\top)  \be - \be^\top  \mathrm{diag}(\tilde {\bs{V}}_k \tilde {\bs{V}}_k^\top  \bP_k) \be}{n} \\
			=: &\; \mathrm{I} + \mathrm{II},
		\end{align*}
		
		The term $\mathrm{I}$ can be controlled by a Chebyshev's inequality following the same proof in Lemma~\ref{lem:offdiagbnd}, since the proof of Lemma~\ref{lem:offdiagbnd} can also be generalized to the case where $e_i$'s are heterogeneous but have uniformly bounded variance. For term $\mathrm{II}$, knowing that we are under Assumption~\ref{as:hete}, it follows from the same lines of proof as the term $\mathrm{II}$ in the proof of Theorem~\ref{thm:alternative1} that~\eqref{eq:upp_e_II1} still holds. 
		
		Now the only remaining job is to understand $\E[\mathrm{II} \mid \bP_j, \bP_k]$. Recall the definition of $\bs{A}_k$; by setting $\bs{D}_e$ as a diagonal matrix with $(i,i)$-th entry equal to $\E[e_i^2]$, we have
		\begin{align*}
			\E[\mathrm{II} \mid \bP_j, \bP_k] &= \sum_{i = 1}^n \frac{(\tilde{\bs{V}}_j \tilde{\bs{V}}_j^\top - \tilde {\bs{V}}_k \tilde {\bs{V}}_k^\top  \bP_k)_{i,i}}{n} \E[e_i^2]  \\
			& = \frac{1}{n} \tr[\bs{D}_e] - \frac{1}{n} \tr[\bs{A}_j \bs{A}_j^\top \bs{D}_e] - \frac{1}{n} \tr[\bs{A}_k \bs{A}_k^\top \bs{P}_k \bs{D}_e] \\
			& \ge \frac{1}{n} \tr[\bs{D}_e] - C_e \frac{2p}{n} - \frac{1}{n} C_e \tr[(\bs{A}_0 \bs{A}_0^\top + \bs{B}_k \bs{B}_k^\top) \bs{P}_k].
		\end{align*}
		Using that $\liminf_{n \to \infty} \tr[\bs{D}_e] / n > c_e$, we have that there exists some $N$ such that for all $n \ge N$, $\tr[\bs{D}_e] / n > c_e$; and therefore following the same proof as~\eqref{eq:explower}, we have that with $n$ sufficiently large,
		\begin{align}
			\label{eq:explowerheto}
			\E[\mathrm{II} \mid \bP_j, \bP_k] &\ge c_e - \frac{3 C_e p - C_e \sqrt{2p}K}{n} \ge \frac{c_e m}{4 C_e / c_e + m},
		\end{align}
		i.e., that it is bounded by a constant that does not depend on $n$. Putting together, we prove the desired result.
		%The last inequality follows from the assumption of the theorem about $n$. This concludes the proof for~\eqref{eq:althete1} and~\eqref{eq:althete2} can be proved similarly.
	\end{proof}
	
	\subsection{Proof of Theorem~\ref{thm:altnonlinear}}

	\begin{proof}[Proof of Theorem~\ref{thm:altnonlinear}]
		We first need to prove that for any fixed $\delta > 0$, 
		\begin{equation*}
			\begin{aligned}
				\prob\left(\exists 1 \le k \le K \;\st\;, \frac{|\bs{h}^\top \tilde{\bs{V}}_k \tilde{\bs{V}}_k^\top \bv|}{b n} \ge \delta\right) & \to 0 ;\\
				\prob\left(\exists 1 \le k \le K \;\st\; \frac{|\bs{h}^\top  \tilde {\bs{V}}_k \tilde {\bs{V}}_k^\top \bP_k \bv|}{b n} \ge \delta\right) & \to 0 ;
			\end{aligned}
		\end{equation*}
		and that the same inequalities still hold but with $\bs{h}$ replaced by $\bs{e}$. Since $\limsup_{n \to \infty} \|\bs{h}\|_2^2 / n \le r$, we have that there exists an $N$ such that for all $n \ge N$, $\|\bs{h}\|_2^2 / n \le 2r$. Then as a direct consequence of Lemma~\ref{lem:tcomb} where we select $\bw = \bs{h}$, $\bM_k = \tilde{\bV}_k \tilde{\bV}_k^\top$ or $\tilde{\bV}_k \tilde{\bV}_k^\top \bs{P}_k$, the above two inequalities still hold. To prove that the above two inequalities still hold with $\bs{h}$ replaced by $\be$, since $K$ is taken as fixed, we may instead apply Lemma~\ref{lem:tcombhete} where we select $\bM = \tilde{\bV}_k \tilde{\bV}_k^\top$ or $\tilde{\bV}_k \tilde{\bV}_k^\top \bs{P}_k$.
		
		% We first need to prove that the two inequalities in~\eqref{eq:upp_epsilon} still holds with $\bv$ replaced by $\bs{G} \bv$. This follows from exactly the same lines of proof as in the proof of Theorem~\ref{thm:alternative1}, except that when applying Lemma~\ref{lem:tcomb}, we instead take $\tilde{\bV}_k \tilde{\bV}_k^\top \bs{G}$ and $\tilde{\bV}_k \tilde{\bV}_k^\top \bP_k \bs{G}$ as $\bM_k$, respectively.

		Following again the proof of Theorem~\ref{thm:alternative1}, it remains to prove that for any fixed $\bP_j, \bP_k$, with probability converging to $1$, 
		\begin{equation*}
			\begin{aligned}
				& \frac{(\be + \bs{h})^\top  \tilde {\bs{V}}_j \tilde {\bs{V}}_j^\top  (\be + \bs{h}) - (\be + \bs{h})^\top  \tilde {\bs{V}}_k \tilde {\bs{V}}_k^\top  \bP_k (\be + \bs{h})}{n}  \ge \frac{(c_e - r) m}{2 (4 C_e / (c_e - r) + m)};\\
				& \frac{(\be + \bs{h})^\top  \tilde {\bs{V}}_j \tilde {\bs{V}}_j^\top  (\be + \bs{h}) + (\be + \bs{h})^\top  \tilde {\bs{V}}_k \tilde {\bs{V}}_k^\top  \bP_k (\be + \bs{h})}{n}  \ge \frac{(c_e - r) m}{2 (4 C_e / (c_e - r) + m)}.
			\end{aligned}
		\end{equation*}
		In the rest of the proof we focus on the first inequality, and the second can be proven via an analogous argument. Notice that
		\begin{align*}
			& \frac{(\be + \bs{h})^\top  \tilde {\bs{V}}_j \tilde {\bs{V}}_j^\top (\be + \bs{h}) - (\be + \bs{h})^\top  \tilde {\bs{V}}_k \tilde {\bs{V}}_k^\top  \bP_k (\be + \bs{h})}{n} \\
			& = \frac{\be^\top  \tilde {\bs{V}}_j \tilde {\bs{V}}_j^\top \be - \be^\top  \tilde {\bs{V}}_k \tilde {\bs{V}}_k^\top  \bP_k \be}{n} + \frac{\bs{h}^\top  \tilde {\bs{V}}_j \tilde {\bs{V}}_j^\top \bs{h} - \bs{h}^\top  \tilde {\bs{V}}_k \tilde {\bs{V}}_k^\top  \bP_k \bs{h}}{n} \\
			& \quad + \frac{\be^\top  \tilde {\bs{V}}_j \tilde {\bs{V}}_j^\top \bs{h} - \be^\top  \tilde {\bs{V}}_k \tilde {\bs{V}}_k^\top  \bP_k \bs{h}}{n} + \frac{\bs{h}^\top  \tilde {\bs{V}}_j \tilde {\bs{V}}_j^\top \be - \bs{h}^\top  \tilde {\bs{V}}_k \tilde {\bs{V}}_k^\top  \bP_k \be}{n}.
		\end{align*}
		As a direct consequence of Chebyshev's inequality and that $\limsup_{ n \to \infty} \|\bs{h}\|_2^2 / n \le r$, we can easily prove that the last two terms on the right hand side of the above inequality converge in probability to zero; for the second inequality, we have that for any fixed constant $\delta > 0$, with sufficiently large $n$, 
		\[
		\frac{\bs{h}^\top  \tilde {\bs{V}}_j \tilde {\bs{V}}_j^\top \bs{h} - \bs{h}^\top  \tilde {\bs{V}}_k \tilde {\bs{V}}_k^\top  \bP_k \bs{h}}{n} \ge - \|\bs{h}\|_2^2 / n \ge - r - \delta.
		\]
		
		In light of the above and using exactly the same lines of proof as in Theorem~\ref{thm:althete} to deal with the first term, we have that for any constant $\delta > 0$, as $n$ goes to infinity, then with probability converging to $1$,
		\begin{align*}
			& \frac{(\be + \bs{h})^\top  \tilde {\bs{V}}_j \tilde {\bs{V}}_j^\top (\be + \bs{h}) - (\be + \bs{h})^\top  \tilde {\bs{V}}_k \tilde {\bs{V}}_k^\top  \bP_k (\be + \bs{h})}{n}  \\
			& \ge c_e - \frac{3 C_e p + C_e \sqrt{2 p} K}{n} - r - \delta \ge \frac{(c_e - r) m}{4 C_e / (c_e - r) + m} - \delta;
		\end{align*} 
		which proves the desired result since $\delta > 0$ can be chosen arbitrarily small.
	\end{proof}
	
	\subsection{Proof of Corollary~\ref{cor:zdet}}
	
	\begin{proof}
		Without loss of generality we just consider the case with $t \in [0, 1)$, and the case with $t = 1$ follows from an analogous argument. Let 
		\[
		\tilde{z}_n := \min_{1 \le j, k \le K} \min_{z \in \{0, 1\}} \bZ^\top \tilde{\bV}_j \tilde{\bV}_j^\top \bZ + (-1)^z \bZ^\top \tilde{\bV}_k \tilde{\bV}_k^\top \bP_k \bZ.
		\]
		It boils down to proving that for any $1 \le k \le K$ and for any fixed $\delta > 0$,
		\begin{equation}\label{eq:detzv}
			\lim_{n \to \infty}\pr\left(\frac{\bZ^\top \tilde{\bV}_k \tilde{\bV}_k^\top \bv}{ b\tilde{z}_n} > \delta \right) = 0,
		\end{equation}
		and that the same conclusion holds with $\tilde{\bV}_k \tilde{\bV}_k^\top$ replaced by $\tilde{\bV}_k \tilde{\bV}_k^\top \bP_k$. In this proof we just prove the former, and the later follows from an analogous argument. To achieve this goal, write $\bw_k := \sqrt{n} \tilde{\bV}_k \tilde{\bV}_k^\top \bZ / \|\bV_0 \bZ\|_2$. Since $\tilde{\bV}_k$ spans a subspace of $\bV_0$, it is straightforward that $\|\bw_k\|_2 \le \sqrt{n}$; and to prove~\eqref{eq:detzv}, it suffices to prove that for $|b'| = \Omega(n^{-\frac{t}{1 + t}})$, $\lim_{n \to \infty}\pr\left(\frac{\bw_k^\top \bv}{ b' n} > \delta \right) = 0$,
		which is a direct consequence of Lemma~\ref{lem:tcomb} where we select $K = 1$, $\bw = \bw_k$, $\bM_1 = \bs{I}$, $b = b'$ and $\gamma = 1$.
	\end{proof}
	
	\subsection{Proof of Corollary~\ref{cor:convergence}}
	
	\begin{proof}
		From Lemma~\ref{lem:t10moment}, we have that for any fixed $\delta > 0$, there exists a constant $B_\delta > 0$ such that uniformly for all $n$,
		\begin{equation}\label{eq:convbf}
			\sqrt{\frac{1}{n^{\frac{1 - t}{1 + t}}} \sum_{i = 1}^n \left(\E[\varepsilon_i \one(|\varepsilon_i| \le B_\delta i^{\frac{1}{1 + t}})]\right)^2} \le \frac{\delta}{3}.
		\end{equation}
		
		Now writing $f_i := \varepsilon_i \one(|\varepsilon_i| \le B_\delta i^{\frac{1}{1 + t}})$ and $\bs{f}$ as in the proof of Lemma~\ref{lem:tcomb}, we have for any $\bw \in \mathcal{S}^{n - 1}$,
		\[
		|\bw^\top\bv| \le |\bw^\top(\bs{f} - \E[\bs{f}])| + |\bw^\top\E[\bs{f}]| + |\bw^\top(\bv - \bs{f})| \le |\bw^\top(\bs{f} - \E[\bs{f}])| + \|\E[\bs{f}]\|_2 + \|\bv - \bs{f}\|_2.
		\]
		In light of the above and~\eqref{eq:convbf}, we have that to prove the desired statement, we only need to prove that
		\[
		\lim_{n \to \infty}\sup_{\bw \in \mathcal{S}^{n - 1}} \pr\left(\frac{|\bw^\top (\bs{f} - \E[\bs{f}])|}{n^{\frac{1 - t}{2 (1 + t)}}} > \frac{\delta}{3}\right) = 0 \quad\&\quad 
		\lim_{n \to \infty} \pr\left(\frac{\|\bv - \bs{f}\|_2}{n^{\frac{1 - t}{2 (1 + t)}}} > \frac{\delta}{3}\right) = 0.
		\]
		The second inequality follows from exactly the same lines of proof as in the proof of~\eqref{eq:t10hete2}. For the first inequality, we apply Chebyshev's inequality, Lemma~\ref{lem:t10moment} and the basic inequality $\E[(f_i - \E[f_i])^2] \le \E[f_i^2]$ to get that
		\[
		\sup_{\bw \in \mathcal{S}^{n - 1}} \pr\left(\frac{|\bw^\top (\bs{f} - \E[\bs{f}])|}{n^{\frac{1 - t}{2 (1 + t)}}} > \frac{\delta}{3}\right) \le \sup_{\bw \in \mathcal{S}^{n - 1}} \frac{9\E[|\bw^\top (\bs{f} - \E[\bs{f}])|^2]}{\delta^2 n^{\frac{1 - t}{1 + t}}} \le \frac{9 c_n}{\delta^2},
		\]
		which converges to zero as $n \to \infty$. Notice that in the last inequality we use the fact that $c_n$ does not depend on $\bw$.
		Putting together we prove the desired result.
	\end{proof}
	
	\subsection{Proof of Theorem~\ref{thm:kalternative1}}

	Since Lemma~\ref{lem:tcomb} can also work on the case where $K$ diverges with $n$, with the help of the proof of Theorem~\ref{thm:alternative1}, it remains to prove that~\eqref{eq:upp_e} still holds with a diverging $K$. Write 
	\[
	\mathrm{I}_{1k} := \frac{\be^\top  (\tilde {\bs{V}}_k \tilde {\bs{V}}_k^\top  - \mathrm{diag}(\tilde {\bs{V}}_k \tilde {\bs{V}}_k^\top ))\be}{n}, \quad \mathrm{I}_{2k} := \frac{\be^\top  (\tilde {\bs{V}}_k \tilde {\bs{V}}_k^\top  \bP_k - \mathrm{diag}(\tilde {\bs{V}}_k \tilde {\bs{V}}_k^\top  \bP_k)) \be}{n},
	\]
	\[
	\mathrm{II}_{1k} := \frac{\be^\top  \mathrm{diag}(\tilde{\bs{V}}_k \tilde{\bs{V}}_k^\top)  \be - \E[e_1^2] \tr[\tilde{\bs{V}}_k \tilde{\bs{V}}_k^\top]}{n}, \quad \mathrm{II}_{2k} := \frac{\be^\top  \mathrm{diag}(\tilde {\bs{V}}_k \tilde {\bs{V}}_k^\top  \bP_k) \be - \E[e_1^2]\tr[\tilde {\bs{V}}_k \tilde {\bs{V}}_k^\top  \bP_k]}{n},
	\]
	and
	\[
	\mathrm{III}_{j,k} := \E[e_1^2] \cdot \frac{\tr[\tilde{\bs{V}}_j \tilde{\bs{V}}_j^\top]}{n} - \E[e_1^2] \cdot \frac{\tr[\tilde{\bs{V}}_k \tilde{\bs{V}}_k^\top \bP_k]}{n}.
	\]
	
	We first consider $\mathrm{I}_{1k}$. From Lemma~\ref{lem:offdiagbnd}, we have that by Chebyshev's inequality and a union bound, as $n \to \infty$, for any fixed $\delta > 0$,
	\[
	\pr(\forall 1 \le k \le K, |\mathrm{I}_{1k}| \ge \delta) = O(K / n) = o(1).
	\]
	Using an analogous argument we can prove that the above result still holds with $\mathrm{I}_{2k}$.
	
	Second, we consider $\mathrm{II}_{1k}$. In the following, we control this by applying Lemma~\ref{lem:tcomb} where we select $e_i^2 - \E[e_i^2]$ as $\varepsilon_i$, $\mathrm{diag}(\tilde{\bs{V}}_k \tilde{\bs{V}}_k^\top)$ as $\bM_k$, ${\bf 1}$ as $\bw$, $\kappa / 2$ as $t$ and set $\gamma = 1$. We now discuss by cases based on $\kappa$.
	
	\begin{enumerate}
		\item When $0 \le \kappa < 2$, we can set the $b$ in Lemma~\ref{lem:tcomb} as $b := \sqrt{K} n^{\frac{-\kappa / 2}{1 + \kappa / 2}}$, then it follows from Lemma~\ref{lem:tcomb} that
		\[
		\sup_{1 \le k \le K} \frac{1}{n} \sum_{i = 1}^n a_{k,i} (e_i^2 - \E[e_i^2]) = o_\pr(b \cdot \|{\bf 1}\|_2^2 / n) = o_\pr(\sqrt{K} n^{- \frac{\kappa}{2 + \kappa}}) =  o_\pr(1).
		\]
		
		% \begin{align*}
			% \sup_{1 \le k \le K} \frac{1}{n} \sum_{i = 1}^n (a_{k,i} - \bar{a}_k) (e_i^2 - \E[e_i^2]) = o_\pr(b \| \bw\|_2^2/n) = o_\pr(\sqrt{K} n^{-\frac{\kappa / 2}{1 + \kappa / 2}}) = o_\pr(\sqrt{K} n^{-\frac{\kappa}{2 + \kappa}}) = o_\pr(1).
			% \end{align*} 
		
		\item When $\kappa \ge 2$, we can choose the $b$ in Lemma~\ref{lem:tcomb} as $b = 1 = \omega(\sqrt{K/n})$ (note that $K = o(n)$ here). Then it follows from Lemma~\ref{lem:tcomb} that
		\begin{align*}
			\sup_{1 \le k \le K} \frac{1}{n} \sum_{i = 1}^n a_{k,i} (e_i^2 - \E[e_i^2]) = o_\pr(b \| {\bf 1}\|_2^2 / n) = o_\pr(1).
		\end{align*} 
		
	\end{enumerate}
	In light of both cases we have $\sup_{1 \le k \le K} \mathrm{II}_{1k} = o_\pr(1)$. Analogously we can derive the same bound for $\mathrm{II}_{2k}$. 
	
	We thirdly consider $\mathrm{III}_{j,k}$. Using exactly the same analysis as the $\E[\mathrm{II} \mid \bP_j, \bP_k]$ in the proof of Theorem~\ref{thm:alternative1}, we have for any $1 \le j,k \le K$,
	\[
	\mathrm{III}_{j,k} \ge \frac{1}{n} \left(n - 3p - \min\{\sqrt{2p} K, p\}\right) \ge \frac{m}{4 + m}.
	\]
	In light of our control of $\mathrm{I}_{1k}, \mathrm{I}_{2k}, \mathrm{II}_{1k}, \mathrm{II}_{2k}$ and $\mathrm{III}_{j,k}$, we prove the desired result.
}

\section{Proof of minimax rate optimality results}\label{sec:pfopt}

\subsection{Proof of Theorem~\ref{thm:lower}}

Without loss of generality we consider the scenario where $\beta = \beta^Z = 0$. Let $H_1(\tau)$ be the class of alternatives such that $|b| \ge \tau$, with $\tau$ to be specified later. Then using Neyman-Pearson lemma, we have that for any $(\bZ, \bY)$ in $H_0$ and any $(\bZ', \bY')$ in $H_1(\tau)$, 
\[
\mathcal{R}_{t, \bX}(\tau) \geq 1 - \mathbb{TV}(\prob_{\bY, \bZ}, \prob_{\bY', \bZ'}).
\]
Hence, the problem becomes constructing a $(\bZ, \bY)$ and $(\bZ', \bY')$ belonging to $H_0$ and $H_1(\tau)$ such that their total variation distance is smaller than $\eta$.

We can do the following construction. First, we construct $Z_i$ as i.i.d. binary random variables such that $\prob(Z_i = n/\gamma) = \gamma/n$ and $\prob(Z_i = - (1 - \gamma/n)^{-1}) = 1 - \gamma/n$, where $\gamma = -\log(1-\eta)/2$, and without loss of generality, $n$ is sufficiently larger such that $\gamma/n < 1$. Moreover, we construct $Z_i'$ such that for each $i$, $Z_i = Z_i'$ almost surely. 

We then construct $\varepsilon_i, \varepsilon_i'$ as i.i.d. Rademacher random variables that are independent from $Z_i, Z_i'$; and construct $\tilde Z_i$ as i.i.d replicates of $Z_i$ which are independent from other randomness in the problem. 
Finally let $Y_i = b \tilde{Z}_i + \varepsilon_i$ and $Y_i' = b Z_i' + \varepsilon_i$ where $b = c_\eta n^{-t/(1+t)}$ for some constant $c_\eta > 0$ depending only on $\eta$ such that $E[|Y_i|^{1+t}] = E[|Y_i'|^{1+t}] = 2$. Then it is straightforward that the distribution of $Y_i$ is in $\cD_{1 + t}$, so that $(\bY, \bZ)$ and $(\bY', \bZ')$ are feasible choices in $H_0$ and $H_1(\tau)$ respectively with $\tau := c_\eta n^{- t / (1 + t)}$.

Using the above construction, we control their total variation distance as
\begin{align*}
	\mathbb{TV}(\prob_{\bY, \bZ}, \prob_{\bY', \bZ'}) 
	&= \sup_B\{\prob \left(\left(\bY, \bZ\right) \in B\right) - \prob\left(\left(\bY', \bZ'\right) \in B\right) \} \\
	&\le \sup_B\{\prob \left(\left(\bY, \bZ\right) \in B\right) - \prob\left(\left(\bY', \bZ'\right) \in B, \left(\bY, \bZ\right) \in B\right) \} \\
	&\le \sup_B{\prob \left(\left(\bY, \bZ\right) \in B,\left(\bY', \bZ'\right) \not \in B \right)} \\
	&\le \prob\left( \bZ \neq \tilde \bZ \right) \le 1 - (1-\gamma/n)^{2n} \le 1 - e^{-2\gamma} = \eta.
\end{align*}

\subsection{Preliminary lemmas for Theorem~\ref{thm:supupp}}

We first invoke the following lemma, which is a uniform convergence extension of Lemma~\ref{lem:t10moment}. 

{\rev \begin{lemma}\label{lem:supt10moment}
		Let $t \in (0, 1), \gamma > 0$ be given. For any fixed $B > 0$, 
		\[
		\sup_{\bw \in \R^n_\circ} \sup_{\prob_\varepsilon \in \cD_{1 + t}}\sum_{i = 1}^n \frac{\E[w_i^2 \varepsilon_i^2 \one(|\varepsilon_i| \le B i^{\frac{1}{1 + t_1}})]}{\|\bw\|_2^2 n^{\frac{1 - t}{1 + t} + \gamma}} = o(1).
		\]
		Moreover,
		\[
		\sum_{i=1}^n \left(\E[\varepsilon_i \one(|\varepsilon_i| \le B i^{\frac{1}{1 + t_1}})] \right)^2 / n^{\frac{1 - t}{1 + t} + \gamma} = o(1),
		\]
		where $\R^n_\circ$ denotes the $n$-dimensional Euclidean space excluding the original point and $t_1$ denote a constant in $(-1, t)$ such that
		\[
		\frac{1 - t_1}{1 + t_1} - \frac{1 - t}{1 + t} = \gamma.
		\]
	\end{lemma}
	
	\begin{proof}
		As a direct consequence of Jensen's inequality it is straightforward that 
		\begin{equation}\label{eq:jensen}
			\sup_{\prob_\varepsilon \in \cD_{1 + t}} \E[|\varepsilon_1|^{1 + t_1}] \le \sup_{\prob_\varepsilon \in \cD_{1 + t}} (\E[|\varepsilon_1|^{1 +t}])^{\frac{1 + t_1}{1 + t}} \le 2^{\frac{1 + t_1}{1 + t}}.
		\end{equation}
		
		Let $f_i := \varepsilon_i \one(|\varepsilon_i| \le B i^{\frac{1}{1 + t_1}})$. To prove the first statement, following the proof of Lemma~\ref{lem:t10moment}, we only need to prove that
		\begin{equation}\label{eq:t10mom1}
			\sup_{\bw \in \R^n_\circ} \sup_{\prob_\varepsilon \in \cD_{1 + t}}\sum_{i = 1}^n \frac{\E[w_i^2 \varepsilon_i^2 \one(|\varepsilon_i| \le B a_n^{\frac{1}{1 + t_1}})]}{\|\bw\|_2^2 n^{\frac{1 - t}{1 + t} + \gamma}} = o(1)
		\end{equation}
		and that  
		\begin{equation}\label{eq:t10mom2}
			\sup_{\bw \in \R^n_\circ} \sup_{\prob_\varepsilon \in \cD_{1 + t}}\sum_{i = a_n + 1}^n \frac{\E[w_i^2 \varepsilon_i^2 \one( B a_n^{\frac{1}{1 + t_1}} < |\varepsilon_i| \le B i^{\frac{1}{1 + t_1}})]}{\|\bw\|_2^2 n^{\frac{1 - t}{1 + t} + \gamma}} = o(1).
		\end{equation}
		For~\eqref{eq:t10mom1}, following exactly the same lines of proof as in Lemma~\ref{lem:t10moment} but with $t$ replaced by $t_1$, we can easily have that its left hand side is of order $o(\sup_{\prob_\varepsilon \in \cD_{1 + t}} \E[|\varepsilon_1|^{1 + t_1}])$, which is of order $o(1)$ knowing~\eqref{eq:jensen}. For~\eqref{eq:t10mom2}, following again the proof in Lemma~\ref{lem:t10moment} but with $t$ replaced by $t_1$, we only need to prove that
		\[
		\sup_{\prob_\varepsilon \in \cD_{1 + t}} \E[|\varepsilon_1|^{1 + t_1} \one(|\varepsilon_1| > B a_n^{\frac{1}{1 + t_1}})] = o(1).
		\]
		This follows from the fact that
		\begin{align*}
			& \E[|\varepsilon_1|^{1 + t_1} \one(|\varepsilon_1| > B a_n^{\frac{1}{1 + t_1}})] \le \E[|\varepsilon_1|^{1 + t} |\varepsilon_1|^{t_1 - t} \one(|\varepsilon_1| > B a_n^{\frac{1}{1 + t_1}})] \\
			& \le \frac{\E[|\varepsilon_1|^{1 + t} \one(|\varepsilon_1| > B a_n^{\frac{1}{1 + t_1}})]}{B^{t - t_1} a_n^{\frac{t - t_1}{1 + t_1}}} \le \E[|\varepsilon_1|^{1 + t}] / (B^{t - t_1} a_n^{\frac{t - t_1}{1 + t_1}}).
		\end{align*}

		Now for the second claim, recall again the proof in Lemma~\ref{lem:t10moment}, we have
		\[
		\sum_{i = 1}^n (\E[f_i])^2 \le 2^{\frac{2}{1 + t}} n^{\frac{1 - t}{1 + t}} \left(\sum_{i = 1}^\infty \prob(|\varepsilon_i|^{1 + t_1} > B^{1 + t_1} i)\right)^{\frac{2t}{1 + t}},
		\]
		and the desired result follows from that 
		\[
		\sum_{i = 1}^\infty \prob(|\varepsilon_i|^{1 + t_1} > B^{1 + t_1} i) \le \int_0^\infty\pr\left(\frac{|\varepsilon_i|^{1 + t_1}}{B^{1 + t_1}} > x\right) dx = \frac{\E[|\varepsilon_i|^{1 + t_1}]}{B^{1 + t_1}}
		\]
		and~\eqref{eq:jensen}.
	\end{proof}
	% \begin{lemma}\label{lem:naquad}
		% Consider the $\bM$ in Lemma~\ref{lem:t1}. If $b \ge \Delta n^{-1/2+\gamma}$ for some $\gamma > 0$, then for any fixed $\delta > 0$,
		% 	\[
		% 	\lim_{n \to \infty} \sup_{\prob_w \in  \tilde{\cD}} \sup_{\prob_\xi \in \cD_{2}}  \mathbb{P}\biggl(\frac{|\bw^\top \bM \bx|}{b\|\bw\|_2^2} > \delta\biggr) = 0.
		% 	\]
		% \end{lemma}
	
	% \begin{proof}
		% Following the same lines of proof as Lemma~\ref{lem:t1}, we have for any $\delta > 0$,
		% 	\begin{align*}
			% 		\prob \left(\frac{|\bw^\top M\bx|}{b \|\bw\|_2^2} > \delta\right) \le \frac{16}{\delta^2 b^2 n} + \prob \left(\|\bw\|_2^2 < \frac{n}{16}\right).
			% 	\end{align*}
		% 	Combining the above with Lemma~\ref{lem:nasumsq} yields the desired result.
		% \end{proof}

	% \begin{lemma}\label{lem:nat10II}
		% 	Let $\bs{g} := (g_1, \ldots, g_n)^\top$ be a $n$-dimensional vector satisfying $\|\bs{g}\|_2^2 \le c_g n^{\frac{1 - t}{1 + t}}$ for some constant $c_g > 0$. Then if $b \ge \Delta n^{-\frac{t}{1 + t}+\gamma}$ for some $\gamma > 0$, we have that for any fixed $\delta > 0$,
		% 	\[
		% 	\lim_{n \to \infty} \sup_{\prob_w \in  \tilde{\cD}}  \prob\left(\frac{|\bw^\top \bs{g}|}{b\|\bw\|_2^2} > \delta\right) = 0
		% 	\]
		% \end{lemma}
	
	% \begin{proof}
		
		% 	We have almost surely
		% 	\[
		% 	\frac{|\bw^\top \bs{g}|}{b\|\bw\|_2^2} \le \frac{\|\bs{g}\|_2}{b \|\bw \|_2},
		% 	\]
		% 	whence
		% 	\[
		% 	\prob\left(\frac{|\bw^\top \bs{g}|}{b\|\bw\|_2^2} > \delta \right)\le  \prob\left( \frac{\| \bs{g}\|_2}{b \|\bw \|_2} > \delta \right).
		% 	\] 
		% 	From above and Lemma~\ref{lem:nasumsq}, we obtain our desired result.
		
		% \end{proof}
	
	Armed with the above lemma, we now introduce Lemma~\ref{lem:natcomb}, which is an extension of Lemma~\ref{lem:tcomb}. 
	
	\begin{lemma}\label{lem:natcomb}
		Consider the $\bM \in \R^{n \times n}$ with $\|\bM\|_\op \le 1$ and let $t \in (0, 1)$ be given. Then if $b \ge \Delta n^{-\frac{t}{1 + t} + \gamma}$ for some constants $\gamma, \Delta > 0$, we have that for any fixed $\delta, \gamma' > 0$,
		\[
		\lim_{n \to \infty} \sup_{\bw \in \R^n} \sup_{\prob_\varepsilon \in \cD_{1 + t}}  \prob\left(\frac{|\bw^\top \bM \bv|}{b\max\{\|\bw\|_2^2, \gamma' n\}} > \delta\right) = 0.
		\]
	\end{lemma}
	
	\begin{proof}
		When $t = 1$, the result follows from an argument analogous to the proof of Lemma~\ref{lem:t1}. Therefore we just need to prove that the result holds for $t \in (0, 1)$.
		
		Let $f_i$ be defined as in Lemma~\ref{lem:supt10moment} for some constant $B > 0$ and write $\bs{f} := (f_1, \ldots, f_n)^\top$. Then we only need to prove that
		\[
		\lim_{n \to \infty} \sup_{\bw \in \R^n} \sup_{\prob_\varepsilon \in \cD_{1 + t}}  \prob\left(\frac{|\bw^\top \bM (\bs{f} - \E[\bs{f}])|}{b\max\{\|\bw\|_2^2, \gamma' n\}} > \delta\right) = 0,
		\]
		\[
		\lim_{n \to \infty} \sup_{\bw \in \R^n} \sup_{\prob_\varepsilon \in \cD_{1 + t}} \frac{|\bw^\top \bM \E[\bs{f}]|}{b\max\{\|\bw\|_2^2, \gamma' n\}}  = 0,
		\]
		and that
		\[
		\lim_{n \to \infty} \sup_{\bw \in \R^n} \sup_{\prob_\varepsilon \in \cD_{1 + t}}  \prob\left(\frac{|\bw^\top \bM (\bv - \bs{f})|}{b\max\{\|\bw\|_2^2, \gamma' n\}} > \delta\right) = 0.
		\]

		The first inequality follows from exactly the same lines of proof as in Lemma~\ref{lem:tcomb}, except that we replace Lemma~\ref{lem:t10moment} with Lemma~\ref{lem:supt10moment}; the second inequality follows from a Cauchy-Schwartz inequality and Lemma~\ref{lem:supt10moment}. Now we are still left with the task of dealing with the third inequality. 
		
		Following the proof of Lemma~\ref{lem:tcomb}, to prove that the third inequality holds, it remains to prove that given any constant $\eta > 0$, there exists constants $N_\eta, C_\eta$ such that the following result hold:
		\begin{equation}\label{eq:supbc}
			\sup_{\prob_\varepsilon \in \cD_{1 + t}} \pr\left(\mathcal{E}_\eta^c\right) \le \eta, \quad\text{where}\quad \mathcal{E}_\eta := \{\forall i > N_\eta, f_i = \varepsilon_i, \forall \ell \le N_\eta, |\varepsilon_\ell| \le C_\eta\}.
		\end{equation}
		To achieve this goal, observe that for any integer $N > 0$,
		\begin{align*}
			& \sum_{i = N + 1}^\infty \prob(f_i \neq \varepsilon_i) = \sum_{i = N + 1}^\infty \prob ( |\varepsilon_i|^{1+t_1} > B^{1+t_1}i) \le \int_N^\infty \prob ( |\varepsilon_1|^{1+t_1} > B^{1+t_1} x) dx \\
			& = \int_0^\infty \prob \left\{ \left(\frac{|\varepsilon_1|^{1+t_1}}{B^{1+t_1}} - N\right) \one(|\varepsilon_1|^{1+t_1} > B^{1+t_1}N) >  x\right\} dx \\
			& = \E\left[\left(\frac{|\varepsilon_1|^{1+t_1}}{B^{1+t_1}} - N\right) \one(|\varepsilon_1|^{1+t_1} > B^{1+t_1}N)\right] \le \E\left[\frac{|\varepsilon_1|^{1+t_1}}{B^{1+t_1}}  \one(|\varepsilon_1|^{1+t_1} > B^{1+t_1}N)\right],
		\end{align*}
		whence
		\begin{align*}
			& \E\left[\frac{|\varepsilon_1|^{1+t_1}}{B^{1+t_1}} \one(|\varepsilon_1|^{1+t_1} > B^{1+t_1}N)\right] = \E\left[\frac{|\varepsilon_1|^{1+t} |\varepsilon_1|^{t_1 - t}}{B^{1+t_1}} \one(|\varepsilon_1|^{1+t_1} > B^{1+t_1}N)\right]\\
			& \le \E\left[\frac{|\varepsilon_1|^{1+t}}{B^{1+t_1}} \one(|\varepsilon_1|^{1+t_1} > B^{1+t_1}N)\right] \cdot B^{t_1 - t} N^{\frac{t_1 - t}{1 + t_1}} \le 2 \frac{ N^{- \frac{t - t_1}{1+t_1}}}{B^{1+t}}.
		\end{align*}
		From above, by choosing $N_\eta := \left( \frac{4}{\eta B^{1+t}} \right)^{\frac{1+t_1}{t - t_1}}$ , we have for any $\prob_\varepsilon \in \cD_{1 + t}$,
		\[
		\prob(\exists i > N_\eta \;\st\; f_i \neq \varepsilon_i) \leq \sum_{i = N_\eta + 1}^\infty \prob(f_i \neq \xi_i) \leq \frac{\eta}{2}.
		\]
		Further letting $C_\eta := (4 N_\eta/\eta)^{\frac{1}{1+t}}$, we have for any $\prob_\varepsilon \in \cD_{1 + t}$,
		\[
		\prob( \exists \ell \le N_\eta , \;\st\; |\varepsilon_\ell| > C_\eta) \le N_\eta \prob(|\varepsilon_1| > C_\eta) \le \frac{2N_\eta}{C_\eta^{1+t}} = \frac{\eta}{2}.
		\]
		In light of the above two inequalities we prove~\eqref{eq:supbc}, thereby proving the desired result.
\end{proof}}

% \begin{lemma}\label{lem:natcomb}
	%     Consider the $\bM$ in Lemma \ref{lem:supt10}; let $t \in (0,1]$ be given and assume that $b \ge \Delta n^{-\frac{t}{1+t} + \gamma}$ for some constants $\Delta, \gamma > 0$. Then for any fixed $\delta, \gamma' > 0$,
	%     \[
	%     \lim_{n \to \infty}\sup_{\bw \in \R^n}\sup_{\prob_\xi \in \cD_{1 + t} \cap \tilde{\cD}_{\geq, -B}  } \prob\biggl(\frac{|\bw^\top \bM \bx|}{b\max\{\|\bw\|_2^2, \gamma' n\}} > \delta\biggr) = 0.
	%     \]
	% \end{lemma}

% \begin{proof}
	% 	When $t = 1$, the result follows from an argument analogous to the proof of Lemma~\ref{lem:naquad}. Otherwise, we apply Lemma~\ref{lem:supt10}.
	% \end{proof}

\begin{lemma}\label{lem:navan06}
	Let $a_{n,i}$ ($i, n = 1, 2, \ldots$) be a deterministic array with $\sum_{i=1}^n |a_{n,i}|^2 \le \frac{4}{n}$.
	Let $V_i$ ($i = 1, \ldots, \infty$) be a sequence of independent random variables obeying the law $\prob_{V_i}$.
	Then for any $\gamma > 0$,
	\[
	\lim_{n \to \infty} \sup_{\prob_{V_1}, \ldots, \prob_{V_n} \in D_{1+\gamma}}\E\left[\left|\sum_{i=1}^n a_{n,i} (V_i - \E[V_i])\right|\right] = 0.
	\]
\end{lemma}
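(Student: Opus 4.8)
The plan is to prove Lemma~\ref{lem:navan06} by a single truncation, arranged so that every estimate invokes only the two structural facts available, namely $\E[V_i]=0$ together with $\E[|V_i|^{1+\gamma}]\le 2$ (both hold because $\prob_{V_i}\in\cD_{1+\gamma}$) and $\sum_{i=1}^n a_{n,i}^2\le 4/n$; consequently the resulting bound is uniform over $\prob_{V_1},\dots,\prob_{V_n}\in\cD_{1+\gamma}$ automatically. Since $\E[V_i]=0$, the centring in the statement is vacuous and I may write $V_i-\E[V_i]=V_i$. First I would dispose of the case $\gamma\ge 1$: by Lyapunov's inequality $\E[V_i^2]\le(\E[|V_i|^{1+\gamma}])^{2/(1+\gamma)}\le 2$, so by independence and Cauchy--Schwarz $\E\big|\sum_i a_{n,i}V_i\big|\le\big(\sum_i a_{n,i}^2\,\E[V_i^2]\big)^{1/2}\le(8/n)^{1/2}\to 0$. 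From now on assume $\gamma\in(0,1)$, which is the only genuinely interesting regime (there $\E[V_i^2]$ may be infinite).

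For $\gamma\in(0,1)$ I would truncate at level $\tau_n:=\sqrt n$, setting $\bar V_i:=V_i\one(|V_i|\le\tau_n)$ and $\tilde V_i:=V_i-\bar V_i$, and splitting $\sum_i a_{n,i}V_i=\sum_i a_{n,i}(\bar V_i-\E\bar V_i)+\sum_i a_{n,i}(\tilde V_i-\E\tilde V_i)$. For the first sum, independence gives $\E\big|\sum_i a_{n,i}(\bar V_i-\E\bar V_i)\big|^2=\sum_i a_{n,i}^2\,\mathrm{Var}(\bar V_i)$, and since $\E[\bar V_i^2]=\E[|V_i|^{1+\gamma}|V_i|^{1-\gamma}\one(|V_i|\le\tau_n)]\le\tau_n^{1-\gamma}\E[|V_i|^{1+\gamma}]\le 2\tau_n^{1-\gamma}$, this is at most $8\tau_n^{1-\gamma}/n\le 8/\sqrt n$ (using $\tau_n^{1-\gamma}\le\tau_n$, valid as $1-\gamma<1$ and $\tau_n\ge1$), whence $\E|\cdot|\le(8/\sqrt n)^{1/2}$. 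For the second sum, the triangle inequality, $\E|\tilde V_i-\E\tilde V_i|\le 2\E|\tilde V_i|$, the bound $\E|\tilde V_i|=\E[|V_i|^{1+\gamma}|V_i|^{-\gamma}\one(|V_i|>\tau_n)]\le\tau_n^{-\gamma}\E[|V_i|^{1+\gamma}]\le 2\tau_n^{-\gamma}$, and Cauchy--Schwarz $\sum_i|a_{n,i}|\le\sqrt n\,(\sum_i a_{n,i}^2)^{1/2}\le 2$ together give $\E\big|\sum_i a_{n,i}(\tilde V_i-\E\tilde V_i)\big|\le 2\sum_i|a_{n,i}|\,\E|\tilde V_i|\le 8\tau_n^{-\gamma}=8n^{-\gamma/2}$.

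Combining the two pieces yields $\sup_{\prob_{V_1},\dots,\prob_{V_n}\in\cD_{1+\gamma}}\E\big|\sum_i a_{n,i}(V_i-\E[V_i])\big|\le(8/\sqrt n)^{1/2}+8n^{-\gamma/2}\to 0$, which is the claim. I do not expect a real obstacle here; the only delicate point is choosing the truncation level so as to balance the variance inflation (the factor $\tau_n^{1-\gamma}/n$) against the tail contribution (the factor $\tau_n^{-\gamma}$), and it is exactly because $1-\gamma<1$ that the single choice $\tau_n=\sqrt n$ works for every $\gamma\in(0,1)$. The other thing to verify carefully is that none of the constants or thresholds depend on the individual laws $\prob_{V_i}$, which is what makes the supremum (rather than merely each fixed expectation) vanish.
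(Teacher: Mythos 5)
Your proof is correct, and the overall strategy — truncate, control the bulk by an $L^2$/variance estimate, control the tail by a first-moment estimate — matches the paper's. There is, however, a real technical difference worth noting in how the bulk term is handled. The paper's proof bounds the variance of the truncated variable crudely by $(V_i''-\E V_i'')^2\le 4a^2$, which forces a $\gamma$-dependent truncation level $a=n^{1/(2(\gamma+1))}$ chosen to balance $a^{-\gamma}$ against $a/\sqrt n$; the resulting rate is $O\bigl(n^{-\gamma/(2(\gamma+1))}\bigr)$. You instead exploit the available $(1+\gamma)$-th moment a second time, writing $\E[\bar V_i^2]\le\tau_n^{1-\gamma}\E[|V_i|^{1+\gamma}]$, which is sharper precisely when $\gamma<1$. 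This buys you a single universal truncation level $\tau_n=\sqrt n$ that works for every $\gamma\in(0,1)$, a strictly better rate (the tail term $n^{-\gamma/2}$ dominates versus the paper's $n^{-\gamma/(2(\gamma+1))}$), and a tidier dispatch of $\gamma\ge 1$ by Lyapunov plus Cauchy--Schwarz with no truncation at all. Both proofs correctly ensure that every constant depends only on $\gamma$ and $n$ and not on the individual laws, which is what the supremum requires; your version is arguably the cleaner of the two.
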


\begin{proof}
	
	Let $a = n^{\frac{1}{2(\gamma + 1)}}$; define 
	\[
	V_i' = V_i \one(|V_i| > a), \quad 
	V_i'' = V_i \one(|V_i| \le a).
	\]
	
	We first have 
	\begin{align*}
		\sum_{i=1}^n |a_{n,i}| \E[|V_i'|]&= \sum_{i=1}^n |a_{n,i}| \E[|V_i| \one(|V_i| > a)] = \sum_{i=1}^n |a_{n,i}| \E[|V_i|^{1+\gamma} |V_i|^{-\gamma} \one(|V_i| > a)] \\ 
		&\le a^{-\gamma}\sum_{i=1}^n |a_{n,i}|\E[|V_i|^{1+\gamma} \one(|V_i| > a)]  \le 2a^{-\gamma} \sum_{i=1}^n |a_{n,i}| \\&\overset{(i)}{\le}2a^{-\gamma} n^{1/2}\left( \sum_{i=1}^n |a_{n,i}|^2 \right)^{1/2} \le 4a^{-\gamma} .    
	\end{align*}
	where $(i)$ uses Cauchy-Schwartz inequality. From above, we have
	\begin{align*}
		\E\left[\left|\sum_{i=1}^n a_{n,i} (V_i - \E[V_i])\right|\right] &= 
		\E\left[\left|\sum_{i=1}^n a_{n,i} (V_i' - \E[V_i'] + V_i'' - \E[V_i''] )\right|\right] \\
		&\le \E\left[\left|\sum_{i=1}^n a_{n,i} (V_i'' - \E[V_i''])\right|\right] + 2 \sum_{i=1}^n |a_{n,i}| \E[|V_i'|] \\
		&\le \E\left[\left|\sum_{i=1}^n a_{n,i} (V_i'' - \E[V_i''])\right|\right] + 8a^{-\gamma}
	\end{align*}
	
	To deal with the first summand on the right hand side of the above inequality, we apply H\"{o}lder's inequality to get that
	\begin{align*}
		& \E\left[\left|\sum_{i=1}^n a_{n,i} (V_i'' - \E[V_i''])\right|\right]  \le \left[ \E\left[\left|\sum_{i=1}^n a_{n,i} (V_i'' - \E[V_i''])\right|^2\right] \right]^{1/2} \\
		&= \left[ \E\left[\sum_{i=1}^n a_{n,i}^2 (V_i'' - \E[V_i''])^2\right] \right]^{1/2} \le \left[ \E\left[\sum_{i=1}^n a_{n,i}^2 4a^2 \right] \right]^{1/2} = 2a \left[\sum_{i=1}^n a_{n,i}^2 \right]^{1/2} \le \frac{4a}{\sqrt{n}}
	\end{align*}
	
	Putting together, we have
	\[
	\sup_{\prob_{V_1}, \ldots, \prob_{V_n} \in D_{1+\gamma}} \E\left[\left|\sum_{i=1}^n a_{n,i} (V_i - \E[V_i])\right|\right] \le 8a^{-\gamma} + \frac{4a}{\sqrt{n}} \le 12 n^{-\frac{\gamma}{2(\gamma + 1)}},
	\]
	which gives us the desired result.
\end{proof}

\subsection{Theoretical analysis of \eqref{eq:upp_sup_high}}

\begin{proof}
	
	Following the proof of Theorem~\ref{thm:alternative1}, we only need to show that for any fixed $j, k$, for all $\delta > 0$,
	\begin{equation}\label{eq:naupp_epsilon}
		\begin{aligned}
			\sup_{\prob_e \in \cD_{2+\nu}} \sup_{\prob_\varepsilon \in \cD_{1+t}} &  \prob\left(\frac{|\be^\top \tilde{\bs{V}}_j \tilde{\bs{V}}_j^\top \bv|}{b n} > \delta\right) \to 0 ;\\
			\sup_{\prob_e \in \cD_{2+\nu}} \sup_{\prob_\varepsilon \in \cD_{1+t}} &  \prob\left(\frac{|\be^\top  \tilde {\bs{V}}_k \tilde {\bs{V}}_k^\top \bP_k \bv|}{b n} > \delta\right) \to 0 ;
		\end{aligned}
	\end{equation}
	and that, 
	\begin{equation}\label{eq:naupp_e}
		\begin{aligned}
			& \sup_{\prob_e \in \cD_{2+\nu}} \sup_{\prob_\varepsilon \in \cD_{1+t}}  \prob \left(\frac{\be^\top  \tilde {\bs{V}}_j \tilde {\bs{V}}_j^\top  \be - \be^\top  \tilde {\bs{V}}_k \tilde {\bs{V}}_k^\top  \bP_k \be}{n} < \frac{m}{2 (4 + m)}\right) \to 0 ;\\ 
			& \sup_{\prob_e \in \cD_{2+\nu}} \sup_{\prob_\varepsilon \in \cD_{1+t}}  \prob \left( \frac{\be^\top  \tilde {\bs{V}}_j \tilde {\bs{V}}_j^\top  \be + \be^\top  \tilde {\bs{V}}_k \tilde {\bs{V}}_k^\top  \bP_k \be}{n}  < \frac{m}{2 (4 + m)}\right) \to 0.
		\end{aligned}
	\end{equation}
	
	To prove the first claim of~\eqref{eq:naupp_epsilon}, since $\prob_e \in \cD_{2+\nu}$, using Lemma~\ref{lem:navan06} yields
	\[
	\sup_{\prob_e \in \cD_{2+\nu}}\bE\left[ \left|\frac{1}{n} \|\be\|_2^2 - \E[\be_1^2] \right| \right] \to 0,
	\]
	whence by Markov's inequality,
	\[
	\sup_{\prob_e \in \cD_{2+\nu}} \prob\left( \|\be\|_2^2 > 2\E[\be_1^2] n \right)  \to 0.
	\]
	
	For $\prob_e \in \cD_{2+\nu}$, using  H\"{o}lder's inequality, we have
	\[
	\E[\be_1^2] \le (\E[|\be_1|^{2 + \nu}])^{2/(2+\nu)} \le 2^{2/(2+\nu)}.
	\]
	From the above two inequalities, we have the random event $\mathcal{E} := \{\|\be\|_2^2 \le 2^{(3+\nu)/(2+\nu)} n\}$ satisfies that $\sup_{\prob_e \in \cD_{2+\nu}} \prob(\mathcal{E}^c) \to 0$. Therefore, by choosing $\gamma' = 2^{(3 + \nu) / (2 + \nu)}$ we can control the first inequality of~\eqref{eq:naupp_epsilon} via that
	\begin{align*}
		\sup_{\prob_e \in \cD_{2 + \nu}} \sup_{\prob_\varepsilon \in \cD_{1 + t}} \;& \prob\left(\frac{|\be^\top \tilde{\bs{V}}_j \tilde{\bs{V}}_j^\top \bv|}{b n} > \delta\right) \le \sup_{\prob_e \in \cD_{2 + \nu}}\sup_{\prob_\varepsilon \in \cD_{1 + t}} \prob\left(\frac{|\be^\top \tilde{\bs{V}}_j \tilde{\bs{V}}_j^\top \bv|}{b n} > \delta \mid \mathcal{E}\right) + \sup_{\prob_e \in \cD_{2 + \nu}}\prob(\mathcal{E}^c) \\
		& \overset{(i)}{=} \sup_{\prob_e \in \cD_{2 + \nu}} \sup_{\prob_\varepsilon \in \cD_{1 + t}} \prob\left(\frac{|\be^\top \tilde{\bs{V}}_j \tilde{\bs{V}}_j^\top \bv|}{b \max\{\|\be\|_2^2, \gamma' n\}} > \frac{\delta}{\gamma'} \mid \mathcal{E}\right) + \sup_{\prob_e \in \cD_{2 + \nu}} \prob(\mathcal{E}^c) \\
		& \le \sup_{\bw \in \R^n} \sup_{\prob_\varepsilon \in \cD_{1 + t}} \prob\left(\frac{|\bw^\top \tilde{\bs{V}}_j \tilde{\bs{V}}_j^\top \bv|}{b \max\{\|\bw\|_2^2, \gamma' n\}} > \frac{\delta}{\gamma'}\right) + \sup_{\prob_e \in \cD_{2 + \nu}} 
		\prob(\mathcal{E}^c),
	\end{align*}
	where for the equality $(i)$ we apply that we are under $\mathcal{E}$. Then as a direct consequence of Lemma~\ref{lem:natcomb}, we prove the first claim of~\eqref{eq:naupp_epsilon}. The second claim of~\eqref{eq:naupp_epsilon} follows from an analogous argument.
	
	In the rest of the proof we focus on proving the first statement of~\eqref{eq:naupp_e}, and the second statement can be prove via a similar argument. To prove this statement, we apply again the decomposition
	\begin{align*}
		\frac{\be^\top  \tilde {\bs{V}}_j \tilde {\bs{V}}_j^\top  \be - \be^\top  \tilde {\bs{V}}_k \tilde {\bs{V}}_k^\top  \bP_k \be}{n} = & \frac{\be^\top  (\tilde {\bs{V}}_j \tilde {\bs{V}}_j^\top  - \mathrm{diag}(\tilde {\bs{V}}_j \tilde {\bs{V}}_j^\top ))\be - \be^\top  (\tilde {\bs{V}}_k \tilde {\bs{V}}_k^\top  \bP_k - \mathrm{diag}(\tilde {\bs{V}}_k \tilde {\bs{V}}_k^\top  \bP_k)) \be}{n}\\
		& + \frac{\be^\top  \mathrm{diag}(\tilde{\bs{V}}_j \tilde{\bs{V}}_j^\top)  \be - \be^\top  \mathrm{diag}(\tilde {\bs{V}}_k \tilde {\bs{V}}_k^\top  \bP_k) \be}{n} \\
		=: &\; \mathrm{I} + \mathrm{II},
	\end{align*}
	where recall that for any matrix $\bs{A} \in \R^{n \times n}$, $\mathrm{diag}(\bs{A})$ corresponds to the diagonal matrix such that all the diagonal elements are equal to the diagonal elements of $\bs{A}$.
	
	For $\mathrm{I}$, using the same lines of proof as the term $\mathrm{I}$ in Section~\ref{sec:upp_asymp_high}, we have that for any constant $\delta > 0$,
	\begin{equation}\label{eq:naupp_e_I}
		\sup_{\prob_e \in \cD_{2+\nu}} \prob(|\mathrm{I}| < \delta) \le \frac{2^{(6 + \nu)/(2+\nu)}}{n \delta^2}.
	\end{equation}
	
	For $\mathrm{II}$, we apply the same lines of proof as the control of term $\mathrm{II}$ in Section~\ref{sec:upp_asymp_high}, except that we replace Lemma~\ref{lem:van06} with Lemma~\ref{lem:navan06}. Putting together, we obtain the desired result.
\end{proof}

\subsection{Theoretical analysis of \eqref{eq:upp_sup_low}}

\begin{lemma}\label{lem:napartconv}
	Let $\bP \in \R^{n \times n}$ be a completely random permutation matrix. We have that for any fixed $\delta > 0$,
	\[
	\lim_{n \to \infty} \sup_{\prob_e \in \cD_{1+\nu}} \prob(|\be^\top \bP \be| / n > \delta) = 0.
	\].
\end{lemma}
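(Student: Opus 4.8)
The plan is to reuse the combinatorial decomposition from the proof of Lemma~\ref{lem:partconv} and then replace the (non-uniform) weak law of large numbers used there by the uniform $L^1$-convergence estimate of Lemma~\ref{lem:navan06}. Since $\bP$ is a permutation matrix with $\tr[\bP]=0$, its underlying permutation $\sigma$ has no fixed point, so Lemma~\ref{lem:partition} produces a partition $U_1,U_2,U_3$ of $\{1,\dots,n\}$ with $|U_j|\ge n/4-1$ and $U_j\cap\sigma(U_j)=\emptyset$ for every $j$. Because $\bw^\top\bP\bw=\sum_{j=1}^3\sum_{i\in U_j}w_iw_{\sigma(i)}$ and $|U_j|\le n$, a union bound reduces the claim to showing that, for each $j\in\{1,2,3\}$ and each fixed $\delta>0$,
\[
\lim_{n\to\infty}\ \sup_{\prob_w\in\cD_{1+\nu}}\ \prob\!\left(\left|\frac{1}{|U_j|}\sum_{i\in U_j}w_iw_{\sigma(i)}\right|>\delta\right)=0 .
\]

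The next step is to observe that, since $U_j$ and $\sigma(U_j)$ are disjoint and $\sigma$ has no fixed point, every index of $\{1,\dots,n\}$ occurs in at most one of the pairs $\{(i,\sigma(i)):i\in U_j\}$; hence these pairs are mutually independent, and the summands $V_i:=w_iw_{\sigma(i)}$, $i\in U_j$, are i.i.d.\ with $\E[V_i]=\E[w_i]\E[w_{\sigma(i)}]=0$ and $\E|V_i|^{1+\nu}=\E[|w_1|^{1+\nu}]^2\le 4$. Writing $m:=|U_j|$ and relabelling the $V_i$'s, Lemma~\ref{lem:navan06} applied with weights $a_{m,i}=1/m$ (so that $\sum_{i=1}^m a_{m,i}^2=1/m\le 4/m$), and with its moment bound $2$ replaced by $4$, which only affects universal constants, gives $\sup_{\prob_w\in\cD_{1+\nu}}\E\bigl|m^{-1}\sum_{i\in U_j}V_i\bigr|\le C\,m^{-\nu/(2(\nu+1))}$. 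Combining this with Markov's inequality and $m=|U_j|\ge n/4-1\to\infty$ yields the displayed limit, and hence the lemma.

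I do not expect a serious obstacle here: the combinatorial heart of the argument is supplied by Lemma~\ref{lem:partition}, and the remaining issue is purely bookkeeping — verifying that the truncation estimate underpinning Lemma~\ref{lem:navan06} stays uniform over $\cD_{1+\nu}$ when the summands carry $(1+\nu)$-th moment bounded by $4$ instead of $2$. This is immediate because that estimate uses only an upper bound on the $(1+\nu)$-th moment together with mean-centring, and here $\E[V_i]=0$ automatically. Should one wish to bypass Lemma~\ref{lem:navan06}, the same bound follows directly by truncating each $V_i$ at level $m^{1/2}$, controlling the discarded tail in $L^1$ via Markov's inequality and the centred truncated part in $L^2$ via Chebyshev's inequality, both contributions being $o(1)$ uniformly over the class.
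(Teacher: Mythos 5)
Your proof is correct and follows essentially the same route as the paper: both reduce via the partition $U_1,U_2,U_3$ of Lemma~\ref{lem:partition}, use that $U_j\cap\sigma(U_j)=\emptyset$ makes the products $w_iw_{\sigma(i)}$, $i\in U_j$, i.i.d.\ with $(1+\nu)$-th moment in $[1,4]$, and invoke the uniform $L^1$-law of Lemma~\ref{lem:navan06} (with the moment bound $2$ replaced by $4$, which only changes universal constants) together with Markov's inequality. Your explicit verification that distinct $i,i'\in U_j$ yield disjoint index pairs $\{i,\sigma(i)\}$, $\{i',\sigma(i')\}$ makes precise the independence claim that the paper's proof of Lemma~\ref{lem:partconv} treats somewhat implicitly.
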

\begin{proof}
	Let $e_{1, 1}, e_{1, 2}, \ldots, e_{1, n}, \ldots$ and $e_{2, 1}, e_{2, 2}, \ldots, e_{2, n}, \ldots$ be two sequences of i.i.d. random variables from a distribution $\prob_e$. Then apparently if $\prob_e \in \cD_{1 + t}$, $1 \le \E[|e_{1, i} e_{2, i}|^{1 + t}] \le 4$.
	Then using Lemma~\ref{lem:navan06}, we have from Markov's inequality that for any $\delta > 0$,
	\[
	\lim_{n \to \infty} \sup_{\prob_e \cD_{1 + \nu}} \prob \left(\left|\frac{1}{n} \sum_{i=1}^n e_{1, i} e_{2, i}\right| > \delta\right) = 0.
	\]
	
	The desired result then follows from the same lines of proof as in Lemma~\ref{lem:partconv}.
	% 	Let $\sigma$ be the permutation corresponding to $\bP$. From Lemma~\ref{lem:partition}, we have there exists a partition $U_1, U_2, U_3$ $|U_j \cap \sigma(U_j)| = 0$ and that $|U_j| \ge \frac{n}{4} - 1$.
	
	% 	Then 
	% 	\[
	% 	\frac{\be^\top \bP \be}{n} = \frac{1}{n}\sum_{j = 1}^3 \sum_{i \in U_j}e_i e_{\sigma(i)}.
	% 	\]
	% 	Using that the $U_j$ and $\sigma(U_j)$ have no overlap, we have that for each $j \in \{1,2,3\}$, the $e_i e_{\sigma(i)}$'s with $i \in U_j$ are i.i.d. zero-mean random variables with bounded $1+\nu$ order moment. So using Lemma~\ref{lem:navan06} with $V_i = e_ie_{\sigma(i)}, a_{n,i} = 1/n$, we have in probability, 
	% 	\begin{align*}
		% 		\sup_{\prob_e \in \cD_{1+\nu}}  \prob\left(\left|\frac{\sum_{i \in U_j} e_i e_{\sigma(i)}}{n} \right| > \frac{\delta}{3}\right)  \to 0
		% 	\end{align*} 
	% 	which proves the desired result.
\end{proof}

\begin{lemma}\label{lem:nasumsq}
	We have
	\[
	\lim_{n \to \infty} \sup_{\prob_e \in \tilde{\cD}}\prob\left(\|\be\|_2^2 < \frac{n}{16}\right) = 0.
	\]
\end{lemma}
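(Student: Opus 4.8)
The plan is to exploit the defining property of $\tilde{\cD}$: if $\prob_w \in \tilde{\cD}$ then $\prob(|w_1| > 1/2) > 1/2$, so each coordinate $w_i$ is ``not too small'' with probability exceeding one half. Concretely, set $B_i := \one(|w_i| > 1/2)$, so that $w_i^2 \geq \tfrac14 B_i$ almost surely, and hence
\[
\|\bw\|_2^2 = \sum_{i=1}^n w_i^2 \geq \frac14 \sum_{i=1}^n B_i.
\]
This immediately gives the inclusion of events $\{\|\bw\|_2^2 < n/16\} \subseteq \{\sum_{i=1}^n B_i < n/4\}$, reducing the claim to a binomial lower-tail bound.

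Next, observe that the $B_i$ are i.i.d.\ Bernoulli random variables with success probability $p := \prob(|w_1| > 1/2) > 1/2$, so $\E[\sum_{i=1}^n B_i] = np > n/2$. Since $n/4$ is below this mean by at least $np - n/4 > n/4$, Hoeffding's inequality for sums of $[0,1]$-valued independent random variables yields
\[
\prob\Bigl(\sum_{i=1}^n B_i < \frac{n}{4}\Bigr) \leq \prob\Bigl(\sum_{i=1}^n B_i - np \leq -\frac{n}{4}\Bigr) \leq \exp\Bigl(-\frac{2 (n/4)^2}{n}\Bigr) = e^{-n/8}.
\]
The crucial point is that this bound depends on $\prob_w$ only through $p$, and is monotone decreasing in the deviation, so it holds uniformly over all $\prob_w \in \tilde{\cD}$ (any such $p$ satisfies $p > 1/2$). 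Combining the two displays gives $\sup_{\prob_w \in \tilde{\cD}} \prob(\|\bw\|_2^2 < n/16) \leq e^{-n/8} \to 0$ as $n \to \infty$, which is the assertion.

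There is no real obstacle here: the statement is a routine uniform concentration bound, and the only mild care needed is to phrase the Hoeffding estimate so that it is uniform over the nonparametric class $\tilde{\cD}$ — which, as noted, is automatic since the tail bound is a function of $p$ alone and worsens monotonically as $p \downarrow 1/2$. One could equally use the Chernoff bound for the binomial or even Chebyshev's inequality (the latter would give only polynomial decay, but that still suffices for the limit); I would present the Hoeffding version for cleanliness.
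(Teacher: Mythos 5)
Your proof is correct and follows essentially the same route as the paper: replace $w_i^2$ by a bounded truncation (you use the indicator $B_i = \one(|w_i| > 1/2)$; the paper uses $\tilde{w}_i = \tfrac12\one(|w_i|\ge 1/2)$, which is the same thing up to scaling), apply Hoeffding's inequality to the resulting sum of bounded i.i.d.\ variables, and observe that the bound is uniform over $\tilde{\cD}$ because it depends on $\prob_w$ only through $\prob(|w_1| > 1/2) > 1/2$. The numerical constant in the exponent differs slightly ($e^{-n/8}$ vs.\ $e^{-n/128}$), but both suffice and the argument is the same.
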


\begin{proof}
	
	Let $\tilde{e}_i := \frac{1}{2} \one\left(|e_i| \geq \frac{1}{2}\right)$, then $E[\tilde{e}_i^2] \ge \frac{1}{8}$.
	By Hoeffding's inequality,
	\[
	\prob\left(\left| \sum_{i=1}^n (\tilde{e}_i^2 - E[\tilde{e}_i^2]) \right| \ge \frac{n}{16} \right) \le \exp\left(-\frac{n}{128}\right).
	\]
	
	In light of the above inequality and that almost surely, $|e_i| \ge |\tilde{e}_i|$, we obtain the desired result.
\end{proof}

\begin{proof}[Proof of~\eqref{eq:upp_sup_low}]
	Following analogous argument as in the proof of Theorem~\ref{thm:alternative2}, we tackle this problem via proving that for any $j, k \in \{1, \ldots, K\}$ and for all $\delta > 0$,
	\begin{equation}\label{eq:naupp_epsilon_2}
		\begin{aligned}
			\sup_{\prob_e \in \cD_{1+\nu} \cap \tilde{\cD}} \sup_{\prob_\varepsilon \in \cD_{1+t}}  & \prob\left(\frac{|\be^\top \tilde{\bs{V}}_j \tilde{\bs{V}}_j^\top \bv|}{b \| \be \|^2_2} \ge \delta\right) \to 0;\\
			\sup_{\prob_e \in \cD_{1+\nu} \cap \tilde{\cD}} \sup_{\prob_\varepsilon \in \cD_{1+t}}  & \prob\left(\frac{|\be^\top  \tilde {\bs{V}}_k \tilde {\bs{V}}_k^\top \bP_k \bv|}{b \| \be \|^2_2} \ge \delta\right) \to 0;
		\end{aligned}
	\end{equation}
	and that
	\begin{equation}\label{eq:naupp_e_2}
		\begin{aligned}
			& \sup_{\prob_e \in \cD_{1+\nu} \cap \tilde{\cD}} \prob
			\left( \frac{\be^\top  \tilde {\bs{V}}_j \tilde {\bs{V}}_j^\top  \be - \be^\top  \tilde {\bs{V}}_k \tilde {\bs{V}}_k^\top  \bP_k \be}{\|\be \|^2}  < \frac{1}{5}\right) \to 0;\\
			& \sup_{\prob_e \in \cD_{1+\nu} \cap \tilde{\cD}} \prob \left( \frac{\be^\top  \tilde {\bs{V}}_j \tilde {\bs{V}}_j^\top  \be + \be^\top  \tilde {\bs{V}}_k \tilde {\bs{V}}_k^\top  \bP_k \be}{\|\be \|^2}  < \frac{1}{5} \right) \to 0.
		\end{aligned}
	\end{equation}
	
	For the first claim of~\eqref{eq:naupp_epsilon_2}, writing $\mathcal{E} := \{\|\be\|_2^2 \ge n / 16\}$, we have with $\gamma' := 1 / 16$,
	\begin{align*}
		\sup_{\prob_e \in \cD_{1+\nu} \cap \tilde{\cD}} & \sup_{\prob_\varepsilon \in \cD_{1+t}}  \prob\left(\frac{|\be^\top \tilde{\bs{V}}_j \tilde{\bs{V}}_j^\top \bv|}{b \| \be \|^2_2} \ge \delta\right) \le \sup_{\prob_e \in \cD_{1+\nu} \cap \tilde{\cD}} \sup_{\prob_\varepsilon \in \cD_{1+t}}  \prob\left(\frac{|\be^\top \tilde{\bs{V}}_j \tilde{\bs{V}}_j^\top \bv|}{b \| \be \|^2_2} \ge \delta \mid \mathcal{E}\right) + \sup_{\prob_e \in \tilde{\cD}} \prob(\mathcal{E}^c) \\
		& = \sup_{\prob_e \in \cD_{1+\nu} \cap \tilde{\cD}} \sup_{\prob_\varepsilon \in \cD_{1+t}}  \prob\left(\frac{|\be^\top \tilde{\bs{V}}_j \tilde{\bs{V}}_j^\top \bv|}{b \max\{\| \be \|^2_2, \gamma' n\}} \ge \delta \mid \mathcal{E}\right) + \sup_{\prob_e \in \tilde{\cD}} \prob(\mathcal{E}^c) \\
		& \le \sup_{\bw \in \R^n} \sup_{\prob_\varepsilon \in \cD_{1+t}}  \prob\left(\frac{|\bw^\top \tilde{\bs{V}}_j \tilde{\bs{V}}_j^\top \bv|}{b \max\{\| \bw \|^2_2, \gamma' n\}} \ge \delta \mid \mathcal{E}\right) + \sup_{\prob_e \in \tilde{\cD}} \prob(\mathcal{E}^c),
	\end{align*}
	which converges to zero knowing that we have Lemmas~\ref{lem:natcomb} and~\ref{lem:nasumsq}. The second claim of~\eqref{eq:naupp_epsilon_2} can be proven via a similar argument.
	
	% The first claim of~\eqref{eq:naupp_epsilon_2} directly follows Lemma~\ref{lem:natcomb}. The second claim of~\eqref{eq:naupp_epsilon_2} uses Lemma~\ref{lem:natcomb} and that $\bP_k \bv \overset{d}{=} \bv$. 
	We now focus on~\eqref{eq:naupp_e_2}, recall the definitions of $\mathcal{E}_1$ -- $\mathcal{E}_5$, it remains to prove that
	\[
	\lim_{n \to \infty} \sup_{\prob_e \in \cD_{1+\nu} \cap \tilde{\cD}} \prob(\mathcal{E}_1^c \cup \cdots \cup \mathcal{E}_5^c) = 0.
	\]
	
	We can control $\mathcal{E}_1$ -- $\mathcal{E}_5$ following the same lines of proof as in the proof of those events in Section~\ref{sec:upp_asymp_low}, except that for $\mathcal{E}_2$ and $\mathcal{E}_3$ we replace Lemma~\ref{lem:tcomb} by Lemma~\ref{lem:natcomb}; for $\mathcal{E}_4$, we replace Lemma~\ref{lem:partconv} and~\eqref{eq:e1mom} by Lemmas~\ref{lem:napartconv} and~\ref{lem:nasumsq} respectively; and for $\mathcal{E}_5$, we additionally control the uniform convergence of $|\be^\top \be'| / n$ with Lemma~\ref{lem:navan06}. 
	
	In light of our control of all the random events, the desired result follows.
\end{proof}

\section{Additional numerical comparisons}\label{sec:addnum}
{\rev We report here additional simulations for sizes of the residual bootstrap (RB) procedure \citep{freedman1981bootstrapping}, residual randomization (RR) procedure \citep{toulis2019invariant} and the desparsified Lasso coefficient test as implemented in the \texttt{hdi} R package (HDI) \citep{dezeure2015high} at nominal levels of $1\%$ and {\revone $5\%$}. As can be seen from Table~\ref{Tab:AdditionalSizeSimulation}, all the methods are above the nominal size level in the majority of the simulation settings considered here, especially when the design or the noise is heavy-tailed.}

\begin{table}[t!]
	\centering
	\begin{tabular}{cccc|cc|cc|cc}
		\hline\hline
		& & & & \multicolumn{2}{c}{RB} & \multicolumn{2}{c}{RR} & \multicolumn{2}{c}{HDI} \\
		\hline
		$n$ & $p$ & $\bX$ & noise & 1\% & 5\% & 1\% & 5\% & 1\% & 5\% \\
		\hline
		$300$ & $100$ & $\mathcal{G}$ & $\mathcal{G}$ & $3.36$ & $10.99$ & $3.72$ & $11.1$ & $7.33$ & $17.04$\\
		$300$ & $100$ & $\mathcal{G}$ & $t_1$ & $2.06$ & $6.33$ & $1.97$ & $6.64$ & $1.86$ & $3.27$\\
		$300$ & $100$ & $\mathcal{G}$ & $t_2$ & $3.07$ & $9.68$ & $3.14$ & $9.89$ & $4.32$ & $10.89$\\
		$300$ & $100$ & $t_1$ & $\mathcal{G}$ & $3.64$ & $11.44$ & $3.74$ & $11.16$ & $70.33$ & $76.08$\\
		$300$ & $100$ & $t_1$ & $t_1$ & $2.05$ & $6.91$ & $2.04$ & $6.88$ & $47.06$ & $55.56$\\
		$300$ & $100$ & $t_1$ & $t_2$ & $2.74$ & $9.91$ & $2.83$ & $9.81$ & $64.15$ & $71.65$\\
		$600$ & $100$ & $\mathcal{G}$ & $\mathcal{G}$ & $1.93$ & $7.46$ & $1.89$ & $7.41$ & $5.36$ & $13.93$\\
		$600$ & $100$ & $\mathcal{G}$ & $t_1$ & $0.99$ & $4.09$ & $1.3$ & $4.59$ & $1.8$ & $2.6$\\
		$600$ & $100$ & $\mathcal{G}$ & $t_2$ & $1.73$ & $6.72$ & $1.65$ & $6.62$ & $4.38$ & $10.81$\\
		$600$ & $100$ & $t_1$ & $\mathcal{G}$ & $1.93$ & $7.46$ & $1.91$ & $7.38$ & $77.05$ & $82.15$\\
		$600$ & $100$ & $t_1$ & $t_1$ & $1.37$ & $5.33$ & $1.29$ & $5.04$ & $51.2$ & $60.86$\\
		$600$ & $100$ & $t_1$ & $t_2$ & $1.66$ & $6.96$ & $1.58$ & $6.82$ & $71.18$ & $77.76$\\
		$600$ & $200$ & $\mathcal{G}$ & $\mathcal{G}$ & $3.55$ & $10.9$ & $3.51$ & $10.96$ & $6.7$ & $17.7$\\
		$600$ & $200$ & $\mathcal{G}$ & $t_1$ & $1.46$ & $5.79$ & $1.75$ & $6.08$ & $1.95$ & $2.6$\\
		$600$ & $200$ & $\mathcal{G}$ & $t_2$ & $2.53$ & $9.05$ & $2.96$ & $9.77$ & $4.33$ & $12.25$\\
		$600$ & $200$ & $t_1$ & $\mathcal{G}$ & $3.92$ & $11.44$ & $3.74$ & $11.31$ & $78.18$ & $82.82$\\
		$600$ & $200$ & $t_1$ & $t_1$ & $1.68$ & $6.19$ & $1.72$ & $6.19$ & $56.9$ & $63.4$\\
		$600$ & $200$ & $t_1$ & $t_2$ & $2.55$ & $9.56$ & $2.46$ & $9.42$ & $74.05$ & $80.56$\\
		\hline\hline
	\end{tabular}
	\caption{\label{Tab:AdditionalSizeSimulation} \rev Percentage of rejections of various tests under the null, estimated over 100000 Monte Carlo repetitions, for various noise distributions at nominal levels of $\alpha=1\%$ and {\revone $\alpha=5\%$}. This table supplements Table~\ref{Tab:SizeControl} in the main text and the same data generation mechanism is used. Percentage signs are omitted.}
\end{table}

{\rev We have focused primarily on size controls at $\alpha=1\%$ and {\revone $\alpha=5\%$} in the main text. In many applications, coefficient tests are conducted multiple times, which necessitates consideration of test size validity at small nominal levels due to multiple testing corrections. In Table~\ref{Tab:Size5pc}, we investigate the estimated sizes of all tests considered in our numerical simulations at the {\revone 0.5\%} nominal level. {\revone We observe that at this smaller nominal level, size validity issues of ANOVA and naive RPT becomes more pronounced, while $\text{RPT}_{\text{em}}$ and RPT still has valid size control.}
	
	\begin{table}[t!]
		\centering
		\begin{tabular}{ccccccccccccccc}
			\hline\hline
			$n$ & $p$ & $\boldsymbol{X}$ & noise & $\text{RPT}_{\text{em}}$ & RPT & ANOVA & Naive & DR & FL & CRT & RB & RR & HDI \\
			\hline
			$300$ & $100$ & $\mathcal{G}$ & $\mathcal{G}$ & $0$ & $0$ & $0.5$ & $0.5$ & $0.5$ & $0.5$ & $0$ & $2.1$ & $2.4$ & $5.5$\\
			$300$ & $100$ & $\mathcal{G}$ & $t_1$ & $0.1$ & $0$ & $1.6$ & $1.2$ & $0.4$ & $0.8$ & $1.7$ & $1.4$ & $1.2$ & $1.6$\\
			$300$ & $100$ & $\mathcal{G}$ & $t_2$ & $0$ & $0$ & $1.1$ & $0.9$ & $0.3$ & $0.6$ & $0.4$ & $1.8$ & $1.9$ & $2.8$\\
			$300$ & $100$ & $t_1$ & $\mathcal{G}$ & $0$ & $0$ & $0.5$ & $0.5$ & $2.2$ & $0.5$ & $0$ & $2.1$ & $2.3$ & $69.3$\\
			$300$ & $100$ & $t_1$ & $t_1$ & $0$ & $0$ & $2.1$ & $1.1$ & $0.7$ & $0.7$ & $0.3$ & $1.4$ & $1.3$ & $44.6$\\
			$300$ & $100$ & $t_1$ & $t_2$ & $0$ & $0$ & $1.3$ & $0.9$ & $1.5$ & $0.5$ & $0$ & $1.5$ & $1.7$ & $61.9$\\
			$600$ & $100$ & $\mathcal{G}$ & $\mathcal{G}$ & $0.1$ & $0$ & $0.5$ & $0.5$ & $0.5$ & $0.5$ & $0$ & $1.1$ & $1.1$ & $4.2$\\
			$600$ & $100$ & $\mathcal{G}$ & $t_1$ & $0.4$ & $0.3$ & $1.4$ & $0.8$ & $0.5$ & $0.6$ & $1.5$ & $0.6$ & $0.7$ & $1.6$\\
			$600$ & $100$ & $\mathcal{G}$ & $t_2$ & $0.3$ & $0.1$ & $1.2$ & $0.8$ & $0.3$ & $0.6$ & $0.4$ & $0.9$ & $0.9$ & $3.1$\\
			$600$ & $100$ & $t_1$ & $\mathcal{G}$ & $0.1$ & $0$ & $0.5$ & $0.5$ & $2.7$ & $0.5$ & $0$ & $1.1$ & $1$ & $76$\\
			$600$ & $100$ & $t_1$ & $t_1$ & $0$ & $0$ & $1.7$ & $0.6$ & $0.7$ & $0.5$ & $0.2$ & $0.8$ & $0.7$ & $48.5$\\
			$600$ & $100$ & $t_1$ & $t_2$ & $0$ & $0$ & $1.3$ & $0.6$ & $2$ & $0.5$ & $0$ & $0.9$ & $0.8$ & $69.5$\\
			$600$ & $200$ & $\mathcal{G}$ & $\mathcal{G}$ & $0$ & $0$ & $0.5$ & $0.5$ & $0.5$ & $0.5$ & $0$ & $2.1$ & $2.3$ & $5.3$\\
			$600$ & $200$ & $\mathcal{G}$ & $t_1$ & $0.3$ & $0.2$ & $1.2$ & $0.9$ & $0.4$ & $0.7$ & $1.3$ & $0.9$ & $1$ & $1.8$\\
			$600$ & $200$ & $\mathcal{G}$ & $t_2$ & $0.1$ & $0$ & $1$ & $0.9$ & $0.3$ & $0.7$ & $0.3$ & $1.3$ & $1.8$ & $2.6$\\
			$600$ & $200$ & $t_1$ & $\mathcal{G}$ & $0$ & $0$ & $0.5$ & $0.5$ & $2.3$ & $0.5$ & $0$ & $2.6$ & $2.3$ & $76.4$\\
			$600$ & $200$ & $t_1$ & $t_1$ & $0$ & $0$ & $1.7$ & $0.9$ & $0.6$ & $0.6$ & $0.2$ & $1.1$ & $1$ & $54.8$\\
			$600$ & $200$ & $t_1$ & $t_2$ & $0$ & $0$ & $1.2$ & $0.8$ & $1.6$ & $0.5$ & $0$ & $1.5$ & $1.4$ & $72$\\
			\hline\hline
		\end{tabular}
		\caption{\label{Tab:Size5pc} \rev Percentage of rejections of various tests under the null, estimated over 100000 Monte Carlo repetitions, for various noise distributions at nominal levels of {\revone $\alpha=0.5\%$}. The data generation mechanism is the same as in Table~\ref{Tab:SizeControl}. Percentage signs are omitted.}
	\end{table}

	{\rev We also report in Figure~\ref{Fig:AdditionalPowerSimulation} a power comparison of RPT with the additional methods mentioned in Table~\ref{Tab:AdditionalSizeSimulation}. 
		%We also remark that in some settings, HDI and dbLasso  packages exhibited numerical instability and we report fraction of rejections among runs that do not produce an error in the figure.
		RB, RR and HDI procedures exhibit better power than RPT and $\text{RPT}_{\text{EM}}$ in most of the simulation setups. However, this should be viewed in the context of their above nominal size under the null as reported in Table~\ref{Tab:AdditionalSizeSimulation}. 
	}
	
	{
		\revone
		Finally, we investigate the robustness of the power of RPT to model misspecification. Specifically, in our theoretical analysis, we have worked under Assumption~\ref{as:iid} and its variants. However, as can be seen in Figure~\ref{fig:misspec}, the power curves of RPT is almost unaffected even if the independence and linearity assumptions in Assumption~\ref{as:iid} are violated. Specifically, in this numerical experiment, we generate $\bv=(\varepsilon_1,\ldots,\varepsilon_n)^\top$ either independent of $\be=(e_1,\ldots,e_n)^\top$ or allow them to be correlated through the relationship $\varepsilon_i \mid e_i \sim t_1\mathbbm{1}_{\{e_i\geq 0\}} + 2t_1\mathbbm{1}_{\{e_i < 0\}}$. We also allow $\bZ=(Z_1,\ldots,Z_n)$ to have nonlinear dependence on $\bX=(X_1,\ldots,X_n)$ in the form of $Z_i = f(X_i \beta^Z) + e_i$, where $f: t\mapsto 1/(1+e^{-t})$ is the sigmoid function.
	}
	\begin{figure}[htbp]
		\centering
		\subfigure[Gaussian design, Gaussian noise]{\includegraphics[width=0.45\textwidth]{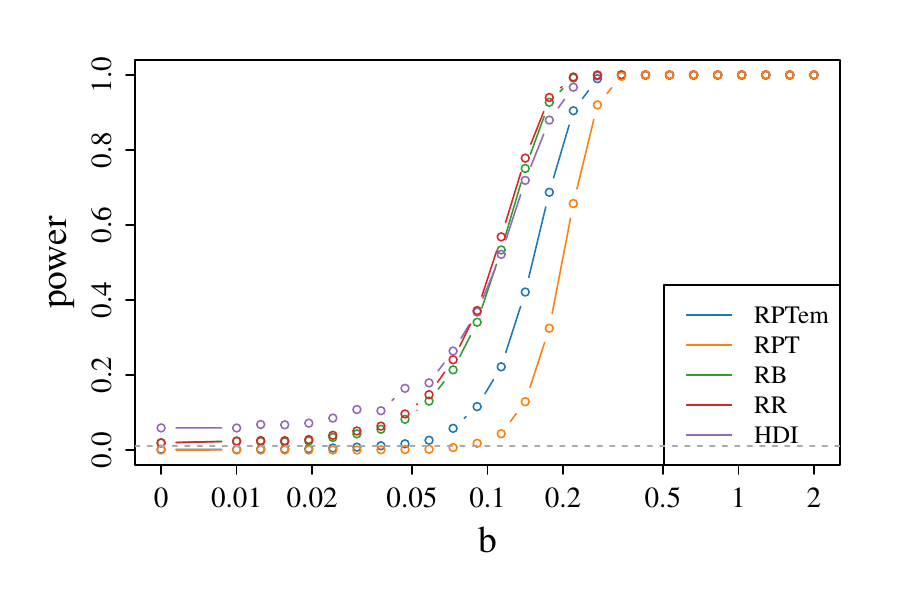}} 
		\subfigure[Gaussian design, $t_1$ noise]{\includegraphics[width=0.45\textwidth]{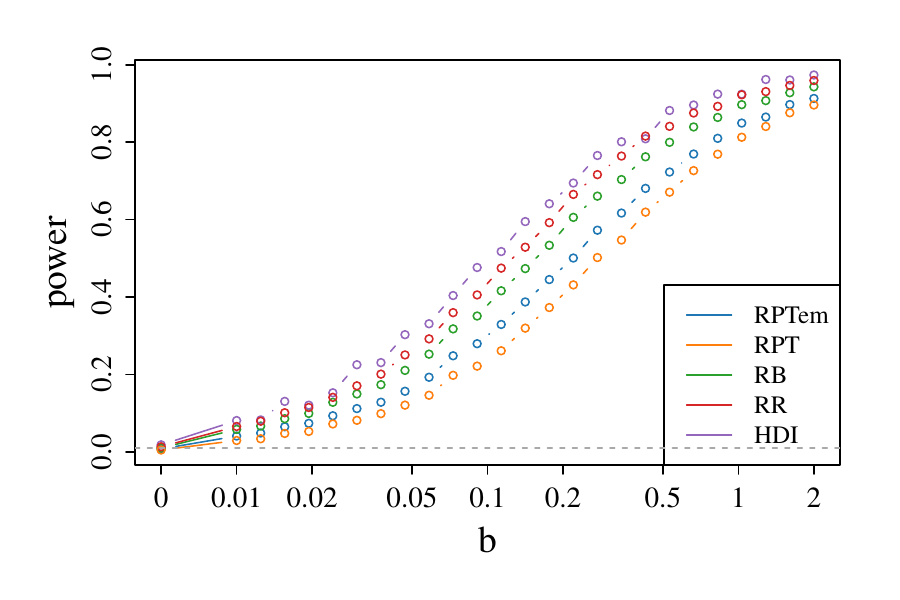}} \\ \vspace*{-0.2cm}
		\subfigure[Gaussian design, $t_2$ noise]{\includegraphics[width=0.45\textwidth]{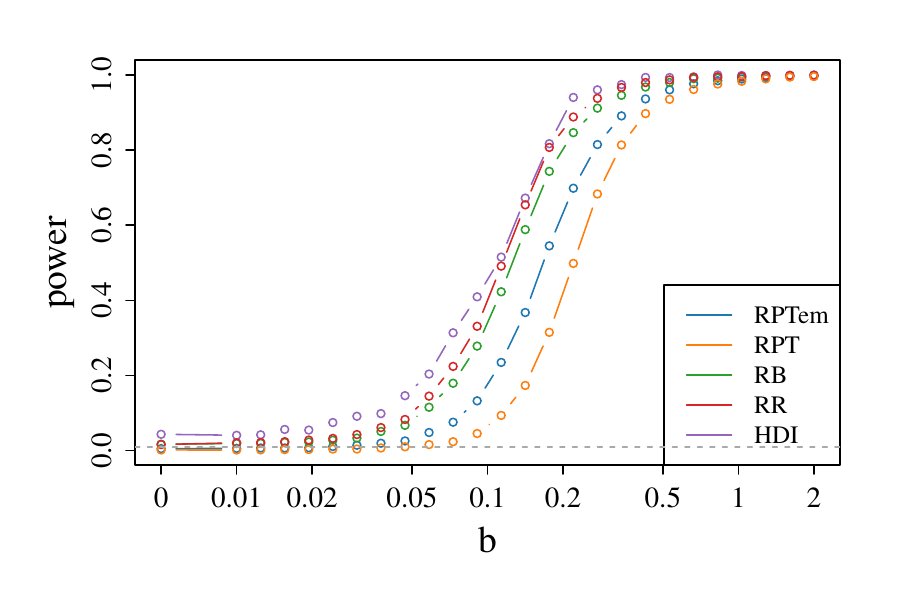}} 
		\subfigure[$t_1$ design, Gaussian noise]{\includegraphics[width=0.45\textwidth]{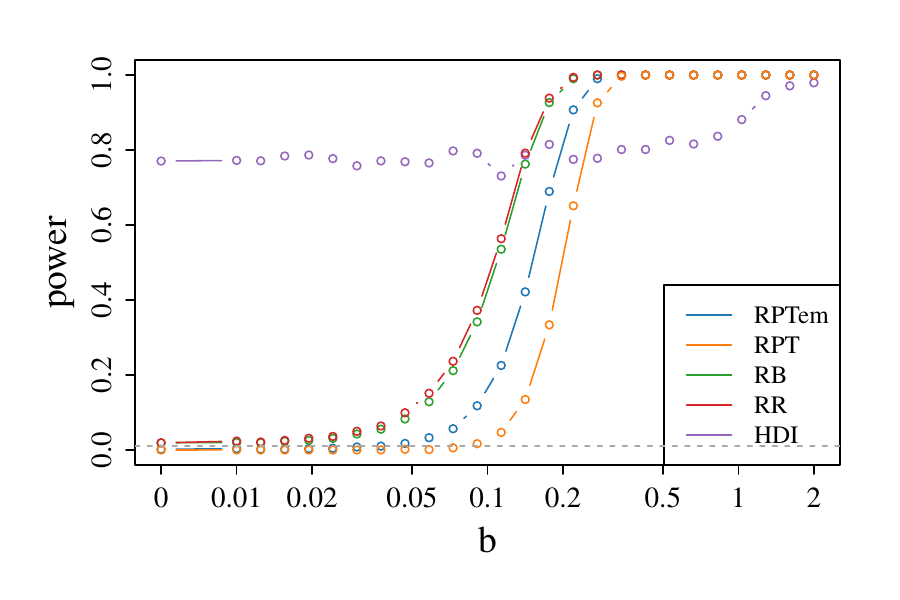}} \\ \vspace*{-0.2cm}
		\subfigure[$t_1$ design, $t_1$ noise]{\includegraphics[width=0.45\textwidth]{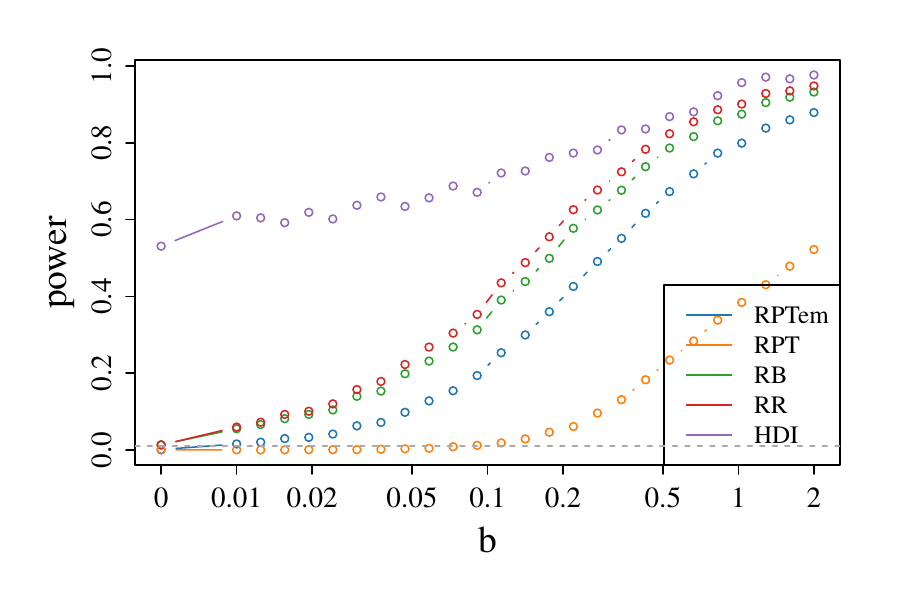}}
		\subfigure[$t_1$ design, $t_2$ noise]{\includegraphics[width=0.45\textwidth]{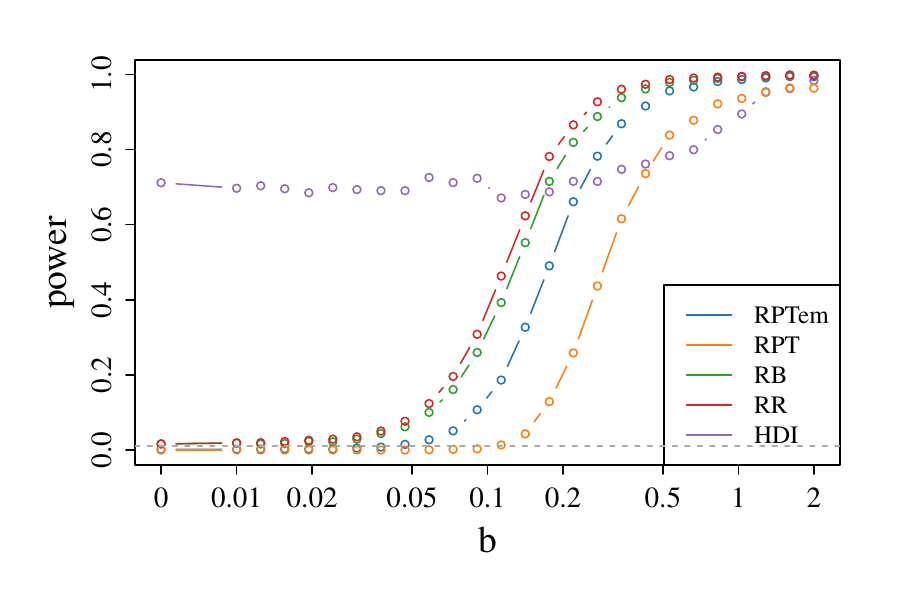}}
		\caption{\rev Power (proportion of rejections) with nominal level $\alpha = 0.01$ (represented by the horizontal dashed line) over 10000 replicates for $b = 0$ or on a logarithmic grid between 0.01 and 2. Here $\bX, \bv$ and $\be$ are generated according to various distribution types prescribed in the caption of each figure.}
		\label{Fig:AdditionalPowerSimulation}
	\end{figure}
	
	\begin{figure}[htbp]
		\centering
		\subfigure[independent noise, linear relation]{\includegraphics[width=0.45\textwidth]{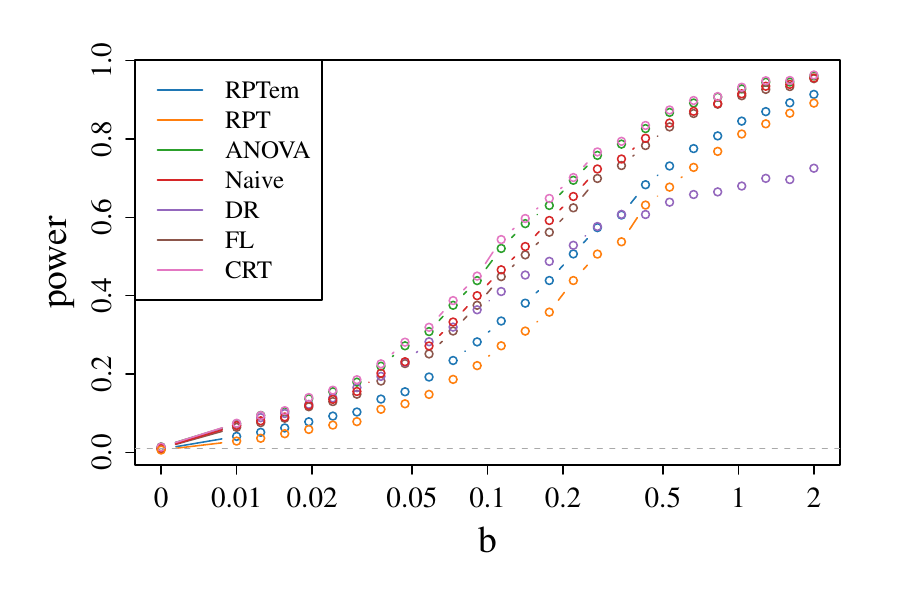}} 
		\subfigure[dependent noise, linear relation]{\includegraphics[width=0.45\textwidth]{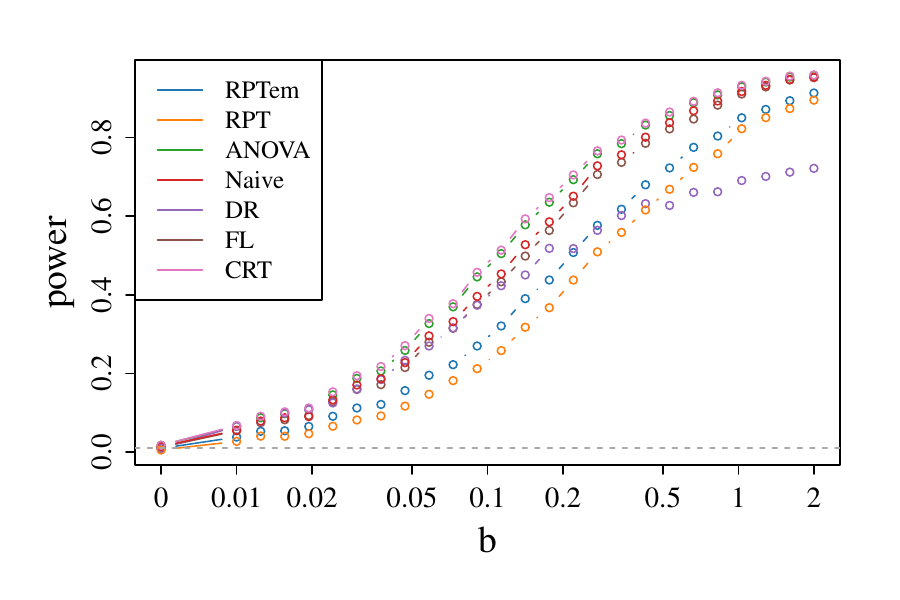}} \\ \vspace*{-0.2cm}
		\subfigure[independent noise, nonlinear relation]{\includegraphics[width=0.45\textwidth]{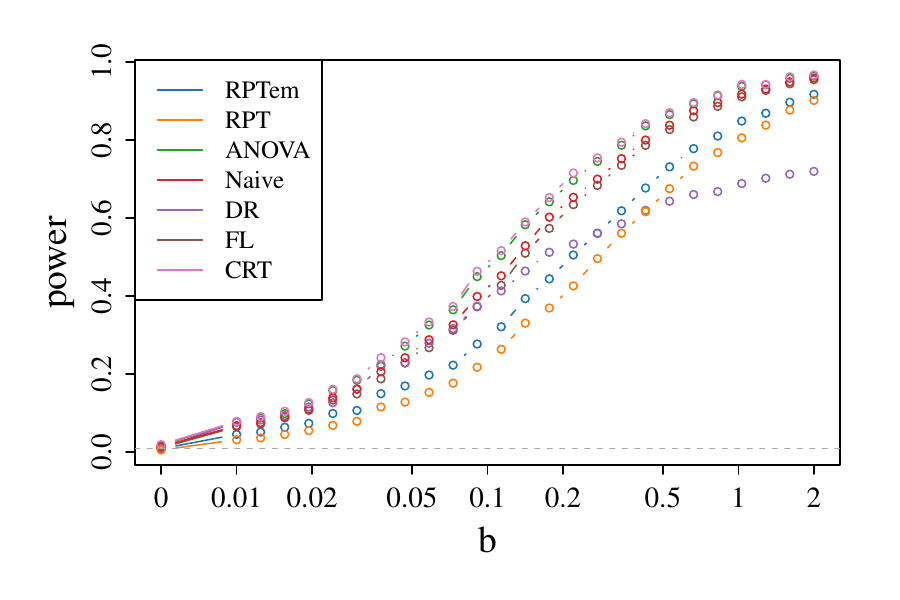}} 
		\subfigure[dependent noise, nonlinear relation]{\includegraphics[width=0.45\textwidth]{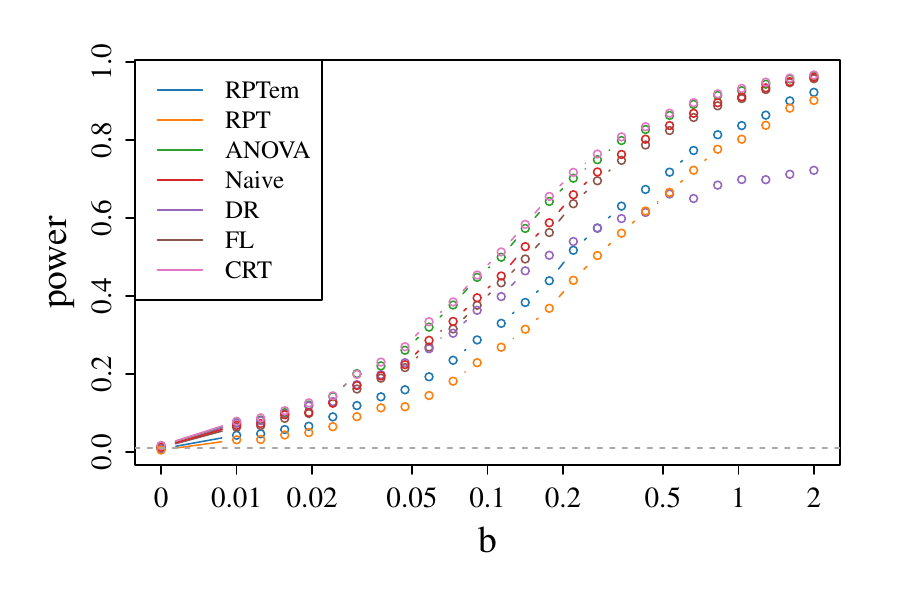}} 
		\caption{\rev Power (proportion of rejections) with nominal level $\alpha = 0.01$ (represented by the horizontal dashed line) over 10000 replicates for $b = 0$ or on a logarithmic grid between 0.01 and 2. Here $\bX$ has independent $\mathcal{N}(0,1)$ entries, $\be$ has independent $t_1$ entries, $\bv$ either has independent $t_1$ entries, independent of $\be$ (independent noise), or a mixture of $t_1$ and $2t_1$ entries dependent on the sign of $\be$ (dependent noise). The covariate $\bZ$ is modelled either using~\eqref{Eq:Model2} (linear relation) or generated through the nonlinear model described at the end of Section~\ref{sec:addnum}.}
		\label{fig:misspec}
	\end{figure}

\end{document}